\documentclass[11pt]{amsart}
\usepackage{fullpage, amsmath, amssymb}
\title{Computational approaches to Poisson traces associated to finite subgroups of $\Sp_{2n}(\CC)$}
\author{Pavel Etingof, Sherry Gong, Aldo Pacchiano, Qingchun Ren, and Travis Schedler}
\usepackage{url}            
\usepackage{graphicx}       

\numberwithin{equation}{section}

\theoremstyle{definition}
\newtheorem{theorem}[equation]{Theorem}
\newtheorem{lemma}[equation]{Lemma}
\newtheorem{proposition}[equation]{Proposition}
\newtheorem{corollary}[equation]{Corollary}

\newtheorem{definition}[equation]{Definition}

\newtheorem{question}[equation]{Question}

\newtheorem{remark}[equation]{Remark}

\newtheorem{claim}[equation]{Claim}

\newcommand{\Id}{\operatorname{Id}}
\newcommand{\rk}{\operatorname{rk}}
\newcommand{\ev}{\mathrm{ev}}
\newcommand{\Span}{\operatorname{Span}}
\newcommand{\ad}{{\operatorname{ad}}}

\newcommand{\HH}{\mathsf{HH}}

\newcommand{\HP}{\mathsf{HP}}
\newcommand{\RR}{\mathbb{R}}

\newcommand{\ZZ}{\mathbb{Z}}

\newcommand{\iso}{{\;\stackrel{_\sim}{\to}\;}}
\newcommand{\C}{\mathbb{C}}

\newcommand{\cO}{\mathcal{O}}

\newcommand{\Hom}{\text{Hom}}

\newcommand{\SL}{\mathsf{SL}}
\newcommand{\GL}{\mathsf{GL}}
\newcommand{\Sp}{\mathsf{Sp}}
\newcommand{\gr}{\operatorname{\mathsf{gr}}}

\newcommand{\onto}{\twoheadrightarrow}
\newcommand{\into}{\hookrightarrow}
\newcommand{\Sym}{\operatorname{\mathsf{Sym}}}
\newcommand{\Mat}{\mathrm{Mat}}
\newcommand{\Diag}{\mathrm{Diag}}

\newcommand{\cD}{\mathcal D}

\newcommand{\lcm}{\operatorname{lcm}}

\newcommand{\CC}{{\Bbb C}}
\newcommand{\bbA}{\mathbb{A}}
\newcommand{\FF}{{\Bbb F}}
\begin{document}
\date{Jan 26, 2011}
\begin{abstract} We reduce the computation of Poisson traces on
  quotients of symplectic vector spaces by finite subgroups of
  symplectic automorphisms to a finite one, by proving several results
  which bound the degrees of such traces as well as the dimension in
  each degree.  This applies more generally to traces on all
  polynomial functions which are invariant under invariant Hamiltonian
  flow.  We implement these approaches by computer together
  with direct computation for infinite families of groups, focusing on
  complex reflection and abelian subgroups of $\GL_2(\CC) <
  \Sp_4(\CC)$, Coxeter groups of rank $\leq 3$ and $A_4, B_4=C_4$, and
  $D_4$, and subgroups of $\SL_2(\CC)$.
  
\end{abstract}
\maketitle
\tableofcontents

\section{Introduction}
Let $A$ be a Poisson algebra over $\C$.  We are interested in linear
functionals $A \rightarrow \C$ satisfying $\{a, b\} \mapsto 0$ for all
$a,b \in A$. Such functionals are called \emph{Poisson traces} on $A$.
The space of Poisson traces is denoted by $\HP_0(A)^*$, and is dual to
the vector space $\HP_0(A) := A / \{A, A\}$, known as the \emph{zeroth
  Poisson homology}, which coincides with the zeroth Lie homology.

Here, we study the case where $A = \cO_V^G$ is the algebra of
$G$-invariant polynomial functions on a nonzero symplectic vector
space $V$, for a finite subgroup $G < \Sp(V)$. We will let $2n > 0$
denote the dimension of $V$. We also consider the
larger space $\HP_0(\cO_V^G, \cO_V) := \cO_V/\{\cO_V^G, \cO_V\}$, as
well as
its dual, $\HP_0(\cO_V^G, \cO_V)^*$, which is the space of functionals
$\phi$ on $\cO_V$ which are invariant under the flow of $G$-invariant
Hamiltonian vector fields, i.e., $\phi(\{f, g\})=0$ for all $f \in
\cO_V^G$ and $g \in \cO_V$.  Note that  $\HP_0(\cO_V^G,
\cO_V)^*$ is a $G$-representation, and its $G$-invariants form the
space of Poisson traces on $\cO_V^G$.

In general, not very much is known about such Poisson traces.  In
\cite{AFLS}, a related quantity was computed: the dimension of the
space of Hochschild traces on $\cD_X^G$ where $\cD_X$ is the algebra
of differential operators on $X \subseteq V$, a Lagrangian subspace.
The algebra $\cD_X^G$ is naturally a quantization of $\cO_V^G$, and
its Hochschild traces are defined as $\HH_0(\cD_X^G)^* := (\cD_X^G /
[\cD_X^G, \cD_X^G])^*$.  More precisely, equip $\cO_V$ with its
natural grading by degree of polynomials and $\cD_X$ with its natural
filtration (which is known as the additive or Bernstein
filtration). Then, $\gr \cD_X = \cO_V$, and there is a canonical
surjection $\HP_0(\cO_V^G) \onto \gr \HH_0(\cD_X^G)$, and similarly
$\HP_0(\cO_V^G, \cO_V) \onto \gr \HH_0(\cD_X^G, \cD_X)$. As a result,
the dimension of the space of Hochschild traces is a lower bound for
the dimension of the space of Poisson traces.  In some special cases,
the lower bound is attained, i.e., the surjection is an isomorphism.
For example, $\HP_0(\cO_V^G) \cong \gr \HH_0(\cD_X^G)$ is known to
hold when $V = \C^2$, and in \cite{ESsym}, the first and last authors
generalized this to the case $V = \C^{2n} = (\C^2)^{\oplus n}$ and $G
= S_n \ltimes K^n$ for $K < \SL_2(\CC)$ (certain cases were shown
previously in \cite{Bu}, and this result was conjectured by Alev
\cite[Remark 40]{Bu}). In \cite{ESweyl}, the same authors will show
that $\HP_0(\cO_V^G) \cong \gr \HH_0(\cD_X^G)$ when $G = S_{n+1}$ is a
Weyl group of type $A_n$ acting on its reflection representation $V =
\CC^{2n}$ (but not for the $D_n$ case).

The following explicit formula for $\HH_0(\cD_X^G, \cD_X)$ as a
$G$-representation is an easy generalization of the main result of
\cite{AFLS}.  Let $\C[G]_\ad$ denote the $G$-representation with
underlying vector space the group algebra $\C[G]$, but with the
conjugation action of $G$.  
\begin{lemma}  \label{l:afls-fla}
  As a $G$-representation, $\HH_0(\cD_X^G, \cD_X)$ is
  isomorphic to the subrepresentation of $\C[G]_\ad$ spanned by
  elements $g \in G$ such that $g - \Id$ is invertible. 
\end{lemma}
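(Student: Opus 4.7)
The plan is to adapt the proof of the main result of \cite{AFLS} --- which computes the Morita-equivalent object $\HH_0(\cD_X \rtimes G, \cD_X \rtimes G) \cong \HH_0(\cD_X^G)$ as a sum of one-dimensional contributions over conjugacy classes of $g \in G$ with $g - \Id$ invertible on $V$ --- by tracking the full $G$-representation structure rather than passing to $G$-invariants (which would collapse group elements to conjugacy classes).

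Concretely, I would construct a natural $G$-equivariant linear map
$$
\Phi \colon \HH_0(\cD_X^G, \cD_X) \to \bigoplus_{g \in G} (\cD_X)_g,
\quad [a] \mapsto \bigl(\pi_g(a)\bigr)_{g \in G},
$$
where $(\cD_X)_g := \cD_X / \la ab - b\, g(a) : a, b \in \cD_X \ra$ is the $g$-twisted zeroth Hochschild homology of the Weyl algebra and $\pi_g$ is the natural projection. Well-definedness: for $x \in \cD_X^G$ and $a \in \cD_X$, the relation $\pi_g(uv) = \pi_g(v\, g(u))$ together with $g(x) = x$ yields $\pi_g(xa) = \pi_g(a\, g(x)) = \pi_g(ax)$, so $\pi_g([x,a]) = 0$. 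For equivariance, the $G$-action on the target sends $(\cD_X)_g \to (\cD_X)_{hgh^{-1}}$ via $a \mapsto h(a)$; this is well-defined because $h$ carries $ab - b\, g(a)$ to $h(a)h(b) - h(b)(hgh^{-1})(h(a))$, the defining relation of $(\cD_X)_{hgh^{-1}}$. Under this action, $\Phi$ is manifestly $G$-equivariant.

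The classical twisted-trace computation at the core of \cite{AFLS} then identifies each summand: $(\cD_X)_g \cong \C$ when $g - \Id$ is invertible on $V$ (with canonical generator the $g$-twisted trace $\tau_g$, constructed via the Koszul resolution of $\cD_X$), and $(\cD_X)_g = 0$ otherwise. Under this identification the target of $\Phi$ becomes precisely the subrepresentation of $\C[G]_\ad$ spanned by $\{g : g - \Id \text{ invertible}\}$, with the $G$-action matching the conjugation action.

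The main obstacle is to show that $\Phi$ is an isomorphism. Surjectivity reduces to the linear independence of the twisted traces $\tau_g$, which follows from their distinct $G$-equivariance properties (each $\tau_g$ is $Z(g)$-invariant but not invariant under a strictly larger subgroup). Injectivity, equivalent to the identity $[\cD_X^G, \cD_X] = \bigcap_g \la ab - b\, g(a) \ra$ in $\cD_X$, is the more substantive point: it can be proved by the same Koszul-resolution argument as in \cite{AFLS}, but carried out on $\cD_X$ as a bimodule over $\cD_X \rtimes G$ so that the $G$-grading of $\cD_X \rtimes G = \bigoplus_g \cD_X \cdot g$ is preserved throughout, rather than being collapsed under the centralizer action at the final step.
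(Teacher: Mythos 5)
The paper offers no proof of this lemma at all: it is stated as ``an easy generalization of the main result of \cite{AFLS},'' so the only fair comparison is against that intended generalization. Your architecture is exactly the right skeleton for it: the $G$-equivariant map $\Phi$ into $\bigoplus_g \HH_0(\cD_X,\cD_X g)$, the well-definedness and equivariance checks (both correct as written), and the identification of each summand with $\C$ or $0$ according to whether $g-\Id$ is invertible, which is the genuine AFLS input. However, the two steps you yourself identify as the substantive ones are not actually carried out, and one of them is argued by a reason that fails.

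The gap is in surjectivity. You correctly reduce it to linear independence of the twisted traces $\tau_g$ as functionals on $\cD_X$, but the justification offered --- that each $\tau_g$ is $Z(g)$-invariant and not invariant under anything larger --- distinguishes nothing when several relevant $g$ share a centralizer, and says nothing at all when $G$ is abelian: there every $\tau_g$ is invariant under all of $G$, yet the lemma is nontrivial and is precisely the case exploited throughout \S 6 of the paper. Linear independence is true but needs a real argument, e.g.\ diagonalizing $g$ on $V$ and computing $\tau_g$ on monomials $x^\alpha\partial^\alpha$ (one gets values of the form $\alpha!\prod_i c(\lambda_i)^{\alpha_i}$ with $c(\lambda)=\lambda/(\lambda-1)$, distinct for distinct $g$, whence independence by a Vandermonde-type argument). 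Relatedly, the mechanism you propose for injectivity --- ``the Koszul-resolution argument carried out on $\cD_X$ as a bimodule over $\cD_X\rtimes G$'' --- does not literally define anything: $\cD_X$ admits no $\cD_X\rtimes G$-bimodule structure compatible with $ga=g(a)g$ on both sides. The standard repair, which settles injectivity and surjectivity simultaneously and avoids the independence issue, is Morita equivalence: $\cD_X^G=e(\cD_X\rtimes G)e$ with $e$ the symmetrizer and $\cD_X\rtimes G$ simple, so $\HH_0(\cD_X^G,eMe)\cong\HH_0(\cD_X\rtimes G,M)$ for every bimodule $M$; applying this to $M=(\cD_X\rtimes G)\otimes\rho^*$ (with $G$ acting on $\rho^*$ by conjugation through the left factor) computes $\HH_0(\cD_X^G,\Hom_G(\rho,\cD_X))\cong\Hom_G\bigl(\rho,\operatorname{Span}\{g: g-\Id \text{ invertible}\}\bigr)$, and summing over $\rho \in \text{Irrep}(G)$ against $\cD_X=\bigoplus_\rho\Hom_G(\rho,\cD_X)\otimes\rho$ yields the lemma, with your $\Phi$ as the resulting isomorphism. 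As written, your proof establishes the map and the target but not the bijectivity.
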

We stress, however, that the above lemma does \emph{not} say anything
about the filtration on $\HH_0(\cD_X^G, \cD_X)$ and hence about the
grading on $\gr \HH_0(\cD_X^G, \cD_X)$.  In the aforementioned cases in
\cite{ESsym} and \cite{ESweyl}, $\HP_0(\cO_X^G)$ is computed along with
its grading, so when it is also isomorphic to $\gr \HH_0(\cD_X^G)$,
one obtains the grading on the latter.

Although we will not use it, the argument of Lemma \ref{l:afls-fla}
applies more generally to show that $\HH_*(\cD_X^G, \cD_X) \cong
\C[G]_\ad$ as $G$-representations, with $\HH_j(\cD_X^G, \cD_X)$ mapping 
to the span of elements $g$ such that $\rk(g - \Id) =
\dim V - j$.  In particular, $\HH_*(\cD_X^G, \cD_X)$ is always
finite-dimensional. This is not necessarily true for $\HP_*(\cO_V^G,
\cO_V)$: see, e.g., \cite[Theorem 2.4.1.(ii)]{EGdelpezzo}, which
implies that $\HP_*(\cO_V^G)$ is infinite-dimensional when $G$ is
nontrivial and $V$ is two-dimensional.

However, thanks to \cite[\S 7]{BEG} (see also \cite{ESdm}), the space
$\HP_0(\cO_V^G, \cO_V)$ is finite-dimensional. On the other hand,
explicit upper bounds are known in only a few cases.  The first aim of
this paper is to prove explicit upper bounds, which allow us to
compute precisely $\HP_0(\cO_V^G, \cO_V)$ and $\HP_0(\cO_V^G)$ for
small enough $G$ and low enough dimension of $V$ with the help of
computer programs.

More precisely, it is not very computationally useful to prove an
upper bound on $\dim \HP_0(\cO_V^G, \cO_V)$, since this does not
immediately render its computation finite.  Instead, we find upper
bounds on the top degree of $\HP_0(\cO_V^G, \cO_V)$ as a graded vector
space.  This renders the computation of $\HP_0(\cO_V^G, \cO_V)$
finite.

To prove such a bound, we use the following reformulation exploited in
\cite[\S 7]{BEG}.  Given any Poisson algebra $A$ and any $f \in A$,
the condition that a functional $\varphi \in A^*$ kills $\{f, A\}$ can
be rewritten as $\xi_f(\varphi) = 0$ where $\xi_f$ is the Hamiltonian
vector field corresponding to $f$, which acts on $A$ by
$\xi_f(g)=\{f,g\}$ and acts on $A^*$ by the negative dual.  In the
case that $A = \cO_V$ is a polynomial algebra, we may canonically
identify the \emph{graded} dual $A^*$, defined by $(A^*)_{i} :=
(A_{-i})^*$, with $\cO_{V^*}$.
Call this isomorphism $F:
A^* \iso \cO_{V^*}$.  Under this isomorphism, 
\begin{equation}
F(\xi_f(\varphi)) = F_D(\xi_f) F(\varphi),
\end{equation}
where $F_D(\xi_f)$ is a kind of Fourier transform of $\xi_f$: for
every $v \in V^*, w \in V$, and $m \geq 0$, $F_D(v^m \partial_w) =
w \partial_v^m$. Here, $\partial_v, \partial_w$ are differentiation
operators defined by $\partial_w(v) = v(w) = \partial_v(w)$.  More
generally, $F_D: \cD_V \iso \cD_{V^*}^{\mathrm{op}}$ is an
anti-isomorphism of rings of differential operators, given by $v
\mapsto \partial_v$ and $\partial_{w} \mapsto w$.

As a result, $\HP_0(\cO_{V}^G, \cO_V)^*$ is identified with the
solutions $h \in \cO_{V^*}$ of the differential equations
\begin{equation}
F_D(\xi_f) (h) = 0, \forall f \in \cO_V^G.
\end{equation}
To help understand the main argument below, we will make the above
explicit using coordinates (although we do not strictly need to do
this---everything below can be formulated invariantly. We will at
least take care to distinguish between vector spaces and their duals.)
Suppose that $\cO_V^G$ is generated as a commutative algebra by elements $h_1,
\ldots, h_k$, and $V = X \oplus Y$ is symplectic with complementary
Lagrangians $X$ and $Y$.  Let us write $V^* = X^* \oplus Y^*$, where
the inclusions $X^*, Y^* \subseteq V^*$ are defined by $X^* = Y^\perp$
and $Y^* = X^\perp$.  Fix bases $(x_1, \ldots, x_n)$ and $(y_1,
\ldots, y_n)$ of $X^*$ and $Y^*$, respectively, with dual bases
$(x_1^*, \ldots, x_n^*)$ and $(y_1^*, \ldots, y_n^*)$ of $X$ and $Y$,
and assume that $(x_i^*, y_j^*) = \delta_{ij} = -(y_j^*, x_i^*)$. In particular,
$\cO_V = \C[x_1, \ldots, x_n, y_1, \ldots, y_n]$.  This induces the
isomorphism $V \iso V^*$ given by $x_i \mapsto y_i^*$ and $y_i \mapsto
- x_i^*$, and hence the Poisson bracket $\{x_i, y_j\} = \delta_{ij} =
- \{y_j, x_i\}$.  Then, $\HP_0(\cO_V^G, \cO_V)^* \subseteq \cO_{V^*}$
identifies with the solutions of the differential equations
\begin{equation} \label{ljvgens} \sum_{i=1}^n \bigl( y_i^*
  F_D(\frac{\partial h_j}{\partial x_i}) - x_i^* F_D(\frac{\partial h_j}{\partial y_i})
  \bigr)(g) = 0.
\end{equation}
  Note that, in \eqref{ljvgens},
we only needed the restriction of $F_D$ to $\cO_V$,
\begin{equation} \label{e:FDrestr} F_D: \C[x_1, \ldots, x_n, y_1,
  \ldots, y_n] \iso \C[\partial_{x_1},
  \ldots, \partial_{x_n}, \partial_{y_1}, \ldots, \partial_{y_n}].
\end{equation}
The reason why we wrote $\frac{\partial h_j}{\partial x_i}$ instead of $\partial_{x_i^*} (h_j)$ above was to avoid confusion with the product of the two elements $\partial_{x_i^*}, h_j \in \mathcal{D}_{V^*}$, which would not be in $\cO_V$, and similarly with $\frac{\partial h_j}{\partial y_i}$.

Next, for every $v \in V^*$, we can evaluate the above equations at $v$:
\begin{equation} \label{jvgens} \sum_{i=1}^n \bigl(y_i^*(v)
  F_D(\frac{\partial h_j}{\partial x_i}) - x_i^*(v) F_D(\frac{\partial h_j}{\partial y_i})\bigr)
  (g)(v) = 0.
\end{equation}
This shows that the Taylor coefficients $F(\partial_{x_1},
\ldots, \partial_{x_n}, \partial_{y_1}, \ldots, \partial_{y_n})(g)(v)$
of $g$ at $v$ (for $F$ a polynomial) only depend on the class of $F$
in the quotient $R_v :=
\C[\partial_{x_1},\ldots,\partial_{x_n},\partial_{y_1},\ldots,\partial_{y_n}]
/ J_v$ (and on $g$), where $J_v$ is the ideal generated by the
constant-coefficient operators on the LHS of \eqref{jvgens}, i.e., the
elements $D_{v'}h_1, \ldots, D_{v'} h_k$ where $v' \in V$ is the
element corresponding to $v \in V^*$ via the symplectic form, and
$D_{v'}$ is the directional derivative operation $D_{v'}: \cO_V \to
\cO_V$. Note that $J_v$ does not actually depend on the choice of
generators $h_1, \ldots, h_k \in \cO_V^G$, since if we adjoin another
polynomial $h_{k+1} \in \cO_V^G$ to the list $h_1, \ldots, h_k$, the
new equation \eqref{jvgens} is already implied by the previous $k$
equations due to the Leibniz rule, $D_{v'}(fg) = (D_{v'} f) g +
(D_{v'} g) f$.

As a result, we deduce that
\begin{equation}
\dim \HP_0(\cO_V^G, \cO_V)^* \leq \dim R_v, \forall v \in V^*.
\end{equation}
This is the upper bound found in \cite[Proposition 3.5]{ESdm} (with
the Fourier transform of the proof found there), and gives a precise
version of the proof that $\HP_0$ is finite-dimensional from \cite[\S
7]{BEG}, once one notices that $\dim R_v$ is finite for generic $v \in
V^*$.\footnote{This is true since the support of $J_v$ is generically
  $\{0\}$. This holds with minimal $\dim R_v$ when $v$ does not
  annihilate any subspace of the form $V^K$ for $K =
  \operatorname{Stab}_G(u) \neq \{1\}$ and $u \in V$: see
  \cite[Theorem 4.13]{ESdm}; cf.~\cite[\S 7]{BEG}. For a more general
  result which implies the generic finite-dimensionality of $R_v$, see
  Remark \ref{r:regseq} below.}  However, the main drawback is that there is no relation, in general, between the grading on $\HP_0(\cO_V^G, \cO_V)$ and that on $R_v$.  The first main goal of this paper is to overcome this problem.

Much of this paper will concern the special case where $G < \GL(X) <
\Sp(V)$, where the embedding $\GL(X)
< \Sp(V)$ is defined by sending $A \in \GL(X)$ to $A \oplus (A^{-1})^*
\in \Sp(X \oplus Y)$.

We now outline the contents of the paper.  First, \S \ref{s:koszul}
gives an elementary bound on $\dim R_v$ using regular sequences, using
an argument we will need again in \S \ref{glnsec}. We also apply these
results in \S \ref{ss:ncalg} to bound the number of irreducible
finite-dimensional representations of filtered quantizations as well
as the number of zero-dimensional symplectic leaves of filtered
Poisson deformations, although this is not needed for the rest of the
paper.

In \S$\!$\S \ref{glnsec} and \ref{matsec} we refine the argument
outlined in the present section in two different ways to obtain
computationally useful bounds on the top degree of $\HP_0(\cO_V^G,
\cO_V)$.  In \S \ref{glnsec}, we apply the above argument in the
case $v \in X^*$ and $G < \GL(X) < \Sp(V)$ to obtain an upper bound on
the top degree of $\HP_0(\cO_V^G, \cO_V)$.  In \S \ref{matsec},
for arbitrary $G$ (not necessarily preserving a Lagrangian subspace)
and for arbitrary $v \in V$ such that $R_v$ is finite-dimensional, we
define a square matrix $A_v$ of size $\dim R_v$ such that the
dimension of the degree $m$ part $\dim \HP_0(\cO_V^G, \cO_V)^*_m$ is
bounded by the dimension of the $m$-eigenspace of $A$. We do this by
lifting generators $f_1, \ldots, f_N$ of $R_v$ to differential
operators $F_1, \ldots, F_N$ on $V^*$, and considering the
differential equations satisfied by the vector $(F_1(T), \ldots,
F_N(T))$ for all $T \in \HP_0(\cO_V^G, \cO_V)^*$ upon evaluation on
the line $\C \cdot v$.

Next, in \S \ref{compsec}, we will apply these results and
computer programs \cite{ReSc-progs} written by two of the authors in
Magma \cite{magma} to obtain $\HP_0(\cO_V^G, \cO_V)$ for many groups
$G$, including all finite subgroups of $\SL_2(\CC)$, the Coxeter
groups of rank $\leq 3$ and types $A_4, B_4 = C_4$, and $D_4$, and the
exceptional Shephard-Todd complex reflection groups $G_4, \ldots,
G_{22} < \GL_2 < \Sp_4$ (except for $G_{18}$ and $G_{19}$, where we
could only obtain $\HP_0(\cO_V^G)$ and without proof).  Combining the
latter with results of \S \ref{s:gmp2}, we obtain a classification of
complex reflection groups of rank two for which $\HP_0(\cO_V^G, \cO_V)
\cong \gr \HH_0(\cD_X^G, \cD_X)$ as well as those for which
$\HP_0(\cO_V^G) \cong \gr \HH_0(\cD_X^G)$, and give the Hilbert series
in these cases.

In the final two sections, we explicitly compute $\HP_0(\cO_V^G,
\cO_V)$, as well as its grading and $G$-structure, for several
infinite families of groups in $\Sp_4$. Namely, in \S \ref{s:sp4abel},
we give an explicit description of $\HP_0(\cO_V^G, \cO_V)$ in the case
that $G < \Sp_4$ is abelian (where it coincides with
$\HP_0(\cO_V^G)$), classify such groups that have the property that
$\HP_0(\cO_V^G) \cong \gr \HH_0(\cD_X^G)$, and give the relevant
Hilbert series.  In \S \ref{s:gmp2}, we explicitly compute
$\HP_0(\cO_V^G, \cO_V)$ for the complex reflection groups
$G=G(m,p,2)$, and classify those having the properties $\HP_0(\cO_V^G,
\cO_V) \cong \gr \HH_0(\cD_X^G, \cD_X)$ and $\HP_0(\cO_V^G) \cong \gr
\HH_0(\cD_X^G)$.

Throughout this article, $G$ always denotes a \emph{finite} group, and
$V$ a \emph{finite-dimensional} symplectic vector space. The algebra
$\cO_V$ and the space $\HP_0(\cO_V^G, \cO_V)$ are nonnegatively
graded, whereas their duals, $\cO_{V^*}$ and $\HP_0(\cO_V^G,
\cO_V)^*$, are nonpositively graded.
\subsection{Acknowledgements}
This work grew out of several projects supported by
MIT's Undergraduate Research Opportunities Program.
The first author's work was partially supported by the NSF grant
DMS-1000113. The second author is a five-year fellow of the American
Institute of Mathematics, and was partially supported by the
ARRA-funded NSF grant DMS-0900233. 

\section{An elementary bound on dimension using Koszul complexes}\label{s:koszul}
We begin with an elementary explicit bound on the dimension of
$\HP_0(\cO_V^G, \cO_V)$. While, for computational purposes, we
ultimately want to bound its top degree,
 we include this both because it may be of
independent interest, and because we will generalize it in \S
\ref{glnkcsec} to give a bound also on the top degree. Additionally, in
the next subsection we apply it to representation theory.

We will consider $J_v$ to be an ideal of $\cO_V$ via
\eqref{e:FDrestr}.  If $h_1, \ldots, h_{2n} \in J_v$ is a collection
of homogeneous elements which forms a regular sequence, i.e., $h_i$ is
a nonzerodivisor in $\cO_V/(h_1, h_2, \ldots, h_{i-1})$ for all $i$,
then the Hilbert series of $R = \cO_V/(h_1, \ldots, h_{2n})$ can be
computed using the associated Koszul complex, and one obtains
\begin{equation}\label{rvkoseqn}
h(R_v;t) \leq h(R;t) = \frac{\prod_{i=1}^{2n} (1-t^{|h_i|})}{(1-t)^{2n}}.
\end{equation}
Here we say that $\sum_i a_i t^i \leq \sum_i b_i t^i$ if $a_i \leq
b_i$ for all $i$.

We can construct such a regular sequence from a regular sequence $g_1, \ldots, g_{2n} \in \cO_V^G$ using the following lemma, which essentially follows from \cite[Theorem 3.1]{ESdm}. We will actually state and prove it more
generally.
\begin{lemma} \label{l:regseq} Let $U$ be an arbitrary
  finite-dimensional vector space and $g_1, \ldots, g_{\dim U} \in
  \cO_U$ a regular sequence of homogeneous elements of degree $\geq
  2$.  Then, for generic $u \in U$, the directional derivatives $D_u
  g_1, \ldots, D_u g_{\dim U}$ also form a regular sequence.
\end{lemma}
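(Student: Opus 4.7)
The strategy is to encode the regular-sequence question geometrically and reduce it to the hypothesis on the $g_i$ via Euler's identity. Since $\cO_U$ is Cohen--Macaulay and $D_u g_1, \dots, D_u g_{\dim U}$ are homogeneous of positive degrees $e_i - 1$, where $e_i := \deg g_i \geq 2$, this sequence is regular in $\cO_U$ if and only if its common zero locus in $U$ is $\{0\}$. Writing $M(x) := (\partial g_i/\partial x_j(x))_{i,j}$ for the Jacobian, so that $D_u g_i(x) = (M(x) u)_i$, the goal becomes to show that the ``bad locus''
\[
S := \{u \in U : M(x) u = 0 \text{ for some } x \in U \setminus \{0\}\}
\]
is a proper closed subvariety of $U$.

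The central step is to study the incidence variety $\tilde Z \subseteq U \times U$ cut out by the $\dim U$ equations $\mu_i(x, u) := D_u g_i(x) = 0$, and to show that $\dim \tilde Z = \dim U$. Here Euler's identity plays the key role: $\mu_i(x, x) = e_i g_i(x)$, so the intersection of $\tilde Z$ with the diagonal $\{u = x\} \subseteq U \times U$ equals the common zero locus of $g_1, \dots, g_{\dim U}$ in $U$, which is $\{0\}$ by the regular-sequence hypothesis. Adjoining the $\dim U$ diagonal equations $u_j - x_j$ to the ideal $(\mu_1, \dots, \mu_{\dim U})$ therefore cuts the Krull dimension of the quotient down to $0$, and since each hypersurface reduces Krull dimension by at most $1$, this gives $\dim \tilde Z \leq \dim U$; the reverse inequality is Krull's Hauptidealsatz. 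Because $\cO_{U \times U}$ is Cohen--Macaulay, $\mu_1, \dots, \mu_{\dim U}$ then in fact form a regular sequence, making $\tilde Z$ equidimensional of dimension $\dim U$.

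To finish, I would exploit the scaling symmetry. Since $\mu_i$ is homogeneous of degree $e_i - 1 \geq 1$ in $x$, the $\C^*$-action $\lambda \cdot (x, u) := (\lambda x, u)$ preserves $\tilde Z$, and by connectedness preserves each irreducible component. Letting $\lambda \to 0$ shows that every component $C$ of $\tilde Z$ contains $\{0\} \times p_2(C)$, where $p_2$ is the projection to the second factor. If $\dim p_2(C) = \dim U$, then $C$ contains the irreducible $\dim U$-dimensional subvariety $\{0\} \times U$, and by equidimensionality and irreducibility $C = \{0\} \times U$. Every component $C \neq \{0\} \times U$ therefore has $\dim p_2(C) < \dim U$, so $S$ lies in a finite union of such images and is a proper closed subvariety of $U$; for $u \notin S$, the $D_u g_i$ form a regular sequence.

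The main obstacle is the dimension bound $\dim \tilde Z \leq \dim U$. A naive stratification of $\tilde Z$ by the rank of $M(x)$ only yields a bound of order $2\dim U - 1$, which is far too weak to produce a proper projection. It is precisely the interplay of Euler's identity with the regular-sequence hypothesis on the $g_i$ that brings the bound down to the sharp value $\dim U$, after which the $\C^*$-equivariance of $\tilde Z$ takes care of the projection.
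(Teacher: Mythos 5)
Your proof is correct, but it establishes the crucial dimension bound $\dim \tilde Z \leq \dim U$ by a genuinely different mechanism than the paper. The paper stratifies $U$ by the rank $r$ of the Jacobian of $\phi = (g_1,\ldots,g_{\dim U})$, observes that $\tilde Z$ is a rank-$(\dim U - r)$ vector bundle over each stratum $U_r$, and then uses the \emph{finiteness} of $\phi$ (a consequence of the regular-sequence hypothesis) to prove $\dim U_r \leq r$: the restriction $\phi|_{U_r}$ is finite onto its image, so its generic rank is $\dim U_r$, which is at most $r$. You instead slice $\tilde Z$ by the diagonal and invoke Euler's identity $\sum_j x_j \partial g_i/\partial x_j = e_i g_i$, so that $\tilde Z \cap \{u = x\}$ is the common zero locus of the $g_i$, which is $\{0\}$ by hypothesis; Krull's Hauptidealsatz then forces $\dim \tilde Z \leq \dim U$. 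Your route is arguably more self-contained (no appeal to finiteness of $\phi$ or to generic smoothness of finite maps), and you correctly identify that the naive rank stratification alone is too weak --- which is exactly the gap the paper closes with finiteness and you close with Euler. The endgame also differs cosmetically: the paper passes directly to the generic fiber of $p_2$, while you decompose $\tilde Z$ into components and use the $\C^*$-action on the $x$-factor to show the only component dominating $U$ is $\{0\}\times U$; both are fine. One point you should make explicit: the step ``each hypersurface reduces Krull dimension by at most $1$'' requires that every irreducible component of maximal dimension of $\tilde Z$ actually meets the diagonal hyperplanes (otherwise a top-dimensional component could be discarded entirely without the count registering it). This is true here because the $\mu_i$ are bihomogeneous, so every component of $\tilde Z$ is a cone containing the origin $(0,0)$, which lies on all the hyperplanes $u_j = x_j$; add that sentence and the argument is complete.
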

\begin{remark} \label{r:regseq} In particular, the ideal in $\cO_U$
  generated by $D_u g_1, \ldots, D_u g_{\dim U}$ has finite
  codimension for generic $u$. Specializing to the case that $U=V$ is
  symplectic of dimension $2n > 0$, $G < \Sp(V)$ is finite, and $g_1,
  \ldots, g_{2n} \in \cO_V^G$, then for $v \in V^*$ and $u \in V$ the
  corresponding element by the symplectic form, this ideal is
  contained in $J_v$. Hence, this result strengthens the fact from
  \cite[\S 3]{ESdm} that $J_v$ has finite codimension for generic $v
  \in V^*$, once one notes that a
  regular sequence $g_1, \ldots, g_{2n} \in \cO_V^G$ of
  positively-graded homogeneous elements always exists (the elements
  must have degree $\geq 2$ unless $V^G \neq \{0\}$, in which case
  $J_v$ is generically the unit ideal).
\end{remark}
\begin{proof}
  We will prove that, for generic $u$, the vanishing locus $Y_u$
of the functions $D_u g_1, \ldots, D_u g_{\dim U}$ is $\{0\}$. Hence
they form a complete intersection, and therefore a regular sequence (by standard characterizations of regular
sequences; see, e.g., \cite[\S$\!$\S 17, 18]{Eis}).  Note that $Y_u$ is nonempty and invariant under scaling, since $g_1, \ldots, g_{\dim U}$ are homogeneous of degrees $\geq 2$. So we only need to prove that $\dim Y_u = 0$.

  The inclusion of polynomial algebras $\CC[g_1, \ldots, g_{\dim U}]
  \subseteq \cO_U$ defines a map $\phi: U \to \bbA^{\dim U}$.  Since $g_1,
  \ldots, g_{\dim U}$ define a regular sequence, $\phi$ is a finite
  map, i.e., $\cO_U$ is a finite module over the polynomial subalgebra
  $\CC[g_1, \ldots, g_{\dim U}]$.  Now, consider the locus
\[
Z := \{(v,u) \in T U \mid v \in U, u \in T_vU, D_ug_i(v) = 0, \forall i\}.
\]
We are interested in the intersection $(U \times \{u\}) \cap Z =
(Y_u \times \{u\})$.

For every $0 \leq r \leq \dim U$, consider the locus $U_r$ of $v \in
U$ at which the map $\phi$ has rank $r$, i.e., the derivatives
$D(g_1)|_{v}, \ldots, D(g_{\dim U})|_{v}$ evaluated at $v$ span a
dimension $r$ subspace of $T_v^* U$.  Then, the intersection $Z \cap
(TU|_{U_r})$ is a vector bundle of rank $\dim U-r$ over $U_r$.

We claim that $\dim U_r \leq r$. This implies that $\dim Z \leq \dim
U$.  Thus, $(U \times \{u\}) \cap Z = (Y_u \times \{u\})$ has
dimension zero for generic $u$ (as $Y_u$ is always nonempty), as desired.

It remains to prove the claim that $\dim U_r \leq r$.  Assume $U_r$ is
nonempty. If we restrict $\phi$ to $U_r$, then we obtain a finite map
$U_r \to \phi(U_r)$.  Generically, this restriction has rank $\dim
U_r$, but by definition the rank is at most $r$. Hence, $\dim U_r \leq r$.
\end{proof}

We return to the case of the symplectic vector space $V$.
\begin{corollary} If $A \subseteq \cO_V$ is a graded Poisson subalgebra
containing a regular sequence
 $g_1, \ldots, g_{2n} \in A$ of homogeneous, positively-graded elements,
then
\begin{equation}\label{gidimfla}
\dim \HP_0(A, \cO_V)^* \leq \prod_{i=1}^{2n} (|g_i| - 1).
\end{equation}
\end{corollary}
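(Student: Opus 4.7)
The plan is to combine the Fourier-dual reformulation from the introduction with Lemma \ref{l:regseq} and a standard Hilbert-series computation for complete intersections.

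First, I would observe that the Fourier-dual setup developed in the introduction carries over to an arbitrary graded Poisson subalgebra $A \subseteq \cO_V$ in place of $\cO_V^G$: $\HP_0(A, \cO_V)^*$ embeds into $\cO_{V^*}$ as the simultaneous solutions of $F_D(\xi_f)(h)=0$ for all $f \in A$, and evaluation of Taylor coefficients at a point $v \in V^*$ gives the estimate $\dim \HP_0(A, \cO_V)^* \leq \dim R_v$, where $R_v = \cO_V/J_v$ and $J_v$ is the ideal of $\cO_V$ generated by the directional derivatives $D_{v'} f$ (with $f$ ranging over a set of generators of $A$ and $v' \in V$ corresponding to $v$ via the symplectic form). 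The Leibniz-rule argument showing that $J_v$ is independent of the chosen generators only uses that $A$ is a subalgebra, so it applies verbatim.

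Next, I would apply Lemma \ref{l:regseq} to $U = V$ and the given regular sequence $g_1, \ldots, g_{2n} \in A$. One may assume each $|g_i| \geq 2$: any $g_i$ of degree $1$ contributes a generically nonzero constant $D_u g_i$ to $J_v$, forcing $J_v$ to be the unit ideal and $\dim R_v = 0$, while the right-hand side $\prod_i (|g_i|-1)$ also vanishes. Under the assumption $|g_i|\geq 2$, the lemma yields that, for generic $u \in V$, the elements $D_u g_1, \ldots, D_u g_{2n}$ form a regular sequence of homogeneous polynomials in $\cO_V$ of degrees $|g_i|-1$.

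Finally, by the Koszul-complex formula \eqref{rvkoseqn} applied to this regular sequence in $\cO_V$, the Hilbert series of $\cO_V/(D_u g_1, \ldots, D_u g_{2n})$ is $\prod_{i=1}^{2n}(1-t^{|g_i|-1})/(1-t)^{2n}$; specializing at $t=1$ gives total dimension $\prod_{i=1}^{2n}(|g_i|-1)$. Since each $D_u g_i$ lies in $J_v$ for $v$ the symplectic partner of $u$, there is an ideal inclusion $(D_u g_1, \ldots, D_u g_{2n}) \subseteq J_v$, so $\dim R_v \leq \prod_i(|g_i|-1)$. Combined with the first step, this yields the desired inequality. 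I do not expect a serious obstacle here; the only mildly nontrivial point is verifying that the Fourier-dual reformulation genuinely extends from $\cO_V^G$ to a general graded Poisson subalgebra $A$, which is routine.
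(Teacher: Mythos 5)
Your proof is correct and follows essentially the same route as the paper's: the paper's own proof just says the bound ``follows immediately'' from the $\dim R_v$ estimate of the introduction together with Lemma \ref{l:regseq} and the Koszul computation \eqref{rvkoseqn}, which is precisely the chain of ingredients you assemble (including the routine observation that the $R_v$ bound extends from $\cO_V^G$ to an arbitrary graded Poisson subalgebra $A$). The only, immaterial, difference is the degree-one case: the paper argues directly that $\{g_i,\cO_V\}=\cO_V$ since $\xi_{g_i}$ is a constant-coefficient directional derivative, whereas you note that $J_v$ is generically the unit ideal so $R_v=0$; both yield $\HP_0(A,\cO_V)=0$ there.
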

\begin{proof} This follows immediately if none of the $g_i$ have
  degree one.  On the other hand, if $g_i$ has degree one, then
  $\{g_i, \cO_V\} = \cO_V$ since $\{g_i, -\}$ is a directional
  derivative operator, so $\HP_0(A, \cO_V) = 0$.
\end{proof}
For example, if $G < \GL(X) < \Sp(V)$ is a complex reflection group
and $A = \cO_V^G$, one could take $g_1, \ldots, g_n$ and $g_{n-1},
\ldots, g_{2n}$ to be homogeneous generators of the polynomial
algebras $\cO_X^G$ and $\cO_Y^G$, where $V = X \oplus Y$ is as in the
introduction.  Then, we deduce that $\dim \HP_0(\cO_V^G, \cO_V)^* \leq
\prod_{i=1}^{n} (|g_i| - 1)^2 < \prod_{i=1}^n |g_i|^2 = |G|^2$.  On
the other hand, by Lemma \ref{l:afls-fla}, $\dim \HH_0(\cD_X^G, \cD_X)
= |\{g \in G: g-\Id \text{ is invertible}\}|$, and as explained in the
introduction, this gives a lower bound for $\dim \HP_0(\cO_V^G,
\cO_V)$.  Hence, we deduce
\begin{corollary} \label{c:cr-dim}
If $G < \GL(X) < \Sp(V)$ is a complex reflection
  group, then 
\begin{equation}
  |\{g \in G: g-\Id \text{ is invertible}\}| \leq \dim \HP_0(\cO_V^G, \cO_V)^* < |G|^2.
\end{equation}
\end{corollary}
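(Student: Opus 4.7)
The plan is to combine the two directions separately, using essentially what has already been developed in the excerpt.

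For the lower bound, I would invoke Lemma \ref{l:afls-fla}, which identifies $\dim \HH_0(\cD_X^G, \cD_X)$ as a $G$-representation with the subspace of $\C[G]_{\ad}$ spanned by those $g \in G$ for which $g - \Id$ is invertible. Hence $\dim \HH_0(\cD_X^G, \cD_X) = |\{g \in G : g - \Id \text{ is invertible}\}|$. The discussion in the introduction provides a canonical graded surjection $\HP_0(\cO_V^G, \cO_V) \onto \gr \HH_0(\cD_X^G, \cD_X)$, which immediately yields the asserted lower bound on $\dim \HP_0(\cO_V^G, \cO_V)^*$.

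For the upper bound, the strategy is to exhibit a regular sequence of $2n$ elements in $\cO_V^G$ whose degrees multiply to $|G|$, and then apply the previous corollary (formula \eqref{gidimfla}). By Chevalley--Shephard--Todd, $\cO_X^G$ is a polynomial algebra on homogeneous generators $g_1, \ldots, g_n$ of degrees $d_1, \ldots, d_n$ with $\prod_i d_i = |G|$; similarly, $\cO_Y^G$, for the dual action, is a polynomial algebra on homogeneous generators $g_{n+1}, \ldots, g_{2n}$ of the same degrees $d_1, \ldots, d_n$. Since $\cO_V = \cO_X \otimes \cO_Y$, the $g_1, \ldots, g_n$ form a regular sequence in $\cO_X$ and $g_{n+1}, \ldots, g_{2n}$ form a regular sequence in $\cO_Y$, so their concatenation is a regular sequence in $\cO_V$ (and in particular in the Poisson subalgebra $\cO_V^G$). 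Applying \eqref{gidimfla} gives
\begin{equation*}
\dim \HP_0(\cO_V^G, \cO_V)^* \leq \prod_{i=1}^{n} (d_i - 1)^2.
\end{equation*}

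To promote this to the strict inequality $< |G|^2 = \prod_i d_i^2$, I would observe that for nontrivial $G$ at least one $d_i$ is $\geq 2$ (in fact for a complex reflection group with $X^G = 0$ all the $d_i \geq 2$), so $(d_i - 1) < d_i$ for that index, yielding $\prod_i (d_i - 1)^2 < \prod_i d_i^2$. (The trivial case $G = \{1\}$ is handled by the corollary's edge case where some $g_i$ has degree one, forcing $\HP_0 = 0$ and making both inequalities trivial.) I do not expect any serious obstacle: the main point is simply assembling Chevalley--Shephard--Todd, the regular sequence formed by tensoring, the inequality \eqref{gidimfla}, and Lemma \ref{l:afls-fla}, all of which are already in hand.
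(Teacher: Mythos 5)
Your proposal is correct and follows essentially the same route as the paper: the lower bound comes from Lemma \ref{l:afls-fla} together with the surjection $\HP_0(\cO_V^G,\cO_V)\onto\gr\HH_0(\cD_X^G,\cD_X)$, and the upper bound comes from taking the Chevalley--Shephard--Todd generators of $\cO_X^G$ and $\cO_Y^G$ as a regular sequence of $2n$ elements and applying \eqref{gidimfla} with $\prod_i(d_i-1)^2<\prod_i d_i^2=|G|^2$. The only difference is that you spell out the strict inequality and the trivial-group edge case a bit more explicitly than the paper does.
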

However, in individual cases, one can do much better than this by
directly computing $\dim R_v$.
\subsection{Applications to representation theory and Poisson geometry}
\label{ss:ncalg}
The material of this subsection is not needed for the rest of the
paper; we include it since it is a natural consequence of the
preceding results.  Let $A = \bigoplus_{i \geq 0} A_i$ be a
nonnegatively graded commutative algebra with a Poisson bracket of
degree $-d < 0$, i.e., $\{A_i, A_j\} \subseteq A_{i+j-d}$.  A
\emph{filtered quantization} is a filtered associative algebra $B =
\bigcup_{i \geq 0} B_{\leq i}$ such that $\gr B = A$ as a commutative
algebra, $[B_{\leq i}, B_{\leq j}] \subseteq B_{\leq (i+j-d)}$, and
$\gr_{i+j-d} [a,b] = \{\gr_i a, \gr_j b\}$ for all $a \in B_{\leq i},
b \in B_{\leq j}$.

Next, given an arbitrary associative algebra $B$ and any
finite-dimensional representation $\rho$ of $B$, the trace functional
$\operatorname{Tr}(\rho): B \to \CC$ annihilates $[B,B]$ and hence
defines an element of $\HH_0(B)^*$.  Given nonisomorphic
finite-dimensional irreducible representations $\rho_1, \ldots,
\rho_m$, the trace functionals $\operatorname{Tr}(\rho_i)$ are
linearly independent (by the density theorem), and hence $\dim
\HH_0(B) \geq m$.  In the situation that $B$ is a filtered
quantization of $A$, one has a canonical surjection $\HP_0(A) \to \gr
\HH_0(B)$ (as in the case of $A=\cO_V^G$ and $B=\cD_X^G$ treated in
the introduction).  Hence, the number of irreducible representations
of $B$ is at most $\dim \HP_0(A)$.

By the material from \cite{ESdm} recalled in the introduction, we conclude:
\begin{corollary}\cite{ESdm}
If $G < \Sp(V)$ is finite, $B$ is an arbitrary 
filtered quantization of $\cO_V^G$, and $v \in V^*$,
then there are at most $\dim R_v$ irreducible finite-dimensional
 representations of $B$.
\end{corollary}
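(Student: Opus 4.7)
The plan is to concatenate a short chain of inequalities, each of which is essentially a direct quotation of a fact already recorded in the preceding discussion. Writing $m$ for the number of pairwise nonisomorphic finite-dimensional irreducible representations of $B$, I would establish
\[ m \leq \dim \HH_0(B) \leq \dim \HP_0(\cO_V^G) \leq \dim \HP_0(\cO_V^G, \cO_V)^* \leq \dim R_v. \]

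The first inequality is the Jacobson density theorem: for $\rho_1, \ldots, \rho_m$ pairwise nonisomorphic finite-dimensional irreducible representations of $B$, the image of $B$ in $\prod_i \operatorname{End}(\rho_i)$ is the full product of matrix algebras, so the trace functionals $\operatorname{Tr}(\rho_i)$, each of which kills $[B,B]$ and hence defines an element of $\HH_0(B)^*$, are linearly independent. The second inequality is the filtered quantization principle sketched just before the statement: the axioms $\gr B = \cO_V^G$ and $\gr_{i+j-d}[a,b] = \{\gr_i a, \gr_j b\}$ show that the associated graded of $[B,B]$ contains $\{\cO_V^G, \cO_V^G\}$, yielding a canonical surjection $\HP_0(\cO_V^G) \onto \gr \HH_0(B)$. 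The third inequality uses the identification, recorded right after the definition of $\HP_0(\cO_V^G, \cO_V)$ in the introduction, of Poisson traces on $\cO_V^G$ with the $G$-invariant part of $\HP_0(\cO_V^G, \cO_V)^*$; invariants have dimension at most that of the ambient space. The fourth inequality is the Fourier-transform plus evaluation bound from \cite[Proposition 3.5]{ESdm} recalled in the introduction.

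There is no serious obstacle: the corollary is pure assembly. The only step that is not a verbatim quotation of an earlier result is the second one, and it is a routine unpacking of the filtered quantization axioms given at the start of this subsection. A minor precaution is to observe that $\HP_0(\cO_V^G, \cO_V)$ is finite-dimensional by the \cite[\S 7]{BEG} result cited in the introduction, so every bound in the chain is genuinely a bound by a finite quantity and hence meaningful.
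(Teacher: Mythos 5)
Your proof is correct and follows exactly the chain the paper itself uses: the density-theorem bound $m \leq \dim \HH_0(B)$ and the surjection $\HP_0(\cO_V^G) \onto \gr \HH_0(B)$ are spelled out in \S\ref{ss:ncalg} immediately before the corollary, and the remaining inequalities $\dim \HP_0(\cO_V^G) \leq \dim \HP_0(\cO_V^G,\cO_V)^* \leq \dim R_v$ are precisely the ``material from \cite{ESdm} recalled in the introduction'' that the paper invokes. No gaps; the finite-dimensionality remark is a sensible precaution but not strictly needed since each inequality holds regardless.
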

Applying Corollary \ref{gidimfla}, we immediately conclude:
\begin{corollary} If $g_1, \ldots, g_{2n} \in \cO_V^G$ is a regular
  sequence of homogeneous, positively-graded elements, then for every
  filtered quantization $B$ of $\cO_V^G$, there are at most $\prod_i
  (|g_i|-1)$ irreducible finite-dimensional representations.
\end{corollary}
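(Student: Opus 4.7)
The plan is to chain the two immediately preceding corollaries. By the first (attributed to \cite{ESdm}), for any $v \in V^*$ the number of irreducible finite-dimensional representations of $B$ is bounded above by $\dim R_v$. Hence it suffices to exhibit a single $v$ for which $\dim R_v \leq \prod_{i=1}^{2n}(|g_i|-1)$.

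Assume first that $|g_i| \geq 2$ for every $i$. Choose $v \in V^*$ generic, let $v' \in V$ be the corresponding vector under the symplectic form, and apply Lemma \ref{l:regseq} with $U = V$: for generic $v'$ the directional derivatives $D_{v'} g_1, \ldots, D_{v'} g_{2n}$ again form a regular sequence of homogeneous elements, now with $|D_{v'} g_i| = |g_i|-1$. By the very definition of $J_v$, these elements lie in $J_v$. Plugging them into the Koszul bound \eqref{rvkoseqn} with $h_i := D_{v'} g_i$ gives
\begin{equation*}
h(R_v; t) \leq \frac{\prod_{i=1}^{2n}(1 - t^{|g_i| - 1})}{(1-t)^{2n}} = \prod_{i=1}^{2n}(1 + t + \cdots + t^{|g_i|-2}),
\end{equation*}
and specializing to $t = 1$ (i.e.\ taking total dimensions) yields $\dim R_v \leq \prod_{i=1}^{2n}(|g_i| - 1)$, as required.

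If instead some $|g_i| = 1$, then exactly as in the proof of \eqref{gidimfla} the Hamiltonian vector field $\{g_i, -\}$ is a nonzero constant-coefficient directional derivative, so $\{g_i, \cO_V\} = \cO_V$ and $\HP_0(\cO_V^G, \cO_V) = 0$. Restricting to $G$-invariant functionals gives $\HP_0(\cO_V^G) = 0$, and the canonical surjection $\HP_0(\cO_V^G) \onto \gr \HH_0(B)$ then forces $B$ to admit no finite-dimensional irreducible representations at all; this matches $\prod_{i=1}^{2n}(|g_i|-1) = 0$.

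I do not anticipate any genuine obstacle: this corollary is essentially a restatement of \eqref{gidimfla} routed through the preceding representation-theoretic bound, and the only nontrivial ingredient---genericity of the derivative regular sequence---is already encapsulated in Lemma \ref{l:regseq}.
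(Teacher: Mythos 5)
Your proposal is correct and follows essentially the same route as the paper: the paper obtains this corollary by combining the bound ``number of irreducibles $\leq \dim R_v$'' with Corollary \eqref{gidimfla}, whose proof is exactly your main case (Lemma \ref{l:regseq} producing the regular sequence $D_{v'}g_1,\ldots,D_{v'}g_{2n}$ of degrees $|g_i|-1$ in $J_v$, followed by the Koszul bound \eqref{rvkoseqn}) together with your degree-one observation. The only cosmetic difference is that you bound $\dim R_v$ directly rather than passing through $\dim \HP_0(\cO_V^G, \cO_V)$; the content is identical.
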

Applying Corollary \ref{c:cr-dim}, we conclude
\begin{corollary}\label{c:cr-irrep}
  If $G$ is a complex reflection group and $B$ a filtered quantization
  of $\cO_V^G$, then there are fewer than $|G|^2$ irreducible
  finite-dimensional representations of $B$.
\end{corollary}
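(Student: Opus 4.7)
The plan is to chain together the general representation-theoretic bound established immediately before the corollary with the dimension bound for complex reflection groups given by Corollary \ref{c:cr-dim}. There is essentially no new content to supply; the task is to make sure the two inequalities compose correctly, paying attention to the distinction between $\HP_0(\cO_V^G)$ and $\HP_0(\cO_V^G, \cO_V)$.

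First, I would invoke the general principle from the preceding subsection: for any filtered quantization $B$ of $\cO_V^G$, traces of pairwise non-isomorphic finite-dimensional irreducible representations are linearly independent in $\HH_0(B)^*$, so the number of such irreducibles is at most $\dim \HH_0(B)$. Since $\gr B = \cO_V^G$, the canonical surjection $\HP_0(\cO_V^G) \onto \gr \HH_0(B)$ yields $\dim \HH_0(B) \leq \dim \HP_0(\cO_V^G)$. Thus the number of irreducible finite-dimensional representations of $B$ is at most $\dim \HP_0(\cO_V^G)$.

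Next, I would compare $\HP_0(\cO_V^G)$ with $\HP_0(\cO_V^G, \cO_V)$. As noted in the introduction, the $G$-invariants of $\HP_0(\cO_V^G, \cO_V)^*$ are exactly the Poisson traces on $\cO_V^G$, i.e.\ $\HP_0(\cO_V^G)^* = \bigl(\HP_0(\cO_V^G, \cO_V)^*\bigr)^G$, so in particular $\dim \HP_0(\cO_V^G) \leq \dim \HP_0(\cO_V^G, \cO_V)^*$. By Corollary \ref{c:cr-dim}, when $G$ is a complex reflection group the right-hand side is strictly less than $|G|^2$. Combining the two inequalities gives the stated bound.

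There is no real obstacle: the statement is a pure corollary of results already in the text. The only point requiring a moment of care is that Corollary \ref{c:cr-dim} bounds the larger space $\HP_0(\cO_V^G, \cO_V)^*$, and one must pass to the $G$-invariant subspace to obtain a bound on $\HP_0(\cO_V^G)$ itself before feeding into the representation-theoretic inequality.
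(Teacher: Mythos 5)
Your proposal is correct and is essentially the paper's own argument: the paper derives this corollary by combining the trace-functional bound (number of irreducibles $\leq \dim \HP_0(\cO_V^G)$) with Corollary \ref{c:cr-dim}, exactly as you do. Your explicit remark that one must pass from $\HP_0(\cO_V^G,\cO_V)^*$ to its $G$-invariant subspace $\HP_0(\cO_V^G)^*$ is the one step the paper leaves implicit, and you handle it correctly.
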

As pointed out after Corollary \ref{c:cr-dim}, in individual cases one
can compute $\dim R_v$ directly, and it is typically much lower than
this. Moreover, $\dim R_v$ is actually a bound on $\dim \HP_0(\cO_V^G,
\cO_V)$, which is in general much larger than the upper bound $\dim
\HP_0(\cO_V^G)$ above.  Finally, again for $G$ a complex reflection
group, when $B$ is a spherical symplectic reflection algebra
quantizing $\cO_V^G$ (see Remark \ref{r:sra} for the notion; note that
these are also called spherical Cherednik algebras in the present case
that $G$ is a complex reflection group), then it is actually known
that there are fewer than $|\text{Irrep}(G)|$ irreducible
finite-dimensional representations of $B$, where $\text{Irrep}(G)$ is
the set of isomorphism classes of irreducible representations of $G$.
This is much better than Corollary \ref{c:cr-irrep}, in these cases.
However, in general, there may exist more general quantizations $B$
than these.

The main goal of this paper is to introduce and apply techniques to
explicitly compute $\HP_0(\cO_V^G)$ in many cases. This in particular
provides the better upper bound $\dim \HP_0(\cO_V^G)$ on the number of
irreducible finite-dimensional representations of quantizations $B$ of
$\cO_V^G$.  These cases include many complex reflection groups,
allowing us to replace the bound $|G|^2$ above by this improved bound.
For example, by Theorem \ref{t:cr-alev} below, applying also Lemma
\ref{l:afls-fla},
\begin{corollary}
  If $G < \GL_2 < \Sp_4$ is one of the complex reflection groups
  $G(m,1,2), G(m,m,2)$, $G(4,2,2), G(6,2,2)$, or $G_4, G_5, G_6, G_8,
  G_9, G_{14}$, or $G_{21}$, then $\HP_0(\cO_V^G) \cong \gr
  \HH_0(\cD_X^G)$ has dimension equal to the number of conjugacy
  classes of elements $g \in G$ such that $g-\Id$ is invertible, i.e.,
  $|\text{Irrep}(G)|-\text{Rank}(G)-1$, where $\text{Rank}(G)$ equals
  the number of conjugacy classes of complex reflections of $G$. Hence, this bounds the number of irreducible
  finite-dimensional representations of every filtered quantization of
  $\cO_{\CC^4}^G$.
\end{corollary}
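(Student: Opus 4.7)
The plan is to combine Theorem \ref{t:cr-alev} with Lemma \ref{l:afls-fla} and the representation-theoretic bound recalled in \S\ref{ss:ncalg}. For each of the listed groups $G$, Theorem \ref{t:cr-alev} asserts that the canonical surjection $\HP_0(\cO_V^G) \onto \gr \HH_0(\cD_X^G)$ is an isomorphism, so it suffices to compute $\dim \HH_0(\cD_X^G)$; the bound on finite-dimensional irreducibles of any filtered quantization $B$ of $\cO_V^G$ then follows from \S\ref{ss:ncalg}, where the density theorem gives linear independence of traces in $\HH_0(B)^*$ and the canonical surjection $\HP_0(\cO_V^G) \onto \gr \HH_0(B)$ supplies the inequality.

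To compute $\dim \HH_0(\cD_X^G)$ I would first invoke Lemma \ref{l:afls-fla}, which identifies $\HH_0(\cD_X^G, \cD_X)$ as a $G$-representation with the subspace $W \subseteq \C[G]_\ad$ spanned by $S := \{g \in G : g - \Id \text{ is invertible on } V\}$. Then I would check that $\HH_0(\cD_X^G) \cong \HH_0(\cD_X^G, \cD_X)^G \cong W^G$ by a standard averaging argument: the short exact sequence $0 \to [\cD_X^G, \cD_X] \to \cD_X \to \HH_0(\cD_X^G, \cD_X) \to 0$ is $G$-equivariant, taking $G$-invariants is exact since $|G| < \infty$, and the averaging projector $e : \cD_X \onto \cD_X^G$ is a $\cD_X^G$-bimodule map sending $\sum_i [a_i, b_i]$ to $\sum_i [a_i, e(b_i)] \in [\cD_X^G, \cD_X^G]$, which forces $[\cD_X^G, \cD_X]^G = [\cD_X^G, \cD_X^G]$. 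Since $W$ is spanned by the $\ad$-orbit of $S$, $\dim W^G$ equals the number of $G$-conjugacy classes contained in $S$.

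It remains to enumerate these classes. Under the embedding $A \mapsto A \oplus (A^{-1})^*$ of $\GL(X)$ into $\Sp(V)$, the identity $(A^{-1})^* - \Id = -(A^{-1})^*(A-\Id)^*$ shows that $A \oplus (A^{-1})^* - \Id$ is invertible on $V$ if and only if $A - \Id$ is invertible on $X$, equivalently $A$ has no eigenvalue $1$. For $G < \GL_2(\CC)$ this excludes precisely the identity together with the complex reflections, accounting for $1 + \text{Rank}(G)$ of the $|\text{Irrep}(G)|$ conjugacy classes of $G$. Hence $|S/G| = |\text{Irrep}(G)| - \text{Rank}(G) - 1$.

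There is no real obstacle: the statement is essentially a bookkeeping corollary of the cited results. The only point requiring a small verification is the equivariant-averaging identification $\HH_0(\cD_X^G) \cong \HH_0(\cD_X^G, \cD_X)^G$ sketched above; the eigenvalue analysis and the final count are immediate.
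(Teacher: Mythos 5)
Your proposal is correct and follows exactly the route the paper intends: the paper derives this corollary in one line from Theorem \ref{t:cr-alev} together with Lemma \ref{l:afls-fla} and the trace-functional bound of \S\ref{ss:ncalg}, and your write-up simply supplies the routine details (the averaging identification $\HH_0(\cD_X^G)\cong \HH_0(\cD_X^G,\cD_X)^G$ and the observation that the elements of $G<\GL_2$ with eigenvalue $1$ are precisely the identity and the complex reflections) that the paper leaves implicit.
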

Note that, in the case $G(m,1,2)$, this is a special case of
\cite[Corollary 1.2.1]{ESsym}, which gives this upper bound in the
case $G = G(m,1,n)$ for arbitrary $m$ and $n$ (as well as for $G = S_n
\ltimes K^n$ for arbitrary $K < \SL_2(\CC)$).  In the other cases,
this bound is new.  Similarly, the bounds $\dim \HP_0(\cO_V^G)$ for
the other groups $G < \GL_2 < \Sp_4$ considered in this paper are new.
\begin{remark}\label{r:sra}
  The filtered quantizations of $\cO_V^G$ include all the
  associated noncommutative spherical symplectic reflection algebras (SRAs),
  defined in \cite{EGsra}. Recall that
  SRAs are certain deformations of $\cO_V \rtimes G$ and spherical SRAs
  are of the form $B = e \widetilde B e$ where $e = \frac{1}{|G|}
  \sum_{g \in G} g \in \CC[G]$ is the symmetrizer element. Noncommutative
  spherical SRAs are those associated to those $\widetilde B$
  obtainable by deforming $\cD_X \rtimes G$ (these form a
  semi-universal family of deformations of $\cD_X \rtimes G$).
\end{remark}
\begin{remark}
  Similarly, one can make a statement about the commutative spherical
  SRAs.  Namely, these are filtered commutative algebras $B$ equipped
  with a Poisson bracket satisfying $\{B_{\leq i}, B_{\leq j}\}
  \subseteq B_{\leq i+j-d}$ such that $\gr B = \cO_V^G$ as a Poisson
  algebra. More generally, if $\gr B = A$ where $B$ is a filtered
  commutative algebra equipped with a Poisson bracket satisfying
  $\{B_{\leq i}, B_{\leq j}\} \subseteq B_{\leq i+j-d}$ and $A$ is
  equipped with the associated graded Poisson bracket of degree $-d <
  0$, then one obtains a canonical surjection $\HP_0(A) \onto \gr
  \HP_0(B)$.  Hence, $\dim \HP_0(B) \leq \dim \HP_0(A)$.  In
  particular, the number of zero-dimensional symplectic leaves (i.e.,
  points whose maximal ideal is a Poisson ideal) of $B$ is dominated
  by $\dim \HP_0(A)$, the same bound as on the number of irreducible
  finite-dimensional representations of filtered quantizations of $A$,
  described in the above results. This is because the zero-dimensional
  symplectic leaves of $B$ all support linearly independent Poisson
  traces on $B$, given by evaluation at that point, and the space of
  Poisson traces on $B$ is the vector space $\HP_0(B)^*$. So, the
  number of zero-dimensional symplectic leaves of commutative
  spherical symplectic reflection algebras associated to $G$ is
  dominated by $\dim \HP_0(\cO_V^G)$, and hence by the same bounds
  described above.
\end{remark}

\section{The case $G < \GL_n < \Sp_{2n}$}\label{glnsec}
As in the introduction, suppose $X$ is a Lagrangian in $V$ and $Y$ a
complementary Lagrangian so that $V = X \oplus Y$. In this section we
restrict to the case that $G < \GL(X) < \Sp(V)$.  As in the
introduction, we may equip $\cO_V$ with a $G$-invariant bigrading, in
which $|X^*| = (1,0)$ and $|Y^*| = (0,1)$. The total degree is the sum
of these degrees. When an element $f$ has bidegree $(a, b)$, we will
also say that $\deg_{X^*} f = a$ and $\deg_{Y^*} f = b$. Similarly,
equip $\cO_{V^*}$ with the bigrading in which $|X|=(-1,0)$ and
$|Y|=(0,-1)$, and when $g \in \cO_{V^*}$ has bidegree $(a,b)$, we say
$\deg_X g = a$ and $\deg_Y g = b$. The total degree is again the sum
of these degrees.

 If we take $v \in X^*$, we can read off $\deg_{Y} g$ (for
 bihomogeneous $g \in \cO_{V^*}$) from its Taylor expansion at $v$: it
 is given by the unique $j \geq 0$ such that there exists $F$ of
 degree $j$ in $Y^*$ such that $F(\partial_{x_1},
 \ldots, \partial_{x_n}, \partial_{y_1}, \ldots, \partial_{y_n})(g)(v)
 \neq 0$.  Moreover, considering \eqref{jvgens}, we see that $J_v$ is
 a bihomogeneous ideal.  Hence, we deduce that
\begin{equation*} 
\dim \{g \in \HP_0(\cO_V^G, \cO_V)^* \mid \deg_{Y}(g) = -j\} \leq \dim \{F \in R_v \mid \deg_{Y^*} F = j\}, \quad \forall v \in X^*, j \geq 0.
\end{equation*}
That is, we get a bound on the Hilbert series of $\HP_0(\cO_V^G,
\cO_V)^*$ with respect to the $Y$-grading, in terms of the
$Y^*$-grading on $R_v$ (for $v \in X^*$).

Next, we note that $\HP_0(\cO_V^G, \cO_V)$ is concentrated in
bidegrees $(i,i), i \geq 0$, since it is annihilated by the action of
the Hamiltonian vector field of $\sum_i x_i y_i$, i.e., the difference
of degrees operator, $\xi_{\sum_i x_i y_i}(g) = (\deg_Y g - \deg_X
g)g$ (for bihomogeneous $g \in \cO_V$). Hence, the total degree of
homogeneous elements of $\HP_0(\cO_V^G, \cO_V)^*$ is always twice the
degree in $Y$ (equivalently, twice the degree in $X$).  We deduce
\begin{theorem}\label{glnthm}
For all $v \in X^*$, 
\begin{equation} \label{glnhsbd}
h(\HP_0(\cO_V^G, \cO_V);t) \leq h((R_v, \deg_Y); t^2).
\end{equation}
Thus, the top degree of $(\HP_0(\cO_V^G, \cO_V)$ is dominated
by twice the top degree of $R_v$ in $Y$.
\end{theorem}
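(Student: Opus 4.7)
The plan is to combine two ingredients, both essentially present in the preceding discussion: the $Y$-graded inequality $\dim\{g \in \HP_0(\cO_V^G, \cO_V)^* \mid \deg_Y g = -j\} \leq \dim\{F \in R_v \mid \deg_{Y^*} F = j\}$ displayed just before the theorem statement, and the fact that $\HP_0(\cO_V^G, \cO_V)^*$ is concentrated on the anti-diagonal bidegrees $(-i,-i)$. Together these collapse the $Y$-graded bound into a total-graded bound, with the degree variable squared.

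First I would establish the anti-diagonal concentration. Because the embedding $G < \GL(X) < \Sp(V)$ preserves the canonical pairing $X \otimes Y \to \CC$, the element $H = \sum_i x_i y_i$ lies in $\cO_V^G$. A short Poisson-bracket computation gives $\xi_H = \sum_i(y_i \partial_{y_i} - x_i \partial_{x_i})$, acting on bihomogeneous $f \in \cO_V$ of bidegree $(\alpha,\beta)$ as multiplication by $\beta - \alpha$. Since $F_D$ preserves bidegrees (immediate from $F_D(x_i) = \partial_{x_i}$ and $F_D(y_i) = \partial_{y_i}$ having bidegrees $(1,0)$ and $(0,1)$ on $\cO_{V^*}$), the operator $F_D(\xi_H)$ acts on a bihomogeneous $g \in \cO_{V^*}$ of bidegree $(-a,-b)$ as multiplication by $b-a$. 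Because $H \in \cO_V^G$, this operator annihilates $\HP_0(\cO_V^G, \cO_V)^*$, so every bihomogeneous component of nonzero bidegree $(-a,-b)$ with $a \neq b$ must vanish.

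Second I would assemble the Hilbert series. By the anti-diagonal concentration, any nonzero homogeneous element of $\HP_0(\cO_V^G, \cO_V)^*$ has total degree $-2i$ for some $i \geq 0$, lies in bidegree $(-i,-i)$, and therefore has $\deg_Y = -i$. Applying the $Y$-graded bound at $j = i$ yields $\dim \HP_0(\cO_V^G, \cO_V)_{2i} \leq \dim\{F \in R_v \mid \deg_{Y^*} F = i\}$, and $\HP_0(\cO_V^G, \cO_V)$ vanishes in odd degrees. Summing with weight $t^{2i}$ produces the stated inequality, and the top-degree assertion follows from it immediately.

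No step is a serious obstacle; the proof reduces to (i) the $G$-invariance of $H$, (ii) the identification of $\xi_H$ with the weighted Euler derivation $\deg_Y - \deg_X$, and (iii) bidegree-preservation of $F_D$, each of which is either immediate or a one-line verification.
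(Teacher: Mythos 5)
Your argument is correct and is essentially the paper's own: the paper derives the theorem from exactly the two ingredients you isolate, namely the $Y$-graded inequality relating $\HP_0(\cO_V^G,\cO_V)^*$ to $(R_v,\deg_{Y^*})$ for $v \in X^*$, and the concentration of $\HP_0(\cO_V^G,\cO_V)$ in bidegrees $(i,i)$ coming from annihilation by the Hamiltonian vector field of the $G$-invariant element $\sum_i x_i y_i$ (the difference-of-degrees operator). Your write-up just makes the bidegree bookkeeping for $F_D$ slightly more explicit than the paper does.
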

Here, $(R_v, \deg_Y)$ denotes the ring $R_v$ equipped with its grading
by degree in $Y$.

For the purpose of computing the top degree only, one can simplify the 
computation somewhat. Namely, the top degree of $R_v$ in $Y$ is the same
as the top degree of $\overline{R_v} := R_v / (X^*)$. This follows since
 $R_v$ is bihomogeneous.  So we obtain
\begin{equation}
\text{topdeg}(\HP_0(\cO_V^G, \cO_V)) \leq 2 \cdot \text{topdeg}(\overline{R_v}).
\end{equation}
Explicitly, if $v' \in Y$ is the element dual to $v \in X^*$ via the
symplectic pairing, then $\overline{R_v} = \cO_Y / (D_{v'} g_i)_{g_i
  \in \cO_Y}$, where $\cO_Y \subset \cO_V$ are the functions of degree
zero in $X^*$, which we also identify with $\cO_V/(X^*)$.  That is, we
can restrict to those $g_i$ which are only polynomials in the $y_i$.
This has a particular advantage when $G$ is a complex reflection
group, since there $\cO_Y^G$ is a polynomial algebra whose structure
is well known.  We will exploit this below.

\subsection{A bound on top degree using Koszul complexes}\label{glnkcsec}
If we combine Theorem \ref{glnthm} with \eqref{rvkoseqn}, we obtain
\begin{corollary}
Suppose that $h_1, \ldots, h_{2n} \in J_v$ are bihomogeneous and
form a regular sequence, for $v \in X^*$.  Then,
\begin{equation}  \label{glnrshsbd}
h(\HP_0(\cO_V^G, \cO_V);t) \leq \frac{\prod_{i=1}^{2n} (1-t^{2 \deg_Y(h_i)})}{(1-t^2)^{2n}}
\end{equation}
\end{corollary}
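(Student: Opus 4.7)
The plan is to combine Theorem \ref{glnthm} with the Koszul-complex bound \eqref{rvkoseqn}, adapted to track the $Y$-grading on $\cO_V$ rather than the total grading. Theorem \ref{glnthm} already gives $h(\HP_0(\cO_V^G, \cO_V); t) \leq h((R_v, \deg_Y); t^2)$, so it suffices to establish the $Y$-graded Hilbert series bound
\begin{equation*}
h((R_v, \deg_Y); s) \leq \frac{\prod_{i=1}^{2n}(1 - s^{\deg_Y h_i})}{(1-s)^{2n}}
\end{equation*}
and then substitute $s = t^2$ to obtain the stated inequality.

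For the $Y$-graded bound, I would proceed as follows. Since $(h_1, \ldots, h_{2n}) \subseteq J_v$ and the $h_i$ are bihomogeneous, there is a surjection of bihomogeneous quotients $\cO_V/(h_1, \ldots, h_{2n}) \twoheadrightarrow R_v$, so it suffices to bound the $Y$-graded Hilbert series of $R := \cO_V/(h_1, \ldots, h_{2n})$ coefficient-wise. The Koszul complex associated to the regular sequence $h_1, \ldots, h_{2n}$ provides a finite free resolution of $R$ as a bigraded $\cO_V$-module; taking Euler characteristics of bigraded Hilbert series gives the identity
\begin{equation*}
h^{X,Y}(R; s, t) = \frac{\prod_{i=1}^{2n}(1 - s^{\deg_{X^*} h_i} t^{\deg_{Y^*} h_i})}{(1-s)^n(1-t)^n},
\end{equation*}
which is in fact a polynomial in $s,t$ since $R$ is finite-dimensional. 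Setting $s=1$ recovers the $Y$-graded Hilbert series $h((R, \deg_Y); t)$, and the Koszul factors produce the desired product $\prod(1-t^{\deg_Y h_i})$ in the numerator. The analogous derivation for the $Y$-grading is entirely parallel to the derivation of \eqref{rvkoseqn} for the total grading.

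The main subtlety lies in the specialization from bigraded to $Y$-graded Hilbert series, since the naive substitution $s = 1$ in the above rational expression produces an indeterminate $0/0$ form (each $1 - s^{\deg_{X^*} h_i} t^{\deg_{Y^*} h_i}$ need not vanish to the right order as $s \to 1$, and one must use polynomial-ness of the whole fraction to compute the limit). I would resolve this either by rewriting the bigraded formula as a manifest polynomial before specializing, or by applying the Koszul Euler-characteristic argument directly in the $Y$-grading via the formal identity $h^Y(\cO_V; s) \cdot \prod(1-s^{\deg_Y h_i}) = h^Y(R; s)$, treating the denominator $(1-s)^{2n}$ as a formal device encoding the $2n$-variable polynomial structure of $\cO_V$. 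A careful bookkeeping here produces the claimed bound; after substituting $s = t^2$ and applying Theorem \ref{glnthm}, the corollary follows.
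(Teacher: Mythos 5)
Your overall route is the same as the paper's: the corollary is derived there in one line by combining Theorem \ref{glnthm} with the Koszul computation \eqref{rvkoseqn}, and you have correctly isolated the only substantive point, namely the specialization of the bigraded Koszul Hilbert series of $R = \cO_V/(h_1, \ldots, h_{2n})$ to the $Y$-grading. However, the step you defer to ``careful bookkeeping'' is exactly where the argument does not return the displayed formula. Since the $h_i$ are bihomogeneous, any $h_i$ with $\deg_{X^*} h_i \geq 1$ and $\deg_{Y^*} h_i \geq 1$ vanishes identically on both Lagrangians $X$ and $Y$ inside $V$; hence for a bihomogeneous regular sequence of full length $2n$ (whose common zero locus must be $\{0\}$), exactly $n$ of the $h_i$ must be pure in $X^*$ (so $\deg_Y h_i = 0$) and exactly $n$ pure in $Y^*$. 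Consequently $n$ of the factors $1 - t^{2\deg_Y(h_i)}$ in \eqref{glnrshsbd} are identically zero, and the right-hand side taken at face value is $0$, which cannot bound the Hilbert series of a nonzero space.

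What the specialization actually yields is the following: writing $(a_i, b_i)$ for the bidegree of $h_i$, your bigraded identity $h^{X,Y}(R;s,t) = \prod_i (1 - s^{a_i} t^{b_i}) / \bigl((1-s)^n (1-t)^n\bigr)$ is correct, but at $s=1$ each factor with $b_i = 0$ contributes $1 - s^{a_i} = (1-s)(1 + s + \cdots + s^{a_i - 1})$, cancelling one $(1-s)$ from the denominator and leaving the constant $a_i$. The limit is therefore $\prod_{b_i = 0} a_i \cdot \prod_{b_i > 0} (1 - t^{b_i}) / (1-t)^n$, not $\prod_i (1-t^{b_i})/(1-t)^{2n}$. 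So the bound must be read with the product restricted to the $h_i$ of positive $Y$-degree and with $(1-t^2)^n$ in the denominator (which is exactly how it is used in \S\ref{glnkcsec}, where only $n$ elements of $\overline{J_v} \subseteq \cO_Y$ appear, and which gives the same top-degree conclusion). Your proof should either establish this corrected formula or make the restriction explicit, rather than asserting that the bookkeeping reproduces the bound as displayed.
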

The disadvantage of the above corollary is the need to verify the
regular sequence property.  Since the condition $v \in X^*$ is not
generic, we cannot immediately apply Lemma \ref{l:regseq}. To
ameliorate this, we can use an alternative approach, using the
polynomial algebra in only the second half of the variables, $\cO_Y$.
 Namely, rather than computing $R_v$, one can compute
$\overline{R_v} = R_v / (X^*)$ mentioned above, at the price of only bounding
the top degree. Let us write $\overline{R_v} = \cO_Y/\overline{J_v}$ where $\overline{J_v} = J_v / ((X^*) \cap J_v)$.

Thus, if $h_1, \ldots, h_n \in \overline{J_v}$ form a regular sequence
in $\cO_{Y^*}$, then
\begin{equation}
\text{topdeg}(\HP_0(\cO_V^G, \cO_V)) \leq 2 \, \sum_{i=1}^n (|h_i|-1).
\end{equation}
Applying Lemma \ref{l:regseq}, we obtain:
\begin{corollary} \label{glntopdegcor} If $g_1, \ldots, g_n$ are
  homogeneous and form a regular sequence in $\cO_Y^G$, then
\begin{equation}\label{glntopdegeq}
\text{topdeg}(\HP_0(\cO_V^G, \cO_V)) \leq 2 \, \sum_i (|g_i|-2).
\end{equation}
\end{corollary}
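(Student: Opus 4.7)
The plan is to apply the displayed bound immediately preceding the corollary to the sequence $h_i := D_{v'} g_i$, where $v' \in Y$ is the symplectic dual of a chosen $v \in X^*$. Since $|D_{v'} g_i| = |g_i| - 1$, the bound $2\sum_i (|h_i| - 1)$ becomes exactly $2\sum_i (|g_i| - 2)$, so it remains to verify, for a sufficiently generic choice of $v'$, that (i) each $D_{v'} g_i$ lies in $\overline{J_v}$, and (ii) the $D_{v'} g_i$ form a regular sequence in $\cO_Y$.

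For (i), recall from the paragraph after \eqref{jvgens} that $J_v$ does not depend on the chosen generators of $\cO_V^G$: by the Leibniz rule, every $G$-invariant polynomial $f$ contributes $D_{v'} f$ to $J_v$. Since each $g_i \in \cO_Y \cap \cO_V^G$, its directional derivative $D_{v'} g_i$ again lies in $\cO_Y$, and hence descends into $\overline{J_v}$ inside $\cO_Y = \cO_V/(X^*)$.

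For (ii), I first reduce to the case $|g_i| \geq 2$ for all $i$. If some $g_i$ is linear, then $g_i \in (Y^*)^G$, so the Hamiltonian $\{g_i, -\}$ is a nonzero constant-coefficient directional derivative in the $X$-direction, hence surjective on $\cO_V$; this forces $\HP_0(\cO_V^G, \cO_V) = 0$ and the bound is vacuous. In the remaining case, I claim that $g_1, \ldots, g_n$, a regular sequence of length $n = \dim Y$ in the domain $\cO_Y^G$ of Krull dimension $n$, also forms a regular sequence in $\cO_Y$. Indeed, regularity in $\cO_Y^G$ forces $\cO_Y^G/(g_1, \ldots, g_n)$ to have Krull dimension zero, hence finite $\CC$-dimension. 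Since $\cO_Y$ is a finitely generated module over $\cO_Y^G$ by Noether's theorem on invariants of finite groups, $\cO_Y/(g_1, \ldots, g_n)\cO_Y$ is then a finitely generated module over a finite-dimensional algebra, hence finite-dimensional; and $n$ homogeneous positive-degree elements cutting out a finite-codimensional ideal in the polynomial ring $\cO_Y$ automatically form a regular sequence. Applying Lemma \ref{l:regseq} to $U = Y$ with this regular sequence yields that, for generic $v' \in Y$, the derivatives $D_{v'} g_1, \ldots, D_{v'} g_n$ again form a regular sequence in $\cO_Y$, completing (ii).

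The only step requiring more than bookkeeping is the propagation of regularity from $\cO_Y^G$ to $\cO_Y$, which is the short Noether/Krull argument above; everything else is a direct invocation of the preceding displayed bound together with Lemma \ref{l:regseq}.
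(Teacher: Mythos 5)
Your proof is correct and follows essentially the same route as the paper, which simply invokes Lemma \ref{l:regseq} together with the displayed bound for regular sequences in $\overline{J_v}$. You in fact supply two details the paper leaves implicit --- the reduction to $|g_i|\geq 2$ required by Lemma \ref{l:regseq}, and the passage from regularity in $\cO_Y^G$ to regularity in $\cO_Y$ via Noether finiteness and the Cohen--Macaulay property of $\cO_Y$ --- both of which are handled correctly.
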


\subsection{Complex reflection groups}
In the case of complex reflection groups, $\cO_Y^G$ is
a polynomial algebra generated by homogeneous elements whose degrees
are well known (\cite{STfurg}; see also \cite[Appendix 2]{BMRcrg}). Thus,
in this case, we can apply Corollary \ref{glntopdegcor}
to \emph{generators} $g_1, \ldots, g_n$ of $\cO_Y^G$.
We thus deduce from Corollary \ref{glntopdegcor} explicit bounds on
the top degree of $\HP_0$:
\begin{corollary}\label{crtopdegcor} The top degrees of
  $\HP_0(\cO_V^G, \cO_V)$ for complex reflection groups $G$ are at
  most:
\begin{center}
\begin{tabular}{|cc|cc|cc|}
\hline
$S_{n+1}$: & $n(n-1)$ & $G(m,p,n)$, $m,n > 1$: & $n(n-1)m + 2mn/p - 4n$
& $G(m,1,1)$: & $2(m-2)$ \\ \hline
\end{tabular} \\
\begin{tabular}{|cc|cc|cc|cc|cc|cc|cc|} \hline
$G_4$: & $12$ & $G_5$: & $28$ & $G_6$: & $24$ & $G_7$: & $40$ & $G_8$: & $32$ &
$G_9$: & $56$ & $G_{10}$: & $64$  \\ \hline 
$G_{11}$: & $88$ & $G_{12}:$ & $20$ & $G_{13}$: & $32$ &
$G_{14}$: & $52$ & $G_{15}$: & $64$ & $G_{16}$ & $92$ & $G_{17}$: & $152$
\\ \hline 
$G_{18}$: & $172$ & $G_{19}$: & $232$ & $G_{20}$: & $76$ & $G_{21}$: & $136$ & $G_{22}$: & $56$ & $G_{23}$: & $24$
& $G_{24}$: & $36$ \\ \hline
$G_{25}$: & $42$ & $G_{26}$: & $60$ & $G_{27}$: & $84$
& $G_{28}$: & $40$ & $G_{29}$: & $72$ & $G_{30}$: & $112$ & $G_{31}$: & 
$112$ \\  \hline
\end{tabular} \\
\begin{tabular}{|cc|cc|cc|cc|cc|cc|} \hline
$G_{32}$: & $152$ & $G_{33}$: & $80$ & $G_{34}$: & $240$ & $G_{35}$:  &
$60$  & $G_{36}$: & $112$ & $G_{37}$: & $224$ \\ \hline
\end{tabular}
\end{center}
\end{corollary}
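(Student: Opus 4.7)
The plan is to apply Corollary \ref{glntopdegcor} directly. Since $G < \GL(X)$ is a complex reflection group, its contragredient action on $Y$ is also a complex reflection representation, with the same list of fundamental degrees. By the Shephard--Todd--Chevalley theorem, $\cO_Y^G$ is then a polynomial algebra on homogeneous generators $g_1, \ldots, g_n$ whose degrees $d_1, \ldots, d_n$ are the fundamental degrees of $G$, tabulated in, e.g., \cite{STfurg} or \cite[Appendix 2]{BMRcrg}.

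These generators automatically form a regular sequence in $\cO_Y$: by Shephard--Todd--Chevalley, $\cO_Y$ is a free (in particular finite) module over $\CC[g_1, \ldots, g_n]$, so the common zero locus of $g_1, \ldots, g_n$ in $Y$ is just $\{0\}$. Thus $(g_1, \ldots, g_n)$ is a homogeneous system of parameters in the polynomial (hence Cohen--Macaulay) ring $\cO_Y$, which is equivalent to its being a regular sequence. Corollary \ref{glntopdegcor} then yields the uniform bound
\begin{equation*}
\text{topdeg}(\HP_0(\cO_V^G, \cO_V)) \leq 2 \sum_{i=1}^n (d_i - 2).
\end{equation*}

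It remains to evaluate the right-hand side for each listed family. For $S_{n+1}$ on its $n$-dimensional reflection representation, $(d_i) = (2, 3, \ldots, n+1)$ yields $2\sum_{i=2}^{n+1}(i-2) = n(n-1)$. For $G(m,p,n)$ with $m, n > 1$, the degrees $m, 2m, \ldots, (n-1)m, nm/p$ give $mn(n-1) + 2nm/p - 4n$ after routine arithmetic, and for $G(m,1,1)$ the unique degree is $m$, giving $2(m-2)$. For each of the exceptional Shephard--Todd groups $G_4, \ldots, G_{37}$, the verification is a short table lookup followed by a sum; e.g., $G_4$ has degrees $(4,6)$ giving $2(2+4)=12$, and $G_{37}=E_8$ has degrees $2, 8, 12, 14, 18, 20, 24, 30$ giving $2(0+6+10+12+16+18+22+28) = 224$. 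The main ``obstacle'' here is bookkeeping: producing all entries of the tables correctly from the standard lists of fundamental degrees. There is no additional mathematical content beyond Corollary \ref{glntopdegcor} and Shephard--Todd--Chevalley.
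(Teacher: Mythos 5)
Your proposal is correct and is essentially the paper's own argument: apply Corollary \ref{glntopdegcor} to homogeneous generators of the polynomial algebra $\cO_Y^G$ (whose degrees are the standard fundamental degrees from \cite{STfurg}) and evaluate $2\sum_i(d_i-2)$ case by case; your spot-checks of the arithmetic (e.g.\ $S_{n+1}$, $G(m,p,n)$, $G_4$, $G_{37}$) agree with the table. The only addition you make is spelling out why the generators form a regular sequence via Shephard--Todd--Chevalley and Cohen--Macaulayness, which the paper leaves implicit.
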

\begin{remark}
  Since the elements $g_1, \ldots, g_n$ can be extended to a
  generating set for $\cO_V$ by elements in the ideal $(X^*)$, e.g.,
  the corresponding generators of $\cO_X^G$, the directional
  derivatives $D_{v'}g_1, \ldots, D_{v'}g_n$ actually generate
  $\overline{J_v} \subseteq \cO_Y$.  Hence, the above bounds coincide
  with those obtained from $R_v$ itself using Theorem \ref{glnthm}, and
  we lose nothing by applying the regular sequence arguments. This is
  in stark contrast to the estimate $\dim R_v < |G|^2$ of Corollary
  \ref{c:cr-dim} (or even
  $\dim R_v \leq \prod_i (|g_i|-1)^2$), where one can
  do much better, in general, by computing $\dim R_v$ directly.
\end{remark}
In the case $S_{n+1}$, the above bound was found by \cite{Matso}, up
to the equivalence of \cite[Theorem 1.5.1]{ReScmat}; in the other
cases, the bounds are new (except for the rank one case, $G(m,1,1)$,
where $\HP_0(\cO_V^G, \cO_V) \cong \HP_0(\cO_V^G)$ is known to have
dimension $2(m-2)$). Using the methods of this paper, we have computed
the actual top degree in the cases of rank $\leq 2$ (with the possible
exception of $G_{18}, G_{19}$) as well as for certain Coxeter groups
of higher rank, which generally differs substantially from the
above. See Remark \ref{r:gmp2-topdeg} for the top
degree in the cases $G(m,p,2)$, and Theorem \ref{t:exst-hilb} for the
top degree in some of the exceptional cases $G_4, \ldots, G_{22}$.

\section{The system of invariant Hamiltonian vector 
fields restricted to a line}\label{matsec}
Now, let $G < \Sp(V)$ and $v \in V^*$ be arbitrary.  Although we know
that elements in $\HP_0(\cO_V^G, \cO_V)^*$ are determined by their
Taylor coefficients by representatives of $R_v$, in general the
grading on $R_v$ is unrelated to the grading on
$\HP_0(\cO_V^G,\cO_V)^*$ (note that $R_v$ is obtained by evaluating at
$v$, which in particular replaces some polynomials on $V^*$ which have
nonzero grading by numbers). To fix this problem, we will use $R_v$ to
construct a local system on the line $\C \cdot v$ and make use of the
Euler vector field, which multiplies by the (correct) degree on
$\HP_0(\cO_V^G, \cO_V)^*$.

Let $f_1, \ldots, f_N$ be a homogeneous basis for $R_v$, and let $F_1,
\ldots, F_N \in \cD_{V^*}$ be differential operators on $V$ such that
$(\gr F_i)|_{T^*_v V^*} \equiv f_i \pmod {J_v}$. Here, restricting
$\gr F_i \in \cO_{T^* V^*}$ to $T^*_v V^*$ means evaluating the
coefficients of the principal symbol $\gr F_i$ of $F_i$ at the point
$v$, obtaining an element of $\cO_{T^*_v V^*} \cong \C[\partial_{x_1},
\ldots, \partial_{x_n}, \partial_{y_1}, \ldots, \partial_{y_n}]$.  For
instance, we can let each $F_i$ be a constant-coefficient differential
operator corresponding to a lift of $f_i$ to $\C[\partial_{x_1},
\ldots, \partial_{x_n}, \partial_{y_1}, \ldots, \partial_{y_n}]$.

\begin{claim}\label{matclaim}
For every $\phi \in \cD_{V^*}$, there exists an operator of
the form $\psi = \sum_i c_i F_i$ for $c_i \in \C$, such that
$\phi(g)|_{\C \cdot v} = \psi(g)|_{\C \cdot v}$ for all $g \in
\HP_0(\cO_V^G,\cO_V)^*$ (i.e., solutions of \eqref{ljvgens}).
\end{claim}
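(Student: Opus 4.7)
The aim is to express $\phi(g)|_{\C \cdot v}$ as $\sum_i c_i F_i(g)|_{\C \cdot v}$ for every $\phi \in \cD_{V^*}$ and every solution $g$ of \eqref{ljvgens}. My plan is to first reduce to the case of a constant-coefficient $\phi$, then exploit the structure of $R_v$ together with the fact that the differential equations \eqref{ljvgens} yield relations among Taylor coefficients at all points along the line.

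For the first step, any $\phi \in \cD_{V^*}$ can be written as $\phi = \sum_\alpha p_\alpha D^\alpha$ with $p_\alpha \in \cO_{V^*}$ and $D^\alpha$ a monomial constant-coefficient operator; restricting to the line gives $\phi(g)|_{\C \cdot v}(\lambda) = \sum_\alpha p_\alpha(\lambda v) \cdot D^\alpha(g)(\lambda v)$, so it suffices to handle constant-coefficient $\phi = D$ with the polynomial factors $p_\alpha(\lambda v)$ absorbed into the coefficients.

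For the second step, reduce $D$ modulo $J_v$: $D \equiv \sum_i c_i^{(D)} f_i \pmod{J_v}$ in $R_v$. The key input is that the generators of $J_v$, applied as constant-coefficient operators to $g$, annihilate $g$ at every point of the line. This follows by evaluating \eqref{ljvgens} at $\lambda v$ and using that the left-hand side operator depends linearly on the evaluation point through the factors $y_i^*$ and $x_i^*$: the equation at $\lambda v$ carries a factor of $\lambda$, which can be divided out (for $\lambda \neq 0$) to yield $F_D(D_{v'}h_j)(g)(\lambda v) = 0$, and this extends to $\lambda = 0$ by polynomial continuity. Differentiating the identity \eqref{ljvgens} in directions transverse to $\C \cdot v$ and combining with the Leibniz rule then propagates the annihilation from the generators to the whole ideal $J_v$, so that the Taylor-coefficient pairing of $g$ with constant-coefficient operators factors through $R_v$.

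Combining the two steps, $D(g)(\lambda v) = \sum_i c_i^{(D)} f_i(g)(\lambda v)$ on the line; the condition $(\gr F_i)|_{T^*_v V^*} \equiv f_i \pmod{J_v}$ allows replacing each $f_i(g)(\lambda v)$ by $F_i(g)(\lambda v)$, with discrepancies from the lower-order parts of the lifts $F_i$ handled by induction on order. Reassembling with the polynomial factors $p_\alpha(\lambda v)$ yields the desired $\psi = \sum_i c_i F_i$. The main obstacle is the propagation step: moving from the annihilation of $g$ by the generators of $J_v$ on the line to control of the full ideal at the level of Taylor coefficients. A cleaner framework is to view $(F_i(g)|_{\C \cdot v})_{i=1}^N$ as sections of a rank-$N$ local system on $\C \cdot v$ with generic fiber $R_v$; the claim then amounts to saying that the action of $\cD_{V^*}$ on $g$, after restriction to the line, factors through this local system.
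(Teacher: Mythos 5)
You have correctly isolated the hard point, but the "propagation step" you rely on is not just an obstacle to be smoothed over---it is false as stated, and the argument does not cross it. It is true that each \emph{generator} $F_D(D_{v'}h_j)$ of $J_v$ annihilates $g$ along $\C\cdot v$ (your scaling argument for that is fine). But a general element of $J_v$ is $P\cdot F_D(D_{v'}h_j)$ for a constant-coefficient $P$, and applying it to $g$ means applying $P$ to the function $F_D(D_{v'}h_j)(g)$, which vanishes on the line but whose transverse derivatives need not. Indeed, differentiating the identity \eqref{ljvgens} in a direction $u\in V^*$ and evaluating at $\lambda v$ gives
\[
\bigl(\partial_u\circ F_D(D_{v'}h_j)\bigr)(g)(\lambda v)\;=\;-\lambda^{-1}\,F_D(D_{u'}h_j)(g)(\lambda v),
\]
where $u'\in V$ corresponds to $u$ under the symplectic form: the right-hand side involves the generator attached to $u$, not to $v$, and is not zero in general. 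So transverse differentiation does not propagate annihilation through $J_v$; it expresses the Taylor coefficient attached to $\partial_u\cdot F_D(D_{v'}h_j)\in J_v$ in terms of \emph{other}, generically nonzero, Taylor coefficients of $g$. Consequently the pairing of $g$ with constant-coefficient operators does not factor through $R_v$, even at the single point $v$, and the reduction $D\equiv\sum_i c_i^{(D)}f_i \pmod{J_v}$ by itself says nothing about $D(g)|_{\C\cdot v}$.

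The paper's proof avoids this by never leaving the noncommutative ring: it forms the \emph{left ideal} $I_H=\langle \cD_{V^*}F_D(\xi_f)\rangle\subset\cD_{V^*}$, every element of which annihilates $g$ identically (because the generators do and it is a left ideal), observes that $(\gr I_H)|_{T^*_vV^*}\supseteq J_v$, and uses a filtration argument to conclude that the lifts $F_i$ span $\cD_{V^*}/(I_v\cdot\cD_{V^*}+I_H)$; any $\phi$ is then congruent to $\sum_i c_iF_i$ modulo operators that kill $g$. The ``lower-order discrepancies handled by induction on order'' that you wave at are exactly the content of this filtered argument---they are where all the work is, and they cannot be carried out inside the commutative quotient $R_v$. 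Your closing picture of a rank-$N$ local system on $\C\cdot v$ is the right one, but making it precise requires $I_H$ and its associated graded, not $J_v$. Separately, absorbing the factors $p_\alpha(\lambda v)$ ``into the coefficients'' yields $c_i$ that are polynomials in $\lambda$, whereas the claim demands $c_i\in\C$; the paper resolves this by reducing the non-constant coefficients modulo $I_v\cdot\cD_{V^*}$ inside the same quotient, and reinstating the $p_\alpha(\lambda v)$ at the end, as you propose, does not return you to constant coefficients.
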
 
In other words,
 the derivatives of solutions $g \in \cO_{V^*}$ of \eqref{ljvgens}, evaluated on
the line $\C \cdot v$, depend only on the $F_i(g)$.  

Using the claim, for every $\xi \in \cD_{V^*}$, there exists an $N$ by $N$
matrix $C_\xi \in \Mat_N(\C)$ such that
\begin{equation}\label{cxieqn}
(\xi \circ F_1(g), \ldots, \xi \circ F_N(g))|_{\C \cdot v} = C_\xi (F_1(g), \ldots, F_N(g))|_{\C \cdot v}, \forall g \in \HP_0(\cO_V^G,\cO_V)^*.
\end{equation}
In particular, if $\xi$ is the Euler vector field,
i.e., $\xi(g) = \deg(g) \cdot g$, and if the $F_i$ are homogeneous 
(under the $\C^*$ action on $V$, i.e., $\deg u = -1$ for all $u \in V$, and 
$\deg \partial_{w} = 1$ for all $w \in V^*$) of
degrees $d_1, \ldots, d_N \geq 0$, and 
$g \in \HP_0(\cO_V^G, \cO_V)^*$ is homogeneous, then
\begin{equation}
C_\xi  (F_1(g), \ldots, F_N(g))|_{\C \cdot v} - (d_1 F_1(g), 
\ldots, d_N F_N(g))|_{\C \cdot v} = \deg(g) (F_1(g), \ldots, 
F_N(g))|_{\C \cdot v},
\end{equation}
i.e., $\deg(g)$ is an eigenvalue of the matrix $B_\xi := C_\xi - \Diag(d_1,
\ldots, d_N)$, and $(F_1(g), \ldots, F_N(g))|_{\C \cdot v}$ is an eigenvector. 
Here $\Diag(d_1, \ldots, d_N)$ denotes the diagonal matrix with entries 
$d_1, \ldots, d_N$.
Now, for $\lambda \in \C$ and $C$ a square matrix, let $E_\lambda(C)$ denote the 
$\lambda$-eigenspace of $C$. We obtain
\begin{theorem} \label{matthm}
For arbitrary $v \in V^*$, degree $d_i$ lifts $F_i$ of generators $f_i$ of 
$R_v$ to $\cD_{V^*}$, and $C_\xi$ 
satisfying \eqref{cxieqn} for $\xi$ the Euler vector field,
\begin{equation} \label{cxibdeqn} h(\HP_0(\cO_V^G, \cO_V)^*;t) \leq
  \sum_{i \leq 0} \dim E_i(B_\xi) t^i, \quad B_\xi := C_\xi -
  \Diag(d_1, \ldots, d_N).
\end{equation}
\end{theorem}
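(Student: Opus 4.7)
The plan is to define a degree-preserving linear map
\[
\Phi: \HP_0(\cO_V^G, \cO_V)^* \to \C^N, \qquad \Phi(g) := (F_1(g)(v), \ldots, F_N(g)(v)),
\]
and to verify that (a) $\Phi$ is injective, and (b) for $g$ homogeneous of degree $\deg(g)$, the image $\Phi(g)$ lies in the eigenspace $E_{\deg(g)}(B_\xi)$. Together these force the degree-$i$ graded component of $\HP_0(\cO_V^G, \cO_V)^*$ to embed into $E_i(B_\xi)$, and summing over $i \leq 0$ yields the Hilbert series inequality \eqref{cxibdeqn}.

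For (a), I would invoke Claim \ref{matclaim}: for any $\phi \in \cD_{V^*}$ there exist constants $c_1, \ldots, c_N \in \C$ with $\phi(g)|_{\C \cdot v} = \sum_i c_i F_i(g)|_{\C \cdot v}$ for every $g \in \HP_0(\cO_V^G, \cO_V)^*$. Specializing to the point $v$ yields $\phi(g)(v) = \sum_i c_i F_i(g)(v)$, so if $\Phi(g) = 0$ then $\phi(g)(v) = 0$ for every $\phi$. Letting $\phi$ range over the constant-coefficient operators $\partial^\alpha$ shows that every Taylor coefficient of the polynomial $g$ at $v$ vanishes, forcing $g = 0$.

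For (b), I would evaluate \eqref{cxieqn} at $v$ with $\xi$ the Euler vector field. Since $F_i$ is homogeneous of degree $d_i$ and $\xi$ acts as multiplication by total degree on homogeneous elements, $\xi \circ F_i(g) = (\deg(g) + d_i) F_i(g)$ whenever $g$ is homogeneous. Substituting this into the left-hand side of \eqref{cxieqn} at $v$ and rearranging using $B_\xi = C_\xi - \Diag(d_1, \ldots, d_N)$ gives $B_\xi \Phi(g) = \deg(g) \Phi(g)$, as desired.

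The conceptual heavy lifting has already been done in Claim \ref{matclaim} and the construction of $C_\xi$; the proof of Theorem \ref{matthm} itself is a clean linear-algebra assembly of injectivity at $v$ with the Euler-vector-field eigenvalue computation. The one mild pitfall I expect is that Claim \ref{matclaim} and \eqref{cxieqn} are identities of polynomial functions on the line $\C \cdot v$, whereas the above argument specializes them to the single point $v$; this is lossless because a homogeneous polynomial on a line through the origin is determined by its value at any nonzero point, so nothing is gained by working on all of $\C \cdot v$ rather than at $v$.
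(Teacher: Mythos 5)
Your proposal is correct and follows essentially the same route as the paper, whose proof of Theorem \ref{matthm} is precisely the discussion surrounding \eqref{cxieqn}: Claim \ref{matclaim} supplies the matrix $C_\xi$, and the Euler vector field computation shows that homogeneous elements give eigenvectors of $B_\xi$. Your version merely specializes the identities from the line $\C\cdot v$ to the point $v$ (harmless, as you note) and has the virtue of making explicit the injectivity of $g \mapsto (F_1(g)(v),\ldots,F_N(g)(v))$ via Taylor coefficients, a step the paper leaves implicit.
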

It seems that the theorem has the disadvantage that many choices are
involved: in particular, there are many possible choices of the matrix
$C_\xi$. We claim nonetheless that, up to
conjugation, the set of possible $B_\xi$ only depends on the choice of
line $\C \cdot v$, and not on the choice of $f_i$ and $F_i$.  Changing
the $f_i$ and $F_i$ amounts to a combination of linear changes of
basis (which change $C_\xi$ by the corresponding linear changes of
basis), adding homogeneous elements to $F_i$ of the same degree as
$F_i$ which send $\HP_0(\cO_V^G, \cO_V)^*$ to elements which are zero
along $\C \cdot v$ (this does not change $C_\xi$), or multiplying the
$F_i$ by homogeneous polynomials in $\cO_{V^*}$ (which does not change
$B_\xi$).  Hence, the set of possible matrices $B_\xi$ is independent
of these choices up to conjugation, and depends only only the line $\C
\cdot v$. Thus, the same is true for the set of possible bounds (i.e.,
possible polynomials on the RHS of \eqref{cxibdeqn}).

Still, even for fixed $v$, there are in general several nonconjugate
choices of $B_\xi$. This is because, in general, $N$ may exceed $\dim
\HP_0(\cO_V^G, \cO_V)$, and so the coefficients $c_i$ given by Claim
\ref{matclaim} are not uniquely determined.  In practice, however, using only a
single choice of $B_\xi$, the bound one obtains is often equal to the
top degree of $\HP_0(\cO_V^G, \cO_V)$ (or only a few degrees higher),
in contrast to the performance of the methods of \S \ref{glnsec}.

We will explain in \S \ref{algsec} below how to turn this into a
practical algorithm.
\begin{proof}[Proof of Claim \ref{matclaim}]
  Let $I_H := \langle \cD_{V^*}  F_D(\xi_f) \mid f \in \cO_V^G
  \rangle \subset \cD_{V^*}$ be the left ideal generated by the
  Fourier transforms of Hamiltonian vector fields of invariant
  functions.  Note that the solutions $g \in \HP_0(\cO_V^G, \cO_V)^*
  \subset \cO_{V^*}$ are exactly the elements annihilated by $I_H$.

  It is evident that, if $g \in \HP_0(\cO_V^G, \cO_V)^*$, and $\beta
  \in I_H$, then $\beta(g)|_{\C \cdot v} = 0$.  Moreover, $(\gr
  I_H)|_{T^*_v V^*} \supseteq J_v = (\gr(\xi_f): f \in
  \cO_V^G)|_{T^*_v V^*}$ as ideals of $\cO_{T^*_v V^*} =
  \C[\partial_{x_1}, \ldots, \partial_{x_n}, \partial_{y_1},
  \ldots, \partial_{y_n}]$.  Let $I_{v} \subseteq \cO_{V^*}$ be the
  ideal of functions vanishing at $v \in V^*$.  Then, lifts of $f_i$
  to elements $F_i \in \cD_{V^*}$ span $\cD_{V^*}/(I_v \cdot \cD_{V^*}
  + I_H)$, since the latter is filtered and has the associated graded
  vector space $\C[\partial_{x_1},
  \ldots, \partial_{x_n}, \partial_{y_1}, \ldots, \partial_{y_n}] /
  (\gr I_H)|_{T^*_v V}$. Therefore, for every $\phi \in \cD_{V^*}$,
  there exists a linear combination $\psi = \sum_i c_i F_i$ such that
  $\phi-\psi \in I_v \cdot \cD_{V^*} + I_H$, and it follows that
  $\psi(g)|_{\C \cdot v} = \phi(g)|_{\C\cdot v}$ for all $g \in
  \HP_0(\cO_V^G, \cO_V)^*$.
\end{proof}

\subsection{Algorithmic implementation}\label{algsec}
In \cite{ReSc-progs}, we algorithmically construct the $C_\xi$ above.
The first step is to compute the $f_i$ in a way that remembers
additional information.  Normally, one computes generators $f_i$ for
$R_v$ by computing a Gr\"obner basis for $J_v$ with respect to some
ordering of monomials in $\partial_{x_1},
\ldots, \partial_{x_n}, \partial_{y_1}, \ldots, \partial_{y_n}$, e.g.,
the graded reverse-lexicographical ordering (grevlex), whose
definition is recalled below. (Note that we will use monomials to
refer to products of powers of the variables).  We will perform this
computation, following
the Buchberger algorithm, while simultaneously keeping track of lifts
of the Gr\"obner basis elements to elements of $\cD_{V^*}$, as follows.

Recall that the (commutative) Buchberger algorithm works in the
following manner. Fix a polynomial ring $\C[z_1, \ldots, z_n]$. Equip
the monomials with an ordering, such as the grevlex ordering:
$z_1^{a_1} \cdots z_n^{a_n} < z_1^{b_1} \cdots z_n^{b_n}$ if and only
if either $a_1 + \cdots + a_n < b_1 + \cdots + b_n$ or $a_1 + \cdots +
a_n = b_1 + \cdots + b_n$ and, for some $1 \leq i \leq n$, $a_i < b_i$
and $a_{j} = b_{j}$ for all $j > i$.  We require that $g < h$ implies
$fg < fh$ for monomials $f, g$, and $h$, and that $g < h$ when $g$ has lower
total degree than $h$ (which are both true for the grevlex
ordering).

Next, given an ideal $I = (g_1, \ldots, g_m) \subset \C[z_1, \ldots,
z_n]$, we compute a Gr\"obner basis as follows. Assume that the
$g_i$ are all monic, i.e., their leading monomials (with respect to
the monomial ordering) have coefficient one.  Denote the leading
monomial of an element $g$ by $LM(g)$.  
Then, for every pair $i \neq j$, we define the monomial $h :=
\lcm(LM(g_i), LM(g_j))$, and consider the element $g_{ij}$ obtained by
rescaling $\frac{h}{LM(g_i)} \cdot g_i - \frac{h}{LM(g_j)} g_j$ to be
monic (unless it is zero, in which case we set $g_{ij}=0$). If
$g_{ij}=0$, we throw it out. Otherwise, we reduce $g_{ij}$ modulo the
$g_1, \ldots, g_m$, i.e., if $LM(g_k) | LM(g_{ij})$, we replace
$g_{ij}$ with $g_{ij} - \frac{LM(g_{ij})}{LM(g_k)} g_k$. If the result
is zero, we discard it, and otherwise, we rescale it to be monic.  We
then iterate this until we either obtain zero (which we discard) or a
monic polynomial $g$ such that $LM(g_k) \nmid LM(g)$ for all $k$,
which we adjoin to the collection $\{g_1, \ldots, g_m\}$ of generators
of $I$.  (Note that we could have skipped the case $\lcm(LM(g_i),
LM(g_j)) = g_i g_j$, since then we always obtain zero.)  Furthermore,
if $LM(g_i) \mid LM(g_j)$, then we discard $g_j$ (this is the case
where $(g_i, g_j, g_{ij}) = (g_i, g_{ij})$), and vice-versa.  This
process is then repeated until exhaustion, i.e., all pairs of elements
in the generating set have been computed (and no new elements remain
to be added).

In our algorithm, we perform the Buchberger algorithm for $J_v$ while
keeping track, for every generator of $J_v$, of a differential
operator in $I_H$ (the left ideal generated by Hamiltonian vector
fields) lifting the given element.  Namely, we begin with the lifts
$\xi_{f_i}$ of $f_i$ for all $i = 1, 2, \ldots, N$.  Every time we
compute the element $\frac{h}{LM(g_i)} \cdot g_i - \frac{h}{LM(g_j)}
g_j$, for $h = \lcm(LM(g_i), LM(g_j))$, given lifts $\widetilde{g_i},
\widetilde{g_j}$ of $g_i, g_j \in J_v$ to $I_H$, we also compute
$\frac{h}{LM(g_i)} \cdot \widetilde{g_i} - \frac{h}{LM(g_j)}
\widetilde{g_j}$, which is a lift to $I_H$. Here we view
$\frac{h}{LM(g_i)}$ and $\frac{h}{LM(g_j)}$ as constant-coefficient
differential operators. We then rescale and reduce while also keeping
track of the lift to $I_H$.

In the end, we arrive at a Gr\"obner basis $(g_i)$
for $J_v$ together with (noncanonical) lifts $(\widetilde{g_i})$ of
the basis elements to $I_H$.  

Using these lifts, we can reduce $\phi = \xi \circ F_j \in \cD_{V^*}$ to
a linear combination $\psi = \sum_i c_i F_i$ modulo $I_{v} \cdot
\cD_{V^*} + I_H$, as follows: We work in $\cD_{V^*} / (I_v \cdot \cD_{V^*})$,
which identifies with $\cO_{T^*_v V^*} \cong \C[\partial_{x_1},
\ldots, \partial_{x_n}, \partial_{y_1}, \ldots, \partial_{y_n}]$
as a vector space.  Define $\overline{I_H} := (I_H + I_v \cdot \cD_{V^*})
/ (I_v \cdot \cD_{V^*})$, which is a vector subspace.  Under the above
identification, $\overline{I_H}$ is filtered (by order of differential
operators), and $\gr \overline{I_H} \supseteq J_v$.  Let
$\overline{\widetilde{g_i}} \in \overline{I_H}$ be the image of $\widetilde{g_i} \in I_H$
under this quotient.  Then, $\gr \overline{\widetilde{g_i}} = g_i$.
We may now reduce $\overline{\phi} \in \cD_{V^*} / (I_v \cdot \cD_{V^*})$
modulo $\overline{I_H}$ by iteratively reducing $\gr \overline{\phi}$
modulo $J_v$, such that every time we subtract $g \cdot g_i$ from $\gr
\overline{\phi}$ for $g \in \C[\partial_{x_1},
\ldots, \partial_{x_n}, \partial_{y_1}, \ldots, \partial_{y_n}]$
a constant-coefficient differential operator, we simultaneously
subtract $g \cdot \overline{\widetilde{g_i}}$ from $\overline{\phi}$.

\section{Computational results}\label{compsec}
We developed computer programs in Magma \cite{ReSc-progs} to compute
$\HP_0(\cO_V^G, \cO_V)$ using the above theory.  First, we wrote
programs which compute $\HP_0(\cO_V^G, \cO_V)$ (together with its
grading and $G$-structure) up to a specified degree. Then, we wrote
programs which compute the bounds of Theorems \ref{glnthm} and
\ref{matthm}.

It turns out that, in practice, the bound produced by Theorem
\ref{matthm} (using the matrix $B_\xi$) is much sharper than that of
Theorem \ref{glnthm} (which is only applicable to the case $G < \GL(X)
< \Sp(V)$).  In particular, in most cases we tested, the top integer
eigenvalue of $-B_\xi$ (for appropriate $v \in V^*$) was in fact equal
to the top degree of $\HP_0(\cO_V^G, \cO_V)$ (recall that the degrees
of $\HP_0(\cO_V^G, \cO_V)$ are nonpositive, which is why we have a
minus sign in $-B_\xi$).  This is good because it can also be applied
to arbitrary $G < \Sp(V)$. The downside is that the computation
required can be much slower, and sometimes too slow.

In the case of groups $G < \GL(X) < \Sp(V)$, we actually use both
techniques: first we apply \S \ref{glnsec} to compute the (generally
less sharp) bound $2 \cdot \text{topdeg}(\overline{R_v})$ on the top degree;
this is usually very fast, and for complex reflection groups the
result is already in Corollary \ref{crtopdegcor}. Next, we compute
$-B_\xi$ and its eigenvalues working over a prime field $\FF_p$ for
$p$ larger than the first bound.  This can be effectively computed in
some cases where it is not over a number field. Although, in theory,
this could produce a less sharp bound than over a number field, in
practice, it is quite effective, and one obtains a useful bound (often
the actual top degree).

Finally, once we have this bound on degree, we use our programs to
explicitly compute $\HP_0(\cO_V^G,\cO_V)$ up to that top degree,
working over a number field (either the field of definition of $G$,
generally a cyclotomic field, or a smaller subfield containing the
coefficients of generators of the invariant ring, over which one can
therefore define $\cO_V^G$: for example, for some of the
exceptional Shephard-Todd groups of rank two, one can compute
generators of $\cO_V^G$ with rational coefficients even though the
generators of $G$ do not have rational coefficients).  If this is too
slow, one could work over a prime field $\FF_p$ containing primitive
$|G|$-th roots of unity, although then the result would technically
only yield an upper bound for the ($G$-graded) Hilbert series of
$\HP_0(\cO_V^G, \cO_V)$ (in practice, one will probably get the right
answer if the prime $p$ is large).  However, if one
obtains in this way a group $\HP_0(\FF_p[V]^G, \FF_p[V])$ of dimension
$|\{g \in G \mid (g-\operatorname{Id}) \text{ is invertible}\}| = \dim
\HH_0(\cD_X^G, \cD_X)$ (cf.~Lemma \ref{l:afls-fla}), then this must be the correct dimension since
this is a lower bound for $\dim \HP_0(\cO_V^G, \cO_V)$, and therefore
$\HP_0(\cO_V^G, \cO_V) \cong \gr \HH_0(\cD_X^G, \cD_X)$.
\subsection{Subgroups of $\SL_2(\CC)$}
In \cite{AL}, the groups $\HP_0(\cO_V^G)$ were computed for $V =
\CC^2$ and $G < \Sp(V) = \SL_2(\CC)$ a finite subgroup (for an
alternative computation, one can specialize \cite{ESsym} to the rank
one case).  The associated varieties $V/G$ are well known and are
called Kleinian singularities. It then follows from Lemma \ref{l:afls-fla}
(the main result of \cite{AFLS}) that
$\HP_0(\cO_V^G) \cong \gr \HH_0(\cD_X^G)$. 

In this subsection, we extend this by computing $\HP_0(\cO_V^G,
\cO_V)$. Our main result is Theorem \ref{t:sl2} below, which we expand on in the subsequent sections.
\begin{definition}
Given a graded vector space $K$, let $K_\ev$ denote the span of the even-graded homogeneous elements of $K$.
\end{definition}
The following elementary lemma explains our interest in the even-graded subspace:
\begin{lemma} Let $V$ be an arbitrary finite-dimensional symplectic vector
space and $G < \Sp(V)$ finite. Then, $\gr \HH_0(\cD_X^G, \cD_X)$ is concentrated in even degrees.
\end{lemma}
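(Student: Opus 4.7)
The plan is to combine the refinement of Lemma~\ref{l:afls-fla} noted right after its statement with the symplectic geometry of fixed subspaces. Recall from that refinement that $\HH_*(\cD_X^G, \cD_X) \cong \C[G]_\ad$ as a $G$-representation, with $\HH_j$ spanned by the elements $g \in G$ having $\rk(g-\Id) = \dim V - j$. In particular, $\HH_0(\cD_X^G, \cD_X)$ is spanned by the AFLS classes $[g]$ attached to $g \in G$ with $g - \Id$ invertible, and the Bernstein filtration on $\cD_X$ descends to a filtration on this space whose associated graded is what we must analyze.

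The key geometric input is that $V^g$ is always a symplectic subspace of $V$ for every $g \in \Sp(V)$. Since $g$ preserves $\omega$, for $u \in V^g$ and $w \in V$ one computes
\[
\omega(u, (g-\Id)w) \;=\; \omega(g^{-1}u, w) - \omega(u, w) \;=\; 0,
\]
so $V^g \subseteq \bigl(\operatorname{Im}(g-\Id)\bigr)^\perp$. Since $g$ has finite order it is semisimple, whence $V = V^g \oplus \operatorname{Im}(g-\Id)$; a dimension count then forces $V^g = \bigl(\operatorname{Im}(g-\Id)\bigr)^\perp$, so $\omega|_{V^g}$ is nondegenerate. Thus $\dim V^g$ is even, and $\rk(g-\Id) = \dim V - \dim V^g$ is even as well.

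It remains to identify the Bernstein degree of each AFLS class $[g] \in \gr \HH_0(\cD_X^G, \cD_X)$ and to show it has the same parity as $\rk(g-\Id)$. My approach would be to run the Koszul-style argument underlying the extension of Lemma~\ref{l:afls-fla} while keeping track of the Bernstein filtration throughout: the contribution from $g$ arises from a subcomplex twisted by the action of $g$ along a complement to $V^g$ in $V$, and the Bernstein degree of the surviving class picks up a shift whose parity equals that of $\rk(g-\Id)$. Combined with the previous paragraph, this forces every Bernstein degree appearing in $\gr \HH_0(\cD_X^G, \cD_X)$ to be even, proving the lemma. The main obstacle is precisely this last step---making the Bernstein-degree shift explicit through the Bernstein-filtered AFLS/Koszul machinery; the symplectic-subspace argument and the identification of the homology basis are routine consequences of the statement recalled in the introduction and of elementary linear algebra for finite-order elements of $\Sp(V)$.
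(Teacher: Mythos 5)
There is a genuine gap, and it sits exactly where you acknowledge it. Your plan hinges on assigning to each group element $g$ with $g-\Id$ invertible a well-defined ``Bernstein degree'' of its class in $\gr \HH_0(\cD_X^G, \cD_X)$ and showing that this degree has the same parity as $\rk(g-\Id)$. But the isomorphism $\HH_0(\cD_X^G,\cD_X)\cong$ (span of such $g$ in $\C[G]_\ad$) is only an isomorphism of $G$-representations; the paper stresses immediately after Lemma~\ref{l:afls-fla} that this result says \emph{nothing} about the filtration, so there is no canonical filtered (let alone graded) basis indexed by group elements, and the ``Bernstein-degree shift'' you invoke is not defined, much less computed. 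Running the AFLS/Koszul argument while tracking the Bernstein filtration is not a routine bookkeeping exercise --- it is precisely the open difficulty the paper is circumventing --- so your sketch does not constitute a proof. The symplectic-subspace computation (that $V^g$ is symplectic and $\rk(g-\Id)$ is even) is correct but, for homological degree zero, only tells you $\rk(g-\Id)=\dim V$ is even, which carries no information about the filtration degree.

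The paper's actual proof avoids all of this with a short central-element trick: if $-\Id\in G$ then $-\Id$ is central, hence acts trivially on $\C[G]_\ad$ and therefore on $\HH_0(\cD_X^G,\cD_X)$ by Lemma~\ref{l:afls-fla}; but $-\Id$ acts on $\gr \HH_0(\cD_X^G,\cD_X)$ by $(-1)^{\deg}$, forcing all degrees to be even. For general $G$ one sets $K:=\langle G,-\Id\rangle$ and uses that $\HH_0(\cD_X^G,\cD_X)$ is a filtered quotient of $\HH_0(\cD_X^K,\cD_X)$. If you want to salvage your approach, you would need to actually carry out the filtered Koszul computation you allude to; the much easier route is the $-\Id$ argument above.
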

\begin{proof}
First suppose that $-\Id \in G$.  Since $-\Id$ is central, it acts trivially on $\C[G]_\ad$ and hence on $\HH_0(\cD_X^G, \cD_X)$ by Lemma \ref{l:afls-fla}.  Since the action of $-\Id$ on $\gr \HH_0(\cD_X^G, \cD_X)$ is by
 $(-1)^{\deg}$, this implies that it is concentrated in even degrees. 

 In the general case, let $K := \langle G, -\Id \rangle$.  Then,
 $\HH_0(\cD_X^G, \cD_X)$ is a quotient of $\HH_0(\cD_X^K, \cD_X)$, so
 this also holds on the level of associated graded vector spaces.
 Therefore, by the above paragraph, $\gr \HH_0(\cD_X^G, \cD_X)$ is
 concentrated in even degrees.
\end{proof}
Let $\widetilde{D_{m}}$ denote the dicyclic subgroup of order $2m$
(for $m$ even), which is the inverse image of the dihedral subgroup
$D_m$ of $\operatorname{\mathsf{SO}}(3,\RR)$ under the double cover by
$\mathsf{SU}(2,\CC)$.  It is well known (the ``McKay correspondence'')
that all finite subgroups of $\SL_2(\CC)$ are either cyclic, dicyclic,
or one of the three exceptional groups $\widetilde A_4, \widetilde
S_4$, and $\widetilde A_5$, which are the preimages of the
tetrahedral, octahedral, and icosahedral rotation subgroups of
$\operatorname{\mathsf{SO}}(3,\RR)$ in $\mathsf{SU}(2,\CC) <
\SL_2(\CC)$ under the double cover $\mathsf{SU}(2,\CC) \onto \operatorname{\mathsf{SO}}(3,\RR)$.

By the McKay correspondence, the cyclic, dicyclic, and exceptional
groups correspond to the simply-laced extended Dynkin diagrams of
types $\widetilde A, \widetilde D$, and $\widetilde E$, respectively:
the vertices are the irreducible representations of the group, and
given an irreducible representation, the decomposition of its tensor
product with the defining representation $\CC^2$ into irreducibles is
given by the vertices adjacent to the one corresponding to the
original irreducible representation.
\begin{theorem}\label{t:sl2}
If $G < \SL_2(\CC)$ is finite, then the composition $\HP_0(\cO_V^G, \cO_V)_\ev \into \HP_0(\cO_V^G, \cO_V) \onto \gr \HH_0(\cD_X^G, \cD_X)$ is an isomorphism.  The Hilbert series of $h(\HP_0(\cO_V^G, \cO_V);t)$ is given by
\begin{gather}
1 + t^2 + \cdots + t^{2(m-2)}, \quad G \cong \ZZ/m; \label{e:hp0-an}\\
1 + (2t + 3t^2 + 2t^3 + \cdots + 3t^{m-2})
 + 2t^m + (t^{m+2} + t^{m+4} + \cdots + t^{2m-4}) + t^{2m}, \quad G \cong \widetilde{D_{m}}; \label{e:hp0-dn} \\
1+2t+3t^2+4t^3+5t^4+4t^5+4t^6+2t^7+4t^8+3t^{10}+t^{12}+t^{14}+t^{20}, \quad G \cong \widetilde{A_4}; 
\end{gather}
\begin{multline}
1+2t+3t^2+4t^3+5t^4+6t^5+7t^6+6t^7+6t^8+6t^9 \\ + 6t^{10}+4t^{11}+6t^{12}+2t^{13}+ 4t^{14}+3t^{16}+3t^{18}+t^{20}+t^{24}+t^{32}, \quad G \cong \widetilde{S_4}; 
\end{multline}
\begin{multline}
1+2t+3t^2+4t^3+5t^4+6t^5+7t^6+8t^7+9t^8+10t^9+11t^{10}+10t^{12}+10t^{13}+10t^{14}\\+10t^{15}+10t^{16}+10t^{17}+10t^{18}+8t^{19}+10t^{20}+6t^{21}+6t^{22}+4t^{23}+6t^{24}+2t^{25}\\+6t^{26}+5t^{28}+3t^{30}+t^{32}+3t^{34}+t^{36}+t^{44}+t^{56}, \quad G \cong \widetilde{A_5}, \label{e:hp0-e8}
\end{multline}
and $h(\HP_0(\cO_V^G);t)$ is given by \eqref{e:hp0-an} when $G \cong \ZZ/m$, and
\begin{gather}
(1 + t^4 + \cdots + t^{2m}) + t^m, \quad G \cong \widetilde{D_{m}};  \label{e:hp0-dn-invts} \\
1+t^6+t^8+t^{12}+t^{14}+t^{20}, \quad G \cong \widetilde{A_4}; \\
1+t^8+t^{12}+t^{16}+t^{20}+t^{24}+t^{32}, \quad G \cong \widetilde{S_4}; \\
1+t^{12}+t^{20}+t^{24}+t^{32}+t^{36}+t^{44}+t^{56}, \quad G \cong \widetilde{A_5}. \label{e:hp0-e8-invts}
\end{gather}
\end{theorem}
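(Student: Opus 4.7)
The plan is to prove the first assertion of the theorem via a dimension-count strategy, then establish the two Hilbert series formulas by a combination of direct computation (for the exceptional groups) and explicit case analysis (for the infinite families). The surjection $\HP_0(\cO_V^G, \cO_V) \onto \gr \HH_0(\cD_X^G, \cD_X)$ together with the preceding lemma (which says the target is concentrated in even degrees) immediately factors through $\HP_0(\cO_V^G,\cO_V)_\ev$, producing a surjection $\HP_0(\cO_V^G, \cO_V)_\ev \onto \gr \HH_0(\cD_X^G, \cD_X)$. By Lemma \ref{l:afls-fla}, the dimension of the target equals $|\{g \in G \mid g - \Id \text{ is invertible}\}|$, which for $G < \SL_2(\CC)$ is simply $|G|-1$ (since every non-identity element of $\SL_2$ that is not unipotent has $g - \Id$ invertible, and a finite subgroup of $\SL_2(\CC)$ contains no nontrivial unipotents). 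So it suffices to verify that the even-degree part of the Hilbert series to be computed has total dimension $|G|-1$, after which the composition is forced to be an isomorphism.

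For the cyclic case $G \cong \ZZ/m$, I would specialize the rank-one case of \cite{ESsym} (or invoke \cite{AL} together with the fact that in the abelian case $G$ acts trivially on $\HP_0(\cO_V^G, \cO_V)$, so this group coincides with $\HP_0(\cO_V^G)$). The resulting Hilbert series $1 + t^2 + \cdots + t^{2(m-2)}$ is concentrated in even degrees with total dimension $m - 1 = |G| - 1$, so the isomorphism with $\gr \HH_0$ follows. For the three exceptional groups $\widetilde{A_4}, \widetilde{S_4}, \widetilde{A_5}$, the procedure is to run the algorithm of \S \ref{compsec}: first compute the matrix $B_\xi$ of Theorem \ref{matthm} for a generic choice of $v \in V^*$, read off an upper bound for the top degree from its spectrum (the negative integer eigenvalues of $-B_\xi$), then compute $\HP_0(\cO_V^G, \cO_V)$ in each degree up to this bound using the Magma programs \cite{ReSc-progs} over a cyclotomic field where $G$ is defined. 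The stated Hilbert series are then read off directly. The even-degree dimensions sum to $|G|-1$ in each case (24, 48, 120 respectively), confirming the first claim.

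The main obstacle is handling the dicyclic family $\widetilde{D_m}$ uniformly in $m$, since the algorithm only produces $\HP_0$ for a fixed group. My plan is to exploit the two-parameter structure of the dicyclic groups: write $\widetilde{D_m} = \langle \sigma, \tau \rangle$ where $\sigma = \Diag(\zeta, \zeta^{-1})$ with $\zeta$ a primitive $m$-th root of unity, and $\tau$ is the matrix swapping and negating coordinates. Then $\cO_V^{\langle \sigma \rangle}$ is easy to describe (it is the rank-one cyclic case), and $\widetilde{D_m}/\langle \sigma \rangle = \ZZ/2$ acts on $\HP_0$ for the cyclic group via $\tau$. I would first describe $\HP_0(\cO_V^{\langle \sigma \rangle}, \cO_V)$ as a graded $\tau$-representation, decompose it into $\pm 1$ eigenspaces of $\tau$, and then take the appropriate (co)invariants for the extra identifications coming from $\{\cO_V^{\widetilde{D_m}}, \cO_V\}$ that are not already in $\{\cO_V^{\langle \sigma\rangle}, \cO_V\}$. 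Verifying the precise Hilbert series \eqref{e:hp0-dn} and \eqref{e:hp0-dn-invts} will require a careful bookkeeping of which $\sigma$-weight components contribute in each degree; this is the most delicate part and, as a sanity check, one confirms that the even-degree total is $2m - 1 = |\widetilde{D_m}|-1$.

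Finally, to obtain the Hilbert series of $\HP_0(\cO_V^G)$ in each case, I take the $G$-invariant part of $\HP_0(\cO_V^G, \cO_V)$ (noting that the latter has a natural residual $G$-representation structure, with $\HP_0(\cO_V^G)$ sitting as its $G$-invariants). For the exceptional groups the computation is done by the same programs, which track the $G$-module structure of each graded piece; for the cyclic case the two coincide, and for dicyclic the formulas \eqref{e:hp0-dn-invts} can be extracted from the explicit description obtained in the preceding step. In all cases the resulting invariant Hilbert series can be checked to be consistent with the even-degree-isomorphism claim via Lemma \ref{l:afls-fla}, since the $G$-invariants of $\gr \HH_0(\cD_X^G, \cD_X)$ correspond to the number of conjugacy classes in $G$ of elements $g$ with $g - \Id$ invertible, and this number matches the total dimension of the invariant Hilbert series in each case.
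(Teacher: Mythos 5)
Your reduction of the first assertion to a dimension count is exactly the paper's argument (surjectivity is automatic from the even-degree lemma, and injectivity follows once the even part of the computed Hilbert series has total dimension $|\{g \in G : g-\Id \text{ invertible}\}| = |G|-1$), and your treatment of the cyclic and exceptional cases matches the paper's as well. The problem is the dicyclic family, where your strategy has the containment backwards and cannot work as stated. Since $\cO_V^{\widetilde{D_m}} \subseteq \cO_V^{\langle\sigma\rangle}$, one has $\{\cO_V^{\widetilde{D_m}}, \cO_V\} \subseteq \{\cO_V^{\langle\sigma\rangle}, \cO_V\}$: there are \emph{no} ``extra identifications coming from $\{\cO_V^{\widetilde{D_m}}, \cO_V\}$ that are not already in $\{\cO_V^{\langle\sigma\rangle}, \cO_V\}$'' --- it is the cyclic subgroup that imposes more identifications, not fewer. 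Consequently the natural map goes $\HP_0(\cO_V^{\widetilde{D_m}}, \cO_V) \onto \HP_0(\cO_V^{\langle\sigma\rangle}, \cO_V)$, and the target has dimension only $m-1$ while the source has dimension $3m-3$ (as one reads off from \eqref{e:hp0-dn}). No passage to $\tau$-eigenspaces, invariants, or coinvariants of the $(m-1)$-dimensional cyclic answer can manufacture the larger space; at best your construction recovers a quotient of what you need, i.e.\ a lower bound, and the delicate ``bookkeeping'' you defer is precisely the computation of the kernel $\{\cO_V^{\langle\sigma\rangle}, \cO_V\}/\{\cO_V^{\widetilde{D_m}}, \cO_V\}$, which is not addressed.

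The paper instead computes $\{\cO_V^{\widetilde{D_m}}, \cO_V\}$ head-on: by Lemma \ref{l:pg} it equals $\{x^2y^2, \cO_V\} + \{x^m+y^m, \cO_V\}$, since $x^2y^2$ and $x^m+y^m$ Poisson-generate $\cO_V^{\widetilde{D_m}}$. The first summand is the span of monomials $x^ay^b$ with $a \neq b$ and $a,b \geq 1$ (note $\{x^2y^2,\cO_V\}$ is strictly smaller than $\{xy,\cO_V\}$, which is the source of the extra classes $x^a$, $y^a$ in low degree), and the second is analyzed degree by degree, with special attention to the bidegrees $(m,0), (0,m), (2m,0), (m,m), (0,2m)$ where cancellations occur; this yields the explicit basis and hence \eqref{e:hp0-dn}. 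If you want to keep your outline, you must replace the dicyclic step with such a direct computation of the bracket subspace for the dicyclic invariants themselves.
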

By the lemma, the composition $\HP_0(\cO_V^G,\cO_V)_\ev \to \gr
\HH_0(\cD_X^G, \cD_X)$ is always a surjection. The fact that it is
injective follows from the explicit formulas for Hilbert series above,
since this together with Lemma \ref{l:afls-fla} shows that the
dimensions are equal.  Thus, below, we restrict our attention to
proving \eqref{e:hp0-an}--\eqref{e:hp0-e8}.

On the other hand, the map $\HP_0(\cO_V^G, \cO_V) \onto \gr
\HH_0(\cD_X^G, \cD_X)$ itself is not injective when $G< \SL_2(\CC)$ is
not abelian, since $\HP_0(\cO_V^G, \cO_V)$ is not concentrated in even
degrees. Nonetheless, by the above formulas (or \cite{AL}) together
with Lemma \ref{l:afls-fla}, the restriction to invariants,
$\HP_0(\cO_V^G) \onto \gr \HH_0(\cD_X^G)$, is an isomorphism.
\begin{remark}
  The above gives examples where $\HP_0(\cO_V^G, \cO_V)$ is not
  concentrated in even degrees, but $\HP_0(\cO_V^G)$ is.  It is
  natural to ask for an example where $\HP_0(\cO_V^G)$ itself is not
  concentrated in even degrees. We construct such examples in Appendix
\ref{s:nontriv-cubic}. 
\end{remark}
\begin{remark}
  The fact that $\HP_0(\cO_V^G, \cO_V)_\ev \cong \gr \HH_0(\cD_X^G,
  \cD_X)$ is quite special to the above case.  For many groups $G$
  (such as many examples discussed below), $\HP_0(\cO_V^G) \ncong \gr
  \HH_0(\cD_X^G)$ and the former is concentrated in even degrees (in
  the cases below, $G < \GL(X) < \Sp(V)$, so $\HP_0(\cO_V^G, \cO_V)$
  itself is automatically concentrated in even degrees, by the
  discussion at the beginning of \S \ref{glnsec}). There are also
  examples where $\HP_0(\cO_V^G) \cong \gr \HH_0(\cD_X^G)$ but still
  $\HP_0(\cO_V^G, \cO_V)_\ev \ncong \gr \HH_0(\cD_X^G, \cD_X)$. For
  example, this holds when $G$ is the complex reflection group
  $G(4,2,2)$ or $G(6,2,2)$ as discussed below.
\end{remark}
As already remarked, the formulas
\eqref{e:hp0-dn-invts}--\eqref{e:hp0-e8-invts} were first computed in
\cite{AL}, but we include them since they follow directly from the
(apparently new) formulas \eqref{e:hp0-an}--\eqref{e:hp0-e8} of the
theorem.\footnote{As is well-known,
  \eqref{e:hp0-dn-invts}--\eqref{e:hp0-e8-invts} can be more compactly
  described as $\sum_i t^{2(m_i-1)}$, where $m_i$ are the Coxeter
  exponents of the root system corresponding to the group by the McKay
  correspondence (type $A_{m-1}$ in the case of $\ZZ/m$, type
  $D_{m/2}$ in the dicyclic case, and types $E_6, E_7$, and $E_8$ in
  the exceptional cases).}  Note that, when $G$ is abelian (and hence
cyclic since $V = \CC^2$), by Lemma \ref{l:abelian} below,
$\HP_0(\cO_V^G, \cO_V) = \HP_0(\cO_V^G)$, so \eqref{e:hp0-an} also
follows from \cite{AL}.  Thus, we do not need to discuss the cyclic
case at all, but we do so anyway since the computation is short and
simple.

Let us write $\cO_V = \CC[x,y]$ with $\{x,y\}=1$. Using the
symplectic form, $V \cong \Span(x,y)$, and let us write matrices
according to their action on the basis pulled back from $(x,y)$.  We will use the following elementary lemma, which holds for arbitrary symplectic $V$ and $G < \Sp(V)$:
\begin{lemma}\label{l:pg}
Let $(g_i)$ be a collection of Poisson generators of $\cO_V^G$. Then
$\{\cO_V^G, \cO_V\}$ is the sum of the subspaces $\{g_i, \cO_V\}$.
\end{lemma}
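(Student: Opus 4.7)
The plan is to use the obvious inclusion $\sum_i \{g_i, \cO_V\} \subseteq \{\cO_V^G, \cO_V\}$ and to prove the reverse inclusion by a ``good elements'' argument. Specifically, I would set
\begin{equation*}
T := \bigl\{f \in \cO_V \mid \{f, \cO_V\} \subseteq \textstyle\sum_i \{g_i, \cO_V\}\bigr\},
\end{equation*}
and show that $T$ is a Poisson subalgebra of $\cO_V$ containing each $g_i$. Since by hypothesis the $g_i$ Poisson-generate $\cO_V^G$, this would force $\cO_V^G \subseteq T$, which is exactly the desired inclusion.

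That $T$ contains every $g_i$ and is a linear subspace is immediate. Closure under the Poisson bracket follows directly from the Jacobi identity: if $f_1, f_2 \in T$, then for any $h \in \cO_V$,
\begin{equation*}
\{\{f_1, f_2\}, h\} = \{f_1, \{f_2, h\}\} - \{f_2, \{f_1, h\}\} \in \{f_1, \cO_V\} + \{f_2, \cO_V\} \subseteq \textstyle\sum_i \{g_i, \cO_V\},
\end{equation*}
so $\{f_1, f_2\} \in T$. For closure under products, the naive Leibniz expansion $\{f_1 f_2, h\} = f_1\{f_2, h\} + f_2\{f_1, h\}$ is not obviously useful, since $\sum_i \{g_i, \cO_V\}$ need not be stable under multiplication by $\cO_V$. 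Instead I would exploit the symmetrized identity
\begin{equation*}
\{f_1 f_2, h\} = \{f_1, f_2 h\} + \{f_2, f_1 h\},
\end{equation*}
which is obtained by applying Leibniz to each term on the right and observing that the resulting extra terms $h\{f_1, f_2\}$ and $h\{f_2, f_1\}$ cancel by antisymmetry. For $f_1, f_2 \in T$, the right-hand side lies in $\{f_1, \cO_V\} + \{f_2, \cO_V\} \subseteq \sum_i \{g_i, \cO_V\}$, so $f_1 f_2 \in T$.

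The main (rather mild) obstacle is exactly the point just flagged: because $\sum_i \{g_i, \cO_V\}$ is in general not an $\cO_V$-submodule of $\cO_V$, one cannot argue closure of $T$ under multiplication via the unsymmetrized Leibniz rule. The symmetric rewriting $\{f_1 f_2, h\} = \{f_1, f_2 h\} + \{f_2, f_1 h\}$ sidesteps this cleanly, and with it the rest of the argument is formal.
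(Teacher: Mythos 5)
Your proof is correct and uses exactly the same two identities as the paper's own argument: the symmetrized Leibniz rule $\{f_1f_2,h\}=\{f_1,f_2h\}+\{f_2,f_1h\}$ and the Jacobi identity. The only cosmetic difference is that you package the induction in a ``good elements'' set $T$, whereas the paper states the reduction directly; the mathematical content is identical.
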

\begin{proof}
It suffices to show that, for all $f, g \in \cO_V^G$ and all $h \in \cO_V$, that $\{fg, h\}$ and $\{\{f,g\},h\}$ are subspaces of $\{f, \cO_V\} + \{g, \cO_V\}$.  This follows from the identities
\[
\{fg, h\} = \{f, gh\} + \{g, fh\}, \quad \{\{f,g\}, h\} = \{f,
\{g,h\}\} - \{g, \{f,h\}\}. \qedhere
\]
\end{proof}

\subsubsection{Cyclic subgroups}

Suppose $G \cong \ZZ/m$. We give a short, self-contained proof of
\begin{theorem}\cite{AL} $h(\HP_0(\cO_V^G, \cO_V);t) = 1 + t^2 + \cdots + t^{2(m-2)}$, and $G$ acts trivially. Moreover, a basis is obtained by the images of the elements $x^a y^a$ for $0 \leq a \leq m-2$.
\end{theorem}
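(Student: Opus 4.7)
The plan is to compute $\{\cO_V^G, \cO_V\}$ explicitly in the monomial basis of $\cO_V = \CC[x,y]$ and then match the resulting dimension to a lower bound coming from Lemma \ref{l:afls-fla}. Choose the generator of $\ZZ/m$ to act by $x \mapsto \zeta x, y \mapsto \zeta^{-1} y$ for $\zeta$ a primitive $m$-th root of unity, so that $\cO_V^G$ is Poisson-generated by $x^m$, $y^m$, and $xy$. By Lemma \ref{l:pg}, it therefore suffices to describe the three subspaces $\{x^m, \cO_V\}$, $\{y^m, \cO_V\}$, and $\{xy, \cO_V\}$ inside $\cO_V$.

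A direct calculation on monomials, using $\{x,y\}=1$, gives the following. First, $\{xy, x^ay^b\} = (b-a)\, x^a y^b$, so $\{xy, \cO_V\}$ is exactly the span of off-diagonal monomials $x^a y^b$ with $a \neq b$. Second, $\{x^m, x^ay^b\} = mb\, x^{m+a-1} y^{b-1}$, so $\{x^m, \cO_V\}$ contains every monomial $x^c y^d$ with $c \geq m-1$; in particular, taking $a = j$ and $b = m + j$ yields $x^{m-1+j} y^{m-1+j} \in \{x^m, \cO_V\}$ for all $j \geq 0$. Symmetrically, $\{y^m, \cO_V\}$ contains every monomial $x^c y^d$ with $d \geq m-1$. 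Combining these three statements, every monomial in $\cO_V$ other than $x^a y^a$ with $0 \leq a \leq m-2$ lies in $\{\cO_V^G, \cO_V\}$, so the images of $x^a y^a$ for $0 \leq a \leq m-2$ span $\HP_0(\cO_V^G, \cO_V)$, yielding the upper bound $h(\HP_0(\cO_V^G, \cO_V); t) \leq 1 + t^2 + \cdots + t^{2(m-2)}$.

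For the matching lower bound I would invoke the general inequality $\dim \HP_0(\cO_V^G, \cO_V) \geq \dim \gr \HH_0(\cD_X^G, \cD_X)$ from the introduction and compute the right-hand side via Lemma \ref{l:afls-fla}: for the generator acting as $\Diag(\zeta, \zeta^{-1})$, the element $g^k - \Id = \Diag(\zeta^k - 1, \zeta^{-k}-1)$ is invertible precisely when $k \not\equiv 0 \pmod m$, giving exactly $m-1$ such group elements. Hence $\dim \HP_0(\cO_V^G, \cO_V) \geq m-1$, which forces the bound above to be an equality and shows the spanning set is a basis. The triviality of the $G$-action on $\HP_0(\cO_V^G, \cO_V)$ is then immediate, since each basis element $x^a y^a$ is itself $G$-invariant.

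There is no real obstacle here: the only substantive step is the monomial-by-monomial check that the three generators produce all the required relations, and the delicate part is verifying that the diagonal monomials $x^a y^a$ with $a \geq m-1$ really do appear as Poisson brackets (handled by the identity $\{x^m, y^{m+j} x^j\} = m(m+j)\, x^{m+j-1} y^{m+j-1}$). Once this is in place, the lower bound from Lemma \ref{l:afls-fla} closes the argument without any further computation.
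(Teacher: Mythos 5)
Your proof is correct and follows essentially the same route as the paper: apply Lemma \ref{l:pg} to the generators and compute the brackets $\{xy,\cO_V\}$, $\{x^m,\cO_V\}$ (and $\{y^m,\cO_V\}$) monomial by monomial, concluding that everything outside the span of $x^ay^a$, $0\leq a\leq m-2$, lies in $\{\cO_V^G,\cO_V\}$. The only cosmetic difference is that you invoke the lower bound from Lemma \ref{l:afls-fla} to get linear independence, whereas the paper gets it for free because the computed bracket space is itself a span of monomials, so the complementary monomials are automatically a basis of the quotient.
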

Up to conjugation, $G = 
\biggl\langle \begin{pmatrix} e^{2\pi i/m} & 0 \\ 0 & e^{-2\pi
    i/m} \end{pmatrix} \biggr\rangle$.  The ring $\cO_V^G$ is
generated by the elements $xy, x^m$, and $y^m$.  It is Poisson
generated by the first two elements.

Therefore, by Lemma \ref{l:pg}, we only need to compute $\{xy,
\cO_V\}$ and $\{x^m, \cO_V\}$. The former is spanned by all
monomials of unequal degrees in $x$ and $y$.  The latter is spanned by monomials of degree $\leq m-1$ in $x$.  Hence, a basis for $\HP_0(\cO_V^G, \cO_V)$ is given by $(1, xy, \ldots, x^{m-2} y^{m-2})$.  This recovers the theorem.

\subsubsection{Dicyclic subgroups}
By the classification of finite subgroups of $\SL_2(\CC)$ recalled
above, the other infinite family of subgroups is that of the dicyclic
groups, which are given up to conjugation by
\[
G = \biggl\langle \begin{pmatrix} e^{2\pi i/m} & 0 \\ 0 & e^{-2\pi
    i/m} \end{pmatrix}, \begin{pmatrix} 0 & -1 \\ 1 & 0 \end{pmatrix}
\biggl\rangle,
\]
for $m$ even. Let $\rho_0$ denote the trivial representation of $G$,
$\rho_1$ the nontrivial one-dimensional representation which vanishes on
the diagonal elements, $\rho_3$ and $\rho_4$ the other one-dimensional representations (in either order), $\tau_1$ the standard $2$-dimensional representation,
 and $\tau_j$ the irreducible two-dimensional representation in which the diagonal elements act through their $j$-th powers (for $1 \leq j \leq m/2-1$).

The goal of this section is to prove
\begin{theorem}
As a graded $G$-representation, $H := \HP_0(\cO_V^G, \cO_V)$ is given by
\begin{gather*}
h(\Hom_G(\rho_0, H);t) = (1 + t^4 + \cdots + t^{2m}) + t^m; \quad
h(\Hom_G(\rho_1, H);t) = (t^2 + t^6 + \cdots + t^{2m-6}) + t^m; \\
h(\Hom_G(\rho_2,H);t) = h(\Hom_G(\rho_3,H);t) = t^{m/2}; \\
h(\Hom_G(\tau_1, H);t) = t; \quad h(\Hom_G(\tau_j, H);t) = t^j + t^{m-j}, \quad 2 \leq j \leq m/2-1.
\end{gather*}
\end{theorem}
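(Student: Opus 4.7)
The plan is to apply Lemma~\ref{l:pg} with explicit Poisson generators of the invariant ring. The commutative algebra $\cO_V^{\widetilde{D_m}}$ is classically generated by $P = x^2 y^2$ (degree $4$), $Q = x^m + y^m$ (degree $m$), and $R = x^{m+1}y - xy^{m+1}$ (degree $m+2$), subject to the Klein relation $R^2 = P Q^2 - 4 P^{m/2+1}$ of type $D_{m/2+2}$. A direct computation gives $\{P,Q\} = -2mR$, so $R$ lies in the Poisson subalgebra generated by $P$ and $Q$; hence $P, Q$ Poisson-generate $\cO_V^G$, and by Lemma~\ref{l:pg},
\[
\{\cO_V^G, \cO_V\} = \xi_P(\cO_V) + \xi_Q(\cO_V).
\]
It thus suffices to compute this sum as a graded $G$-subrepresentation of $\cO_V$ and take the quotient.

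First, the identity $\xi_P(x^p y^q) = 2(q-p)x^{p+1}y^{q+1}$ shows that $\xi_P(\cO_V)$ is spanned by monomials $x^a y^b$ with $a,b \geq 1$ and $a \neq b$; hence $\cO_V / \xi_P(\cO_V)$ has the monomial basis $\{1\}$ in degree $0$, $\{x^k, y^k\}$ in each odd degree $k$, and $\{x^k, (xy)^{k/2}, y^k\}$ in each even degree $k \geq 2$. Using the actions $a \cdot x = \zeta x$, $a \cdot y = \zeta^{-1}y$, $b \cdot x = y$, $b \cdot y = -x$ (dictated by the matrices in the theorem statement), I read off the $G$-type of each basis element: $(xy)^\ell$ spans a copy of $\rho_0$ when $\ell$ is even and of $\rho_1$ when $\ell$ is odd; the plane $\{x^k, y^k\}$ is isomorphic to $\tau_{k}$ (via $\tau_j \cong \tau_{m-j}$) when $k$ is not a multiple of $m/2$, and splits into a pair of one-dimensional reps when it is (with splitting type depending on the parity of $m/2$).

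Next, I compute the image of $\xi_Q$ modulo $\xi_P(\cO_V)$. From
\[
\xi_Q(x^p y^q) = mq\,x^{p+m-1}y^{q-1} - mp\,x^{p-1}y^{q+m-1},
\]
a term $x^a y^b$ survives reduction only if one of $a,b$ vanishes or $a=b$, so only a handful of monomials in each degree $\geq m-1$ contribute. Collecting these, I obtain explicit linear relations among the basis elements of step one, compute the resulting cokernel degree by degree, and read off its isotypic decomposition using the classification already established. The expected qualitative picture is that $\xi_Q$ annihilates each later occurrence of $\tau_j$ (for $j \neq 1$), so that $\tau_1$ survives only at degree $1$ and each $\tau_j$ ($2 \leq j \leq m/2 - 1$) survives only at its two ``first'' degrees $j$ and $m-j$; $\xi_Q$ kills one of the two candidate one-dimensional summands at each even degree beyond the initial range, producing the regular patterns $1 + t^4 + \cdots + t^{2m}$ for $\rho_0$ and $t^2 + t^6 + \cdots + t^{2m-6}$ for $\rho_1$; and at the exceptional degrees $m/2$ and $m$ the Klein syzygy forces additional surviving classes that account for the isolated $t^{m/2}$ in $\rho_2, \rho_3$ and the isolated $t^m$ in $\rho_0$.

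The main obstacle is the degree-by-degree bookkeeping of step two: at each degree one must verify precisely which of the $\xi_P$-cokernel basis elements is in the image of $\xi_Q$, and correctly track how the Klein syzygy $R^2 = PQ^2 - 4P^{m/2+1}$ manifests as a linear dependency at the boundary degrees $m/2$, $m$, $m+2$, and $2m$. For small $m$ (most transparently $m = 4$, where $G \cong Q_8$ is the quaternion group) the computation can be completed by hand and already exhibits the pattern; in general, one can partition the degrees into the intervals $[rm, (r+1)m-1]$ for $r \geq 0$ and argue by induction on $r$, with the Klein syzygy's contribution appearing only in the first and final intervals. (A small ambiguity in the stated formula for $\Hom_G(\rho_1, H)$ must be resolved by this direct computation, which determines the correct multiplicity of $\rho_1$ at degree $m$.)
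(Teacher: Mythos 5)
Your setup is exactly the paper's: the same two Poisson generators $P=x^2y^2$ and $Q=x^m+y^m$ (with $\{P,Q\}$ proportional to the third algebra generator), Lemma \ref{l:pg} to reduce to $\xi_P(\cO_V)+\xi_Q(\cO_V)$, and the same description of $\cO_V/\xi_P(\cO_V)$ by the monomials $1$, $x^k$, $y^k$, $(xy)^{\ell}$. The problem is that the decisive step --- determining $\xi_Q(\cO_V)$ modulo $\xi_P(\cO_V)$ --- is not actually carried out: you state an ``expected qualitative picture'' and defer the verification, and you yourself concede that the multiplicity of $\rho_1$ in degree $m$ cannot be pinned down without it. Since that picture \emph{is} the content of the theorem, what you have written is a correct plan rather than a proof. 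The missing computation is short: from $\xi_Q(x^ay^b)=mb\,x^{a+m-1}y^{b-1}-ma\,x^{a-1}y^{b+m-1}$ one sees that, modulo $\xi_P(\cO_V)$, the image of $\xi_Q$ is the span of all monomials $x^ay^b$ with $a\geq m-1$ or $b\geq m-1$ \emph{except} at the five bidegrees $(m,0),(0,m),(m,m),(2m,0),(0,2m)$, where one obtains only the combinations $x^m-y^m$, $mx^{2m}-m(m+1)x^my^m$, and $m(m+1)x^my^m-my^{2m}$; this immediately yields the basis $(1)\cup(x^a,y^a,x^ay^a)_{1\leq a\leq m-2}\cup(x^m+y^m)\cup\bigl((m+1)(x^{2m}+y^{2m})+x^my^m\bigr)$ and hence the stated graded $G$-decomposition.

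Two points in your sketch are also misattributed. The isolated $t^{m/2}$ in $\rho_2,\rho_3$ has nothing to do with the Klein syzygy: it is just the $a=m/2$ member of the generic family $\Span(x^a,y^a)$, which splits into one-dimensional summands precisely when $a$ is a multiple of $m/2$ (as your own classification of the $\xi_P$-cokernel already shows). Likewise the survivors $x^m+y^m$ in degree $m$ and the degree-$2m$ class are read off directly from the five exceptional bidegrees above, not from the relation $R^2=PQ^2-4P^{m/2+1}$, which plays no role in this argument. Completing the linear algebra at those five bidegrees would close the gap and resolve the $\rho_1$ ambiguity you flag.
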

The invariant ring $\cO_V^G$ in generated by $x^2y^2, x^m+y^m$, and
$xy(x^m-y^m)$.  The first two of these are Poisson
generators.  By Lemma \ref{l:pg}, we therefore only need to compute
$\{x^2y^2, \cO_V\}$ and $\{x^m+y^m, \cO_V\}$.

First, $\{x^2 y^2, \cO_V\}$ is spanned by $\{x^2 y^2, x^a y^b \} = 2(b-a) x^{a+1} y^{b+1}$. This is the span of all monomials of unequal positive
degrees in $x$ and $y$.

Next, $\{x^m+y^m, \cO_V\}$  is spanned by $\{x^m+y^m, x^a y^b \} = bm x^{a+m-1} y^{b-1} - am x^{a-1} y^{b+m-1}$.  Up to the previous span, this is the same as the span of the monomials $x^a y^b$ with either $a \geq m-1$ or $b \geq m-1$, with
the exception of the pairs $(a,b) \in \{(m,0), (0,m), (2m,0), (m,m), (0,2m)\}$, where we obtain the elements
\[
x^m - y^m, \quad mx^{2m} - m(m+1)x^my^m, \quad m(m+1)x^my^m - my^{2m}.
\]
As a result, the following elements map to a graded basis of $\HP_0(\cO_V^G, \cO_V)$:
\begin{equation}
(1) \cup (x^a, y^a, x^ay^a)_{1 \leq a \leq m-2} \cup (x^m+y^m) \cup ((m+1)(x^{2m} + y^{2m}) + x^my^m).
\end{equation}
Moreover, the span of these elements is $G$-invariant, and the theorem
follows easily.

\subsubsection{Exceptional subgroups}
By computer programs in Magma, we computed for the exceptional
subgroups the graded representations $\HP_0(\cO_V^G, \cO_V)$.  In this
case, one can prove that the answer is correct using only the bound on
dimension, $\dim R_v$, from the introduction, for a particular choice
of $v$, since for $G < \SL_2$, $\gr (\xi_{h_i}) = (\gr \xi_{h_i})$, as
$h_i$ ranges over generators of $\cO_V^G$.  Just to double-check, we
also employed the programs using the method of \S \ref{matsec} (since
$\dim R_v = \dim \HP_0(\cO_V^G, \cO_V)$ in this case, this yields
precisely the correct Hilbert series, i.e., \eqref{cxibdeqn} is an
equality.)

Label the representations of $G \in \{\widetilde{A_4}, \widetilde{S_4}, \widetilde{A_5}\}$, corresponding to the McKay graph $E_m$, 
by $\rho_0, \ldots, \rho_m$, with $\rho_0$ the trivial representation, according to Figure \ref{f:e6-8}. Our indexing follows Magma (in particular, indices increase with the dimension of the irreducible representation).
\begin{figure}
\includegraphics[width = 0.8\textwidth]{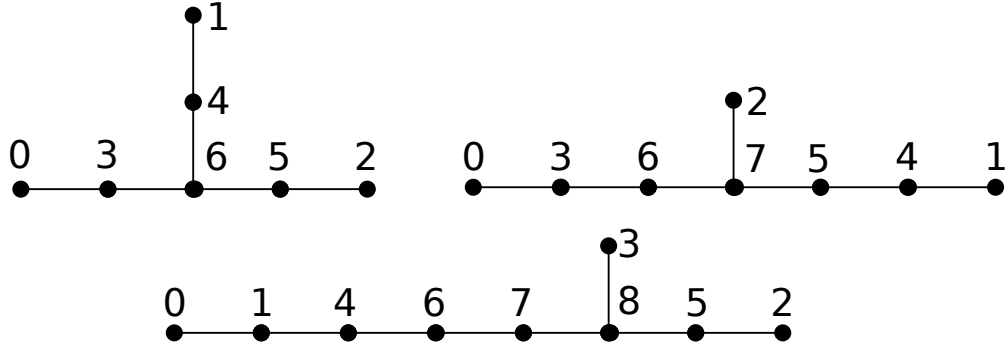}
\caption{Labels of the irreducible representations of $\widetilde{A_4}, \widetilde{S_4}, \widetilde{A_5} < \SL_2(\CC)$ in terms of the McKay graphs $\widetilde{E_6}, \widetilde{E_7}, \widetilde{E_8}$, respectively.  The defining $2$-dimensional representation is the second from the left in all cases; tensoring a representation by
this representation yields the direct sum of all adjacent representations.\label{f:e6-8}}
\end{figure}
\begin{theorem}
The graded $G$-structure of $H = \HP_0(\cO_V^G, \cO_V)$ is given by:
\begin{enumerate}
\item[$G=\widetilde{A_4}$:]
$h(\Hom_G(\rho_0, H);t)=1+t^6+t^8+t^{12}+t^{14}+t^{20};$ \\
$h(\Hom_G(\rho_1, H);t)=h(\Hom_G(\rho_2, H);t)= t^4;$ \\
$h(\Hom_G(\rho_3, H);t)=t+t^7;$ \\
$h(\Hom_G(\rho_4, H);t)=h(\Hom_G(\rho_5, H);t)=t^3 + t^5;$ \\
$h(\Hom_G(\rho_6, H);t)=t^2+t^4+t^6+t^8+t^{10}$.
\item[$G=\widetilde{S_4}$:] $h(\Hom_G(\rho_0, H);t)=1+t^8+t^{12}+t^{16}+t^{20}+t^{24}+t^{32}$;  $\quad$
$h(\Hom_G(\rho_1, H);t) = t^6+t^{14}$; \\
$h(\Hom_G(\rho_2, H);t) = t^4+t^8+t^{12}+t^{16}$; $\quad$
$h(\Hom_G(\rho_3, H);t) = t+t^9$; \\
$h(\Hom_G(\rho_4, H);t) = t^5+t^7+t^{13}$; $\quad$
$h(\Hom_G(\rho_5, H);t) = t^4+t^6+t^8+t^{12}$; \\
$h(\Hom_G(\rho_6, H);t) = t^2+t^6+2t^{10}+t^{14}+t^{18}$; $\quad$
$h(\Hom_G(\rho_7, H);t) = t^3+t^5+t^7+t^9+t^{11}$.
\item[$G=\widetilde{A_5}$:]
$h(\Hom_G(\rho_0, H);t)=1+t^{12}+t^{20}+t^{24}+t^{32}+t^{36}+t^{44}+t^{56}$;  $\quad$
$h(\Hom_G(\rho_1, H);t)=t+t^{13}+t^{25}$; \\
$h(\Hom_G(\rho_3, H);t)= t^6+t^{10}+t^{14}+t^{18}+t^{22}+t^{26}+t^{30}$; $\quad$
$h(\Hom_G(\rho_2, H);t)=t^7+t^{13}+t^{19}$; \\
$h(\Hom_G(\rho_4, H);t)=t^2+t^{10}+t^{14}+t^{18}+t^{22}+t^{26}+t^{34}$; \\
$h(\Hom_G(\rho_5, H);t)=t^6+t^8+t^{12}+t^{14}+t^{18}+t^{20}$; \\
$h(\Hom_G(\rho_6, H);t)=t^3+t^9+t^{11}+t^{15}+t^{17}+t^{23}$; \\
$h(\Hom_G(\rho_7, H);t)=t^4+t^8+t^{10}+t^{12}+2t^{16}+t^{20}+t^{24}+t^{28}$; \\
$h(\Hom_G(\rho_8, H);t)=t^5+t^7+t^9+t^{11}+t^{13}+t^{15}+t^{17}+t^{19}+t^{21}$.
\end{enumerate}
\end{theorem}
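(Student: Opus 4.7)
The plan is to combine the dimension bound $\dim \HP_0(\cO_V^G, \cO_V)^* \leq \dim R_v$ from the introduction with the top-degree bound of Theorem \ref{matthm} and a direct finite-dimensional linear algebra computation of $\cO_V/\{\cO_V^G, \cO_V\}$ up to that degree. For each of the three exceptional subgroups $G \in \{\widetilde{A_4}, \widetilde{S_4}, \widetilde{A_5}\}$, I would first fix the classical Klein invariant generators $h_1,h_2,h_3$ of $\cO_V^G$ (of degrees $(6,8,12)$, $(8,12,18)$, and $(12,20,30)$ respectively), which are Poisson generators.

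The crucial reduction is the remark in \S \ref{compsec} that for $G < \SL_2(\CC)$ one has $\gr(F_D(\xi_{h_i})) = D_{v'} h_i$ as honest ideal generators, so that $\gr I_H = J_v$ and hence $\dim R_v = \dim \HP_0(\cO_V^G, \cO_V)$ for generic $v \in V^*$. In particular, after computing $R_v$ for one generic $v$ via a Gr\"obner basis of $J_v = (D_{v'} h_1, D_{v'} h_2, D_{v'} h_3) \subset \cO_V$, we obtain both the total dimension and, via Theorem \ref{matthm} applied to the Euler field, the top degree (which should match $20$, $32$, and $56$ in the three cases). As a sanity check one can also run the method of \S \ref{matsec} and verify that \eqref{cxibdeqn} achieves equality.

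With the top degree in hand, the computation becomes finite linear algebra graded-piece by graded-piece. By Lemma \ref{l:pg}, for each degree $d$ up to the top degree one computes the image of
\begin{equation*}
\{h_1,-\} \oplus \{h_2,-\} \oplus \{h_3,-\} \colon \bigoplus_{j=1}^{3} (\cO_V)_{d-|h_j|+2} \longrightarrow (\cO_V)_d
\end{equation*}
and takes the cokernel. The decomposition into $G$-isotypic components is then obtained by applying the character projectors $e_{\rho_i} = \frac{\dim \rho_i}{|G|} \sum_{g \in G} \overline{\chi_{\rho_i}(g)}\, g$ to a basis of the cokernel in each degree, reading off $h(\Hom_G(\rho_i, H);t)$ as claimed. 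Summing the dimensions of the isotypic components in each degree against $\dim \rho_i$, and summing over all $i$, must reproduce $\dim R_v$; matching this total certifies that no degree was missed.

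The main obstacle is purely computational: in the $\widetilde{A_5}$ case the top degree $56$ combined with $|G|=120$ produces rather large linear systems over a cyclotomic field. As indicated in \S \ref{compsec}, the practical remedy is to work over a prime field $\FF_p$ containing primitive $|G|$-th roots of unity, which a priori gives only upper bounds on the multiplicities $\dim \Hom_G(\rho_i, H)_d$; but since we have already established the sharp bound $\dim \HP_0(\cO_V^G, \cO_V) = \dim R_v$ over $\CC$, equality of total dimensions forces these upper bounds to be sharp, so the $\FF_p$-computation yields the correct answer. The same character-projector analysis then produces the stated Hilbert series for $\Hom_G(\rho_i, H)$ for each irreducible $\rho_i$ in Figure \ref{f:e6-8}.
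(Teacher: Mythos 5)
Your proposal is correct and is essentially the paper's own proof: the paper likewise computes $\HP_0(\cO_V^G,\cO_V)$ by Magma up to the predicted top degree and certifies completeness by observing that the total dimension attains the a priori upper bound $\dim R_v$ from the introduction (using that for $G<\SL_2(\CC)$ the Hamiltonian vector fields equal their symbols), with the method of \S\ref{matsec} used only as a cross-check. The one caveat is that the equality $\dim R_v = \dim \HP_0(\cO_V^G,\cO_V)$ is an a posteriori observation rather than a consequence of $\gr I_H = J_v$ (which only yields the inequality $\dim \HP_0(\cO_V^G,\cO_V) \leq \dim R_v$), so the certification should be phrased as: the explicitly computed part already has dimension $\dim R_v$, hence nothing in higher degree is missed.
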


\subsection{Coxeter groups of rank $\leq 3$ and $A_4, B_4=C_4$, and $D_4$}\label{ss:cox}
\begin{theorem}
For every Coxeter group $G< \GL(X) < \Sp(V)$ of rank $\leq 3$, $\HP_0(\cO_V^G,
\cO_V) \cong \gr \HH_0(\cD_X^G,\cD_X)$. The resulting Hilbert series is
\begin{gather*}
A_1: 1; \quad A_2: 1+t^2; \quad A_3: 1+3t^2+2t^4; \\
B_2=C_2: 1+t^2+t^4; \quad B_3=C_3: 1+3t^2+6t^4+4t^6+t^8;\\
H_3: 1+3t^2+6t^4+10t^6+15t^8+9t^{10}+t^{12}; \quad
I_2(m): 1 + t^2 + \cdots + t^{2(m-2)}. 
\end{gather*}
Also, for types $A_4, B_4=C_4$, and $D_4$,  $\HP_0(\cO_V^G,
\cO_V) \cong \gr \HH_0(\cD_X^G,\cD_X)$ holds. The resulting Hilbert series are
\begin{gather*}
A_4: 1+6t^2+10t^4+6t^6+t^8; \quad D_4: 1+6t^2+20t^4+16t^6+2t^8; \\
B_4=C_4: 1+6t^2+20t^4+31t^6+28t^8+15t^{10}+4t^{12}. 
\end{gather*}
The Hilbert series of $\HP_0(\cO_V^G) \cong \HH_0(\cD_X^G)$ in
all of these cases are
\begin{gather*}
A_1, A_2, A_3, A_4: 1;  \quad D_4: 1+t^4+t^8; \\
B_2=C_2: 1+t^4;  \quad B_3=C_3: 1+t^4+t^8; \quad B_4=C_4: 1+t^4+2t^8+t^{12}; \\
H_3: 1+t^4+t^8+t^{12}; \quad
I_2(m): 1 + t^4 + \cdots + t^{4 \lfloor (m-2)/2 \rfloor}. 
\end{gather*}
\end{theorem}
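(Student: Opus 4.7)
The plan is to follow the computational pipeline outlined in \S \ref{compsec}, and to verify the claimed isomorphism $\HP_0(\cO_V^G, \cO_V) \cong \gr \HH_0(\cD_X^G, \cD_X)$ case by case by squeezing the dimension between the upper bound coming from the techniques of \S\S \ref{glnsec}--\ref{matsec} and the lower bound from Lemma \ref{l:afls-fla}. More precisely, for each Coxeter group $G$ in the list, I would first invoke Corollary \ref{crtopdegcor} (or equivalently Theorem \ref{glnthm} applied to a $v \in X^*$) to obtain a preliminary bound on the top degree of $\HP_0(\cO_V^G, \cO_V)$. This bound is typically much larger than the true top degree but is cheap to compute and already makes the subsequent computation finite.

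Next, I would apply Theorem \ref{matthm}, constructing the matrix $B_\xi$ for a chosen $v \in V^*$ and extracting its nonpositive integer eigenvalues to refine the bound on the top degree. Following the prescription of \S \ref{compsec}, this computation is performed modulo a prime $p$ larger than the first bound in order to keep the linear algebra tractable; the output is a (usually much sharper) upper bound on the top degree, which in practice already matches the true top degree for all the groups considered. Given this top-degree bound, I would then run the Magma programs of \cite{ReSc-progs} to explicitly compute the graded vector space $\HP_0(\cO_V^G, \cO_V)$, together with its $G$-structure, up to that degree, working over a suitable number field (or, if necessary, over $\FF_p$ for large $p$, as discussed in \S \ref{compsec}).

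The decisive step is the comparison with Lemma \ref{l:afls-fla}: once the computer output yields a graded space of dimension exactly $|\{g \in G : g-\Id \text{ invertible}\}|$, the canonical surjection $\HP_0(\cO_V^G, \cO_V) \onto \gr \HH_0(\cD_X^G, \cD_X)$ recalled in the introduction must be an isomorphism, since the right-hand side already has that dimension. This gives the first set of Hilbert series. The second set, for $\HP_0(\cO_V^G) \cong \gr \HH_0(\cD_X^G)$, then follows by extracting $G$-invariants from the $G$-graded answer, and matches the trivial isotypic component of the subspace of $\C[G]_\ad$ spanned by $g$ with $g-\Id$ invertible, again via Lemma \ref{l:afls-fla}.

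The main obstacle is purely computational: for the rank-$4$ groups ($A_4, D_4, B_4=C_4$) the ambient symplectic space has dimension $8$, so both the Gr\"obner-basis computation producing $R_v$ and the construction and diagonalization of $B_\xi$ can be quite slow, and one may need to work modulo several primes to keep the linear algebra in bounds. A secondary difficulty is the choice of $v \in V^*$: for special $v$ the ideal $J_v$ can fail to be zero-dimensional or can have unnecessarily large codimension, so $v$ must be chosen generically (in the sense of the footnote following Remark \ref{r:regseq}) to both make $\dim R_v$ finite and to keep the $B_\xi$-eigenvalue bound tight. Once these practical issues are handled, no further theoretical input beyond Lemma \ref{l:afls-fla} and Theorems \ref{glnthm}--\ref{matthm} is needed to conclude.
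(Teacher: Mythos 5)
Your pipeline is exactly the one the paper uses for this theorem: a cheap top-degree bound from Corollary \ref{crtopdegcor}/Theorem \ref{glnthm}, a sharper bound from the eigenvalues of $-B_\xi$ in Theorem \ref{matthm} computed over $\FF_p$, an explicit Magma computation of $\HP_0(\cO_V^G,\cO_V)$ up to that degree, and finally the squeeze against the lower bound $|\{g \in G : g-\Id \text{ invertible}\}|$ from Lemma \ref{l:afls-fla}, with the invariant Hilbert series read off from the $G$-structure. For the finitely many groups $A_1,\dots,A_4$, $B_2,B_3,B_4$, $D_4$, $H_3$ this is precisely how the paper proceeds (the paper gives no further proof in \S \ref{ss:cox} beyond pointing to this machinery).

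There is, however, one genuine gap: $I_2(m)$ is an \emph{infinite family}, and a case-by-case computer verification can only establish the claimed formula $1+t^2+\cdots+t^{2(m-2)}$ for finitely many values of $m$, not for all $m$. The paper closes this by a separate hand computation: $I_2(m) = G(m,m,2)$ is treated in \S \ref{s:gmp2} (Theorem \ref{t:gmm2} and Corollary \ref{c:gmm2}), where one writes down Poisson generators of $\cO_V^G$ (namely $x_1y_1+x_2y_2$, $x_1x_2$, $y_1y_2$, $x_1^m+x_2^m$, $y_1^m+y_2^m$), computes the spans of their brackets explicitly via Lemma \ref{l:pg}, and exhibits the basis $x_1^ay_1^a+(-1)^ax_2^ay_2^a$, $0 \leq a \leq m-2$, whose cardinality $m-1$ matches the count of elements $g$ with $g-\Id$ invertible. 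Your argument needs to be supplemented by something of this kind (or by the general $G(m,p,2)$ analysis) to cover all $m$ at once; otherwise the $I_2(m)$ line of the theorem remains unproved.
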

\begin{remark}
Partial computer tests have shown that $\HP_0(\cO_V^G, \cO_V) \ncong \gr \HH_0(\cD_X^G, \cD_X)$ for $G = F_4$, although we do not know whether the identity holds on the level of invariants.
\end{remark}
\begin{remark}
  The surjection $\HP_0(\cO_V^G, \cO_V) \onto \gr \HH_0(\cD_X^G,
  \cD_X)$ is not, in general, an isomorphism for Coxeter groups of
  rank $\geq 5$. Via the equivalence of \cite[Theorem 1.5.1]{ReScmat},
  \cite[8.6]{Matso} (see also \cite[Example 1.6.1]{ReScmat}) shows
  that $\HP_0(\cO_V^G, \cO_V) \ncong \gr \HH_0(\cD_X^G, \cD_X)$ when
  $G \cong S_{n+1}$ is a Weyl group of type $A_{n}$ for $n \geq 5$
  (but, $HP_0(\cO_V^G) \cong \gr \HH_0(\cD_X^G)$ for all types $A_n$
  by \cite{ESweyl}).  Also, by \cite{ESweyl}, $\HP_0(\cO_V^G) \ncong
  \gr \HH_0(\cD_X^G)$ when $G$ is a Weyl group of type $D_n$ for $n
  \geq 5$.
\end{remark}
\begin{question} In the cases $F_4, H_4, E_6, E_7$, and $E_8$, does
  $\HP_0(\cO_V^G) \cong \gr \HH_0(\cD_X^G)$ hold? If so, in any case
  (except $F_4$), does $\HP_0(\cO_V^G, \cO_V) \cong \gr \HH_0(\cD_X^G,
  \cD_X)$ hold?
\end{question}

\subsection{Complex reflection groups of rank two}
\begin{theorem}\label{t:cr-alev}
Of the complex reflection groups of rank two, the ones such that $\HP_0(\cO_V^G, \cO_V) \cong \gr \HH_0(\cD_X^G, \cD_X)$ are exactly $S_3, G(m,1,2), G(m,m,2),
G_4, G_6, G_8$, and $G_{14}$.  The additional groups such that $\HP_0(\cO_V^G) \cong \gr \HH_0(\cD_X^G)$ are $G(4,2,2), G(6,2,2), G_5, G_9$, and $G_{21}$.
\end{theorem}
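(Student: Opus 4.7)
The plan is to prove the classification by combining the degree bounds of \S\ref{glnsec}--\S\ref{matsec}, the lower bound of Lemma \ref{l:afls-fla}, and case-by-case computation, while invoking the explicit results for the two infinite families separately.

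First, I would handle the infinite families. For $G = S_3$ and for $G = G(m,1,2)$ and $G = G(m,m,2)$, the explicit description of $\HP_0(\cO_V^G, \cO_V)$ carried out in \S\ref{s:gmp2} (the theorem there treats all $G(m,p,2)$) yields the Hilbert series and $G$-structure. Comparing the total dimension of $\HP_0(\cO_V^G, \cO_V)$ obtained there with $|\{g \in G : g - \Id \text{ is invertible}\}|$, which equals $\dim \HH_0(\cD_X^G, \cD_X)$ by Lemma \ref{l:afls-fla}, determines whether the canonical surjection $\HP_0(\cO_V^G, \cO_V) \onto \gr \HH_0(\cD_X^G, \cD_X)$ is an isomorphism. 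This shows that among $G(m,p,2)$, only $G(m,1,2)$ and $G(m,m,2)$ satisfy the full isomorphism, and that additionally $G(4,2,2)$ and $G(6,2,2)$ satisfy the invariant isomorphism $\HP_0(\cO_V^G) \cong \gr \HH_0(\cD_X^G)$ (obtained by restricting to $G$-invariants in the explicit description).

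Next, I would treat each of the exceptional Shephard--Todd groups $G_4, G_5, \ldots, G_{22}$ in turn. For each such $G$, the procedure is: (i) choose a suitable $v \in V^*$ and apply Theorem \ref{matthm} to compute the matrix $B_\xi$ and thus an upper bound on the top degree of $\HP_0(\cO_V^G, \cO_V)$; when Theorem \ref{matthm} is too slow, fall back on Corollary \ref{crtopdegcor} to get the cruder bound. (ii) Run the computer programs \cite{ReSc-progs} to compute $\HP_0(\cO_V^G, \cO_V)$ explicitly up through this top-degree bound, recording the full graded $G$-structure. (iii) Compare the resulting total dimension with the AFLS count $|\{g \in G : g-\Id \text{ is invertible}\}|$ of Lemma \ref{l:afls-fla}; equality forces the surjection to be an isomorphism, and strict inequality rules it out. (iv) Restrict to $G$-invariants and compare $\dim \HP_0(\cO_V^G)$ with the number of conjugacy classes of $g \in G$ with $g - \Id$ invertible to decide the invariant version. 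Executing this verifies that the full isomorphism holds precisely for $G_4, G_6, G_8, G_{14}$, and that among the remaining groups, $G_5, G_9, G_{21}$ additionally satisfy the invariant isomorphism.

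The main obstacle is computational: for the largest groups (notably $G_{17}, G_{18}, G_{19}, G_{20}, G_{22}$) the top-degree bounds from Corollary \ref{crtopdegcor} are very large (e.g.\ $232$ for $G_{19}$), and Gr\"obner basis computations over the cyclotomic field of definition become infeasible. I would handle this by two reductions described in \S\ref{compsec}: first compute $B_\xi$ modulo a prime $p$ larger than the top-degree bound and extract its integer eigenvalues, which typically shrinks the effective top-degree bound dramatically and often matches it exactly; then perform the $\HP_0$ computation itself either over the minimal subfield over which $\cO_V^G$ is defined, or over $\FF_p$ with $p$ containing primitive $|G|$-th roots of unity. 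Working over $\FF_p$ a priori yields only an upper bound for the Hilbert series of $\HP_0(\cO_V^G, \cO_V)$, but if the resulting dimension coincides with the AFLS lower bound, this forces equality in characteristic zero and establishes the isomorphism unconditionally.

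Finally, I would assemble the individual case outputs into the two lists in the statement, noting that these are precisely the groups for which the AFLS count matches the computed $\HP_0$ (respectively $\HP_0(\cO_V^G)$) dimension, with all other rank-two complex reflection groups showing a strict gap. The two cases $G_{18}, G_{19}$ require separate comment: for the non-invariant statement they fail the isomorphism by direct computation, while the invariant isomorphism in those two cases is established only modulo the caveat about working over $\FF_p$ discussed above, consistent with the scope described in the introduction.
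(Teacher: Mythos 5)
Your overall strategy---explicit treatment of the infinite families $G(m,p,2)$ via \S\ref{s:gmp2} (which covers $S_3\cong G(3,3,2)$), and machine computation of $\HP_0(\cO_V^G,\cO_V)$ for the exceptional groups $G_4,\ldots,G_{22}$, bounded above in degree by Theorem \ref{matthm} and Corollary \ref{crtopdegcor} and below in dimension by the count of Lemma \ref{l:afls-fla}---is exactly the route the paper takes. However, your treatment of $G_{18}$ and $G_{19}$ is backwards, and as stated it would fail. For these two groups the full computation of $\HP_0(\cO_V^G,\cO_V)$ is precisely what is \emph{not} feasible (the paper computes $\HP_0(\cO_V^G,\cO_V)$ for all exceptional rank-two groups \emph{except} $G_{18}$ and $G_{19}$), so you cannot claim that they ``fail the isomorphism by direct computation'' of the non-invariant object. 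The correct logic is the reverse of what you wrote: one computes enough low-degree graded components of the \emph{invariant} space $\HP_0(\cO_V^G)$ (exactly, over a number field, so that the partial dimension count is a rigorous lower bound) to see that $\dim\HP_0(\cO_V^G)$ strictly exceeds the number of conjugacy classes of $g\in G$ with $g-\Id$ invertible; this proves $\HP_0(\cO_V^G)\ncong\gr\HH_0(\cD_X^G)$, and the failure of the non-invariant isomorphism then follows by taking $G$-invariants of the canonical surjection. Your closing assertion that ``the invariant isomorphism in those two cases is established'' also contradicts the very theorem you are proving: $G_{18}$ and $G_{19}$ appear on neither list.

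A second, smaller point: in step (iii) you write that ``strict inequality rules it out,'' but this is valid only when the computation is exact in characteristic zero. A computation over $\FF_p$ yields only an \emph{upper} bound on the characteristic-zero Hilbert series, so it can certify the positive direction (when the $\FF_p$ dimension equals the AFLS lower bound) but can never by itself certify $\ncong$. For every group you wish to exclude from either list you must exhibit a rigorous characteristic-zero lower bound exceeding the relevant count from Lemma \ref{l:afls-fla}, e.g., by exact computation of sufficiently many low-degree graded pieces over the field of definition of $\cO_V^G$.
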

We also compute the relevant Hilbert series, where $\HP_0$ and $\HH_0$
coincide.  For the case $S_3$, this is given in the previous section,
and the $G(m,p,2)$ case is treated in \S \ref{s:gmp2}, where we also
prove the above theorem in this case.  For the exceptional cases, we
used Magma programs and the techniques of \S$\!$\S \ref{glnsec} and
\ref{matsec} to compute $\HP_0(\cO_V^G, \cO_V)$ for all $G_4, \ldots,
G_{22}$ except $G_{18}$ and $G_{19}$, and computed enough of
$\HP_0(\cO_V^G)$ for the cases $G_{18}$ and $G_{19}$ to prove that
  $\HP_0(\cO_{V}^G) \ncong \gr \HH_0(\cD_X^G)$ (in fact, it seems
  we computed all of $\HP_0(\cO_V^G)$, but we could not prove
  it).  We give the results in the cases where the isomorphism holds:
\begin{theorem}\label{t:exst-hilb}
The Hilbert series of $\HP_0(\cO_{V}^G) \cong \gr \HH_0(\cD_X^G)$ for
the exceptional Shephard-Todd groups $G_4, G_5,
  G_6, G_8, G_9, G_{14}$, and $G_{21}$ where this holds are:
\begin{gather*}
G_4:  1+t^2+t^4+t^8; \quad G_5: 1+t^2+t^4+2t^6+3t^8+2t^{10}+2t^{12}+2t^{14}+t^{16}+t^{20}; \\
G_6: 1+t^2+t^4+t^6+2t^8+t^{10}+t^{12}+t^{14}+t^{16}; \\
G_8: 1+t^2+t^4+t^6+2t^8+t^{10}+2t^{12}+t^{14}+t^{16}+t^{20}; \\
G_9: 1+t^2+t^4+t^6+2t^8+2t^{10}+3t^{12}+2t^{14}+3t^{16}+2t^{18}+3t^{20}+t^{22}+2t^{24}+t^{26}+t^{28}+t^{32}; \\
G_{14}:1+t^2+t^4+t^6+2t^8+t^{10}+2t^{12}+2t^{14}+2t^{16}+t^{18}+2t^{20}+t^{22}+t^{24}+t^{26}+t^{28};
\end{gather*}
\begin{multline*}
G_{21}: 1 + t^2+t^4+t^6+t^8+t^{10}+2t^{12}+2t^{14}+2t^{16}+2t^{18}+3t^{20}+2t^{22}+3t^{24}+3t^{26}+3t^{28}\\+2t^{30}+3t^{32}+2t^{34}+3t^{36}+2t^{38}+2t^{40}+t^{42}+2t^{44}+t^{46}+t^{48}+t^{50}+t^{52}+t^{56}.
\end{multline*}
The Hilbert series of $\HP_0(\cO_V^G, \cO_V) \cong \gr \HH_0(\cD_{X}^G, \cD_X)$ in
the cases $G_4, G_6, G_8$, and $G_{14}$ where this holds are
\begin{gather*}
G_4: 1+4t^2+6t^{14}+3t^6+t^8; \quad G_6: 1+4t^2+9t^4+7t^6+5t^8+4t^{10}+t^{12}+t^{14}+t^{16}; \\
G_8: 1+4t^2+9t^4+16t^6+17t^8+13t^{10}+10t^{12}+5t^{14}+t^{16}+t^{20}; \\
G_{14}: 1+4t^2+9t^4+16t^6+22t^8+18t^{10}+15t^{12}+11t^{14}+7t^{16}+6t^{18}+2t^{20}+t^{22}+t^{24}+t^{26}+t^{28}.
\end{gather*}
\end{theorem}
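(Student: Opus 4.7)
The plan is to realize this as a computer-assisted verification built on the two reduction techniques developed above: the regular-sequence top-degree bound of \S\ref{glnsec} and the matrix eigenvalue bound of \S\ref{matsec}. Since each of the groups $G \in \{G_4, G_5, G_6, G_8, G_9, G_{14}, G_{21}\}$ sits inside $\GL(X) < \Sp(V)$ with $\dim X = 2$, Corollary \ref{crtopdegcor} immediately gives explicit (but typically quite loose) upper bounds on the top degree of $\HP_0(\cO_V^G, \cO_V)$, namely those tabulated in Corollary \ref{crtopdegcor} (e.g.\ $12$ for $G_4$, $56$ for $G_{14}$, $136$ for $G_{21}$). The strategy is then to sharpen these bounds using Theorem \ref{matthm} and to finish by explicit finite-dimensional linear algebra in Magma.

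First I would fix a specific generic $v \in V^* = X^* \oplus Y^*$ (in practice one takes $v \in X^*$ so that the bound $2\cdot\mathrm{topdeg}(\overline{R_v})$ of \S\ref{glnsec} applies first) and compute a Gröbner basis of $J_v$, keeping track of lifts to the left ideal $I_H \subset \cD_{V^*}$ as in \S\ref{algsec}. From this Gröbner basis one reads off a homogeneous monomial basis $f_1,\dots,f_N$ of $R_v$, lifts each to $F_i \in \cD_{V^*}$, and, by reducing $\xi \circ F_i$ modulo $I_v \cdot \cD_{V^*} + I_H$ using the tracked lifts, assembles the matrix $C_\xi$ and hence $B_\xi = C_\xi - \Diag(d_1,\dots,d_N)$ of Theorem \ref{matthm}. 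The integer eigenvalues of $-B_\xi$ then provide the sharp upper bound $h(\HP_0(\cO_V^G,\cO_V)^*;t) \leq \sum_{i \le 0} \dim E_i(B_\xi)\, t^i$, which in the cases tested is expected to coincide with the actual Hilbert series (or to overshoot by only a few degrees). To keep this feasible, I would perform the eigenvalue computation over a prime field $\FF_p$ for $p$ exceeding the Koszul bound; this is justified since an $\FF_p$-bound still bounds the characteristic-zero dimensions from above.

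Once a top degree $N_0$ is in hand, the remaining step is purely mechanical: compute $\HP_0(\cO_V^G, \cO_V)$ (or just $\HP_0(\cO_V^G)$ for $G_5, G_9, G_{21}$) truncated in degrees $\leq N_0$ by forming the quotient $\cO_V / \{\cO_V^G, \cO_V\}$ degree by degree, using a Poisson-generating set of $\cO_V^G$ and Lemma \ref{l:pg} to cut down the number of brackets one must compute. This yields an explicit upper bound on the graded Hilbert series. To match it with the stated series, I would invoke Lemma \ref{l:afls-fla}: the dimension of $\HH_0(\cD_X^G, \cD_X)$ equals the number of elements $g \in G$ with $g - \Id$ invertible, and this is a \emph{lower} bound on $\dim \HP_0(\cO_V^G, \cO_V)$ via the graded surjection $\HP_0(\cO_V^G, \cO_V) \onto \gr \HH_0(\cD_X^G, \cD_X)$. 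When the computed truncated dimension matches this lower bound (as should happen for $G_4, G_6, G_8, G_{14}$), we simultaneously prove the Hilbert series formula and the isomorphism of Theorem \ref{t:cr-alev}; for $G_5, G_9, G_{21}$ one performs the same comparison after passing to $G$-invariants, using the class-function version of Lemma \ref{l:afls-fla}.

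The main obstacles are computational rather than conceptual. The size of $R_v$, and hence of the matrix $B_\xi$, grows quickly, so for the larger groups (notably $G_{21}$ with its degree bound of $136$ and $G_9$) the Gröbner basis step and the assembly of $C_\xi$ are the bottleneck; this is the reason we must work over $\FF_p$ and, for the characteristic-zero computation of $\HP_0$ itself, over the smallest subfield of $\overline\QQ$ over which $\cO_V^G$ is defined (often a proper subfield of the field of definition of $G$). A secondary difficulty is ensuring that the $\FF_p$-bound one obtains from $B_\xi$ does not spuriously exceed the true top degree by so much that the subsequent direct computation becomes infeasible; in practice this requires some experimentation with the choice of $v$ and with the lifts $F_i$, exploiting the fact, noted after Theorem \ref{matthm}, that the set of admissible $B_\xi$ depends only on the line $\CC\cdot v$ up to conjugation.
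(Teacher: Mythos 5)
Your proposal follows essentially the same route as the paper: first the coarse top-degree bound of Corollary \ref{crtopdegcor}, then the sharper eigenvalue bound from $B_\xi$ of Theorem \ref{matthm} computed over $\FF_p$, then explicit degree-by-degree computation of $\HP_0$ over a number field up to that bound, and finally comparison with the lower bound from Lemma \ref{l:afls-fla} (at the level of invariants for $G_5$, $G_9$, $G_{21}$). This is exactly the procedure described in \S \ref{compsec} and \S \ref{algsec}; the only quibble is the misquoted Koszul bound for $G_{14}$ (the table gives $52$, not $56$).
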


\section{Abelian subgroups of $\Sp_4$}\label{s:sp4abel}
In this section, we describe $\HP_0(\cO_V^G, \cO_V)$ in the case that
$V=\C^4$ and $G$ is an abelian subgroup of $\Sp_4$.  By the following
elementary lemma, it suffices to assume that $G < (\C^\times)^2 <
\GL_2 < \Sp_4$, and moreover, in this case, $\HP_0(\cO_V^G,
\cO_V)=\HP_0(\cO_V^G)$:
\begin{lemma}\label{l:abelian}
  Let $G < \Sp_{2n}$ be a finite abelian subgroup. Then, up to
  conjugation, $G < (\C^\times)^n < \GL_n < \Sp_{2n}$ is a subgroup of
  diagonal matrices. Moreover, $G$ acts trivially on $\HP_0(\cO_{\C^{2n}}^G,
  \cO_{\C^{2n}})$.
\end{lemma}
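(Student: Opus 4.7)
The plan is to handle the two statements in sequence. For the diagonalization claim, since $G$ is a finite commutative subgroup of $\Sp_{2n}(\CC)$, every element is semisimple, so $G$ is simultaneously diagonalizable: $V = \bigoplus_{\chi \in \widehat G} V_\chi$, where $\chi$ ranges over characters of $G$. Because each $g \in G$ preserves the symplectic form $\omega$, for $v \in V_\chi$ and $w \in V_\mu$ we have $\omega(v,w) = \omega(gv,gw) = \chi(g)\mu(g)\omega(v,w)$, so $\omega$ pairs $V_\chi$ nondegenerately with $V_{\chi^{-1}}$ and annihilates all other pairs. First I would choose, for each unordered pair $\{\chi,\chi^{-1}\}$ with $\chi^2 \neq 1$, an arbitrary basis of $V_\chi$ and take the $\omega$-dual basis of $V_{\chi^{-1}}$; for each $\chi$ with $\chi^2 = 1$, the restriction of $\omega$ to $V_\chi$ is nondegenerate, so I pick a symplectic basis there (with all basis vectors $G$-eigenvectors of character $\chi$). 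Concatenating yields a symplectic basis $x_1,\dots,x_n,y_1,\dots,y_n$ simultaneously diagonalizing $G$, with each $g$ acting on $y_i$ by the reciprocal of its eigenvalue on $x_i$. This is precisely the assertion that $G$ sits inside $(\CC^\times)^n < \GL_n < \Sp_{2n}$ under the embedding $A \mapsto A \oplus (A^{-1})^*$.

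For the second statement, I use these coordinates, where $\cO_V = \CC[x_1,\dots,x_n,y_1,\dots,y_n]$ is bigraded by $G$-characters and each monomial is a $G$-eigenvector. The key observation is that $x_i y_i \in \cO_V^G$ for every $i$, since the character of $x_i y_i$ is $\chi \cdot \chi^{-1} = 1$. A direct computation using $\{x_i,y_j\}=\delta_{ij}$ gives
\begin{equation*}
\{x_i y_i, x_1^{a_1}y_1^{b_1}\cdots x_n^{a_n}y_n^{b_n}\} = (b_i - a_i)\, x_1^{a_1}y_1^{b_1}\cdots x_n^{a_n}y_n^{b_n}.
\end{equation*}
Consequently, any monomial $m = \prod_i x_i^{a_i}y_i^{b_i}$ with $a_i \neq b_i$ for some $i$ lies in $\{\cO_V^G,\cO_V\}$ and so vanishes in $\HP_0(\cO_V^G,\cO_V)$, while monomials with $a_i = b_i$ for all $i$ have trivial $G$-character and are therefore $G$-invariant. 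Hence every class in $\HP_0(\cO_V^G,\cO_V)$ is $G$-invariant, which is the desired triviality (and as a byproduct yields $\HP_0(\cO_V^G,\cO_V) = \HP_0(\cO_V^G)$, since the $G$-invariants of the former compute the latter).

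There is no real obstacle here; the whole argument is linear algebra plus the single identity above. The one point requiring care is that simultaneous diagonalization in an arbitrary basis only lands $G$ in some torus of $\Sp_{2n}$, not necessarily the specific $(\CC^\times)^n < \GL_n$ torus; the $\omega$-duality between $V_\chi$ and $V_{\chi^{-1}}$ described in the first paragraph is exactly what upgrades the generic diagonalization to one compatible with a Lagrangian splitting $V = X \oplus Y$.
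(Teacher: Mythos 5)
Your proof is correct. For the second statement your argument coincides with the paper's: both rest on the invariants $x_iy_i$ and the identity $\{x_iy_i,\, \prod_j x_j^{a_j}y_j^{b_j}\} = (b_i-a_i)\prod_j x_j^{a_j}y_j^{b_j}$, which shows every monomial with some $a_i \neq b_i$ lies in $\{\cO_V^G,\cO_V\}$, so that $\HP_0(\cO_V^G,\cO_V)$ is a quotient of the span of the $G$-invariant monomials $\prod_j (x_jy_j)^{a_j}$. For the diagonalization claim, however, the paper takes a different route: it inductively builds a flag $0 \subset V_1 \subset \cdots \subset V_n$ of $G$-invariant isotropic subspaces by repeatedly extracting a common eigenvector from $V_i^\perp$ (which is again $G$-invariant), terminating at a Lagrangian, and then completes the resulting eigenbasis by symplectic duals. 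Your argument instead decomposes $V = \bigoplus_\chi V_\chi$ into character spaces and uses the fact that $\omega$ pairs $V_\chi$ nondegenerately with $V_{\chi^{-1}}$ and annihilates all other pairs. The two are comparably elementary; yours has the small advantage of making explicit why the dual basis vectors are themselves eigenvectors with reciprocal eigenvalues (the paper's phrase ``together with their duals under the symplectic form'' leaves the choice of a $G$-compatible complementary Lagrangian implicit), at the cost of the separate treatment of the self-paired characters with $\chi^2 = 1$, which you handle correctly by taking a symplectic basis of $V_\chi$ itself. Your closing remark that triviality of the action forces $\HP_0(\cO_V^G,\cO_V) = \HP_0(\cO_V^G)$ also matches the assertion the paper makes just before the lemma.
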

\begin{proof}
  To prove the first statement, we proceed inductively. There must
  exist a common eigenvector $v_1 \in \C^{2n}$ for $G$. Set $V_1 :=
  \Span(v_1)$.  Since $G < \Sp_{2n}$ and $G$ stabilizes $V_1$, it also
  stabilizes $V_1^\perp$. If $\dim V_1^\perp > \dim V_1$, pick another
  common eigenvector $v_2 \in \dim V_1^\perp$ not in $V_1$, and set
  $V_2 := \Span(v_1,v_2)$.  Inductively, we form in this way a
  sequence of isotropic $G$-invariant subspaces $0 \subseteq V_1
  \subseteq V_2 \subseteq \cdots$ such that $\dim V_i = i$, and we
  terminate at $V_n$, since only for $i=n$ do we have $\dim V_i^\perp
  = i$.  Then, $G$ stabilizes the Lagrangian subspace $V_n$, and in
  the eigenbasis obtained from $v_1, \ldots, v_n$ together with their
  duals under the symplectic form, $G < (\C^{\times})^{n} < \GL_n <
  \Sp_{2n}$.

For the last statement, note that, if $G < (\CC^{\times})^n$, then in
standard symplectic coordinates, the elements $x_i y_i \in
\cO_{\CC^{2n}} = \CC[x_1,\ldots, x_n, y_1, \ldots, y_n]$ are
$G$-invariant.  Since, for a monomial $f$, $\{x_i y_i, f\} =
\deg_{x_i} f - \deg_{y_i} f$, it follows that $\HP_0(\cO_V^G, \cO_V)$
is a quotient, as a vector space, of the subalgebra $\CC[x_1 y_1, x_2
y_2, \ldots, x_n y_n] \subseteq \cO_{\CC^{2n}}$.  Since this
subalgebra is $G$-invariant, we deduce the statement.
\end{proof}

\begin{theorem}\label{theorem-typean-main}
$G<\CC{}^{\times{}}\times{}\CC{}^{\times{}}$ has the property $\HP_0(\cO_V^G,\cO_V)\cong{}\gr \HH_0(\cD_X^G,\cD_X)$ if and only if, up
to conjugation, $G$ is one of the following groups (for $r, m, A, B \geq 1)$: 
\begin{enumerate}
\item{}
The cyclic group generated by
$\begin{pmatrix}
e^{2\pi{}i/m} & 0\\
0 & e^{\pm{}2r\pi{}i/m}
\end{pmatrix},$
where $\mathrm{gcd}(r,m)=1$, and either \\ $r | (m+1)$ or $r | (m-1)$.
\item{}
The cyclic group generated by
$\begin{pmatrix}
e^{\pm{}2\pi{}i/(mA)} & 0\\
0 & e^{2\pi{}i/m}
\end{pmatrix}.$
\item{}
The group generated by
$\begin{pmatrix}
e^{2\pi{}i/A} & 0\\
0 & 1
\end{pmatrix}$
and
$\begin{pmatrix}
1 & 0\\
0 & e^{2\pi{}i/B}
\end{pmatrix}$.
\end{enumerate}
\end{theorem}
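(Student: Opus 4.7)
The plan is to recast the problem via the character lattice of $G$, compute both sides explicitly, and then classify by an arithmetic case analysis. By Lemma \ref{l:abelian}, we may assume $G < (\CC^\times)^2 < \GL_2 < \Sp_4$ is diagonal, and $\HP_0(\cO_V^G, \cO_V) = \HP_0(\cO_V^G)$ is a quotient of $\CC[u_1, u_2]$ where $u_i := x_i y_i$. Let $L \subseteq \ZZ^2$ be the sublattice of characters of $(\CC^\times)^2$ trivial on $G$; then $\cO_V^G$ is spanned by monomials $x_1^{a_1} y_1^{b_1} x_2^{a_2} y_2^{b_2}$ with $(a_1 - b_1, a_2 - b_2) \in L$, one has $[\ZZ^2 : L] = |G|$, and the minimal positive integers $m_1, m_2$ with $(m_1, 0), (0, m_2) \in L$ are the orders of the two coordinate projections $G \to \CC^\times$.

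By Lemma \ref{l:afls-fla} via inclusion-exclusion on the kernels of these two projections, $\dim \gr \HH_0(\cD_X^G, \cD_X) = |G| - |G|/m_1 - |G|/m_2 + 1$. For $\HP_0(\cO_V^G)$ I use the bigrading by $X^*$- and $Y^*$-degrees: the Poisson bracket decreases each of these by $1$ on each symplectic pair, so a bracket $\{f, g\}$ of bihomogeneous monomials lies in $\CC[u_1, u_2]$ exactly when $f$ and $g$ are both $G$-invariant with opposite lattice weights $\pm(\alpha, \beta) \in L$. Direct evaluation yields the following relations in $\CC[u_1, u_2]$: axis points $(m_1, 0), (0, m_2) \in L$ kill every $u_1^p u_2^q$ with $p \geq m_1 - 1$ or $q \geq m_2 - 1$, while each off-axis $(\alpha, \beta) \in L$ (with $\alpha\beta \neq 0$) produces the two-term identities
\[
\alpha p \cdot u_1^{p-1} u_2^q + \beta q \cdot u_1^p u_2^{q-1} = 0, \qquad p \geq |\alpha|,\ q \geq |\beta|.
\]
Taking differences of $u_1$- and $u_2$-multiples of consecutive relations moreover shows that $u_1^p u_2^q \equiv 0$ whenever $(p, q)$ majorizes some off-axis $(\alpha, \beta) \in L$; thus $\HP_0(\cO_V^G) = \CC[u_1, u_2]/\overline I_L$, where $\overline I_L$ is the explicit ideal generated by these two-term relations together with the axis-killed monomials, and can be computed purely combinatorially from $L$.

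It remains to classify all finite-index $L \subset \ZZ^2$, up to the action of the normalizer of the diagonal torus in $\Sp_4$ (the signed permutation group $S_2 \ltimes (\ZZ/2)^2$), for which $\dim(\CC[u_1, u_2]/\overline I_L)$ equals $|G| - |G|/m_1 - |G|/m_2 + 1$. In case (3), $L = A\ZZ \oplus B\ZZ$ has no off-axis lattice point whose corresponding rectangle is not already contained in the axis-killed region, so $\dim \HP_0 = (A-1)(B-1)$ matches immediately. In case (2), the short vector $(\mp A, 1) \in L$ kills every $u_1^p u_2^q$ with $p \geq A$ and $q \geq 1$, and its boundary two-term relation propagates enough further single-term relations to reduce the dimension to $A(m-1)$. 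In case (1), the arithmetic condition $r \mid m \pm 1$ is precisely equivalent to $L$ containing a short vector $(1, s)$ with $|s| \leq (m+1)/r$, from which the corresponding two-term relations triangularly cut the preliminary bound $(m-1)^2$ down to $m-1$. The main obstacle will be the converse direction: for any $L$ not conjugate to one of the three families, one must exhibit an explicit surviving monomial $u_1^p u_2^q$ in $\CC[u_1, u_2]/\overline I_L$ beyond the target count. This reduces to showing that, absent a sufficiently short off-axis vector in $L$ (in the precise quantitative sense of the arithmetic conditions), the union of the rectangles $\{(p, q) : p \geq |\alpha|, q \geq |\beta|\}$ over $(\alpha, \beta) \in L$ fails to cover enough of the relevant $(p, q)$-grid, and the degenerate boundary two-term relations cannot bridge the resulting gap; one then produces a specific surviving monomial in the uncovered region for each excluded lattice class.
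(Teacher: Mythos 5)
Your overall strategy---reduce to diagonal $G$ via Lemma \ref{l:abelian}, encode the invariants by a lattice $L \subset \ZZ^2$, compute $\dim \gr \HH_0$ by inclusion--exclusion, and present $\HP_0$ as a combinatorial quotient of the span of the monomials $u_1^p u_2^q$ with $u_i = x_i y_i$---is essentially the paper's (the paper works with the minimal generating sets $V_1, V_2$ of the two semigroups of invariant monomials, which is equivalent to your lattice description, and organizes the answer as a ``staircase''). But there is a genuine error in your combinatorial model of $\HP_0(\cO_V^G)$. The step ``taking differences of $u_1$- and $u_2$-multiples of consecutive relations shows that $u_1^p u_2^q \equiv 0$ whenever $(p,q)$ majorizes some off-axis $(\alpha,\beta) \in L$'' is invalid: $\{\cO_V^G, \cO_V\}$ is a span of brackets, not an ideal of $\cO_V$, so you may not multiply a relation by $u_2$. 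Concretely, for $f = x_1^\alpha x_2^\beta$ invariant and $g = x_1^{p-\alpha} x_2^{q-\beta} y_1^p y_2^q$ one has $u_2 \{f, g\} = \{f, u_2 g\} - \beta\, u_1^p u_2^q$, so the ``difference of multiples'' merely reproduces the relation at $(p, q+1)$ and yields nothing new; the derivation is circular. Moreover the conclusion is false: for a single off-axis generator, the two-term relations on a fixed anti-diagonal $p+q=d$ form a path graph joining consecutive monomials with nonzero coefficients, and the quotient of a path graph is one-dimensional, not zero. This is precisely the content of the paper's Theorem \ref{theorem-staircase}: connected components all of whose consecutive vertices are joined by a \emph{single} edge survive with dimension one, and a monomial dies only when two irredundant relations (e.g., coming from both $(\alpha,\beta) \in L$ and $(\alpha,-\beta) \in L$, or from an axis degeneration) hit the same adjacent pair. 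Your model therefore systematically undercounts $\dim \HP_0$ (for instance, for $L = \ZZ(2,3) + \ZZ(0,13)$ the chain through $(3,2), (2,3), (1,4)$ survives with dimension one, while your recipe kills all three monomials), which corrupts the comparison with $\dim \gr \HH_0(\cD_X^G,\cD_X)$ and hence the classification in both directions.

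Even with a corrected combinatorial model, the heart of the theorem---the ``only if'' direction in the cyclic case, i.e., showing $\dim \HP_0 > \dim \gr \HH_0$ whenever $\gcd(r,m)=1$ but $r \nmid (m\pm 1)$---is left as a one-sentence promise (``one then produces a specific surviving monomial in the uncovered region for each excluded lattice class''). In the paper this is by far the longest part of the argument: writing $m = bp+a = cq+d$, locating the staircase corners $(a-1,b-1)$ and $(c-1,d-1)$, proving inequalities such as $p \geq kc+1$ and $p+b-2 \leq m-p$, and verifying $m+1-p-q-bc+h>0$ through a five-case analysis in which extra surviving chains must be exhibited to make $h>0$ in the tight cases, plus a separate reduction of the general abelian group to this cyclic case via the integers $A, B$. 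Nothing in your proposal substitutes for this, so the proof is incomplete even apart from the error above.
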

The proof of the theorem yields a complete description of the
resulting graded vector space $\HP_0(\cO_V^G, \cO_V) \cong \gr
\HH_0(\cD_X^G, \cD_X)$. In particular, from Theorem
\ref{theorem-staircase} and Figures \ref{k1fig1} and \ref{k1fig2} (for
type (1)), \ref{f:t-abel-2} (for type (2)), and \ref{f:t-abel-3} (for
type (3)), we deduce
\begin{corollary}
In the three cases defined in Theorem \ref{theorem-typean-main} such that $\HP_0(\cO_V^G,\cO_V)\cong{}\gr \HH_0(\cD_X^G,\cD_X)$, 
\begin{enumerate}
\item{} Let us assume that $r \not \equiv \pm 1 \pmod m$; otherwise this case is covered in (2) below.  
Define $p,q \geq 1$ as in \S \ref{subsectioncase1}: namely, $1 < p,q < m/2$, $p \equiv \pm r \pmod m$, and $pq = m \pm 1$.  Without loss of generality (up to conjugating $G$ by the nontrivial permutation matrix) we can assume $p \leq q$. Then,
\begin{align*}
h(\HP_0(\cO_V^G, \cO_V);t) &= 1 + 2t^2 + 3t^4 + \dotsb{} + p t^{2p-2} + p t^{2p} + \dotsb{} + p t^{2q-2} + (p-1) t^{2q}\\
 &+ \dotsb{} + t^{2p+2q-4}, \text{ if } pq+1=m;\\
h(\HP_0(\cO_V^G, \cO_V);t) &= 1 + 2t^2 + 3t^4 + \dotsb{} + p t^{2p-2} + p t^{2p} + \dotsb{} + p t^{2q-2} + (p-1) t^{2q}\\
 &+ \dotsb{} + 3t^{2p+2q-8} + t^{2p+2q-6}, \text{ if } pq-1=m.
\end{align*}
\item{} In this case,
\begin{align*}
h(\HP_0(\cO_V^G, \cO_V);t) &= 1 + 2t^2 + \dotsb{} + (m-1) t^{2m-4} + (m-1) t^{2m-2} + \dotsb{} + (m-1) t^{2A-2}\\
 &+ (m-2) t^{2A} + \dotsb{} + t^{2m+2A-6}, \text{ if } m\leq{}A;\\
h(\HP_0(\cO_V^G, \cO_V);t) &= 1 + 2t^2 + \dotsb{} + A t^{2A-2} + A t^{2A} + \dotsb{} + A t^{2m-4} + (A-1) t^{2m-2}\\
 &+ \dotsb{} + t^{2m+2A-6}, \text{ if } m > A.
\end{align*}
\item{}
Without loss of generality, assume that $A\geq{}B$. Then 
\begin{align*}
h(\HP_0(\cO_V^G, \cO_V);t) &= 1 + 2t^2 + \dotsb{} + (B-1) t^{2B-4} + (B-1) t^{2B-2} + \dotsb{} + (B-1) t^{2A-4}\\
 &+ (B-2) t^{2A-2} + \dotsb{} + t^{2A+2B-8}.
\end{align*}
\end{enumerate}
\end{corollary}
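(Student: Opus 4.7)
The plan is to extract the Hilbert series from the explicit basis for $\HP_0(\cO_V^G, \cO_V)$ provided by Theorem \ref{theorem-staircase} and the staircase diagrams in Figures \ref{k1fig1}, \ref{k1fig2}, \ref{f:t-abel-2}, and \ref{f:t-abel-3}. The key reduction is Lemma \ref{l:abelian}: since $G$ acts by diagonal matrices, $G$ acts trivially on $\HP_0(\cO_V^G, \cO_V)$, so this space coincides with $\HP_0(\cO_V^G)$ and is a graded quotient of the two-variable polynomial algebra $\CC[u,v]$, where $u = x_1 y_1$ and $v = x_2 y_2$ both have degree $2$. Consequently a homogeneous basis corresponds to a finite subset $S \subset \ZZgez \times \ZZgez$ of lattice points, and
\begin{equation*}
h(\HP_0(\cO_V^G, \cO_V); t) = \sum_{d \geq 0} \bigl|\{(a,b) \in S : a+b = d\}\bigr|\, t^{2d}.
\end{equation*}

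Concretely, I would first invoke Theorem \ref{theorem-staircase} to identify $S$ in each case: essentially a $p \times q$ rectangle (with one corner cell included or excluded according as $pq = m+1$ or $pq = m-1$) in case (1), a rectangle whose sides are controlled by $m$ and $A$ in case (2), and one controlled by $A$ and $B$ in case (3). Next I would slice $S$ by antidiagonals $a+b = d$. For an honest $P \times Q$ rectangle with $P \leq Q$, the antidiagonal count is $d+1$ for $0 \leq d \leq P-1$, the constant $P$ on the plateau $P-1 \leq d \leq Q-1$, and $P+Q-1-d$ for $Q-1 \leq d \leq P+Q-2$. Substituting the specific dimensions read off from the figures, and adjusting the two subcases of (1) by the single corner cell, reproduces each piecewise-linear expression in the statement term by term.

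As a numerical cross-check, summing the diagonal counts yields $\dim \HP_0(\cO_V^G, \cO_V)$, which by Theorem \ref{theorem-typean-main} and Lemma \ref{l:afls-fla} coincides with $|\{g \in G : g-\Id \text{ is invertible}\}|$. A direct character computation (using $\gcd(r,m)=1$ in case (1), the cyclic presentation in case (2), and the bi-cyclic structure in case (3)) then confirms that the totals of the piecewise formulas agree with this count, and this comparison also fixes any remaining ambiguity in how to draw the corner of the staircase in the two subcases of (1).

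The main obstacle is bookkeeping at the transitions between the ascent, plateau, and descent phases of the diagonal count, particularly in case (1), where the two subcases $pq = m \pm 1$ differ by exactly one lattice point and therefore produce the slightly different tail behavior visible in the statement (most notably the terminating pattern $\ldots + 3t^{2p+2q-8} + t^{2p+2q-6}$ when $pq = m-1$, versus the full geometric descent down to $t^{2p+2q-4}$ when $pq = m+1$). Once the staircase is correctly drawn in each case, no further algebraic input is needed beyond elementary counting.
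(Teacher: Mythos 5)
Your proposal follows the paper's own route exactly: the paper likewise obtains the corollary by applying Theorem \ref{theorem-staircase} and reading the antidiagonal counts of connected components off the staircase figures, with Lemma \ref{l:abelian} reducing everything to the diagonal, $G$-invariant setting. Two small corrections to your sketch, both of which the dimension cross-check you describe would catch: in case (1) with $pq=m-1$ the count falls short of the full $p\times q$ rectangle by \emph{two} cells (the coefficients at degrees $2p+2q-6$ and $2p+2q-4$ each drop by one), not by a single corner cell; and the basis is indexed by connected components rather than raw lattice points, so a type-(2) chain contributes one basis vector in its antidiagonal's degree no matter how many vertices it contains.
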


The theorem will follow from a case-by-case analysis of the following
general combinatorial description of $\HP_0(\cO_V^G, \cO_V)$ for
arbitrary $G < \C^\times \times \C^\times < \GL_2 < \Sp_4$, which is
interesting in its own right.

Let $V_1$ be the minimal set of generators for the semigroup
$\{x_1^rx_2^s|x_1^rx_2^s\in{}\cO_V^G, (r,s) \neq (0,0)\}$ and $V_2$ be
the minimal set of generators for the semigroup
$\{x_1^ry_2^s|x_1^ry_2^s\in{}\cO_V^G, (r,s) \neq (0,0)\}$. Note that
the elements of $V_1$ are those $x_1^r x_2^s$ with $r, s \geq 0$ and
$(r,s) \neq (0,0)$ such that, for all other $x_1^{r'} x_2^{s'} \in
\cO_V^G$ with $r', s' \geq 0$, either $r < r'$ or $s < s'$, and
similarly for $V_2$.

  Construct a graph $\Gamma$ as follows. The vertices of $\Gamma$ are
  the points $(j,k)$ where $j,k \geq -1$. For each $(r,s)$ such that
  $x_1^rx_2^s\in{}V_1$, we draw an edge between $(a+r,b+s-1)$ and
  $(a+r-1,b+s)$ for every pair of nonnegative integers $a,b$; we then
  do the same for every $x_1^ry_2^s \in V_2$.  
\begin{definition}
  Let $\mathcal{C}$ be the set of connected components of $\Gamma$
  whose vertices are all pairs $(a,b)$ of nonnegative integers, and
  such that every pair of adjacent vertices comprises the endpoints of
  a unique edge.
\end{definition}
\begin{theorem}\label{theorem-staircase}  Pick for each $C \in \mathcal{C}$ a vertex $(a_C,b_C) \in C$. Then a basis of $\HP_0(\cO_V^G, \cO_V)$ is obtained by the image of the monomials $\{x_1^{a_C} x_2^{b_C} y_1^{a_C} y_2^{b_C} \mid C \in \mathcal{C}\}$.
\end{theorem}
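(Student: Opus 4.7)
The plan is to reduce the computation to an explicit quotient of $R := \CC[x_1 y_1, x_2 y_2]$ by two-term relations, and then identify that quotient with the basis indexed by $\mathcal{C}$ by reading off the combinatorics of $\Gamma$.

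First, by Lemma \ref{l:abelian} the composition $R \hookrightarrow \cO_V \twoheadrightarrow \HP_0(\cO_V^G, \cO_V)$ is surjective. Writing $[A, B] := (x_1 y_1)^A (x_2 y_2)^B$, the Poisson bracket preserves the $\ZZ^2$-bigrading $(\deg_{x_1} - \deg_{y_1},\,\deg_{x_2} - \deg_{y_2})$ under which $R$ is precisely the $(0,0)$-component, so I only need to identify the $(0,0)$-part of $\{\cO_V^G, \cO_V\}$. Using Lemma \ref{l:pg}, I work with the commutative-generating set $\{x_1 y_1, x_2 y_2\} \cup V_1 \cup V_2 \cup V_3 \cup V_4$ of $\cO_V^G$, where $V_3 := \{y_1^r y_2^s : x_1^r x_2^s \in V_1\}$ and $V_4 := \{y_1^r x_2^s : x_1^r y_2^s \in V_2\}$ are the $y$-analogues.

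Second, I compute each contribution. For $g \in \{x_1 y_1, x_2 y_2\}$, the bracket $\{g, f\}$ scales each monomial of $f$ by a degree-difference, so its image in $R$ is zero. For $g = x_1^r x_2^s \in V_1$ and $f = x_1^a y_1^c x_2^b y_2^d$, a direct calculation shows both summands of $\{g, f\}$ lie in $R$ exactly when $c = a + r$ and $d = b + s$; setting $A := a + r$ and $B := b + s$, I obtain the relation $rA\,[A-1, B] + sB\,[A, B-1] \equiv 0$. An analogous computation for $V_2$ yields $rA\,[A-1, B] - sB\,[A, B-1] \equiv 0$, while the $V_3, V_4$ brackets reproduce these $V_1, V_2$ relations up to sign. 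These are exactly the edges of $\Gamma$, with the phantom vertices $(-1, k), (j, -1)$ encoding off-diagonal monomials already killed in $R$.

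Third, I analyze the resulting quotient combinatorially. The key observation is that every edge preserves $A + B - 1$, so each component of $\Gamma$ lies in a single antidiagonal and is a (possibly multi-edged) subpath; in particular, no cycles arise and the proportionalities along any component are automatically consistent. I then argue the dichotomy by propagation: for a component outside $\mathcal{C}$, either a phantom edge has one coefficient equal to zero and forces an interior monomial to vanish, or two distinct edges between the same pair give independent linear relations (independence following from the minimality of $V_1, V_2$, which prevents parallel pairs) and force both endpoints to vanish; either way the whole component collapses. For a component in $\mathcal{C}$, every edge is unique and non-phantom, so $r, s \geq 1$ and $A, B \geq 1$, making every proportionality strict; chaining along the acyclic path shows all its monomials are simultaneously nonzero multiples of any chosen representative $[a_C, b_C]$. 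Linear independence across components is immediate since edges never cross components.

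The main obstacle I anticipate is showing, in the third step, that any $V$-generator with $r = 0$ or $s = 0$ only produces edges inside components that also contain a phantom vertex, so that such a ``zero-coefficient'' relation never spoils a would-be good component. This reduces to tracking how the phantom edges arising from $a = 0$ or $b = 0$ connect to the interior edges via shared vertices along the boundary rows $(0, k)$ and $(k, 0)$ of each antidiagonal. Combined with the minimality-based independence check on multi-edges, this closes the dichotomy and yields the claimed basis.
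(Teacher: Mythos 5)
Your proposal follows essentially the same route as the paper's proof: reduce to the diagonal monomials $(x_1y_1)^A(x_2y_2)^B$ via Lemmas \ref{l:pg} and \ref{l:abelian} (your bigrading observation is just a repackaging of Lemma \ref{l:aeqb}), extract the two-term relations $rA\,[A-1,B]\pm sB\,[A,B-1]=0$ from the semigroup generators, and read off the dimension of each connected component of $\Gamma$, with minimality of $V_1,V_2$ supplying independence of doubled relations. The one point you flag as an obstacle --- that a generator with $r=0$ or $s=0$ never spoils a component of $\mathcal{C}$ --- closes exactly as you expect and is how the paper handles it: such a generator produces one-term relations killing every vertex it touches, and its edges chain each such vertex down the antidiagonal to a vertex with a coordinate equal to $-1$, so the affected component already contains a phantom vertex and is excluded from $\mathcal{C}$.
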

\begin{corollary} The Hilbert series of $\HP_0(\cO_V^G, \cO_V)$ is $\sum_{C \in \mathcal{C}} t^{2a_C+2b_C}$. Its dimension is $|\mathcal{C}|$.
\end{corollary}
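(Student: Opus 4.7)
The plan is to reduce $\HP_0(\cO_V^G, \cO_V) = \cO_V / \{\cO_V^G, \cO_V\}$ to a problem on the graph $\Gamma$. By Lemma~\ref{l:abelian}, $G$ lies in the diagonal torus, so $x_1 y_1, x_2 y_2 \in \cO_V^G$. Direct computation gives $\{x_1 y_1, x_1^a x_2^b y_1^c y_2^d\} = (c-a) \cdot x_1^a x_2^b y_1^c y_2^d$ and the analogous formula for $x_2 y_2$, so modding out by $\{x_1 y_1, \cO_V\} + \{x_2 y_2, \cO_V\}$ kills every monomial with $a \neq c$ or $b \neq d$, leaving the spanning set $\{m_{a,b} := x_1^a x_2^b y_1^a y_2^b\}_{a,b \geq 0}$. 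Moreover, using $x_1 y_1$ and $x_2 y_2$, every $G$-invariant monomial factors as a product of such pairs with a $G$-invariant monomial supported in just two of the four variables, so $\cO_V^G$ is algebra-generated by $x_1 y_1, x_2 y_2, V_1, V_2, V_1', V_2'$, where $V_1'$ and $V_2'$ are the analogous semigroup generators for $y_1^r y_2^s$ and $y_1^r x_2^s$.

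By Lemma~\ref{l:pg}, the residual relations in $\HP_0$ come from the Hamiltonians of these generators. A direct Leibniz-rule calculation gives, for $g = x_1^r x_2^s \in V_1$ and $f = x_1^p x_2^q y_1^u y_2^v$,
\[
\{g, f\} = ru \, x_1^{r+p-1} x_2^{s+q} y_1^{u-1} y_2^v + sv \, x_1^{r+p} x_2^{s+q-1} y_1^u y_2^{v-1},
\]
which projects nontrivially onto the $m_{a,b}$-span only when $u = r+p$ and $v = s+q$, in which case the projection equals $rA \cdot m_{A-1,B} + sB \cdot m_{A, B-1}$ with $A := r+p$, $B := s+q$. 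The analogous computation for $g = x_1^r y_2^s \in V_2$ yields $rA \cdot m_{A-1,B} - sB \cdot m_{A, B-1}$, and for $V_1', V_2'$ one obtains identical relations up to an overall sign (so these are redundant). Setting $A = a+r$, $B = b+s$, each relation is identified with one of the edges drawn in the theorem's construction of $\Gamma$, and varying $(a,b)$ and the generator recovers all such relations.

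I would then analyze $\Gamma$ component by component. Each edge preserves $A+B$, so components lie on single antidiagonals. A phantom vertex arises exactly when a generator has $r = 0$ or $s = 0$, in which case one coefficient vanishes and the edge degenerates to an annihilation relation on a single $m_{A,B}$. Inside a good component $C \in \mathcal{C}$, all coefficients $rA$ and $\pm sB$ are strictly positive, and the $k-1$ simple edges on $k$ consecutive vertices express each $m_{A,B} \in C$ as an invertible scalar multiple of any other, giving a one-dimensional quotient represented by the chosen $m_{a_C, b_C}$. If $C$ contains a phantom, the annihilation relation propagates via the chain to kill every $m_{A,B} \in C$; if $C$ contains a multi-edge from distinct generators $g_1, g_2 \in V_1 \cup V_2$, the $2 \times 2$ coefficient determinant $\pm AB(\sigma_2 r_1 s_2 - \sigma_1 r_2 s_1)$ (with $\sigma_i = +1$ for $V_1$-type and $-1$ for $V_2$-type) is nonzero, forcing both endpoints to zero and then propagating. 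Linear independence of the representatives $m_{a_C, b_C}$ across components is then immediate, because each spanning relation involves only two adjacent vertices on a single antidiagonal, so the span of relations decomposes as a direct sum over components.

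The principal obstacle I anticipate is verifying the nonvanishing of the multi-edge determinant in every case. For two $V_1$-generators (or two $V_2$-generators), non-proportionality of the coefficient pairs follows from the minimality defining the semigroup generators. For a mixed $V_1$/$V_2$ pair, the sign flip turns the determinant into $-AB(r_1 s_2 + r_2 s_1)$, which vanishes only when one of each pair lies on a coordinate axis (e.g., $r_1 = 0$ and $r_2 = 0$); however, such degenerate generators necessarily produce edges touching phantom vertices on the same antidiagonal, so the corresponding component is already bad for the phantom reason, and the classification of $\mathcal{C}$ still yields the correct basis.
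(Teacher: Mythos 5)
Your proposal is correct and follows essentially the same route as the paper's proof of Theorem \ref{theorem-staircase} (from which the corollary is immediate): reduce to the diagonal monomials $x_1^ax_2^by_1^ay_2^b$ via the Hamiltonians of $x_1y_1$ and $x_2y_2$, identify the residual relations coming from the semigroup generators $V_1, V_2$ with the edges of $\Gamma$, and analyze components, killing those with phantom vertices or double edges. Your explicit $2\times 2$ determinant computation in the multi-edge case is a slightly more detailed justification of the paper's one-line appeal to minimality of $V_1, V_2$, but the argument is the same.
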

Let us describe the connected components of the theorem more
explicitly. Let $E := \{(r,s) \in \ZZ_{\geq 0}^2 \setminus \{(0,0)\}
\mid x_1^r x_2^s \in V_1 \text{ or } x_1^r y_2^s \in V_2\}$. Then,
a connected component $C$ of $\Gamma$ is in $\mathcal{C}$
if and only if it is one of the following:
\begin{enumerate}
\item A connected component which is a point $(a,b)$ with $a, b \geq
  0$, such that for all $(r,s) \in E$, either $a < r-1$ or $b < s-1$;
\item A connected component which is a chain $(a,b+c), (a+1, b+c-1),
  \ldots, (a+c,b)$ with $a,b, c \geq{}0$ such that there is exactly
  one edge between any two consecutive points in the
  chain. Equivalently, for any $0\leq{}i\leq{}c-1$, there is exactly
  one $(r,s)\in{}E$ such that $a+i\geq{}r-1$ and $b+c-i\geq{}s$.
\end{enumerate}
We will refer to connected components of the first type as ``points of
type (1)'' and connected components of the second type as ``chains of
type (2).'' Note that there may exist chains of type (2) consisting of
a single point.  We will not always make a distinction
between connected components consisting of a single point and the
point itself.

Note that elements of $E$ the form $(0,s)$ and $(r,0)$ may generate chains
$(a,b+c), (a+1,b+c-1), \ldots, (a+c,b)$ which satisfy all the conditions 
of type (2) except that either $a < 0$ or $b < 0$; these
are not included in $\mathcal{C}$.

In practice, to apply the above theorem, it is more convenient and
intuitive to draw a picture called the \emph{staircase}.  This is the
collection of vertices $(r-1, s-1)$ for $(r,s) \in E$, together with
some line segments as follows: Call a vertex $(r-1,s-1)$ a
\emph{corner} if $(r,s) \in E$ and, for all other $(r', s') \in E$,
either $r < r'$ or $s < s'$.  Note that the points
of type (1) above are exactly those $(a,b)$ such that, for
every corner $(r-1,s-1)$, either $a < r-1$ or $b < s-1$.  Order the
corners $(r_1, s_1), (r_2, s_2), \ldots$ such that $r_1 < r_2 <
\cdots$.  We then draw line segments from $(r_i, s_i)$ to $(r_{i+1},
s_i)$ and from $(r_{i+1}, s_i)$ to $(r_{i+1}, s_{i+1})$.  Let the
\emph{staircase} be the region
\[
S := \{(x,y) \in \RR_{\geq 0}^2 \mid x \leq r_i-1 \text{ or } y \leq
s_i-1, \forall i\}.
\]
In general, this region is shaped like a staircase, which explains our
terminology. See Figures \ref{k1fig1}--\ref{f:t-abel-2} for examples
of the resulting staircases. In all of these figures except Figure
\ref{fig:staircase3}, the shaded regions consist only of vertices
lying in connected components in $\mathcal{C}$ (and every
connected component includes at least one vertex in the shaded region,
possibly on the boundary). Moreover, again in all figures except
Figure \ref{fig:staircase3}, the plotted vertices are exactly those
appearing in a connected component in $\mathcal{C}$.

Then, the points of type (1) are the lattice
points of $S$ which are not incident to any of the aforementioned line
segments (this includes all the lattice points in the interior of
$S$).  The chains of type (2) are naturally
in bijection with a subquotient of the remaining lattice points in
$S$, i.e., those incident to one of the aforementioned line segments.

\subsection{Proof of Theorem \ref{theorem-staircase}}
We begin with a series of preliminary lemmas.

\begin{lemma}
  $\cO_V^G$ is generated, as an algebra, by $x_1y_1$, $x_2y_2$, and
  the elements of the form $x_1^a x_2^b$, $x_1^ay_2^b$, $y_1^ax_2^b$,
  and $y_1^ay_2^b$.
\end{lemma}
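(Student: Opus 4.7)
The plan is elementary and follows by a monomial analysis, given the strong constraint on $G$. First, because $G$ acts by diagonal matrices in the coordinates $(x_1, x_2, y_1, y_2)$, each coordinate monomial $x_1^a x_2^b y_1^c y_2^d$ is a simultaneous eigenvector for $G$. Hence $\cO_V^G$ is spanned by the \emph{monomial} invariants, namely those monomials whose associated character of $G$ is trivial. It therefore suffices to show that every monomial invariant is a product of the claimed generators.

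Next I would verify the key observation that $x_1 y_1$ and $x_2 y_2$ are always $G$-invariant. This follows from the definition of the embedding $\GL(X) \hookrightarrow \Sp(V)$ given by $A \mapsto A \oplus (A^{-1})^*$: if $g \in G$ acts on $X^*$ with eigenvalues $(\lambda_1^{-1}, \lambda_2^{-1})$ on the basis $(x_1, x_2)$, then it acts on $Y^*$ with eigenvalues $(\lambda_1, \lambda_2)$ on $(y_1, y_2)$. So the characters of $x_i$ and $y_i$ are mutually inverse, and $x_i y_i$ is invariant for each $i$.

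Now take an arbitrary monomial invariant $f = x_1^a x_2^b y_1^c y_2^d$. Factor out the maximal powers of $x_1 y_1$ and $x_2 y_2$ to write
\begin{equation*}
f = (x_1 y_1)^{\min(a,c)} (x_2 y_2)^{\min(b,d)} \cdot f',
\end{equation*}
where in $f'$ at least one exponent in each pair $\{x_1,y_1\}, \{x_2,y_2\}$ is zero. Casework on which of the two exponents vanishes in each pair leaves exactly four possibilities for $f'$, namely one of $x_1^{a'} x_2^{b'}$, $x_1^{a'} y_2^{d'}$, $y_1^{c'} x_2^{b'}$, or $y_1^{c'} y_2^{d'}$. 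Since $x_1 y_1$ and $x_2 y_2$ are invariant and $f$ is invariant, $f'$ must also be invariant, and so it is a generator of one of the four stated types (allowing some exponents to be zero, which just reduces to a pure power of a single variable---still of the claimed form). This expresses $f$ as a product of the allowed generators, completing the proof.

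There is essentially no obstacle here beyond bookkeeping; the only subtlety is the careful verification that the four cases after the division step really do exhaust all possibilities, and that the trivial subcases (where $f'$ is a pure power of one variable, or $f' = 1$) are covered by allowing the second exponent in each generator type to be zero.
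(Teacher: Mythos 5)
Your proof is correct and follows essentially the same route as the paper's: reduce to monomial invariants using diagonality of $G$, observe that $x_1y_1$ and $x_2y_2$ are invariant, and factor out their maximal powers so that the residual monomial is invariant and of one of the four listed forms. The paper's version is just terser, treating only the case $a_1\geq b_1$, $a_2\geq b_2$ explicitly and noting the other cases are similar.
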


\begin{proof}
It is clear that $x_1y_1$ and $x_2y_2$ are invariants. Since $G$ is a group of diagonal matrices, $f\in{}\cO_V$ is an invariant if and only if every term of $f$ is an invariant. For each monomial $x_1^{a_1}x_2^{a_2}y_1^{b_1}y_2^{b_2}$, if $a_1\geq{}b_1$ and $a_2\geq{}b_2$, then we can write $x_1^{a_1}x_2^{a_2}y_1^{b_1}y_2^{b_2}=(x_1y_1)^{b_1}(x_2y_2)^{b_2}(x_1^{a_1-b_1}x_2^{a_2-b_2})$. The other cases are similar.
\end{proof}

\begin{lemma} \label{l:aeqb}
If $a_1\neq{}b_1$ or $a_2\neq{}b_2$, then $x_1^{a_1}x_2^{a_2}y_1^{b_1}y_2^{b_2}\in{}\{\cO_V^G,\cO_V\}$.
\end{lemma}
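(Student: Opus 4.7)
The plan is to exploit the two invariants $x_1 y_1, x_2 y_2 \in \cO_V^G$, which are always $G$-invariant because $G < \CC^\times \times \CC^\times$ acts on $x_i$ by a character $\chi_i$ and on $y_i$ by $\chi_i^{-1}$, so the product has trivial character. This holds regardless of what $G$ is.

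First I would compute the Poisson bracket of $x_i y_i$ against an arbitrary monomial $m = x_1^{a_1} x_2^{a_2} y_1^{b_1} y_2^{b_2}$. Using the Leibniz rule and $\{x_i, y_j\} = \delta_{ij}$, the Hamiltonian vector field $\{x_1 y_1, -\}$ acts as $y_1 \partial_{y_1} - x_1 \partial_{x_1}$ (up to sign conventions), so
\begin{equation*}
\{x_1 y_1, m\} = (a_1 - b_1)\, m, \qquad \{x_2 y_2, m\} = (a_2 - b_2)\, m.
\end{equation*}
Thus if $a_1 \neq b_1$, then $m = \frac{1}{a_1 - b_1}\{x_1 y_1, m\} \in \{\cO_V^G, \cO_V\}$; and if $a_2 \neq b_2$, then $m = \frac{1}{a_2 - b_2}\{x_2 y_2, m\} \in \{\cO_V^G, \cO_V\}$. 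Either hypothesis of the lemma is enough, which completes the argument.

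There is no real obstacle here: the entire proof is the Leibniz-rule calculation above, and the key observation is just that $x_i y_i$ is automatically $G$-invariant for any subgroup of the diagonal torus. The lemma is essentially a direct analogue, for $n=2$, of the grading-based cancellation already used in the cyclic case of the proof of Theorem \ref{t:sl2}, and indeed it is the reason Lemma \ref{l:abelian} produces $\HP_0(\cO_V^G, \cO_V)$ as a quotient of $\CC[x_1 y_1, x_2 y_2]$: every monomial with $(a_1,a_2) \neq (b_1,b_2)$ has already been killed, leaving only the candidates $(x_1 y_1)^{a_1}(x_2 y_2)^{a_2}$ to analyze further (which is what the combinatorics of the staircase in Theorem \ref{theorem-staircase} then does).
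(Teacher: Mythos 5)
Your proof is correct and is essentially identical to the paper's: the paper also brackets the monomial against the invariants $x_1y_1$ and $x_2y_2$ and divides by $a_i-b_i$. The sign ambiguity you flag is immaterial, and your framing of this as the mechanism behind Lemma \ref{l:abelian} matches the paper's own remark that this is a special case of that argument.
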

\begin{proof}
  This is a special case of the argument of the proof of the final
  statement of Lemma \ref{l:abelian}. Explicitly, if $a_1\neq{}b_1$,
  then
\begin{align*}
\frac{1}{a_1-b_1}\{x_1y_1,x_1^{a_1}x_2^{a_2}y_1^{b_1}y_2^{b_2}\}&=x_1^{a_1}x_2^{a_2}y_1^{b_1}y_2^{b_2}.
\end{align*}
If $a_2\neq{}b_2$, then
\begin{align*}
\frac{1}{a_2-b_2}\{x_2y_2,x_1^{a_1}x_2^{a_2}y_1^{b_1}y_2^{b_2}\}&=x_1^{a_1}x_2^{a_2}y_1^{b_1}y_2^{b_2}. \qedhere
\end{align*}
\end{proof}
\begin{proof}[Proof of Theorem \ref{theorem-staircase}]
  By the above lemmas and Lemma \ref{l:pg}, it suffices to determine, for all $a,b \geq 0$,
  whether or not $x_1^ax_2^by_1^ay_2^b \in \{\cO_V^G, \cO_V\}$. By
  symmetry, $\{y_1^ry_2^s|x_1^rx_2^s\in{}V_1\}$ is a minimal set of
  generators of the semigroup of invariants of the form $y_1^ry_2^s$, and
  $\{y_1^rx_2^s|x_1^ry_2^s\in{}V_2\}$ is a minimal set of generators
  of the semigroup of invariants of the form $y_1^rx_2^s$. Furthermore,
\begin{align*}
\{x_1^rx_2^s,\cO_V\}\cap{}\{x_1^ax_2^by_1^ay_2^b \mid a,b \geq 0\}&=\{y_1^ry_2^s,\cO_V\}\cap{}\{x_1^ax_2^by_1^ay_2^b \mid a,b \geq 0\},\\
\{x_1^ry_2^s,\cO_V\}\cap{}\{x_1^ax_2^by_1^ay_2^b \mid a,b \geq 0\}&=\{y_1^rx_2^s,\cO_V\}\cap{}\{x_1^ax_2^by_1^ay_2^b \mid a,b \geq 0\}.
\end{align*}
So, $\{\cO_V^G,\cO_V\}$ is spanned by
$\{V_1,\cO_V\}$ and $\{V_2,\cO_V\}$, together with $\{x_1^a
y_1^b x_2^c y_2^d \mid (a,b) \neq (c,d)\}$.  Next,
\begin{align*}
\{x_1^{r}x_2^{s},x_1^{a_1}x_2^{a_2}y_1^{b_1}y_2^{b_2}\}&=sb_2x_1^{a_1+r}x_2^{a_2+s-1}y_1^{b_1}y_2^{b_2-1}+rb_1x_1^{a_1+r-1}x_2^{a_2+s}y_1^{b_1-1}y_2^{b_2},\\
\{x_1^{r}y_2^{s},x_1^{a_1}x_2^{a_2}y_1^{b_1}y_2^{b_2}\}&= -sa_2x_1^{a_1+r}x_2^{a_1-1}y_1^{b_1}y_2^{b_2+s-1}+rb_1x_1^{a_1+r-1}x_2^{a_1}y_1^{b_1-1}y_2^{b_2+s}.
\end{align*}
We are interested in the possible RHS expressions whose
monomials have the form $x_1^{a}x_2^{b}y_1^{a}y_2^{b}$:
\begin{align*}
\{x_1^{r}x_2^{s},x_1^{a_1}x_2^{a_2}y_1^{a_1+r}y_2^{a_2+s}\}&=s(a_2+s)x_1^{a_1+r}x_2^{a_2+s-1}y_1^{a_1+r}y_2^{a_2+s-1}+r(a_1+r)x_1^{a_1+r-1}x_2^{a_2+s}y_1^{a_1+r-1}y_2^{a_2+s},\\
\{x_1^{r}y_2^{s},x_1^{a_1}x_2^{a_2+s}y_1^{a_1+r}y_2^{a_2}\}&= -s(a_2+s)x_1^{a_1+r}x_2^{a_2+s-1}y_1^{a_1+r}y_2^{a_2+s-1}+r(a_1+r)x_1^{a_1+r-1}x_2^{a_2+s}y_1^{a_1+r-1}y_2^{a_2+s}.
\end{align*}
For simplicity, denote
$[f]=f+\{\cO_V^G,\cO_V\}\in{}\HP_0(\cO_V^G,
\cO_V)$. Then, for every $x_1^r x_2^s \in V_1$,
\begin{equation}\label{relation1}
  s_i(a_2+s_i)[x_1^{a_1+r_i}x_2^{a_2+s_i-1}y_1^{a_1+r_i}y_2^{a_2+s_i-1}] +
r_i(a_1+r_i)[x_1^{a_1+r_i-1}x_2^{a_2+s_i}y_1^{a_1+r_i-1}y_2^{a_2+s_i}]=0.
\end{equation}
For every $x_1^r y_2^s \in V_2$,
\begin{equation}\label{relation2}
  -s_i(a_2+s_i)[x_1^{a_1+r_i}x_2^{a_2+s_i-1}y_1^{a_1+r_i}y_2^{a_2+s_i-1}]+
  r_i(a_1+r_i)[x_1^{a_1+r_i-1}x_2^{a_2+s_i}y_1^{a_1+r_i-1}y_2^{a_2+s_i}]=0,
\end{equation}
if $r,s \geq 1$; in the case that $s=0$, 
\begin{equation}\label{relation3}
[x_1^{a_1+r-1}x_2^{a_2}y_1^{a_1+r-1}y_2^{a_2}]=0,
\end{equation}
and in the case that $r=0$, 
\begin{equation}\label{relation4}
[x_1^{a_1}x_2^{a_2+s-1}y_1^{a_1}y_2^{a_2+s-1}]=0.
\end{equation}
Since $V_1 \cup V_2$ forms a set of algebra generators of $\cO_V^G$,
these span all the relations in $\HP_0(\cO_V^G,\cO_V)$, together with
the relations $[x_1^a x_2^b y_1^c y_2^d] = 0$ if $a \neq c$ or $b \neq
d$. Now, if we represent $[x_1^{a_1}x_2^{a_2}y_1^{a_1}y_2^{a_2}]$ by
the point $(a_1,a_2)$ and each relation by an edge, then we get the
subgraph of $\Gamma$ of vertices with nonnegative coordinates,
together with the additional relations that $[x_1^{a_1}
x_2^{a_2}y_1^{a_1}y_2^{a_2}] =0$ if $(a_1, a_2)$ is adjacent in
$\Gamma$ to a vertex that does not have nonnegative coordinates.

Let $C_1,C_2,\ldots$ be the connected components of $\Gamma$
containing at least one vertex with nonnegative coordinates. Let $V(C_i)
\subseteq\HP_0(\cO_V^G,\cO_V) $ be the vector space spanned
by $\{[x_1^rx_2^sy_1^ry_2^s]|(r,s)\in{}C_i,r,s\geq{}0\}$. Then
$\HP_0(\cO_V^G,\cO_V) = \bigoplus_i V(C_i)$.

For any $a,b\geq{}0$, if for every $(r,s) \in E$, either $a<r-1$ or
$b<s-1$, then there is no relation involving
$[x_1^ax_2^by_1^ay_2^b]$. Thus, $\dim{}V(\{(a,b)\})=1$. This accounts
for the points of type (1). Next, if $a', b' \geq 0$ and there exists
$(r,s) \in E$ such that $a'\geq r-1$ and $b'\geq s-1$, then $(a',b')$
is in a connected component of $\Gamma$ that is a chain of the form
$(a,b+c),(a+1,b+c-1), \ldots, (a+c,b)$. If there is exactly one edge
between any two consecutive points $(a+i,b+c-i)$ and
$(a+i+1,b+c-i-1)$, and $a, b \geq 0$, then there is exactly one
relation of the form \ref{relation1} or \ref{relation2} between the
two corresponding terms $[x_1^{a+i}y_1^{b+c-i}x_2^{a+i}y_2^{b+c-i}]$
and $[x_1^{a+i+1}y_1^{b+c-i-1}x_2^{a+i+1}y_2^{b+c-i-1}]$, and no other
relations involving these elements. Therefore,
$\dim{}V(\{(a,b+c),(a+1,b+c-1),\dotsc{},(a+c,b)\})=1$. This accounts
for the chains of type (2).

If there are two edges between two consecutive points of a chain, then
there are two relations of the form \ref{relation1} or
\ref{relation2}. The assumption that $V_1$, $V_2$ are minimal sets of
generators implies that the two relations are irredundant. Therefore,
$V(\{(a,b+c),(a+1,b+c-1),\dotsc{},(a+c,b)\})=0$. Finally, if a
connected component $C_i$ contains a point $(a,b)$ with $a=-1$ or
$b=-1$, then there is a relation of the form \ref{relation3} or
\ref{relation4}, which implies that $V(C_i)=0$.
\end{proof}

\subsection{Proof of Theorem \ref{theorem-typean-main}}
We prove Theorem \ref{theorem-typean-main} first in the case that $G$
is cyclic and generated by an element of the form
\begin{equation}\label{case1}
\begin{pmatrix}
e^{2\pi{}i/m} & 0\\
0 & e^{2r\pi{}i/m}
\end{pmatrix},
\end{equation} where $\mathrm{gcd}(r,m)=1$ (\textbf{Case I}), and then we
reduce the general case (\textbf{Case II}) to this case.

\subsubsection{Case I: $G$ is generated by \eqref{case1}} \label{subsectioncase1}
In this subsection, we prove the most difficult part of the theorem:
\begin{proposition} \label{p:theorem-typean-main-case1}
Let $G$ be cyclic and generated by
$\begin{pmatrix}
e^{2\pi{}i/m} & 0\\
0 & e^{2r\pi{}i/m}
\end{pmatrix}$ where $\mathrm{gcd}(r,m)=1$. Assume
$|r|\leq{}\frac{m}{2}$. Then, $G$ has the property
$\HP_0(\cO_V^G,\cO_V)\cong \gr
\HH_0(\cD_X^G,\cD_X)$ if and only if $r|(m+1)$ or
$r|(m-1)$.
\end{proposition}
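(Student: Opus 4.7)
The plan is to reduce the isomorphism question to a combinatorial count via Theorem~\ref{theorem-staircase}. Since $\gcd(r,m)=1$, every nonidentity $g^k \in G$ has $g^k - \Id$ invertible, so Lemma~\ref{l:afls-fla} gives $\dim \HH_0(\cD_X^G, \cD_X) = m-1$. Combined with the canonical surjection $\HP_0(\cO_V^G,\cO_V) \onto \gr \HH_0(\cD_X^G,\cD_X)$, the asserted isomorphism holds iff $\dim \HP_0(\cO_V^G,\cO_V) = m-1$. Without loss of generality I may assume $0 < r < m/2$ (the case $r < 0$ follows by symmetry, swapping the roles of $V_1$ and $V_2$).

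First I would describe the semigroup generators $V_1$ and $V_2$ explicitly. A monomial $x_1^a x_2^b$ is invariant iff $a + rb \equiv 0 \pmod m$, and $x_1^a y_2^b$ is invariant iff $a - rb \equiv 0 \pmod m$; the sets $V_1, V_2$ are the corresponding Hilbert bases. These are classical, and coincide with the ``Newton boundary'' generators of the cyclic quotient singularities $\CC^2/\tfrac{1}{m}(1,\pm r)$, whose structure is controlled by the Hirzebruch--Jung continued fraction expansions of $m/r$ and $m/(m-r)$. Crucially, the condition ``$r \mid m+1$ or $r \mid m-1$'' is equivalent to each of these expansions having length $\leq 2$, in which case $V_1$ and $V_2$ each consist of just the two corners $(m,0)$ and $(0,m)$ together with a single anti-diagonal chain of interior generators.

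Next, I would apply Theorem~\ref{theorem-staircase} to count the components of $\mathcal{C}$. I would determine the Pareto minima of $E = V_1 \cup V_2$, draw the associated staircase region, and enumerate components anti-diagonal by anti-diagonal. The essential subtlety is that whenever distinct $(r_1,s_1), (r_2,s_2) \in E$ contribute the same edge, the resulting double edge kills the corresponding component of $\mathcal{C}$. In the distinguished case $rq = m \pm 1$, the explicit form of $V_1, V_2$ permits a direct degree-by-degree count on each anti-diagonal that matches the Hilbert series stated in the subsequent corollary, summing to exactly $m - 1$ and proving the ``if'' direction.

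For the ``only if'' direction, I would show that if $r \nmid m \pm 1$ and $r \not\equiv \pm 1 \pmod m$, then the continued fraction expansion of $m/r$ has length $\geq 3$, and hence $V_1$ (or $V_2$) contains an ``interior'' minimal generator beyond the initial chain. Such an extra generator either introduces a new isolated point of type~(1) on an extremal anti-diagonal, or creates a new single-edge chain of type~(2), strictly increasing $\dim \HP_0(\cO_V^G,\cO_V)$ above $m-1$. The main obstacle is controlling the competing effects of these extra generators: they can simultaneously produce new chains (raising the count) and new double edges (eliminating other chains), so one must verify that the net change is strictly positive. I expect to handle this by induction on the length of the continued fraction expansion, at each step exhibiting an explicit extra surviving component of $\mathcal{C}$ corresponding to the additional expansion step.
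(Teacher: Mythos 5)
Your overall strategy---reduce to showing $\dim \HP_0(\cO_V^G,\cO_V) = m-1$ via Lemma \ref{l:afls-fla} and then count components of $\mathcal{C}$ using Theorem \ref{theorem-staircase}---is exactly the paper's, and your identification of double edges as the mechanism that kills components is correct. However, there is a genuine gap at the step where you translate the divisibility condition into combinatorics: the claim that ``$r \mid m+1$ or $r \mid m-1$'' is equivalent to both Hirzebruch--Jung expansions of $m/r$ and $m/(m-r)$ having length $\leq 2$ is false. Take $m=7$, $r=3$: here $3 \mid 6 = m-1$, so the proposition asserts the isomorphism holds, yet $7/3 = [3,2,2]$ has length $3$, and the semigroup $V_2$ has three interior minimal generators $(3,1), (2,3), (1,5)$ rather than a single short chain. (Similarly $m=13$, $r=4$ gives lengths $4$ and $3$.) The correct reformulation, which is what the paper uses, involves \emph{both} $r$ and its inverse: setting $p=|r|$ and $q=|t|$ with $rt \equiv 1 \pmod m$ and $|t| \leq m/2$, the condition is $pq = m \pm 1$, and what it controls is the set of Pareto-minimal corners of the \emph{union} $E = V_1 \cup V_2$, not of $V_1$ and $V_2$ separately. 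In the example above, the extra generators $(2,3)$ and $(1,5)$ of $V_2$ are dominated by $(1,2) \in V_1$ and so contribute no corners to the staircase; your ``only if'' argument, which exhibits an extra surviving component whenever an individual expansion has length $\geq 3$, would therefore wrongly conclude $\dim \HP_0 > m-1$ in cases where the isomorphism in fact holds.

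Beyond this, even granting the correct reformulation, the hard half of the proof is only gestured at. The paper splits on $k$ where $mk = pq \pm 1$: the cases $k \in \{0,1\}$ (equivalent to $r \mid m \pm 1$) are read off from two explicit staircases, but the case $k \geq 2$ requires locating two further corners $(a-1,b-1)$, $(c-1,d-1)$ with $m = bp + a = cq + d$, establishing the inequality $\dim\HP_0 - \dim\HH_0 = m+1-p-q-bc+h$ with $h$ counting extra chains, and then verifying positivity through a five-case analysis with separate arguments for small values of $b$, $c$, and $k$. Your proposal correctly anticipates that ``the competing effects'' of new chains versus new double edges are the obstacle, but the inductive scheme you propose is anchored to the (incorrect) single-semigroup continued-fraction picture and would need to be rebuilt around the joint staircase of $E$ before it could close this gap.
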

Since $\mathrm{gcd}(r,m)=1$, it follows from Lemma \ref{l:afls-fla},
as mentioned at the beginning of the section, that $\dim
\HH_0(\cD_X^G, \cD_X) = |G|-1$.

We break the proof into two easy lemmas and one hard one.

Since $G$ is generated by 
$\begin{pmatrix}
  e^{2\pi{}i/m} & 0\\
  0 & e^{2r\pi{}i/m}
\end{pmatrix}$, it follows in the case $r > 0$ that
$x_1^{r} y_2$ is an invariant, and in the case $r < 0$ that $x_1^{-r}
x_2$ is an invariant.  Since also $|r| \leq m/2$, $(|r|-1,0)$ is a corner of the
staircase. Next, let $t$ be an integer such that $|t| \leq m/2$ and
$rt \equiv 1 \pmod m$.  Then, $G$ is also generated by $\begin{pmatrix}
  e^{2t\pi{}i/m} & 0\\
  0 & e^{2\pi{}i/m}
\end{pmatrix}$.
It follows that $(0,|t|-1)$ is a corner of the staircase.  For ease of notation, let us set $p := |r|$ and $q := |t|$, so that $(p-1,0)$ and $(0,q-1)$ are
corners of the staircase.

Since $rt\equiv 1 \pmod{m}$, it follows that either $m| (pq+1)$ or $m|
(pq-1)$.  It suffices to assume that $G$ is nontrivial, i.e., $m > 1$.
Let $k \geq 0$ be such that $mk=pq+1$ or $mk=pq-1$. Then the
proposition reduces to the following lemmas:
\begin{lemma}
If $k=0$, then $\dim{}\HP_0(\cO_V^G,\cO_V)=\dim{}\HH_0(\cD_X^G,\cD_X)$.
\end{lemma}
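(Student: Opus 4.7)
The plan is to identify the $k=0$ case combinatorially and then invoke Theorem \ref{theorem-staircase} to count $\dim\HP_0(\cO_V^G,\cO_V)$ as $|\mathcal{C}|$, matching it against the value of $\dim\HH_0(\cD_X^G,\cD_X)$ predicted by Lemma \ref{l:afls-fla}.

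Since $p,q\geq 1$, the equation $mk=pq\pm 1$ with $k=0$ forces $pq=1$, hence $p=q=1$; combined with $|r|\leq m/2$, this gives $r=\pm 1$ on the nose. The two cases $r=1$ and $r=-1$ merely swap $V_1$ with $V_2$, and since the multigraph $\Gamma$ of Theorem \ref{theorem-staircase} is constructed symmetrically from these two semigroups, I may assume $r=1$. In that case $G$ scales $x_i$ by $\zeta^{-1}$ and $y_i$ by $\zeta$ (with $\zeta=e^{2\pi i/m}$), giving
\[
V_1=\{x_1^{m-i}x_2^i : 0\leq i\leq m\}\ (\text{from }m\mid a+b),\qquad V_2=\{x_1y_2,\;x_1^m,\;y_2^m\}\ (\text{from }m\mid a-b).
\]

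The crucial observation is that every edge of $\Gamma$ preserves the coordinate sum $a+b$, so $\Gamma$ splits as a disjoint union over antidiagonals $a+b=N$; on each, a generator $(r,s)\in V_1\cup V_2$ contributes exactly one edge between $(c,N-c)$ and $(c+1,N-c-1)$ for $c\in\{r-1,\ldots,N-s\}$. For $0\leq N\leq m-2$, only the element $(1,1)\in V_2$ can contribute (every other generator has $r+s=m$, forcing an empty range of $c$), so the antidiagonal becomes a unique-edge chain on $(0,N),(1,N-1),\ldots,(N,0)$, with the boundary vertices $(-1,N+1)$ and $(N+1,-1)$ isolated; this chain lies in $\mathcal{C}$ and contributes exactly one to its size. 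For $N\geq m-1$, every interior adjacency acquires an additional edge from some generator $(m-i,i)\in V_1$ on top of the $(1,1)$-edge, and the overlaps $(m,0),(0,m)\in V_1\cap V_2$ attach the $(-1,*)$ and $(*,-1)$ boundary vertices to the rest of the antidiagonal. Hence every antidiagonal $N\geq m-1$ becomes a single component that fails both criteria for $\mathcal{C}$ (multi-edges and negative coordinates).

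Summing over $N\in\{0,1,\ldots,m-2\}$ yields $|\mathcal{C}|=m-1$, so $\dim\HP_0(\cO_V^G,\cO_V)=m-1$ by Theorem \ref{theorem-staircase}. Since $\gcd(r,m)=1$, every nonidentity element of $G$ has both eigenvalues different from $1$, so Lemma \ref{l:afls-fla} gives $\dim\HH_0(\cD_X^G,\cD_X)=|G|-1=m-1$, matching. The delicate bookkeeping step is the transitional antidiagonal $N=m-1$, where I must verify simultaneously that the newly active $V_1$ generators double each $(1,1)$-edge and that the overlaps in $V_1\cap V_2$ drag the boundary vertices into the component, so that both disqualification criteria for $\mathcal{C}$ engage at once.
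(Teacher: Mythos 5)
Your proof is correct and follows essentially the same route as the paper: the paper's proof likewise notes that $k=0$ forces $p=q=1$ (so $r=\pm 1$) and reads off $\dim\HP_0(\cO_V^G,\cO_V)=m-1$ from the staircase/graph combinatorics of Theorem \ref{theorem-staircase}, matching $|G|-1=\dim\HH_0(\cD_X^G,\cD_X)$ from Lemma \ref{l:afls-fla}. The paper dispatches this in two lines by appealing to the staircase picture, while you have supplied the antidiagonal-by-antidiagonal bookkeeping that it declares ``easy''; the content is the same.
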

\begin{proof} 
  In this case, $p=q=1$.  Then, $(0,0)$ is a corner of the staircase, as are
  $(m-1,0)$ and $(0,m-1)$. The proposition follows easily.
\end{proof}
\begin{lemma}
If $k=1$, then $\dim{}\HP_0(\cO_V^G,\cO_V)=\dim{}\HH_0(\cD_X^G,\cD_X)$.
\end{lemma}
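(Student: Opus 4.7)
The plan is to apply Theorem~\ref{theorem-staircase} and show $|\mathcal{C}| = m - 1$. Since $\gcd(r,m) = 1$, every nontrivial $g \in G$ has $g - \Id$ invertible, so by Lemma~\ref{l:afls-fla}, $\dim \HH_0(\cD_X^G, \cD_X) = m - 1$; together with the canonical surjection $\HP_0(\cO_V^G, \cO_V) \onto \gr \HH_0(\cD_X^G, \cD_X)$ recalled in the introduction, this yields the lower bound $|\mathcal{C}| = \dim \HP_0(\cO_V^G, \cO_V) \geq m - 1$. Hence the content is the upper bound $|\mathcal{C}| \leq m - 1$.

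First I would identify the minimal semigroup generators $V_1$ and $V_2$ of the invariant monomials of the forms $x_1^a x_2^b$ and $x_1^a y_2^b$. Assume without loss of generality $r = p > 0$; the two subcases of $k = 1$ correspond to $t = q > 0$ (giving $pq = m + 1$) and $t = -q < 0$ (giving $pq = m - 1$), according to which half of $\ZZ/m\ZZ$ contains the modular inverse of $r$. In the first subcase, the identity $pq \equiv 1 \pmod m$ immediately places $(p, 1)$ and $(1, q)$ in the lattice $L_2 := \{(a,b) : a - rb \equiv 0 \pmod m\}$ for $V_2$; a short calculation using $\gcd(p, m) = 1$ then gives $V_2 = \{(m, 0), (0, m), (p, 1), (1, q)\}$, and the set $V_1$ for $L_1 := \{(a, b) : a + rb \equiv 0 \pmod m\}$ is a zigzagging staircase obtained by reducing $-jp \bmod m$ to $[0, m)$ and discarding non-minimal elements. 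In the second subcase one analogously finds $V_2 = \{(m, 0), (0, m), (p, 1), (1, m - q)\}$ with a similar (in general longer) staircase for $V_1$.

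Next I would identify the Pareto-minimal elements of $E := V_1 \cup V_2$ (the corners of the staircase), draw the region $S$, and apply Theorem~\ref{theorem-staircase} to enumerate the components of $\Gamma$ lying in $\mathcal{C}$. Each such component is either an isolated lattice point of $S$ (of type (1), or equivalently a degenerate chain of type (2) with $c = 0$) or a proper chain of type (2) with $c \geq 1$ that has a single edge between each consecutive pair of vertices and no vertex having a $-1$ coordinate. The main obstacle will be the bookkeeping of double edges: the multiplicity of the edge from $(c, d)$ to $(c - 1, d + 1)$ in $\Gamma$ equals $|\{(r, s) \in E : r \leq c,\ s \leq d + 1\}|$, and a chain is killed as soon as any one of its edges has multiplicity $\geq 2$. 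In the $k = 1$ case this pattern is tractable because $V_2$ has only four elements, and these combine with the zigzag $V_1$-staircase to produce double edges precisely along the outer antidiagonals of large total degree, while the inner antidiagonals contribute either isolated points or a single surviving chain. An antidiagonal-by-antidiagonal enumeration---yielding type~(1) isolated points at the smallest total degrees, additional isolated ``diagonal'' points at intermediate total degree, and one surviving proper chain at the critical total degree---produces the total $|\mathcal{C}| = m - 1$, establishing the required upper bound.
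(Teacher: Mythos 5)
Your proposal is correct and follows essentially the same route as the paper: the paper's own proof of this lemma consists of observing that $m=pq\pm 1$ and reading off $\dim\HP_0(\cO_V^G,\cO_V)=m-1$ from the staircases of Figures \ref{k1fig1} and \ref{k1fig2} via Theorem \ref{theorem-staircase}, which is exactly the computation you outline (your determination of $V_1$, $V_2$ and the component count reproduces those figures, and your reduction to an upper bound via the surjection onto $\gr\HH_0$ is a harmless extra remark). The only caveat is that, like the paper, you leave the final antidiagonal-by-antidiagonal enumeration to the reader; carrying it out does yield $|\mathcal{C}|=pq=m-1$ when $m=pq+1$ and $|\mathcal{C}|=pq-2=m-1$ when $m=pq-1$, consistent with the Hilbert series recorded in the corollary following Theorem \ref{theorem-typean-main}.
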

\begin{proof}
  If $k=1$, then $m=pq+1$ or $pq-1$. It is straightforward to
  compute $\dim{}\HP_0(\cO_V^G,\cO_V)$ from Figures
  \ref{k1fig1} and \ref{k1fig2}, which depict the corresponding staircases.
\end{proof}
\begin{figure}[h!tbp] 
\centering
	\includegraphics[width = 0.5\textwidth]{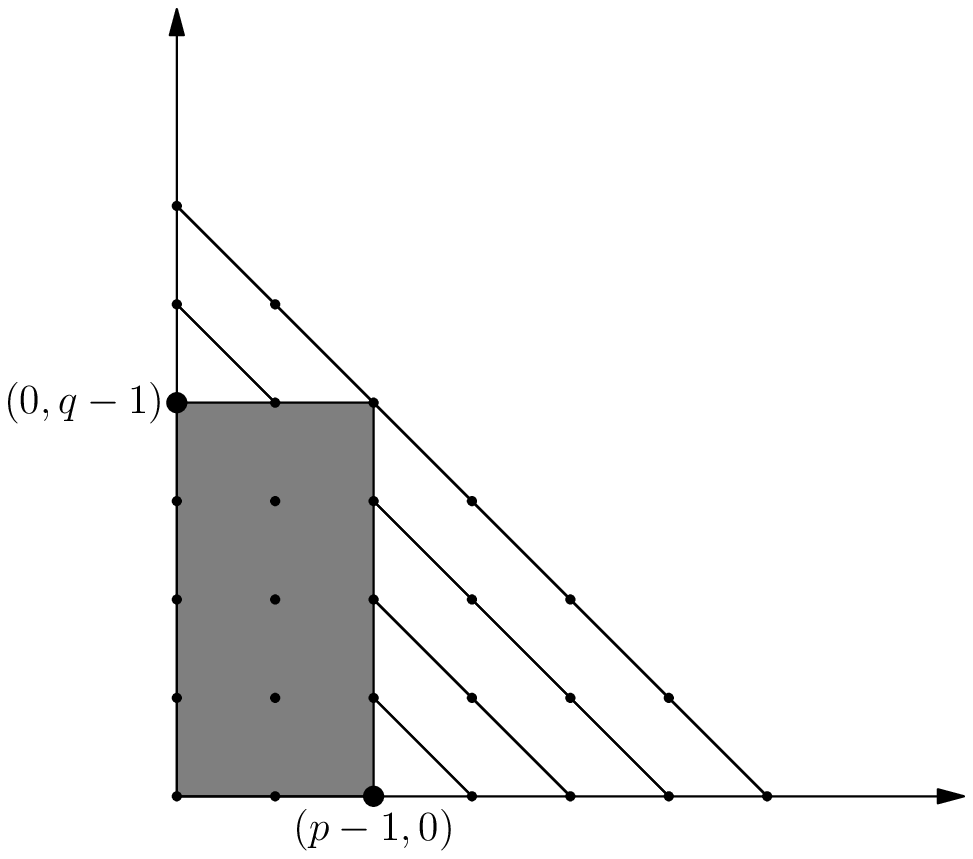}
	\caption{The case m=pq+1\label{k1fig1} }
\end{figure}
\begin{figure} [h!tbp] 
\centering
\includegraphics[width = 0.5\textwidth]{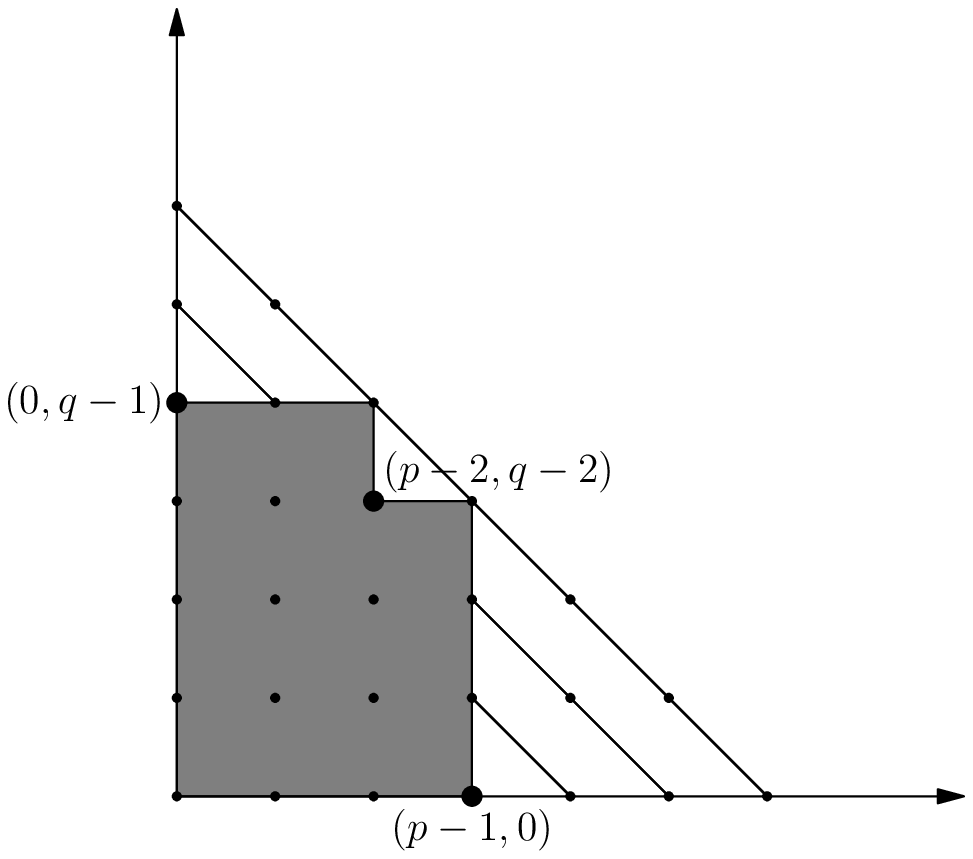}
	\caption{The case m=pq-1\label{k1fig2}}
\end{figure}
\begin{lemma}
  If $k\geq{}2$ and $m > 1$, then
  $\dim{}\HP_0(\cO_V^G,\cO_V) >
  \dim{}\HH_0(\cD_X^G,\cD_X)$.
\end{lemma}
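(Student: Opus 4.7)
By Theorem \ref{theorem-staircase}, $\dim \HP_0(\cO_V^G, \cO_V)$ equals $|\mathcal{C}|$, the number of good connected components of the graph $\Gamma$.  Since $\gcd(r, m) = 1$ makes $g - \Id$ invertible for every nontrivial $g \in G$, Lemma \ref{l:afls-fla} gives $\dim \HH_0(\cD_X^G, \cD_X) = m - 1$, so it suffices to exhibit at least $m$ distinct components in $\mathcal{C}$.  Write $G = \langle \mathrm{diag}(\zeta, \zeta^d)\rangle$ with $\zeta = e^{2\pi i/m}$, $d = \pm p$, $\gcd(d, m) = 1$, and $|d| \leq m/2$; the semigroup $V_1$ consists of minimal $(a, b) \in \ZZ_{\geq 0}^2 \setminus \{(0,0)\}$ with $a + db \equiv 0 \pmod m$, and $V_2$ of the analogous minimal pairs with $a - db \equiv 0 \pmod m$.

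I first count axis-isolated type (1) vertices.  The smallest $s = 1$ element of $E = V_1 \cup V_2$ is $(p, 1) \in V_2$ (coming from the invariant $x_1^p y_2$), so the vertex $(u, 0)$ has no incident edge iff no $(r, s) \in E$ satisfies both $r \leq u + 1$ and $s \leq 1$; this holds exactly when $u \leq p - 1$.  Hence $(u, 0)$ is an isolated type (1) vertex for $u \in \{0, 1, \ldots, p - 1\}$, and by symmetry $(0, v)$ is isolated for $v \in \{1, \ldots, q - 1\}$, yielding $p + q - 1$ axis components in $\mathcal{C}$.

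The hypothesis $k \geq 2$---equivalently $pq \geq 2m - 1$---forces strictly more minimal generators of $V_1$ and $V_2$ in the interior region $(0, p) \times (0, q)$ than in the $k = 1$ case.  A standard continued-fraction (Apery set) analysis of the semigroups shows that at least one minimal pair $(a, b) \in V_1 \cup V_2$ with $1 \leq a < p$ and $1 \leq b < q$ must exist, with more such pairs appearing as $k$ grows.  Each such intermediate minimal element $(a, b)$ contributes at least one extra component to $\mathcal{C}$: either an isolated type (1) vertex at $(a - 1, b - 1)$, when no other element of $E$ dominates that point, or a type (2) chain component along the anti-diagonal between two consecutive corners of the staircase.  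Combining these interior contributions with the $p + q - 1$ axis components should yield $|\mathcal{C}| \geq m$.

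The main obstacle is the double-edge phenomenon, illustrated by the case $m = 13$, $d = 5$, $p = q = 5$: the invariants $x_1^3 x_2^2 \in V_1$ and $x_1^2 y_2^3 \in V_2$ both produce an edge between the staircase vertices $(3, 2)$ and $(2, 3)$, so the anti-diagonal chain through those vertices fails the uniqueness-of-edge condition and is excluded from $\mathcal{C}$.  I would handle this by an inductive argument on $k$, using $k = 1$ as the base (where $|\mathcal{C}| = m - 1$ exactly) and tracking the net change in $|\mathcal{C}|$ as each new minimal generator is added; alternatively, one may split into the two subcases $pq = mk + 1$ and $pq = mk - 1$ and enumerate the minimal generators of $V_1$ and $V_2$ directly from the continued fraction expansion of $d/m$.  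Either approach reduces the problem to verifying that, even after accounting for all chains lost to double edges, the net count of surviving components stays at least $m$.
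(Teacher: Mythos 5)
Your setup is right: Theorem \ref{theorem-staircase} reduces the problem to counting components of $\mathcal{C}$, Lemma \ref{l:afls-fla} gives $\dim \HH_0(\cD_X^G,\cD_X)=m-1$ since $\gcd(r,m)=1$, and your count of $p+q-1$ components along the axes is correct. You have also correctly identified the double-edge obstruction that can kill anti-diagonal chains. But the proof has a genuine gap: the central inequality $|\mathcal{C}|\geq m$ is never established. Since $pq = km \pm 1$, one has $p+q-1$ of order $\sqrt{km}$, which for fixed $k$ and large $m$ is far smaller than $m$; so essentially all of the required components must come from the interior, and your treatment of the interior is an unquantified assertion ("at least one minimal pair\dots must exist, with more such pairs appearing as $k$ grows," each "contributing at least one extra component"). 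Even taken at face value this yields only $p+q$ or so, and the closing sentence ("should yield $|\mathcal{C}|\geq m$") together with the deferral to an unexecuted induction on $k$ or a continued-fraction enumeration is precisely the content of the lemma, not a proof of it. Moreover, the heuristic "one component per interior minimal generator" does not reflect where the components actually come from: the bulk of them are type (1) lattice points filling rectangular regions under the staircase, not chains attached to generators.

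For comparison, the paper's argument is a direct quantitative count. Writing $m=bp+a$ with $0<a<p$ and $m=cq+d$ with $0<d<q$, it shows $(a-1,b-1)$ and $(c-1,d-1)$ are the staircase corners adjacent to $(p-1,0)$ and $(0,q-1)$, counts the type (1) points in the two rectangles $\{i<p-1,\,j<b-1\}$ and $\{i<c-1,\,j<q-1\}$ together with the boundary chains, obtaining
\[
\dim \HP_0(\cO_V^G,\cO_V)\;\geq\;p(b-1)+q(c-1)-(b-1)(c-1)+(d-b)+(a-c)+1+h,
\]
so that the lemma reduces to $m+1-p-q-bc+h>0$. This is then closed by a five-case analysis in $k$, $b$, $c$, using $p\geq kc+1$, $q\geq kb+1$, and in the borderline cases ($k=2$, or $b=c=2$) by exhibiting one or two explicit extra chains to make $h\geq 1$ or $2$. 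If you want to complete your approach, you would need to supply an estimate of this strength for the interior type (1) points; the axis components alone cannot carry the argument.
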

The proof of this final lemma is long and somewhat technical, so we
further subdivide it into several parts.
\begin{proof}
  Note that, by assumption, $p, q > 1$. Write $m=bp+a$ for $0<a<p$ and
  $m=cq+d$ for $0<d<q$. 
\begin{claim}\label{c:abcd}
$(a-1,b-1)$ and $(c-1,d-1)$ are corners of the staircase: $(a-1,b-1)$ is the rightmost before $(p-1,0)$, and $(c-1,d-1)$ is the leftmost after $(0,q-1)$, as
in Figure \ref{fig:staircase3}.
\end{claim}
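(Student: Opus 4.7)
I would verify the claim in three stages for $(a,b)$ and then use symmetry for $(c,d)$. First, the defining identity $a + pb = m$ gives $a + pb \equiv 0 \pmod{m}$, so $x_1^a x_2^b \in \cO_V^G$ (taking $r > 0$ without loss of generality, so $p = r$; the case $r < 0$ is analogous with $V_1$ and $V_2$ swapped). Hence $(a,b) \in V_1 \subseteq E$ and $(a-1,b-1)$ is a candidate corner.

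Next I would show $(a,b)$ is Pareto-minimal in $E = V_1 \cup V_2$. For $(a',b') \in V_1$ with $a' \le a$, $b' \le b$, $(a',b') \ne (0,0)$: the bound $0 < a'+pb' \le m$ combined with $m \mid a'+pb'$ forces $a'+pb' = m$, and then $a - a' = p(b'-b)$ together with $a - a' \ge 0$ and $b' - b \le 0$ forces $(a',b') = (a,b)$. For $(a',b') \in V_2$: if $0 < b' \le b$, then $pb' \le pb < m$, so the smallest nonnegative $a'$ with $a' \equiv pb' \pmod{m}$ is $a' = pb' \ge p > a$; the case $b' = 0$ gives $a' = m > a$. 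So no $(a',b') \in V_2$ dominates $(a,b)$.

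Then I would rule out any Pareto-minimal $(a',b') \in E$ with $a < a' < p$. If $(a',b') \in V_1$, write $a'+pb' = jm$: for $j=1$, $a' = a + p(b-b')$ is simultaneously $> a$ (forcing $b' < b$) and $< p$ (forcing $b' \ge b$), a contradiction; for $j \ge 2$, $pb' \ge 2m - p \ge 3m/2 > pb$, so $b' > b$ and $(a,b)$ strictly dominates $(a',b')$. If $(a',b') \in V_2$, then $b' \le b$ would give $a' \ge p$ as above, so $b' > b$, and again $(a,b)$ dominates. In either subcase $(a',b')$ fails to be Pareto-minimal.

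Finally, for $(c-1,d-1)$ I would apply the involution $(x_1,y_1) \leftrightarrow (x_2,y_2)$. This preserves $G$ but recasts its canonical generator as $\mathrm{diag}(\zeta,\zeta^t)$ in place of $\mathrm{diag}(\zeta,\zeta^r)$, thereby swapping the roles of $p$ and $q$; the set $E$ is preserved up to transposition of coordinates, with $V_1 \leftrightarrow V_1$ and $V_2 \leftrightarrow V_2$ when $pq \equiv 1 \pmod m$ and $V_1 \leftrightarrow V_2$ when $pq \equiv -1 \pmod m$. Applying the previous three stages with $p$ and $q$ interchanged identifies $(d-1,c-1)$ as the rightmost corner before $(q-1,0)$ in the swapped staircase, which transports back to $(c-1,d-1)$ as the leftmost corner after $(0,q-1)$ in the original. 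The main obstacle is the case distinction $pq \equiv \pm 1 \pmod m$, which controls whether $(c,d)$ itself lies in $V_1$ or $V_2$ (via the identity $c+pd = pm + c(1-pq)$, which is an integer multiple of $m$ in Case A but off by $2c$ in Case B); the symmetry absorbs this cleanly, avoiding a direct two-case computation.
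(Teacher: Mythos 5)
Your Stages 1--3 are correct and complete, and are in fact more thorough than the paper's own argument for the first half of the claim: the paper only analyzes the congruence $a'+b'p\equiv 0\pmod m$ (the $V_1$-type elements) and leaves the $V_2$-type elements and the Pareto-minimality of $(a,b)$ itself implicit, whereas you check all of this explicitly and the computations are sound.

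Stage 4, however, rests on a false assertion. Writing $\zeta=e^{2\pi i/m}$, the involution $(x_1,y_1)\leftrightarrow(x_2,y_2)$ does \emph{not} preserve $G$, and the set $E$ is \emph{not} preserved up to transposition of coordinates: conjugation by the swap carries $\langle\mathrm{diag}(\zeta,\zeta^{r})\rangle$ to $\langle\mathrm{diag}(\zeta^{r},\zeta)\rangle=\langle\mathrm{diag}(\zeta,\zeta^{t})\rangle$, which equals $G$ only when $r^2\equiv 1\pmod m$. If $E$ really were transposition-symmetric, the set of corners would be symmetric about the diagonal, forcing $p=q$ (the corners $(p-1,0)$ and $(0,q-1)$ would be reflections of each other); concretely, for $m=17$, $r=5$, $t=7$ (so $k=2$, within the lemma's hypotheses) one has $(5,1)\in E$ but $(1,5)\notin S_1\cup S_2$. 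The correct statement is that transposition carries $E(G)$ onto $E(G')$ for the \emph{different} group $G'=\langle\mathrm{diag}(\zeta,\zeta^{t})\rangle$, which is again of the form covered by your Stages 1--3 with the roles of $(p,a,b)$ and $(q,d,c)$ interchanged (since $m=cq+d$); hence the staircase of $G$ is the transpose of that of $G'$, and your ``transport back'' step goes through once the symmetry is attributed to this pair of groups rather than to $G$ alone. Alternatively, one can simply rerun Stages 1--3 with the two coordinates swapped, using $d\equiv -cq\pmod m$ in place of $a+pb=m$ --- this is where your $pq\equiv\pm1$ dichotomy (which you compute correctly via $c+pd=pm+c(1-pq)$) determines whether $(c,d)$ lies in $V_1$ or $V_2$, and it is what the paper's ``similarly'' amounts to. So the gap is real but local to the symmetry justification, and the repair is routine.
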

\begin{proof}
  First, note that $b < q/k$ and $c < p/k$, since $m = \frac{pq \pm 1}{k}
  = bp+a = cq+d$. Next, for all $a'$ such that $a < a' < p$, $a' + b'
  p \equiv 0 \pmod m$ implies that $b'p > m$ so that $b' > q/k$.
  Therefore,
  $(a'-1,b'-1)$ cannot be a corner of the staircase.  It follows that
  $(a-1,b-1)$ is a corner.  Similarly, if $d < d' < q$, then
  $(c'-1,d'-1)$ cannot be a corner, and hence $(c-1,d-1)$ is a corner.
\end{proof}
In particular, it follows that $c \leq a$ and $d \geq b$ (see Figure
\ref{fig:staircase3}).  (A direct proof of this also follows from the
argument of the proposition: first one shows $c < p/k$ and $b < q/k$; then
if $a < c < p$, it would follow that $d > q/k$, a contradiction.) To summarize,
$0 < c \leq a < p$ and $0 < b \leq d < q$.

Note also that
$b=\lfloor{}\frac{m}{p}\rfloor{}=\lfloor{}\frac{q}{k}\rfloor{}$ and
$c=\lfloor{}\frac{m}{q}\rfloor{}=\lfloor{}\frac{p}{k}\rfloor{}$. By
our assumptions, $p,q < m/2$, and hence also $b,c \geq 2$.

\begin{claim}
$p+b-2\leq{}m-p$.
\end{claim}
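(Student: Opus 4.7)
The plan is a short arithmetic verification from facts already established in the previous paragraph, so no new ideas are needed. The relevant inputs are: the division identity $m = bp + a$ with $1 \leq a \leq p-1$ (from the definition of $b$), and the inequality $b \geq 2$ (noted just above the claim, coming from $b = \lfloor q/k \rfloor$ and $q > 2$). I also observe $p \geq 2$, although this is not essential: I will only need $p \geq 1$.

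The inequality $p + b - 2 \leq m - p$ is equivalent to $m - 2p - b + 2 \geq 0$, so I would substitute $m = bp + a$ and factor:
\[
m - 2p - b + 2 \;=\; (bp + a) - 2p - b + 2 \;=\; (b-2)(p-1) + a.
\]
Since $b \geq 2$ and $p \geq 1$, both factors of $(b-2)(p-1)$ are nonnegative; combined with $a \geq 1$, the right-hand side is at least $1$. Hence $p + b - 2 \leq m - p$ (in fact with a strict gap of at least one).

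The only subtlety worth flagging is to record that $b \geq 2$ is genuinely used; this is why the claim appears here and not earlier in the argument, since in the cases $k = 0, 1$ (treated in the preceding lemmas) the values of $b$ may be as small as $0$ or $1$ and the inequality can fail. Beyond that, there is no obstacle: the claim is a one-line consequence of the previous discussion and will feed into the subsequent geometric analysis of the staircase in Figure \ref{fig:staircase3}.
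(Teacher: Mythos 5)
Your proof is correct, but it takes a genuinely different route from the paper's. The paper bounds $(m-p)-(p+b-2)=m+2-2p-b$ from below by $f(p)=m+2-2p-\frac{m}{p}$ (using $b\leq m/p$) and then argues that $f\geq 0$ on the interval $1<p<\frac{m}{2}$ by checking that $f$ vanishes at both endpoints and invoking concavity of $f$ (the paper says ``convex,'' but the relevant property is that $f''(p)=-2m/p^3<0$, so the minimum on the interval is attained at an endpoint). You instead substitute the exact division $m=bp+a$ and factor, obtaining the identity $m-2p-b+2=(b-2)(p-1)+a$, which is manifestly $\geq 1$ given $b\geq 2$, $p\geq 1$, $a\geq 1$. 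Your version is more elementary (no calculus or convexity), yields the strict inequality with an explicit gap of $1$, and makes transparent exactly which hypotheses are used; the paper's version avoids needing the remainder $a$ at all, working only with $p<m/2$ and $b\leq m/p$. The two arguments rest on essentially equivalent inputs, since $b\geq 2$ is itself derived in the paper from $p<m/2$ via $b=\lfloor m/p\rfloor$. One small inaccuracy: your parenthetical derivation of $b\geq 2$ from ``$b=\lfloor q/k\rfloor$ and $q>2$'' does not work as stated (for large $k$ that floor could be $1$); the correct source, which the paper records just above the claim, is $b=\lfloor m/p\rfloor\geq 2$ because $p<m/2$. Since you only use the fact $b\geq 2$, which is legitimately established at that point, this does not affect the validity of your argument.
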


\begin{proof}
\begin{align*}
(m-p)-(p+b-2)&=m+2-2p-b\\
&\geq{}m+2-2p-\frac{m}{p}.
\end{align*}
Let $f(p)=m+2-2p-\frac{m}{p}$. Since $f(p)$ is convex and $1 <p <
\frac{m}{2}$, it suffices to prove that $f(1)\geq 0$ and
$f(\frac{m}{2})\geq 0$. This is clear because they are both $0$.
\end{proof}

Therefore, glancing at Figure \ref{fig:staircase3}, we see that there
are chains beginning at $(p-1,0), \ldots, (p-1,b-2)$ of type (2) (in
the language of the beginning of the section) which form connected
components in $\mathcal{C}$.  Similarly, there are chains of type
(2) ending at
$(0,q-1),\dotsc{},(c-2,q-1)$.
  Next, again from Figure \ref{fig:staircase3}, we see
that there are points of type (1) of the form $(c-1,j)$ with $b-1
\leq j < d-1$ and of the form $(i, b-1)$ for $c-1 \leq i < a-1$, and
also the chains $(c-1, d-1)$ and $(a-1, b-1)$ of type (2), each a
connected component in $\mathcal{C}$ consisting of a single vertex
(some of which may be equal).  Together with the more obvious points
$(i,j)$ of type (1) where either $i < c-1, j < q-1$ or $i < p-1, j <
b-1$, we deduce
\begin{claim}
$\dim{}\HP_0(\cO_V^G,\cO_V)\geq{}p(b-1)+q(c-1)-
(b-1)(c-1)+(d-b)+(a-c)+1$.
\end{claim}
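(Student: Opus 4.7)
The plan is to apply Theorem~\ref{theorem-staircase}, which identifies $\dim \HP_0(\cO_V^G,\cO_V)$ with $|\mathcal{C}|$, and to exhibit enough connected components in $\mathcal{C}$ to account for the claimed lower bound. Using the staircase picture in Figure~\ref{fig:staircase3}, whose corners $(p-1,0),(a-1,b-1),(c-1,d-1),(0,q-1)$ are located by Claim~\ref{c:abcd}, I will enumerate four groups of components: (i) points of type~(1) lying in the two rectangles $R_1=[0,p-2]\times[0,b-2]$ and $R_2=[0,c-2]\times[0,q-2]$; (ii) chains of type~(2) beginning, respectively, at the right-edge vertices $(p-1,0),\dots,(p-1,b-2)$ and ending at the top-edge vertices $(0,q-1),\dots,(c-2,q-1)$; (iii) isolated ``shelf'' points of type~(1) at $(c-1,j)$ for $b-1\le j\le d-2$ and $(i,b-1)$ for $c-1\le i\le a-2$; and (iv) the single-vertex chains of type~(2) at the intermediate corners $(a-1,b-1)$ and $(c-1,d-1)$, which contribute at least one component in total (two if they are distinct, one if they coincide).

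Summing these contributions with inclusion--exclusion on $R_1\cap R_2=[0,c-2]\times[0,b-2]$ (using $c\le a<p$ and $b\le d<q$, so that this intersection lies inside both rectangles) yields
\[
(p-1)(b-1)+(c-1)(q-1)-(c-1)(b-1)+(b-1)+(c-1)+(d-b)+(a-c)+1,
\]
which rearranges to $p(b-1)+q(c-1)-(b-1)(c-1)+(d-b)+(a-c)+1$, the desired bound.

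For the type~(1) points in the rectangles and shelves, verification reduces to checking, for each corner $(r-1,s-1)$ of the staircase, that the test point $(i,j)$ satisfies $i<r-1$ or $j<s-1$; since every element of $E$ dominates some corner coordinate-wise, it suffices to check against the corners themselves. The four located corners, together with the observation that any additional intermediate corner must have first coordinate strictly less than $a-1$ and second coordinate strictly greater than $b-1$, handle every case. Distinctness of the listed components is immediate from their distinct positions, except for the possible coincidence $(a-1,b-1)=(c-1,d-1)$ already accounted for in (iv).

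The step I expect to be the main obstacle is verifying that each of the $b-1$ right-edge chains and $c-1$ top-edge chains qualifies as a chain of type~(2), i.e., has \emph{exactly} one edge between consecutive vertices rather than two or more (which would eject the component from $\mathcal{C}$) or none (which would split it or leave it dangling off a vertex with a negative coordinate). The hypothesis $p+b-2\le m-p$ proved just above is essential here: it guarantees that the diagonals $x+y=p-1+j$ for $0\le j\le b-2$ stay inside the region where $(p,1)$ is the unique element of $E$ contributing edges, so no second edge from a competing generator is ever created along these chains. A symmetric argument, using the analogous inequality obtained by interchanging the roles of $p,b$ and $q,c$, handles the top-edge chains. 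Once these local multiplicity checks are in place, the enumeration is complete and the stated inequality follows.
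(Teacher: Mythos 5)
Your proposal is correct and takes essentially the same route as the paper: the paper's argument is exactly this enumeration of components of $\mathcal{C}$ read off from Figure \ref{fig:staircase3} --- the two rectangles of type-(1) points counted by inclusion--exclusion, the $b-1$ right-edge and $c-1$ top-edge chains of type (2) (justified by Claim \ref{c:abcd} and the inequality $p+b-2\le m-p$), the shelf points, and the intermediate corner(s). The one bookkeeping subtlety --- the two shelf families overlap at $(c-1,b-1)$ precisely when the two intermediate corners are distinct, so the combined contribution $(d-b)+(a-c)+1$ is correct in either case --- is present in the paper's count as well and resolves the same way.
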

Let $h$ denote the difference $h := \dim{}\HP_0(\cO_V^G,\cO_V) -
\bigl( p(b-1)+q(c-1)- (b-1)(c-1)+(d-b)+(a-c)+1 \bigr)$.  In
particular, $h$ is at least the number of chains of type (2)
containing vertices $(j,k)$ such that $j+k > \max\{c+d-2,a+b-2\}$ and
$j < p-1, k < q-1$.  (The last condition ensures that these chains are
not the ones beginning with any of the vertices $(p-1,0), \ldots,
(p-1, b-2)$ or ending at any of the vertices $(0,q-1), \ldots, (c-2,
q-1)$, which we already counted above.)

\begin{figure} [h!tbp] 
\centering
	\includegraphics{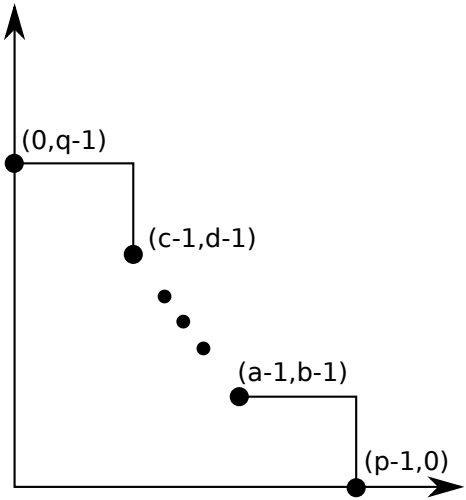}
	\caption{The staircase for $k \geq 2$\label{fig:staircase3}}
\end{figure}

In view of the claim and the formula for
$\dim{}\HH_0(\cD_X^G,\cD_X)$, we deduce that 
\begin{align*}
  \dim{}\HP_0&(\cO_V^G,\cO_V)-
  \dim{}\HH_0(\cD_X^G,\cD_X)\\
  &= p(b-1)+q(c-1)-(b-1)(c-1)+(d-b)+(a-c)+1-(m-1) + h\\
  &=m-a-p+m-d-q-bc+b+c-1+d-b+a-c+1-m+1 + h\\
  &=m+1-p-q-bc + h.
\end{align*}

We will need one more inequality which gives a lower bound on $p$, and
similarly on $q$.

\begin{claim}
$p\geq{}kc+1$. Similarly, $q \geq kb+1$.
\end{claim}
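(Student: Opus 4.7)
The plan is to derive both inequalities by a direct manipulation of the defining relations $m = bp+a = cq+d$ together with $pq = mk+\epsilon$ for some $\epsilon\in\{+1,-1\}$. First I would substitute $m = cq+d$ into $pq = mk+\epsilon$ to get
\begin{equation*}
pq = (cq+d)k + \epsilon = cqk + dk + \epsilon,
\end{equation*}
and rearrange as
\begin{equation*}
q(p-ck) = dk + \epsilon.
\end{equation*}
Here $p-ck$ is an integer, and $q>0$, so the sign of $p-ck$ is the sign of $dk+\epsilon$. The whole game is therefore to show that $dk+\epsilon > 0$.

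In the case $\epsilon = +1$ this is immediate since $d\geq 1$ and $k\geq 2$ (even $d\geq 1$ and $k\geq 1$ would do). In the case $\epsilon = -1$, we use the hypotheses $k\geq 2$ and $d\geq 1$ from the setup (recall $0 < d < q$, so $d\geq 1$), which give $dk-1 \geq 1 > 0$. Hence $q(p-ck) > 0$, forcing $p-ck\geq 1$, i.e., $p \geq kc+1$.

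The inequality $q \geq kb+1$ then follows by symmetry: interchanging the roles of the two coordinates swaps $p\leftrightarrow q$ and $(b,a)\leftrightarrow (c,d)$ (since both pairs of equations $m=bp+a$ and $m=cq+d$ are interchanged), while leaving the equation $pq = mk+\epsilon$ invariant. Running the same argument with $m = bp+a$ substituted instead yields $p(q-bk) = bk\cdot? $\dots more directly, substituting $m=bp+a$ into $pq=mk+\epsilon$ gives $p(q-bk) = ak+\epsilon$, and the same positivity argument (using $a\geq 1, k\geq 2$) produces $q \geq kb+1$.

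I do not expect any serious obstacle here — the claim is essentially a one-line congruence computation dressed up in the notation of the paper. The only subtle point is verifying $dk-1>0$ (and symmetrically $ak-1>0$) in the $\epsilon=-1$ case, which uses the standing assumption $k\geq 2$ of this lemma together with the defining condition $0<d<q$ (respectively $0<a<p$) that forces $d, a \geq 1$.
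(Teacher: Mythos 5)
Your proof is correct and is essentially the paper's argument: the paper writes $pq \geq km-1 = k(cq+d)-1 > kcq$, which is the same rearrangement $q(p-ck) = dk + \epsilon > 0$ that you perform, using $k \geq 2$ and $d \geq 1$ to handle the $\epsilon = -1$ case. The symmetric argument for $q \geq kb+1$ matches as well.
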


\begin{proof}
$pq\geq{}km-1=k(cq+d)-1>kcq$.  The same argument shows $q > kb$.
\end{proof}

We now divide the lemma into five cases. In each case, we 
prove that $m+1-p-q-bc + h > 0$.  Up to symmetry (swapping $r$ with
$t$), we will assume that $b \geq c$.

{\bf Case 1.} $k=2$. Note that, since $b, c \geq 2$ as remarked at the
beginning of the proof of the lemma, it follows that $p \geq kc+1 \geq
5$ and similarly $q \geq 5$.

{\bf Case 1a.} $m=\frac{pq-1}{2}$.

In this case, the staircase has three corners with nonnegative
coefficients: $(p-1, 0)$,
$(\frac{p-1}{2}-1,\frac{q-1}{2}-1)$, and $(0,q-1)$. So
$a=c=\frac{p-1}{2}$ and $b=d=\frac{q-1}{2}$. Then,
\begin{align*}
m+1-p-q-bc&=\frac{pq-1}{2}+1-p-q-\frac{p-1}{2} \cdot \frac{q-1}{2}\\
&=\frac{1}{4}(pq-3p-3q+1).
\end{align*}
In addition, since $p,q \geq 5$, we have at least two additional
chains in $\mathcal{C}$ of type (2):
$(\frac{p-3}{2},\frac{q-1}{2}),(\frac{p-1}{2},\frac{q-3}{2})$ and
$(\frac{p-3}{2},\frac{q+1}{2}),(\frac{p-1}{2},\frac{q-1}{2}),
(\frac{p+1}{2},\frac{q-3}{2})$. So $h \geq 2$, and it suffices to
prove that $pq-3p-3q+9 = (p-3)(q-3) >0$, which is obvious.

{\bf Case 1b.} $m=\frac{pq+1}{2}$.

In this case, the staircase has four corners with nonnegative
coefficients: $(0, q-1)$, $(\frac{p-1}{2}-1,\frac{q+1}{2}-1)$,
$(\frac{p+1}{2}-1,\frac{q-1}{2}-1)$, and $(0,p-1)$. So $a=\frac{p+1}{2}$,
$b=\frac{q-1}{2}$, $c=\frac{p-1}{2}$, $d=\frac{q+1}{2}$. Then,
\begin{align*}
m+1-p-q-bc&=\frac{pq+1}{2}+1-p-q-\frac{p-1}{2} \cdot \frac{q-1}{2}\\
&=\frac{1}{4}(pq-3p-3q+5).
\end{align*}
Also, since $p,q \geq 5$, there is at least one additional chain of
type (2) in $\mathcal{C}$:
$(\frac{p-3}{2},\frac{q+1}{2}),(\frac{p-1}{2},\frac{q-1}{2}),
(\frac{p+1}{2},\frac{q-3}{2})$. So
$h \geq 1$, and it suffices to prove that $pq-3p-3q+9>0$, which we already
saw in Case 1a.

{\bf Case 2.} $k\geq{}3$, $b\geq{}3$, $c\geq{}3$.

In this case, $m+1-p-q-bc>0$ follows from the inequalities
\begin{align*}
  p&<\frac{m}{b}\leq{}\frac{m}{3},\\
  q&<\frac{m}{c}\leq{}\frac{m}{3}, \text{ and }\\
  bc&=\frac{m-a}{p} \cdot
  \frac{m-d}{q}<\frac{m^2}{pq}\leq{}\frac{m\frac{pq+1}{k}}{pq}=
\frac{m}{k}+\frac{m}{kpq}<\frac{m}{3}+1.
\end{align*}

{\bf Case 3.} $k\geq{}3$, $c=2$, $b\geq{}4$.

Since $p\geq{}kc+1\geq{}7$, 
\begin{align*}
p&<\frac{m}{b}\leq{}\frac{m}{4},\\
q+\frac{1}{2}b&\leq{}q+\frac{d}{2}=\frac{m}{2}, \text{ and }\\
\frac{3}{2}b&<\frac{3m}{2p}\leq{}\frac{3m}{14}.
\end{align*}
For the inequality on the second line, see Figure \ref{fig:staircase3}
and the discussion after Claim \ref{c:abcd}. We deduce from the three
lines that $m+1-p-q-bc=m+1-p-(q+\frac{1}{2}b)-(\frac{3}{2}b)>0$.

{\bf Case 4.} $k\geq{}3$, $c=2$, $b=3$.  

Note that $d \geq b = 3$ and $a \geq c = 2$. Hence
\begin{align*}
q&=\frac{m-d}{c}\leq{}\frac{m-3}{2} \text{ and }\\
p&=\frac{m-a}{b}\leq{}\frac{m-2}{3}.
\end{align*}
So, $m-p-q-5\geq{}\frac{m-17}{6}$. Since $m>bp>bkc\geq{}18$, we conclude
that $m-p-q-5>0$, as desired.

{\bf Case 5.} $k\geq{}3$, $c=2$, $b=2$.  Note that

\begin{align*}
m+1-p-q-bc&=m+1-\frac{m-a}{2}-\frac{m-d}{2}-4\\
&=\frac{a+d-6}{2}
\end{align*}

Therefore, it suffices to prove that $2h + a+d > 6$. 

{\bf Case 5a.} $a=d=2$.

In this case, we have at least two additional chains of type (2)
in $\mathcal{C}$:
$(1,2),(2,1)$ and $(1,3),(2,2),(3,1)$. Therefore, $h \geq 2$, as desired.

{\bf Case 5b.} If we are not in the case $a=d=2$, then $(1,1)$ is not
a corner of the staircase; in view of Figure \ref{fig:staircase3},
this implies  $a,d>2$. It suffices to assume that $a=d=3$. We claim
that this cannot happen. For sake of contradiction, assume
$a=d=3$. Then, $m=2p+3=2q+3$. Since $m=\frac{pq\pm 1}{k}$, $4m$
divides $4(pq \pm 1)=m^2-6m+9\pm 4$. Therefore, $m$ is odd, so $m \mid
m^2 - 6m +9 \pm 4$, and hence $m$ divides $5$ or $13$. However,
$m=2p+3\geq{}2(kc+1)+3 \geq 17$, which is a contradiction.
\end{proof}

\subsubsection{Case II: the general case}

In this subsection, we complete the proof of Theorem
\ref{theorem-typean-main} by reducing the general case to Proposition
\ref{p:theorem-typean-main-case1}, which was proved in the previous
subsection.

\begin{lemma}
Let $A=\min \{r > 0: x_1^r x_2^s \in \cO_V^G \text{ or } x_1^r y_2^s \in \cO_V^G\}$. Then, for every invariant of the form $x_1^r x_2^s$ or $x_1^r y_2^s$
in $\cO_V^G$, 
$A \mid r$.
\end{lemma}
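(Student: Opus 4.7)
The plan is to reduce the divisibility claim to the elementary fact that every subgroup of $\ZZ$ is cyclic, by setting up a character lattice of monomial invariants.

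First I would fix notation for the $G$-action. Since $G < \CC^\times \times \CC^\times < \GL_2 < \Sp_4$ via the embedding $A \mapsto A \oplus (A^{-1})^*$, each element $g = (\lambda_1,\lambda_2) \in G$ acts diagonally by $x_i \mapsto \lambda_i x_i$ and $y_i \mapsto \lambda_i^{-1} y_i$. Consequently, the monomial $x_1^r x_2^s$ is $G$-invariant iff $\lambda_1^r \lambda_2^s = 1$ for all $g \in G$, and $x_1^r y_2^s$ is $G$-invariant iff $\lambda_1^r \lambda_2^{-s} = 1$ for all $g \in G$. I would then introduce the character lattice
\[
M := \{(r,s) \in \ZZ^2 : \lambda_1^r \lambda_2^s = 1 \text{ for all } (\lambda_1, \lambda_2) \in G\} \subseteq \ZZ^2,
\]
so that invariants $x_1^r x_2^s$ (with $r,s \geq 0$) correspond bijectively to $M \cap \ZZ_{\geq 0}^2$, while invariants $x_1^r y_2^s$ (with $r,s \geq 0$) correspond to $M \cap (\ZZ_{\geq 0} \times \ZZ_{\leq 0})$ via $(r,-s) \leftrightarrow x_1^r y_2^s$. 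In either case, the exponent of $x_1$ is the first coordinate of some element of $M$.

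Next, I would observe that the first-coordinate projection $\pi_1(M) \subseteq \ZZ$ is a subgroup, hence equal to $A'\ZZ$ for some $A' \geq 0$. Because $G$ is finite, the image of $G$ under projection to the first $\CC^\times$-factor is a finite cyclic group of some order $n$, which forces $(n, 0) \in M$, so $A' > 0$.

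The remaining claim is $A = A'$, which divides every valid exponent $r$. For $A \leq A'$, I would pick $(A', s) \in M$ and observe that when $s \geq 0$ the monomial $x_1^{A'} x_2^s$ is an invariant with first exponent $A'$, while when $s < 0$ the monomial $x_1^{A'} y_2^{-s}$ is an invariant with first exponent $A'$; either way, $A \leq A'$. For $A' \leq A$, the exponent $r$ of any invariant $x_1^r x_2^s$ or $x_1^r y_2^s$ lies in $\pi_1(M) = A'\ZZ$, so in particular $A' \mid A$. Combining these, $A = A'$ and $A'\ZZ = \pi_1(M)$ contains every allowed $r$, proving $A \mid r$. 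The argument presents no real obstacle beyond keeping track of the sign convention that distinguishes $y_2$-invariants (corresponding to negative second coordinates in $M$) from $x_2$-invariants; once the correspondence with $M$ is in place, the divisibility is immediate.
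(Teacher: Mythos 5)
Your proof is correct, and it takes a somewhat different route from the paper's. The paper argues by descent on a minimal counterexample inside the multiplicative semigroup of invariant monomials: assuming $r$ is minimal with $A \nmid r$, it first asserts that invariants of \emph{both} forms $x_1^A x_2^{s'}$ and $x_1^A y_2^{s''}$ exist (this uses, implicitly, that one may shift the second exponent by multiples of the order of $\lambda_2$ to reach either sign), and then multiplies the given invariant by the appropriate one of these to produce an invariant with $x_1$-exponent $r-A$, contradicting minimality. You instead pass to the character lattice $M \le \ZZ^2$ of all (integer, possibly negative) exponent vectors killed by every character of $G$, identify the two kinds of monomial invariants with the two sign-quadrants of $M$, and apply the fact that $\pi_1(M)$ is a subgroup of $\ZZ$, hence of the form $A'\ZZ$, before checking $A = A'$. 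The two arguments rest on the same Euclidean-algorithm-type fact, but your formulation makes transparent exactly why both $x_2$- and $y_2$-type invariants must be allowed in the definition of $A$: the projection of the single quadrant $M \cap \ZZ_{\ge 0}^2$ is only a sub-semigroup of $\ZZ_{\ge 0}$ and need not be closed under subtraction, whereas allowing both signs of the second coordinate yields the full group $\pi_1(M)$. It also packages the small verification the paper leaves implicit (existence of $x_1^A x_2^{s'}$ and $x_1^A y_2^{s''}$ simultaneously) into the single observation that $(0,n_2) \in M$ for $n_2$ the order of the second projection of $G$. The paper's version is shorter and stays entirely within the semigroup of honest invariant monomials; yours is slightly more structural and generalizes immediately to any diagonal torus action.
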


\begin{proof}
  It is enough to show the result for $r > 0$.  Suppose, for sake of
  contradiction, that $A \nmid r$ and $x_1^r x_2^s$ or $x_1^r y_2^s$
  is an invariant.  We can assume that $r$ is minimal for this
  property.  There must exist $s', s'' \geq 0$ such that $x_1^A
  x_2^{s'}$ and $x_1^A y_2^{s''}$ are invariants.  In the first case
  that $x_1^r x_2^s$ is invariant, it follows also that
  $x_1^{r-A}x_2^{s+s''}$ is invariant; in the latter case that $x_1^r
  y_2^s$ is invariant, it follows also that $x_1^{r-A} y_2^{s+s'}$ is
  invariant. This contradicts our assumption.
\end{proof}

Similarly, let $B= \min \{s > 0: x_1^r x_2^s \in \cO_V^G \text{ or }
x_1^r y_2^s \in \cO_V^G\}$. Then $B$ divides all of the $s$ appearing
in the set.  We construct a group $G'$ in the following way:
\begin{equation*}
  G'=\left\{\begin{pmatrix}\zeta^A & 0 \\ 0 & \xi^B\end{pmatrix}: 
\begin{pmatrix}\zeta & 0 \\ 0 & \xi \end{pmatrix} \in{}G\right\}.
\end{equation*}
Then $x_1^{Ar}x_2^{Bs}$ is an invariant of $G$ if and only if
$x_1^rx_2^s$ is an invariant of $G'$, and $x_1^{Ar}y_2^{Bs}$ is an
invariant of $G$ if and only if $x_1^ry_2^s$ is an invariant of $G'$.

\begin{lemma} \label{l:GfromG'}
$G=\left\{\begin{pmatrix}\zeta & 0 \\ 0 & \xi\end{pmatrix} : \begin{pmatrix}\zeta^A & 0 \\ 0 & \xi^B \end{pmatrix} \in{}G'\right\}$.
\end{lemma}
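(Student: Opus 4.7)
The plan is to establish the nontrivial inclusion by reducing it to the assertion $\mu_A \times \mu_B \subseteq G$ (where $\mu_k \subset \CC^\times$ denotes the $k$-th roots of unity), and then to verify this via a short character-theoretic computation. The inclusion $\subseteq$ is immediate from the definition of $G'$. For the reverse, if $(\zeta^A, \xi^B) \in G'$, then by definition there exists $(\zeta_0, \xi_0) \in G$ with $\zeta^A = \zeta_0^A$ and $\xi^B = \xi_0^B$, so $(\zeta, \xi) = (\omega, \omega') \cdot (\zeta_0, \xi_0)$ for some $(\omega, \omega') \in \mu_A \times \mu_B$. It therefore suffices to show $\mu_A \times \mu_B \subseteq G$; by the symmetric roles of the two factors, this reduces further to $\mu_A \times \{1\} \subseteq G$.

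Let $\chi_i \colon G \to \CC^\times$, $i = 1, 2$, denote the two coordinate characters, so that the inclusion $G \hookrightarrow \CC^\times \times \CC^\times$ is realized by $g \mapsto (\chi_1(g), \chi_2(g))$; in particular $\chi_1$ and $\chi_2$ separate points of $G$ and therefore generate the dual group $\widehat{G}$. Set $K_1 := \ker \chi_2 = G \cap (\CC^\times \times \{1\})$, a finite subgroup of $\CC^\times$, hence cyclic of the form $\mu_{n_1}$. The goal becomes showing $n_1 = A$, which then gives $\mu_A = K_1 \subseteq G$ as required.

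To compute $A$ in these terms, I would first observe that $x_1^r x_2^s$ is $G$-invariant iff $\chi_1^r = \chi_2^{-s}$, while $x_1^r y_2^s$ is $G$-invariant iff $\chi_1^r = \chi_2^s$; allowing $s$ to range over $\ZZ_{\geq 0}$ in both options yields the uniform condition $\chi_1^r \in \langle \chi_2 \rangle \subseteq \widehat{G}$. Hence $A$ equals the order of the image of $\chi_1$ in the cyclic quotient $\widehat{G}/\langle \chi_2\rangle$, which is generated by that image, so $[\widehat G : \langle \chi_2\rangle] = A$. Applying the first isomorphism theorem together with $|\widehat G| = |G|$,
\begin{equation*}
  |K_1| = |\ker \chi_2| = |G|/|\mathrm{im}(\chi_2)| = |\widehat G|/|\langle \chi_2\rangle| = A,
\end{equation*}
so $n_1 = A$ and the proof is complete.

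The main (minor) obstacle I anticipate is the character-theoretic reformulation of $A$: one must carefully check that combining the two families of invariant monomials, with $s$ running only over $\ZZ_{\geq 0}$ in each, still collapses to the single clean condition $\chi_1^r \in \langle \chi_2\rangle$. This uses that $\chi_2$ has finite order in $\widehat G$, so $\chi_2^{-1}$ equals a nonnegative power of $\chi_2$ and therefore $\{\chi_2^s : s \geq 0\} = \langle \chi_2\rangle$. Once this reformulation is in place, the remainder is a routine application of duality for finite abelian groups.
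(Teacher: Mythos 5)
Your proof is correct, but it takes a genuinely different route from the paper's. The paper disposes of the lemma in one line: by the observation immediately preceding it, $G$ and the preimage group $H:=\{(\zeta,\xi):(\zeta^A,\xi^B)\in G'\}$ have the same invariant ring, hence the same invariant field $\CC(V)^G=\CC(V)^H$, and since $G\leq H$, Galois theory forces $G=H$. You instead argue directly with duality for finite abelian groups: you reduce the reverse inclusion to $\mu_A\times\mu_B\subseteq G$, reinterpret $A$ as $\min\{r>0:\chi_1^r\in\langle\chi_2\rangle\}=[\widehat G:\langle\chi_2\rangle]$ (using that $\chi_1,\chi_2$ separate points and hence generate $\widehat G$), and match this against $|\ker\chi_2|$ via the first isomorphism theorem. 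All the steps check out, including the point you flag about collapsing the two monomial families to the single condition $\chi_1^r\in\langle\chi_2\rangle$ (and the possible sign ambiguity in how $\mathrm{diag}(\zeta,\xi)$ acts on $x_i$ versus $y_i$ is harmless, since it only replaces $\chi_i$ by $\chi_i^{-1}$). What the paper's argument buys is brevity and generality -- it works verbatim for any pair of finite groups with the same invariants, with no need to unwind the definition of $A$. What yours buys is explicitness: it avoids the Galois-theoretic black box and actually identifies the coordinate kernels $G\cap(\CC^\times\times\{1\})=\mu_A$ and $G\cap(\{1\}\times\CC^\times)=\mu_B$, which is structural information about $G$ that the paper's proof does not produce.
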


\begin{proof}
  It is immediate from the above description that the two groups have
  the same invariants. This implies that the two groups are the same
  in a standard way: for example, if $G \leq H$ and $\cO_V^G =
  \cO_V^H$, then the quotient fields $\CC(V)^G$ and $\CC(V)^H$ would also
  be equal, and by the main theorem of Galois theory, $G = H$.
\end{proof}

\begin{lemma}
$G'$ is generated by $\begin{pmatrix}
e^{2\pi{}i/m} & 0\\
0 & e^{2r\pi{}i/m}
\end{pmatrix}$, for some integers $r, m$ with $\mathrm{gcd}(r,m)=1$.
\end{lemma}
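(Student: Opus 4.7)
The plan is to exhibit explicit invariants of $G'$ that force both coordinate projections $G' \to \CC^\times$ to be injective; this will pin down $G'$ as a cyclic group whose generator can be normalized to the required form.

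First, I would extract useful $G'$-invariants from the construction. By definition of $A$, there exists an invariant of $G$ of the form $x_1^A x_2^{s_0}$ or $x_1^A y_2^{s_0}$ for some $s_0 \geq 0$. The analogous divisibility statement for $B$ (noted immediately after Lemma~\ref{l:GfromG'} in the preceding paragraph, proved symmetrically to the $A$-lemma) forces $B \mid s_0$, so $s_0 = Bs$ for some $s \geq 0$. Via the $G \leftrightarrow G'$ correspondence for invariant monomials, this yields an invariant $x_1 x_2^{s}$ or $x_1 y_2^{s}$ of $G'$. Symmetrically, starting from $B$ and using the $A$-divisibility lemma, $G'$ also has an invariant of the form $x_1^{r} x_2$ or $x_1^{r} y_2$ for some $r \geq 0$.

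Second, I would use these invariants to show $G'$ is cyclic. Writing an element of $G' < (\CC^\times)^2$ as $(\zeta,\xi)$, the first invariant gives $\zeta \xi^{\pm s} = 1$ for every element, so $\zeta$ is determined by $\xi$; the second gives $\zeta^{\pm r} \xi = 1$, so $\xi$ is determined by $\zeta$. Hence both coordinate projections $G' \to \CC^\times$ are injective. Their images are finite subgroups of $\CC^\times$, hence cyclic, so $G'$ itself is cyclic of some order $m := |G'|$.

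Finally, I would normalize a generator. Pick any generator $g_0 = (\zeta_0,\xi_0)$ of $G'$. Since the first projection is injective with image of order $m$, the image $\zeta_0$ must have order exactly $m$, i.e., $\zeta_0$ is a primitive $m$-th root of unity. Replacing $g_0$ by the appropriate power (a unit mod $m$), we may assume $\zeta_0 = e^{2\pi i/m}$. Then $\xi_0$ is some $m$-th root of unity, say $\xi_0 = e^{2\pi i r/m}$, and injectivity of the second projection forces $\xi_0$ to have order $m$ as well, giving $\gcd(r,m) = 1$. This is the required form. The only mildly delicate point is the bookkeeping in step one: one must remember that the divisibility lemma for $A$ yields invariants whose exponent on $x_2$ or $y_2$ is a multiple of $B$, so that dividing exponents by $A$ and $B$ produces a genuine $G'$-invariant with exponent $1$ on $x_1$; after that, the cyclic structure and normalization are essentially automatic.
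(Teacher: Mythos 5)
Your proof is correct, and it takes a somewhat different route from the paper's. Both arguments start from the same key input: the minimality of $A$ and $B$ (together with the two divisibility lemmas) produces $G'$-invariants of the form $x_1 x_2^{\pm s}$ and $x_1^{\pm \ell} x_2$, i.e., invariants with exponent $1$ on each variable separately. From there the paper works with the lattice $(\ZZ^2)^{G'}$ of invariant Laurent monomials: it shows this lattice is generated by $(m,0)$ and $(\ell,1)$, identifies it with the invariant lattice of the claimed diagonal element, and concludes via the index formula $|K| = |\ZZ^2/(\ZZ^2)^K|$ that the cyclic subgroup $K$ generated by that element is all of $G'$. You instead argue directly at the level of group elements: each invariant forces one coordinate of $(\zeta,\xi) \in G'$ to be a function of the other, so both coordinate projections $G' \to \CC^\times$ are injective, whence $G'$ is cyclic; normalizing a generator so its first entry is $e^{2\pi i/m}$ and using injectivity of the second projection to get $\gcd(r,m)=1$ finishes the argument. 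Your version is more elementary and self-contained (no lattice/index bookkeeping), while the paper's version has the advantage of explicitly computing $(\ZZ^2)^{G'}$, which is in the same spirit as the Galois-theoretic identification of groups by their invariants used in the preceding Lemma \ref{l:GfromG'} and is the form in which the data is reused in the surrounding staircase analysis. The one point worth stating carefully in your write-up, which you do flag, is that the invariant of $G$ witnessing the minimality of $A$ has its $x_2$- or $y_2$-exponent divisible by $B$, so that dividing exponents by $A$ and $B$ really does land in $\cO_V^{G'}$; with that in place the rest is routine.
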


\begin{proof}
  Let $m \geq 1$ be the positive integer such that the first
  projection $\{\zeta: \begin{pmatrix}\zeta & 0 \\ 0 &
    \xi\end{pmatrix} \in G' \}$ is the cyclic group generated by
  $e^{2\pi i/m}$. By the definition of $G'$, there exists $\ell \geq
  1$ such that $x_1^\ell x_2 \in \cO_V^{G'}$.  It follows that the
  lattice $(\ZZ^2)^{G'} := \{(a,b) \in \ZZ^2 \mid x_1^a x_2^b \in
  \CC(V)^{G'}\}$ is generated by $(m,0)$ and $(\ell, 1)$. By
  assumption, $\mathrm{gcd}(\ell,m) = 1$.  Thus, we can let $r := -\ell$,
  and then $(\ZZ^2)^{G'}$ identifies with the lattice invariant under
  the element stated in the lemma.  This implies that $G'$ is
  generated by the element.  In more detail, if $K \leq G'$ is the
  subgroup generated by this element, then $|K| = |\ZZ^2/(\ZZ^2)^K| =
  |\ZZ^2/(\ZZ^2)^{G'}|= |G'|$.
\end{proof}

We see that Case I of Theorem \ref{theorem-typean-main}, i.e.,
Proposition \ref{p:theorem-typean-main-case1}, is equivalent to the
case $A=B=1$.  We divide the remainder of the theorem into two cases:

{\bf Case 1.} $A,B>1$.  In the case that $G'$ is the trivial group,
$G$ is evidently of the type (3) in Theorem
\ref{theorem-typean-main}, and it is easy to see that, for this group,
$\dim \HP_0(\cO_V^G, \cO_V) = (A-1)(B-1) = \dim \HH_0(\cD_X^G,
\cD_X)$. See also Figure \ref{f:t-abel-3}.
\begin{claim}
If $A,B>1$, and $G'$ is nontrivial, then $\dim{}\HP_0(\cO_V^G,\cO_V)>\dim{}\HH_0(\cD_X^G,\cD_X)$.
\end{claim}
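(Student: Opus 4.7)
The plan is to lower-bound $\dim \HP_0(\cO_V^G, \cO_V) = |\mathcal{C}|$ by counting isolated vertices of the graph $\Gamma=\Gamma_G$ of Theorem \ref{theorem-staircase} lying in $\ZZ_{\geq 0}^2$, and to show this count strictly exceeds $\dim \HH_0(\cD_X^G, \cD_X) = (A-1)(B-1) + (m-1)AB$, where $m := |G'|$. The central structural fact I will exploit is $E = \{(A\alpha, B\beta) \mid (\alpha, \beta) \in E'\}$, where $E'$ is the analogous set for $G'$; this follows from the defining property of $A, B$ and from $G$ being the full preimage of $G'$ under $(\zeta, \xi) \mapsto (\zeta^A, \xi^B)$.

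First I will verify that every $(a,b) \in \ZZ_{\geq 0}^2$ with $b \leq B-2$ and $a \leq Am-2$ is isolated in $\Gamma$, and symmetrically every $(a,b)$ with $a \leq A-2$ and $b \leq Bm-2$ is isolated. Unwinding the edge description in Theorem \ref{theorem-staircase}, a vertex $(a,b)$ has an edge contributed by $(r,s) \in E$ exactly when $(a,b) > (r-1, s-1)$ strictly in at least one coordinate. For any $(A\alpha, B\beta) \in E$ with $\alpha, \beta \geq 1$, the hypothesis $b \leq B-2 < B\beta - 1$ gives $b < s-1$ and rules out the edge; for the boundary elements $(Am, 0)$ and $(0, Bm)$ of $E$, the bounds $a \leq Am-2$ and $b \leq Bm-2$ do the same. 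Inclusion--exclusion on the union of the two strips gives $(Am-1)(B-1) + (A-1)(Bm-1) - (A-1)(B-1) = 2ABm - Am - Bm - AB + 1$ isolated vertices.

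Next I will show that each \emph{inner corner} of $G'$---a minimal element $(\alpha_l, \beta_l)$ of $E'$ with $\alpha_l, \beta_l \geq 1$---produces an additional isolated vertex $(A\alpha_l - 1, B\beta_l - 1) \in \ZZ_{\geq 0}^2$ lying outside both strips (since $a \geq A-1$ and $b \geq B-1$). Minimality forces $\alpha_l, \beta_l \leq m-1$, so the conditions coming from the boundary elements of $E$ are automatic. Isolation against $(A\alpha_l, B\beta_l)$ itself is immediate because $(a,b) = (r-1, s-1)$ exactly; for any other $(\alpha', \beta') \in E'$, the minimality of $(\alpha_l, \beta_l)$ together with the fact that every element of $E'$ dominates some minimal one implies $(\alpha', \beta') \not\leq (\alpha_l, \beta_l)$, and hence $\alpha' > \alpha_l$ or $\beta' > \beta_l$, supplying the strict inequality in one coordinate needed to avoid an edge from $(A\alpha', B\beta')$. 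Since $G'$ is nontrivial and cyclic, the element $(r_1, 1) \in E'$, with $r_1 := \min(r \bmod m,\; m - (r \bmod m)) \in \{1, \dots, m-1\}$, is a minimal inner corner, so the number $k$ of inner corners satisfies $k \geq 1$.

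Combining these counts, $\dim \HP_0(\cO_V^G, \cO_V) \geq 2ABm - Am - Bm - AB + 1 + k$, which exceeds $\dim \HH_0(\cD_X^G, \cD_X) = mAB - A - B + 1$ by $(m-1)\bigl((A-1)(B-1) - 1\bigr) + k \geq 1$, since $m \geq 2$, $A, B \geq 2$, and $k \geq 1$. The main subtlety to handle carefully is the isolation check for inner-corner vertices against \emph{non-minimal} elements of $E'$; the key is that every $(\alpha', \beta') \in E'$ dominates some minimal corner, so the minimality of $(\alpha_l, \beta_l)$ propagates to yield $(\alpha', \beta') \not\leq (\alpha_l, \beta_l)$.
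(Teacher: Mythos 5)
Your proof is correct, but it takes a genuinely different route from the paper's. Both arguments rest on Theorem \ref{theorem-staircase} and the identity $\dim \HH_0(\cD_X^G,\cD_X)=ABm-A-B+1$, but the paper argues \emph{relatively}: it lifts each connected component of $\Gamma(G')$ to $AB$ (or $A(B-1)+B$, etc.) components of $\Gamma(G)$, adds $2(A-1)(B-1)$ extra type-(1) points near the far ends of the axes, and then invokes the already-known lower bound $\dim\HP_0(\cO_V^{G'},\cO_V)\geq \dim\HH_0(\cD_X^{G'},\cD_X)=m-1$ to get $\dim\HP_0(\cO_V^G,\cO_V)\geq AB(m-1)+2(A-1)(B-1)>ABm-A-B+1$. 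You instead count \emph{absolutely}: two strips of isolated vertices along the axes (using $E=\{(A\alpha,B\beta)\}$, which you justify correctly, and the correct incidence criterion, including the subtle fact that $(r-1,s-1)$ is not incident to the edges contributed by $(r,s)$ itself), giving $2ABm-Am-Bm-AB+1$ singleton components, plus at least one inner-corner vertex $(A\alpha_l-1,B\beta_l-1)$. Your bookkeeping is right: the strip count alone exceeds $\dim\HH_0$ by $(m-1)\bigl((A-1)(B-1)-1\bigr)$, which vanishes when $A=B=2$, and your inner-corner argument (with the explicit minimal element $(r_1,1)\in E'$, which does exist since $\gcd(r,m)=1$ and $m\geq 2$ force $1\leq r_1\leq m-1$) supplies the strict inequality in that boundary case. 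What each approach buys: the paper's is shorter because it reuses the computation of $\HP_0(\cO_V^{G'},\cO_V)$ and the $\HH_0$ lower bound, and its correspondence (i)--(iii) is also used again in the proof of Claim \ref{c:CaseII2}; yours is more self-contained (it needs nothing about $\HP_0(\cO_V^{G'},\cO_V)$) at the cost of the extra corner analysis needed to close the $A=B=2$ gap.
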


\begin{proof}
Without loss of generality, assume that $A\geq{}B$. Then $\dim{}\HH_0(\cD_X^G,\cD_X)=ABm-A-B+1$. Then, we prove that $\dim{}\HP_0(\cO_V^G,\cO_V)\geq{}AB\dim{}\HP_0(\cO_V^{G'},\cO_V)+2(A-1)(B-1)$ by the following correspondence:

(i) Let $(a,b)$ be a point that forms a connected component of $\Gamma(G')$ of type (1). Then, for every $(r,s) \in E(G)$, either
$a<r/A-1$ or $b<s/B-1$. Hence, $(Aa+i,Bb+j)$ forms a connected component of $\Gamma(G)$ of type (1) for each $0\leq{}i<A$, $0\leq{}j<B$, because $Aa+i<r-1$ or $bB+j<s-1$ for all $(r,s) \in E(G)$.

(ii) Let $(a,b+c),(a+1,b+c-1), \ldots, (a+c,b)$ form a connected
component of $\Gamma(G')$ of type (2).  Then we can verify that
$(Aa+i,B(b+c)+j)$ is a connected component of $\Gamma(G)$ of type (1)
for each $0\leq{}i < A-1$, $0\leq{}j<B$, and that the chains
starting from $(Aa+A-1,B(b+c)+j)$, $0\leq{}j<B$ are connected
components of $\Gamma(G)$ of type (2).

(iii) In addition, each point $(A(m-1)+i,j)$ and $(i,B(m-1)+j)$,
$0\leq{}i<A-1$, $0\leq{}j<B-1$ forms a connected component of
$\Gamma(G)$ of type (1).

Thus,
$\dim{}\HP_0(\cO_V^G,\cO_V)\geq{}AB\dim{}\HP_0(\cO_V^{G'},\cO_V)+2(A-1)(B-1)\geq{}AB(m-1)+2(A-1)(B-1)>ABm-A-B+1=\dim{}\HH_0(\cD_X^G,\cD_X)$.
\end{proof}

\begin{figure}[h!tbp] 
\centering
	\includegraphics[width = 0.5\textwidth]{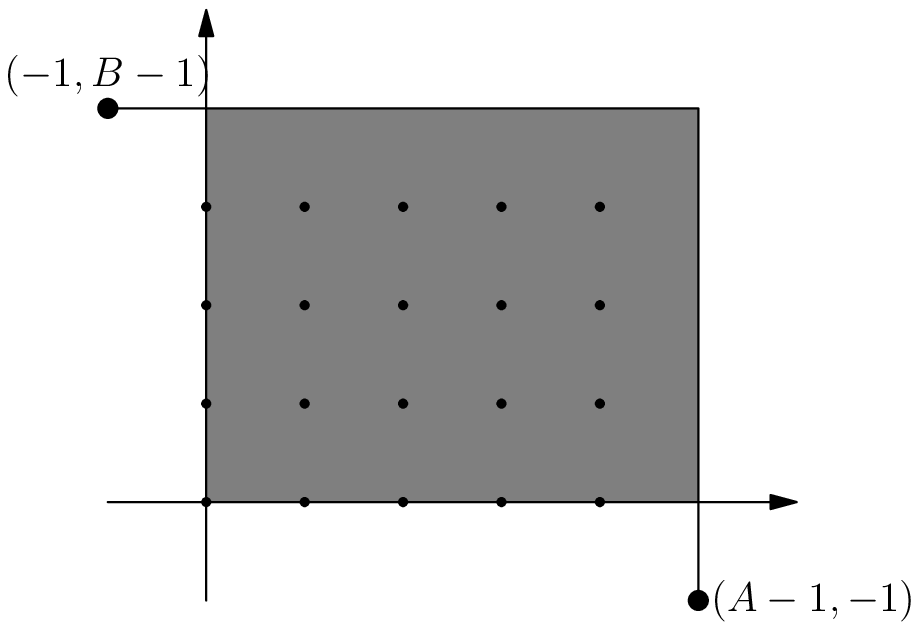}
	\caption{The staircase for type (3) in Theorem \ref{theorem-typean-main} \label{f:t-abel-3}}
\end{figure}

{\bf Case 2.} $A>1,B=1$ or $A=1,B>1$. Without loss of generality, assume that $A>1,B=1$.
\begin{claim}\label{c:CaseII2}
If $A > 1$ and $B=1$, $G$ is nontrivial, and $\dim{}\HP_0(\cO_V^G,\cO_V)=\dim{}\HH_0(\cD_X^G,\cD_X)$, then $G'$ is generated by
$\begin{pmatrix}
e^{2\pi{}i/m} & 0\\
0 & e^{\pm{}2\pi{}i/m}
\end{pmatrix}$.
\end{claim}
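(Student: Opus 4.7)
The plan is to argue by contradiction: assume $r \not\equiv \pm 1 \pmod{m}$, equivalently $p := |r| \geq 2$ under the normalization $|r| \leq m/2$, and derive a strict inequality $\dim \HP_0(\cO_V^G, \cO_V) > A(m-1) = \dim \HH_0(\cD_X^G, \cD_X)$, contradicting the hypothesis. The value $\dim \HH_0(\cD_X^G, \cD_X) = A(m-1)$ will follow from Lemma \ref{l:afls-fla} together with Lemma \ref{l:GfromG'}: since $G$ has order $Am$ and an elementary enumeration of its diagonal entries shows that exactly $A$ elements $g \in G$ have second diagonal entry equal to $1$ while only the identity has first diagonal entry equal to $1$, there are $Am - A$ elements $g$ with $g - \Id$ invertible.

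Next I would reuse the correspondences (i) and (ii) from Case 1 of Case II, specialized to $B = 1$ (so that correspondence (iii) becomes vacuous). Namely, each type (1) component $(a,b) \in \mathcal{C}(G')$ lifts to $A$ distinct type (1) components $\{(Aa+i, b) : 0 \leq i < A\}$ in $\mathcal{C}(G)$, and each type (2) chain $(a,b+c), \ldots, (a+c,b) \in \mathcal{C}(G')$ lifts to $A-1$ type (1) components $\{(Aa+i, b+c) : 0 \leq i < A-1\}$ together with a single type (2) chain in $\Gamma(G)$ starting at $(Aa+A-1, b+c)$. These lifts are pairwise distinct: within (i) or within (ii) this is immediate, and a coincidence between (i) and (ii) would force a type (1) vertex of $\mathcal{C}(G')$ to coincide with the start of a chain, contradicting that these components are disjoint. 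Summing gives
\begin{equation*}
\dim \HP_0(\cO_V^G, \cO_V) \;\geq\; A \cdot \dim \HP_0(\cO_V^{G'}, \cO_V).
\end{equation*}
If $r \nmid m \pm 1$ then Proposition \ref{p:theorem-typean-main-case1} yields $\dim \HP_0(\cO_V^{G'}, \cO_V) > m-1$, and we are done.

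The main obstacle is the remaining sub-case $r \mid m \pm 1$, where $\dim \HP_0(\cO_V^{G'}, \cO_V) = m-1$ and we must exhibit at least one extra component of $\mathcal{C}(G)$. I propose the isolated vertex $(0, m-2)$ of $\Gamma(G)$. To see it is isolated: any edge incident to $(0, m-2)$ would require some $(R, S) \in E(G)$ with $R \leq 1$ and $S \leq m-1$; but $E(G) = A \cdot E'$ makes all positive $R$-values in $E(G)$ divisible by $A \geq 2$, and the only element with $R = 0$ is $(0, m)$, which has $S = m > m-1$. To see this component is extra (not among the lifts), I would check that $(0, m-2)$ lies in a multi-edge component of $\Gamma(G')$. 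In the sub-case $pq = m + \varepsilon$ with $\varepsilon \in \{-1, +1\}$ and $p, q \geq 2$ (the inequality $q \geq 2$ following from $p \geq 2$, since $q = 1$ would force $p \equiv \pm 1 \pmod{m}$), both $(1, q)$ and $(1, m-q)$ lie in $V_1(G') \cup V_2(G')$, with the assignment to $V_1$ versus $V_2$ depending on $\varepsilon$. Since $2 \leq q \leq m - q \leq m-2$, both give edges at $(0, m-2) - (1, m-3)$ in $\Gamma(G')$, forcing multiplicity at least two. Therefore $(0, m-2)$ is neither a type (1) component nor a vertex of any type (2) chain of $\mathcal{C}(G')$, so it provides a genuine extra component of $\mathcal{C}(G)$, giving $\dim \HP_0(\cO_V^G, \cO_V) \geq A(m-1) + 1$, the desired contradiction.
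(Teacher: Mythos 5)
Your proof is correct and follows the same strategy as the paper's: establish $\dim\HH_0(\cD_X^G,\cD_X)=A(m-1)=A\dim\HH_0(\cD_X^{G'},\cD_X)$ and $\dim\HP_0(\cO_V^G,\cO_V)\geq A\dim\HP_0(\cO_V^{G'},\cO_V)$ via the lifting correspondence, reduce to $G'$ satisfying the equality of Proposition \ref{p:theorem-typean-main-case1}, and then exhibit an extra component of $\mathcal{C}(G)$ when $q\geq 2$. The only difference is in the last step, where the paper points to the corner $(A-1,q-1)$ and defers to ``the argument in Case 1,'' while you exhibit the isolated vertex $(0,m-2)$ of $\Gamma(G)$ and check directly (via the double edge between $(0,m-2)$ and $(1,m-3)$ in $\Gamma(G')$ coming from $(1,q)$ and $(1,m-q)$) that it is not a lifted component --- a more explicit and self-contained realization of the same idea.
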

For $G'$ as in the claim, Lemma \ref{l:GfromG'}  implies that
$G$ is generated by
$\begin{pmatrix}
e^{\pm{}2\pi{}i/(mA)} & 0\\
0 & e^{2\pi{}i/m}
\end{pmatrix}$.  This accounts for the groups of type (2) in Theorem
\ref{theorem-typean-main}; conversely, it is an easy consequence of
Theorem \ref{theorem-staircase} that all of these groups indeed
satisfy
$\dim{}\HP_0(\cO_V^G,\cO_V)=\dim{}\HH_0(\cD_X^G,\cD_X)$. See
also Figure \ref{f:t-abel-2}. This
finishes the proof of the theorem, and it remains only to prove the
claim.
\begin{proof}[Proof of Claim \ref{c:CaseII2}]
  Similarly to (i) and (ii) in Case 1 above, 
\begin{gather*}
\dim{}\HH_0(\cD_X^G,\cD_X)=A(m-1)=A\dim{}\HH_0(\cD_X^{G'},\cD_X) \text{ and } \\
    \dim{}\HP_0(\cO_V^G,\cO_V)\geq{}A\dim{}\HP_0(\cO_V^{G'},\cO_V).
\end{gather*}
Assume that
$\dim{}\HP_0(\cO_V^G,\cO_V) =
\dim{}\HH_0(\cD_X^G,\cD_X)$. Then,
we must have
$\dim{}\HP_0(\cO_V^{G'},\cO_V) =
\dim{}\HH_0(\cD_X^{G'},\cD_X)$.

Define $p,q$ in the same way as in Case I (note that we must have $k =
0$ or $k=1$). Then $(0,q-1)$ is the corner of the staircase for $G'$
with $x$-coordinate equal to zero. This implies that the staircase for
$G$ has the corner $(A-1, q-1)$.  However, in this case, it would
follow, similarly to the argument in Case 1 of this subsection, that
$\dim \HP_0(\cO_V^G, \cO_V) > A \dim \HP_0(\cO_V^{G'}, \cO_V)$ unless
$q = 1$. In the latter case, $G'$ is as claimed.
\end{proof}

\begin{figure}[h!tbp] 
\centering
	\includegraphics[width = 0.5\textwidth]{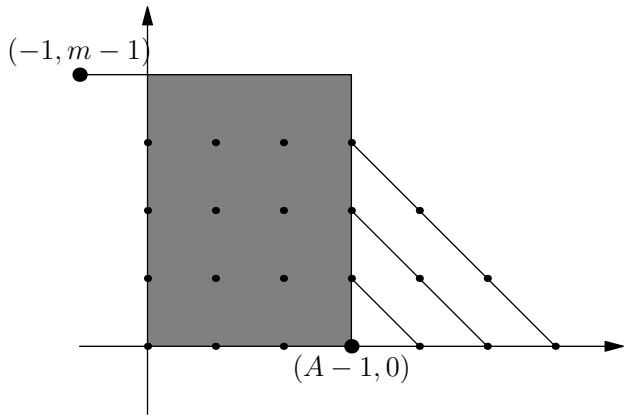}
	\caption{The staircase for type (2) in Theorem \ref{theorem-typean-main} \label{f:t-abel-2}}
\end{figure}

\section{Complex reflection groups $G(m,p,2) < \GL_2 < \Sp_4$} \label{s:gmp2}
Assume $m \geq 2$.
Up to conjugation, the complex reflection group $G = G(m,p,2) < \GL_2$ 
has the form
\begin{equation} \label{e:gmp2-expl}
  G = \biggl\langle \begin{pmatrix} e^{2 \pi i/m} & 0 \\ 0 & e^{-2\pi i/m} \end{pmatrix}, \begin{pmatrix} e^{2 \pi p i/m} & 0 \\ 0 & 1 \end{pmatrix}, \begin{pmatrix} 0 & 1 \\ 1 & 0 \end{pmatrix} \biggr\rangle.
\end{equation}
Let $K < G$ be the index-two abelian subgroup of diagonal matrices.
As before, let $V = \CC^4$ and consider $K < G < \Sp(V)$ in the
standard way.  Let $r := m/p$.  Then, the invariants $\cO_V^K$ are
spanned by the monomials
\begin{equation}
x_1^a x_2^b y_1^c y_2^d, \quad  m \mid \bigl((a-c) - (b-d)\bigr), r \mid a,b,c,d.
\end{equation}
The invariants $\cO_V^G$ are spanned by the sums $x_1^a x_2^b y_1^c
y_2^d + x_1^b x_2^a y_1^d y_2^c$ where $a,b,c$, and $d$ are as above. It follows easily that, as an algebra, $\cO_V^G$ is generated by
\begin{gather}
x_1 y_1 + x_2 y_2, x_1 y_1 x_2 y_2, x_1^m + x_2^m, y_1^m + y_2^m, x_1^r x_2^r, y_1^r y_2^r, x_1^{jr} y_2^{m-jr} + x_2^{jr} y_1^{m-jr} (1 \leq j < p), \label{e:gm2pg} \\
x_1^{m+1} y_1 + x_2^{m+1} y_2,  y_1^{m+1} x_1 + y_2^{m+1} x_2, x_1^{jr+1} y_1 y_2^{m-jr} + x_2^{jr+1} y_2 y_1^{m-jr} (1 \leq j < p). \label{e:gm2og}
\end{gather}
The second line consists of elements obtainable from those in the
first line by a linear combination of bracketing with $x_1 y_1 x_2
y_2$ and multiplying by $x_1 y_1 + x_2 y_2$, and hence the first line
Poisson generates $\cO_V^G$.  Therefore, $\{\cO_V^G, \cO_V\}$ is spanned by $\{f, \cO_V\}$ where $f$ ranges among the elements listed in \eqref{e:gm2pg}.

In the next subsections we will consider separately the cases $p=1$,
$p=m$, and $1 < p < m$.  We first consider $p = 1$ since the computations
here will be used in subsequent subsections as well.
\begin{remark}
  The techniques used here might also be able to handle the case of
  somewhat more general finite subgroups of $\GL_2$: namely, those
  generated by a subgroup of diagonal matrices together with an
  off-diagonal element with zeros on the diagonal.  For such groups,
  we can use the subgroup $K < G$ of diagonal matrices, which has
  index two, and for which $\HP_0(\cO_V^K, \cO_V)$ was computed in the
  previous section. In more detail, there is a natural map
  $\HP_0(\cO_V^G) \into \HP_0(\cO_V^G, \cO_V) \onto \HP_0(\cO_V^K,
  \cO_V)=\HP_0(\cO_V^K)$ whose image is $\HP_0(\cO_V^K)^G$, the part
  symmetric under swapping indices $1$ and $2$. The dimension of the
  latter is roughly $\frac{1}{2} \dim \HP_0(\cO_V^K)$, so estimates
  using Theorem \ref{theorem-staircase}, in the spirit of the previous
  section, should suffice to show that $\HP_0(\cO_V^G) \ncong \gr
  \HH_0(\cD_X^G)$ for many of these $G$.
\end{remark}

\subsection{The case $p=1$}\label{s:gm12}
Set $G = G(m,1,2)$.  
\begin{theorem}\label{t:gm12}
For $G = G(m,1,2)$, $\HP_0(\cO_V^G, \cO_V) \cong \gr \HH_0(\cD_X^G, \cD_X)$, and
a homogeneous basis for the former is given by the images of the elements 
\begin{gather}\label{gmp2-gens1}
x_1^a x_2^b y_1^a y_2^b \quad (a,b \leq m-2); \quad \quad x_1^{m-1} x_2^a y_1^{m-1} y_2^a + x_1^a x_2^{m-1} y_1^a y_2^{m-1} (1 \leq a \leq m-1); \\
x_1^{a+b} y_1^a y_2^b, \quad x_2^{a+b} y_1^b y_2^a \quad (a+b \leq m-2, \, b \geq 1); \\ 
b x_1^{m-1} y_1^{m-1-b} y_2^{b} -(m-b) x_2^{m-1} y_1^{m-b} y_2^{b-1} \quad (1 \leq b \leq m-1).\label{gmp2-gens3}
\end{gather}
\end{theorem}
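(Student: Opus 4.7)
The plan is to match dimensions: the surjection $\HP_0(\cO_V^G, \cO_V) \onto \gr \HH_0(\cD_X^G, \cD_X)$ provides a lower bound of $\dim \HH_0(\cD_X^G, \cD_X)$ on $\dim \HP_0(\cO_V^G, \cO_V)$, and explicit reduction of monomials modulo $\{\cO_V^G, \cO_V\}$ will yield an upper bound of the same value, with the elements \eqref{gmp2-gens1}--\eqref{gmp2-gens3} serving as representatives of a basis. By Lemma~\ref{l:afls-fla}, $\dim \HH_0(\cD_X^G, \cD_X)$ equals the number of $g \in G = G(m,1,2)$ such that $1$ is not an eigenvalue of $g$ acting on $X$. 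The diagonal elements $\mathrm{diag}(\zeta, \eta)$ with $\zeta, \eta \neq 1$ contribute $(m-1)^2$, and the off-diagonal elements (with nonzero entries $\zeta, \eta$ off the diagonal, whose eigenvalues are $\pm\sqrt{\zeta\eta}$) contribute $m(m-1)$ when $\zeta\eta \neq 1$, for a total of $(m-1)(2m-1)$. A direct enumeration of the four families in \eqref{gmp2-gens1}--\eqref{gmp2-gens3} gives the same total, $(m-1)^2 + (m-1) + (m-2)(m-1) + (m-1) = (m-1)(2m-1)$.

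By Lemma~\ref{l:pg}, $\{\cO_V^G, \cO_V\}$ is the sum of the subspaces $\{f_i, \cO_V\}$ for the Poisson generators $f_1 = x_1 y_1 + x_2 y_2$, $f_2 = x_1 y_1 x_2 y_2$, $f_3 = x_1^m + x_2^m$, $f_4 = y_1^m + y_2^m$, $f_5 = x_1^m x_2^m$, $f_6 = y_1^m y_2^m$ from \eqref{e:gm2pg}. On a monomial $x_1^{a_1} x_2^{a_2} y_1^{b_1} y_2^{b_2}$, the bracket $\{f_1, -\}$ rescales by $b_1 + b_2 - a_1 - a_2$, reducing to the bidegree-$(s,s)$ subspaces of \S\ref{glnsec}, while each of the remaining brackets produces a two-term relation; for instance,
\[
\{f_3, x_1^{a_1} x_2^{a_2} y_1^{b_1} y_2^{b_2}\} = m b_1 \, x_1^{a_1+m-1} x_2^{a_2} y_1^{b_1-1} y_2^{b_2} + m b_2 \, x_1^{a_1} x_2^{a_2+m-1} y_1^{b_1} y_2^{b_2-1}.
\]
Working bidegree by bidegree: for $s \leq m - 2$, only $\{f_2, -\}$ applies nontrivially, and it reduces the $(s+1)^2$ bidegree-$(s,s)$ monomials to the $3s + 1$ combinations given by the balanced $x_1^a x_2^b y_1^a y_2^b$ together with the unbalanced $x_1^{a+b} y_1^a y_2^b$, $x_2^{a+b} y_1^b y_2^a$ (with $a + b = s$). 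For $s = m - 1$, the relations $\{f_3, y_1^{b_1} y_2^{b_2}\} \equiv 0$ with $b_1 + b_2 = m$, reading $b_1 x_1^{m-1} y_1^{b_1-1} y_2^{b_2} + b_2 x_2^{m-1} y_1^{b_1} y_2^{b_2-1} \equiv 0$, together with the dual $\{f_4, x_1^{a_1} x_2^{a_2}\} \equiv 0$ relations (for $a_1 + a_2 = m$) and the remaining $\{f_2, -\}$ relations, collapse the $m^2$ monomials in bidegree $(m-1, m-1)$ to a $(2m - 3)$-dimensional space; the specific coefficients $b$ and $m - b$ in \eqref{gmp2-gens3} are precisely those appearing in the $f_3$-relation at $b_1 = m - b$, $b_2 = b$. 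For $s \geq m$, the brackets with $f_3, f_4, f_5, f_6$ force each bidegree-$(s,s)$ class to the one-dimensional space spanned by the relevant element of \eqref{gmp2-gens1}, second family.

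The main obstacle is the detailed case analysis required in the middle range of bidegrees, particularly at $s = m - 1$: a substantial number of two-term relations interact, and one must verify that their rank is exactly $m^2 - (2m - 3)$, which reduces to a linear-algebraic calculation simplified considerably by the structure of the $f_i$. The dimension match from Lemma~\ref{l:afls-fla} provides an essential safeguard: once spanning is established for a set of $(m-1)(2m-1)$ elements, both linear independence and the isomorphism with $\gr \HH_0(\cD_X^G, \cD_X)$ follow automatically, so the argument only needs to exhibit the reductions explicitly and need not verify independence separately.
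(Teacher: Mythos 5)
Your overall strategy---match dimensions against $\gr \HH_0(\cD_X^G,\cD_X)$ via Lemma \ref{l:afls-fla} and the canonical surjection, then compute $\{\cO_V^G,\cO_V\}$ from the Poisson generators in \eqref{e:gm2pg} using Lemma \ref{l:pg}---is exactly the paper's, and several of your intermediate claims are right: the count $(m-1)^2+m(m-1)=(m-1)(2m-1)$, the reduction to $3s+1$ classes in bidegree $(s,s)$ for $s\le m-2$, and the target dimension $2m-3$ in bidegree $(m-1,m-1)$ with the coefficients of \eqref{gmp2-gens3} coming from the $\{x_1^m+x_2^m,-\}$ relations. But your treatment of the range $s\ge m$ contains a genuine error. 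You assert that for $s\ge m$ the brackets with $f_3,\dots,f_6$ collapse the bidegree-$(s,s)$ component to the single symmetrized element of the second family of \eqref{gmp2-gens1}. This is false for $m\ge 4$: the first family of \eqref{gmp2-gens1} requires only $a,b\le m-2$, not $a+b\le m-2$, so the balanced monomials $x_1^ax_2^by_1^ay_2^b$ with $a+b=s$ and $\max(a,b)\le m-2$ also survive, and the degree-$2s$ component has dimension $2m-2-s$ for $m\le s\le 2m-4$ (e.g.\ for $m=4$ the degree-$8$ part is $2$-dimensional, spanned by $x_1^2x_2^2y_1^2y_2^2$ and $x_1^3x_2y_1^3y_2+x_1x_2^3y_1y_2^3$, consistent with Corollary \ref{c:gm12}). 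The relations produced by $\{x_1^m+x_2^m,-\}$ and $\{y_1^m+y_2^m,-\}$ in these bidegrees kill the unbalanced classes and only involve balanced monomials having some exponent $\ge m-1$; they never touch those with both exponents $\le m-2$.

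This is not merely a bookkeeping slip: the spanning set your reduction produces has cardinality $\sum_{s=0}^{m-2}(3s+1)+(2m-3)+(m-1)=(m-1)(2m-1)-\binom{m-2}{2}$, falling short of the lower bound from the surjection by exactly the number of omitted elements $x_1^ax_2^by_1^ay_2^b$ with $a,b\le m-2$ and $a+b\ge m$. So your own safeguard would fire and tell you the claimed reduction is too aggressive, but the argument as written does not close. What is needed in the high-degree range is precisely the content of the paper's relations \eqref{e:gm2-monrels1}--\eqref{e:gm2-monrels8}: one must show that the relations there express each balanced monomial with an exponent $\ge m-1$ in terms of the others, that only the symmetric combination at exponent $m-1$ survives, and that the monomials with both exponents $\le m-2$ remain independent through degree $2(2m-4)$.
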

The $G$-graded structure of $H = \HP_0(\cO_V^G, \cO_V) \cong \gr
\HH_0(\cD_X^G, \cD_X)$ follows immediately from
\eqref{gmp2-gens1}--\eqref{gmp2-gens3}.  We will need some notation
for the irreducible representations of $G$.  Let $\chi$ be the
tautological one-dimensional representation of the group of $m$-th
roots of unity $\{e^{2\pi k i/m}\}$.  For $0 \leq a \leq m-1$, let
$\rho_a := \chi^a \circ \det$, so that $\rho_0$ is the trivial
representation. Let $\rho_0^-$ be the nontrivial one-dimensional
representation which restricts to the trivial representation on $K$,
i.e., which is $-1$ on off-diagonal elements and $1$ on diagonal
elements.  Then, let $\rho_a^- := \rho_0^- \otimes \rho_a$.  Next, for
$a \neq b$, let $\tau_{a,b}$ be the two-dimensional irreducible
representation which restricts to $(\chi^a \boxtimes \chi^b) \oplus
(\chi^b \boxtimes \chi^a)$ on $K$. There are ${m \choose 2}$ distinct such
irreducible representations. Note that the corresponding representation in the case $a=b$ is $\rho_a \oplus \rho_a^-$.
\begin{corollary} \label{c:gm12}
\begin{gather}
h(\Hom_G(\rho_0, H);t) = \sum_{j=0}^{m-2} \lfloor \frac{j+2}{2} \rfloor t^{2j} + \sum_{j=m-1}^{2m-4} \lfloor \frac{2m-2-j}{2} \rfloor t^{2j} + \sum_{j=0}^{m -2} t^{2m+2j}; \\
h(\Hom_G(\rho_0^-, H);t) = \sum_{j=0}^{m-2} \lfloor \frac{j+1}{2} \rfloor t^{2j} +
 \sum_{j=m-1}^{2m-4} \lfloor \frac{2m-3-j}{2} \rfloor t^{2j};
\end{gather}
\begin{multline}
h(\Hom_G(\tau_{b,-b}, H); t) = (t^{2b} + t^{2b+2} + \cdots + t^{2(m-b)-2})
\\ + (2t^{2(m-b)} + 2t^{2(m-b)+4} + \cdots + 2t^{2m-4}) + t^{2m-2}, \quad 1 \leq b < m/2;
\end{multline} 
If $m$ is odd, then for all other irreducible representations $\rho$, $\Hom_G(\rho, H) = 0$. If $m$ is even, then this is true except for $\rho_{m/2}$ and $\rho_{m/2}^-$, for which
\begin{gather}
 h(\Hom_G(\rho_{m/2}, H);t) = t^m + t^{m+2} + \cdots + t^{2m-4}; \\
h(\Hom_G(\rho_{m/2}^-, H);t) = t^m + t^{m+2} + \cdots + t^{2m-2}.
\end{gather}
\end{corollary}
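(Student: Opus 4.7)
The plan is to read off the $G$-representation structure of $H=\HP_0(\cO_V^G,\cO_V)$ directly from the explicit homogeneous basis provided by Theorem \ref{t:gm12}. For each basis element, I would first determine its weight under the index-two abelian diagonal subgroup $K<G$; on a monomial $x_1^{a_1}x_2^{a_2}y_1^{b_1}y_2^{b_2}$ this weight is the character $(a_1-b_1,a_2-b_2)\in(\ZZ/m)^2$. Second, I would track how the nontrivial coset representative (the swap exchanging indices $1$ and $2$) acts, which decides whether an element contributes to $\rho_a$ or $\rho_a^-$ (in the case of $K$-weight $(a,a)$) or pairs with its swap image to span a copy of the two-dimensional irreducible $\tau_{a,b}$.

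Concretely, family (1) in Theorem \ref{t:gm12} consists of $K$-invariants of degree $2(a+b)$, and for each unordered pair $\{a,b\}$ with $0\leq a,b\leq m-2$ the symmetric combination contributes one copy of $\rho_0$ while the antisymmetric one, when $a\neq b$, contributes one copy of $\rho_0^-$. Family (2) is already $K$-invariant and swap-symmetric, yielding one $\rho_0$ at degree $2(m-1)+2a$ for each $1\leq a\leq m-1$. Family (3), the pair $(x_1^{a+b}y_1^ay_2^b,\,x_2^{a+b}y_1^by_2^a)$, has $K$-weights $(b,-b)$ and $(-b,b)$ and is interchanged by the swap, so spans one copy of $\tau_{b,-b}$ at degree $2(a+b)$; when $m$ is even and $b=m/2$, the two $K$-weights coincide modulo $m$ and the pair splits into one $\rho_{m/2}$ and one $\rho_{m/2}^-$. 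Family (4) has $K$-weight $(b,-b)$, and a direct check shows the swap sends the $b$-th element to $-1$ times the $(m-b)$-th element; for $b\neq m-b$ the $b$ and $m-b$ elements together span one copy of $\tau_{b,-b}$ at degree $2m-2$, while for $b=m/2$ (with $m$ even) the single element is antisymmetric under swap and contributes one $\rho_{m/2}^-$.

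Assembling these contributions by degree produces the stated Hilbert series. For $\rho_0$ one counts unordered pairs $\{a,b\}$ with $a+b=j$ and $0\leq a,b\leq m-2$, which is $\lfloor(j+2)/2\rfloor$ when $j\leq m-2$ and $\lfloor(2m-2-j)/2\rfloor$ when $m-1\leq j\leq 2m-4$, plus the series $\sum_{j=0}^{m-2}t^{2m+2j}$ coming from family (2). For $\rho_0^-$ only the antisymmetric parts of family (1) survive, giving $\lfloor(j+1)/2\rfloor$ and $\lfloor(2m-3-j)/2\rfloor$ at the respective ranges of $j$. For $\tau_{b,-b}$ with $1\leq b<m/2$, one combines the copies from family (3) with $b'=b$ (valid at degrees $2b,2b+2,\ldots,2m-4$) and with $b'=m-b$ (valid at degrees $2(m-b),\ldots,2m-4$, which requires $b\geq 2$), together with the single copy at degree $2m-2$ coming from family (4). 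When $m$ is even, the $b=m/2$ part of family (3) produces the $\rho_{m/2}$ series from its symmetric halves and contributes to the $\rho_{m/2}^-$ series from its antisymmetric halves, which combine with the $b=m/2$ term of family (4) to give the stated answer; for any other $\rho_a$ or $\rho_a^-$ with $a\neq 0, m/2$ the required $K$-weight $(a,a)$ simply does not appear on any basis element, so these isotypic components vanish.

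The computation is essentially mechanical bookkeeping once the $G$-action on each basis element has been described. The main point requiring care is the identification $\tau_{b,-b}\cong\tau_{m-b,-(m-b)}$: contributions from $b'=b$ and $b'=m-b$ in family (3) both land in the same isotypic component, which is what produces doubled coefficients in a window of degrees (where both values of $b'$ contribute) and single coefficients outside it. Once this symmetry is tracked correctly, the claimed formulas follow by elementary counting.
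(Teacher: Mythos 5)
Your proposal is correct and is exactly the argument the paper intends: the paper omits the proof of Corollary \ref{c:gm12}, saying only that it follows directly from Theorem \ref{t:gm12}, and your bookkeeping of $K$-weights and of the swap action on each of the four families of basis elements is precisely that direct derivation (including the key point that $\tau_{b,-b}\cong\tau_{m-b,-(m-b)}$ merges two families' contributions, and the splitting of the $b=m/2$ pair into $\rho_{m/2}\oplus\rho_{m/2}^-$ when $m$ is even). One remark: your count puts the doubled coefficients at \emph{every} even degree from $2(m-b)$ to $2m-4$, which is correct (check, e.g., $m=8$, $b=3$, where family (3) with $b'=5$ contributes at degrees $10$ and $12$), so the exponent step of $4$ in the middle group of the displayed formula for $h(\Hom_G(\tau_{b,-b},H);t)$ should be read as a step of $2$.
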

We omit the proof of the corollary, since it follows directly from the theorem.
\begin{proof}[Proof of Theorem \ref{t:gm12}]
  We will prove that the given elements map to a basis of
  $\HP_0(\cO_V^G, \cO_V)$. From this it is easy to deduce that
  $\HP_0(\cO_V^G, \cO_V) \cong \gr \HH_0(\cD_X^G, \cD_X)$: we only
  need to compute that the dimensions are equal, since there is always
  a surjection. By Lemma \ref{l:afls-fla}, $\dim \HH_0(\cD_X^G,
  \cD_X)$ equals the number of elements $g \in G$ such that $g - \Id$
  are invertible. There are $(m-1)^2$ diagonal elements without $1$ on
  the diagonal, and $m(m-1)$ off-diagonal matrices of determinant not
  equal to $-1$, and these are exactly the elements such that $g-\Id$
  is invertible. So it is enough to show that $\dim \HP_0(\cO_V^G,
  \cO_V) = (m-1)(2m-1)$, and this follows by computing the number of
  basis elements.

  We will compute explicitly the brackets of \eqref{e:gm2pg} and show
  that the claimed elements form a basis of $\HP_0(\cO_V^G, \cO_V)$.
  Since $p=1$, only the first four elements of \eqref{e:gm2pg} are
  needed.  So, we compute the brackets with these elements.

First, $\{x_1 y_1 + x_2 y_2, \cO_V^G\}$ is the span of all monomials
$x_1^a x_2^b y_1^c y_2^d$ with $a+b \neq c+d$.

Next, $\{x_1 y_1 x_2 y_2, \cO_V^G\}$ is the span of elements $(c-a)
x_1^{a-1} x_2^b y_1^{c-1} y_2^d + (d-b) x_1^a x_2^{b-1} y_1^c
y_2^{d-1}$.  In the case $a+b=c+d$ (otherwise the monomial is in the
span of the previous paragraph), this reduces to $x_1^{a-1} x_2^b
y_1^{c-1} y_2^d + x_1^a x_2^{b-1} y_1^c y_2^{d-1}$.  So if we quotient 
 by this and the brackets of the previous paragraph, the result is spanned
by the images of the monomials 
\begin{equation}\label{e:gm2-mons}
  x_1^a x_2^b y_1^a y_2^b \quad (a,b \geq 0); \quad  \quad
  x_1^{a+b} y_1^a y_2^b, \quad x_2^{a+b} y_1^b y_2^a \quad (a \geq 0, b > 0),
\end{equation}
remembering also the equivalences
\begin{equation}\label{e:gm2-mons-equiv}
x_1^{a+b} y_1^a y_2^b \equiv x_1^{a+b-c}
x_2^c y_1^{a-c} y_2^{b+c}, \quad x_2^{a+b} y_1^b y_2^a \equiv  x_1^c x_2^{a+b-c} y_1^{b+c} y_2^{a-c}, \quad \quad c \leq a, b > 0,
\end{equation}
which we will use for subsequent relations.

Finally, $\{x_1^m + x_2^m, \cO_V^G\}$ is spanned by $c x_1^{a+m-1}
x_2^b y_1^{c-1} y_2^d +d x_1^{a} x_2^{b+m-1} y_1^c y_2^{d-1}$, and
similarly for $\{y_1^m + y_2^m, \cO_V^G\}$. In particular, this
includes the elements $x_1^{a} x_2^b y_1^{a+b}$ and $x_1^{b} x_2^{a}
y_2^{a+b}$ for $a \geq m-1$ and $b \geq 0$.  Together with the spans
described in the previous paragraphs, we can first restrict our
attention to the case $a+b+m-1=c+d-1$, i.e., $d = a+b-c+m$.  Then, we
obtain the monomials of the second two forms of \eqref{e:gm2-mons} in
the case that $a \geq m-1$, i.e.,
\begin{equation} \label{e:gm2-monrels1} x_1^{a+b} y_1^a y_2^b, \quad
  x_2^{a+b} y_1^b y_2^a \quad (a \geq m-1, b > 0).
\end{equation}
The remaining elements in the span yield, up to the symmetry of swapping $x_1$
with $x_2$ and $y_1$ with $y_2$ (and still assuming $d=a+b-c+m$),
\begin{gather}
  c x_1^{a+b+m-1} y_1^{b+c-1} y_2^{d-b} + d x_2^{a+b+m-1} y_1^{c-a} y_2^{a+d-1}, \quad \text{if $a < c, b < d$};  \notag \\
  x_1^{a+b+m-1} y_1^{b+c-1} y_2^{d-b}, \quad \text{if $a > c$}; \notag \\
(b+m) x_1^a x_2^{b+m-1} y_1^a y_2^{b+m-1} + a x_1^{a+b+m-1} y_1^{a+b-1} y_2^m, \quad \text{if $a=c$}.   \label{e:gm2-monrels2}
\end{gather}
The final expression \eqref{e:gm2-monrels2} together with
\eqref{e:gm2-monrels1} yields the first monomial of
\eqref{e:gm2-monrels2} when $a+b \geq m$, or equivalently (by changing
$a$ and $b$):
\begin{equation} \label{e:gm2-monrels3}
x_1^{a} x_2^{b} y_1^a y_2^{b}, \quad a+b \geq 2m-1.
\end{equation}
The expressions in the two lines above \eqref{e:gm2-monrels2} can be
rewritten, by changing $a,b,c,d$, as
\begin{gather} 
  c x_1^{a+b} y_1^a y_2^b +  (a+b+1-c) x_2^{a+b} y_1^{m-b} y_2^{a+2b-m} \quad (0 < m-b \leq c \leq a+1, \, b > 0); \notag \\
  x_1^{a+b} y_1^a y_2^b, \quad x_2^{a+b}y_1^by_2^a \quad (b >
  m).  \label{e:gm2-monrels4}
\end{gather}
For fixed $a$ and $b$, if there is more than one possible value of $c$
in the first equation above, then in fact both monomials that appear
are in the span. So, we can rewrite this as
\begin{gather} \label{e:gm2-monrels5} (a+1) x_1^{m-1} y_1^a
  y_2^{m-a-1} + (m-a-1) x_2^{m-1} y_1^{a+1} y_2^{m-a-2} \quad (a <
  m-1);\\ 
  \label{e:gm2-monrels6} x_1^{a+b} y_1^a y_2^b, \quad x_2^{a+b} y_1^b
  y_2^a \quad (a+b \geq m, \, 0 < b < m).
\end{gather}
Applying the aforementioned swap of indices $1$ and $2$ to \eqref{e:gm2-monrels2}, we also obtain
\begin{equation} \label{e:gm2-monrels7}
(b+m) x_1^{b+m-1} x_2^{a} y_1^{b+m-1} y_2^{a} + a x_2^{a+b+m-1} y_1^m y_2^{a+b-1}.
\end{equation}
The overall span \eqref{e:gm2-monrels1}--\eqref{e:gm2-monrels7} is now
symmetric in swapping indices $1$ and $2$. It is
also 
 almost symmetric in swapping $x$ with $y$ using 
\eqref{e:gm2-mons-equiv}, since the latter shows that
 $x_1^{a+b} y_1^a y_2^b$ is equivalent to $x_1^b x_2^a
y_2^{a+b}$ when $b > 0$.  However, \eqref{e:gm2-monrels2} yields, after
swapping $x$ with $y$ and applying \eqref{e:gm2-mons-equiv},
\[
(b+m) x_1^a x_2^{b+m-1} y_1^a y_2^{b+m-1} + a x_2^{a+b+m-1} y_1^m y_2^{a+b-1}. 
\]
Up to \eqref{e:gm2-monrels7}, this is equivalent to
\begin{equation}\label{e:gm2-monrels8}
x_1^a x_2^{b+m-1} y_1^a y_2^{b+m-1} - x_1^{b+m-1} x_2^a y_1^{b+m-1} y_2^a.
\end{equation}
We conclude that
$\HP_0(\cO_V^G, \cO_V)$ is presented as the span of monomials
\eqref{e:gm2-mons} modulo span of
\eqref{e:gm2-monrels1}--\eqref{e:gm2-monrels8}. 
From this the
statement of the theorem easily follows.
\end{proof}

\subsection{The case $p=m$, i.e., the Coxeter groups $I_2(m)$} In the
case $p=m$, $G(m,m,2)$ is the Coxeter group $I_2(m)$.  
\begin{theorem}\label{t:gmm2}
If $p=m$, then $\HP_0(\cO_V^G, \cO_V) \cong \gr \HH_0(\cO_V^G, \cO_V)$, and a homogeneous basis of the former is given by the images of the elements
\begin{equation}
x_1^a y_1^a + (-1)^a x_2^a y_2^a, 0 \leq a \leq m-2.
\end{equation}
\end{theorem}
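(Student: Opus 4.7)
Plan. The plan is to combine the Coxeter dimension count with a non-vanishing check obtained by pushing forward to the cyclic subgroup of diagonal matrices. Since $G = G(m,m,2)$ is the dihedral Coxeter group $I_2(m)$, the rank-$\leq 3$ theorem of \S\ref{ss:cox} already provides $h(\HP_0(\cO_V^G,\cO_V);t) = 1 + t^2 + \cdots + t^{2(m-2)}$, so each graded piece of $\HP_0(\cO_V^G, \cO_V)$ in the even degrees $0,2,\ldots,2(m-2)$ is one-dimensional. Because the candidate basis elements $x_1^a y_1^a + (-1)^a x_2^a y_2^a$ lie one per such degree, it suffices to show that each is nonzero in $\HP_0(\cO_V^G, \cO_V)$.

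To detect non-vanishing, I introduce the cyclic subgroup $K = \langle \mathrm{diag}(e^{2\pi i/m}, e^{-2\pi i/m})\rangle < G$ of diagonal matrices; note that for $p=m$ the middle generator in \eqref{e:gmp2-expl} becomes the identity, so $G$ is generated by $K$ together with the swap. The inclusion $\cO_V^G \subseteq \cO_V^K$ induces a natural surjection $\pi: \HP_0(\cO_V^G, \cO_V) \twoheadrightarrow \HP_0(\cO_V^K, \cO_V)$, and applying Theorem \ref{theorem-staircase} to $K$ yields the basis $\{[x_1^a y_1^a]_K : 0 \leq a \leq m-2\}$ of $\HP_0(\cO_V^K, \cO_V)$. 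Explicitly, the minimal elements of $E$ for $K$ are $(1,1)$, $(0,m)$, and $(m,0)$, and the corresponding $\mathcal{C}$ consists of the isolated point $\{(0,0)\}$ together with the single-edge chains $\{(k,0),(k-1,1),\ldots,(0,k)\}$ for $1 \leq k \leq m-2$.

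The crux is the scalar identity $[x_1^a y_1^a]_K = (-1)^a [x_2^a y_2^a]_K$ in $\HP_0(\cO_V^K, \cO_V)$. Instantiating \eqref{relation1} at $(r,s)=(1,1)\in V_1$ (i.e., applying $\{x_1 x_2,-\}$ to the appropriate monomials) gives the recursion
\begin{equation*}
(a_2+1)[x_1^{a_1+1} x_2^{a_2} y_1^{a_1+1} y_2^{a_2}]_K + (a_1+1)[x_1^{a_1} x_2^{a_2+1} y_1^{a_1} y_2^{a_2+1}]_K = 0,
\end{equation*}
and telescoping along the chain from $(0,a)$ to $(a,0)$ produces $[x_1^k x_2^{a-k} y_1^k y_2^{a-k}]_K = (-1)^k \binom{a}{k}^{-1} [x_2^a y_2^a]_K$, which at $k=a$ gives the claimed identity.

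Consequently $\pi(x_1^a y_1^a + (-1)^a x_2^a y_2^a) = [x_1^a y_1^a]_K + (-1)^{2a}[x_1^a y_1^a]_K = 2[x_1^a y_1^a]_K$, which is nonzero for every $0 \leq a \leq m-2$ by the basis property. This finishes the proof. The only mildly delicate step is the bookkeeping identifying the basis and scalar relation inside $\HP_0(\cO_V^K, \cO_V)$; once that is in hand, no direct manipulation of the more complicated generators listed in \eqref{e:gm2pg} for $p=m$ is required.
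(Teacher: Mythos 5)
Your argument is circular at its main step. The $I_2(m)$ entry of the theorem in \S\ref{ss:cox} --- the Hilbert series $1+t^2+\cdots+t^{2(m-2)}$ and the isomorphism with $\gr \HH_0(\cD_X^G,\cD_X)$ --- is not established independently anywhere in the paper: the computational methods of \S\ref{compsec} can only certify individual values of $m$, and for the infinite family $G(m,p,2)$ (of which $I_2(m)=G(m,m,2)$ is the case $p=m$) the paper explicitly defers the proof to \S\ref{s:gmp2}, i.e., to Theorem \ref{t:gmm2} itself. So you cannot quote that theorem here; everything in Theorem \ref{t:gmm2} beyond the identification of an explicit basis is exactly what you are importing. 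Concretely, what is missing from your argument is the upper bound: that $\HP_0(\cO_V^G,\cO_V)$ is at most one-dimensional in each degree $2a$ for $0\le a\le m-2$ and vanishes in all other degrees. The paper obtains this by directly computing the spans $\{x_1x_2,\cO_V\}$, $\{y_1y_2,\cO_V\}$, $\{x_1^m+x_2^m,\cO_V\}$, $\{y_1^m+y_2^m,\cO_V\}$ (reusing the bookkeeping of \S\ref{s:gm12}) and checking that these relations reduce every monomial class to the $m-1$ candidates; some such hands-on reduction cannot be avoided.

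The other half of your proposal is correct and is a pleasant alternative to the paper's counting of elements $g$ with $g-\Id$ invertible via Lemma \ref{l:afls-fla}: the surjection $\HP_0(\cO_V^G,\cO_V)\onto\HP_0(\cO_V^K,\cO_V)$ onto the quotient for the diagonal cyclic subgroup $K$, combined with Theorem \ref{theorem-staircase} and the telescoped identity $[x_1^ay_1^a]_K=(-1)^a[x_2^ay_2^a]_K$, does show that each candidate element has nonzero image, hence that the $m-1$ candidates (lying in distinct degrees) are linearly independent and $\dim\HP_0(\cO_V^G,\cO_V)\ge m-1$. One small slip there: for this $K$ the set $E$ also contains $(r,m-r)$ for $0<r<m$, coming from the invariants $x_1^ry_2^{m-r}$, not only $(1,1)$, $(m,0)$, $(0,m)$; this happens to be harmless because the extra edges only appear on antidiagonals of total degree at least $m-1$, so your description of $\mathcal{C}$ stands. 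But this machinery only replaces the lower-bound half of the paper's proof; to finish you must still carry out the spanning computation, at which point the lower bound you worked for comes essentially for free.
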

We can immediately deduce the graded $G$-structure.  Let $\rho_0$ be the trivial representation and ``$\det$'' the determinant representation.  Let $H := \HP_0(\cO_V^G, \cO_V) \cong \gr \HH_0(\cO_V^G, \cO_V)$.
\begin{corollary}\label{c:gmm2}
\begin{equation}
h(\Hom_G(\rho_0, H);t) = 1 + t^4 + \cdots + t^{4\lfloor\frac{m-2}{2} \rfloor} \text{ and } 
h(\Hom_G(\det, H);t) = t^2 + t^6 + \cdots + t^{4\lfloor\frac{m-1}{2} \rfloor - 2}.
\end{equation}
\end{corollary}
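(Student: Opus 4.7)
The plan is to combine the lower bound from Lemma \ref{l:afls-fla} with a matching upper bound via direct bracket computation using Lemma \ref{l:pg}. For the lower bound, each off-diagonal element $g = \bigl(\begin{smallmatrix} 0 & \zeta^{-1} \\ \zeta & 0 \end{smallmatrix}\bigr) \in G = I_2(m)$ (with $\zeta^m = 1$) satisfies $\det(g-\Id) = 1 - \zeta^{-1}\zeta = 0$, so only the $m-1$ nontrivial diagonal rotations contribute, giving $\dim \gr \HH_0(\cD_X^G, \cD_X) = m-1$ and hence $\dim \HP_0(\cO_V^G, \cO_V) \geq m-1$. For the upper bound, the plan is to reduce via Lemma \ref{l:pg} to computing $\{g, \cO_V\}$ for $g$ among the Poisson generators in the first line of \eqref{e:gm2pg} specialized to $p = m$, $r = 1$. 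First, $\{g_1, \cO_V\}$ annihilates all monomials $f = x_1^a x_2^b y_1^c y_2^d$ with $a + b \neq c + d$, and $\{g_2, \cO_V\}$ shows that for balanced $f$ with $a \neq c$, the equivalence classes $[k, \delta]$ indexed by $k := a + b$ and $\delta := a - c \neq 0$ are well-defined modulo the shifts $(a, c) \mapsto (a \pm 1, c \pm 1)$; write $[\alpha, \beta] := [x_1^\alpha x_2^\beta y_1^\alpha y_2^\beta]$ for the balanced classes.

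The crux of the argument is that $\{g_5, \cO_V\} = \{x_1 x_2, \cO_V\}$ alone annihilates every unbalanced class. The key computation is
\begin{equation*}
\{x_1 x_2,\, x_1^{k-1} y_1^{k-\delta} y_2^{\delta+1}\} = (k-\delta)\, x_1^{k-1} x_2 y_1^{k-\delta-1} y_2^{\delta+1} + (\delta+1)\, x_1^{k} y_1^{k-\delta} y_2^{\delta},
\end{equation*}
where both monomials on the right lie in the class $[k, \delta]$, yielding $(k+1)[k, \delta] = 0$ and hence $[k, \delta] = 0$ for $\delta \in \{-1, 1, 2, \ldots, k\}$. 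A mirror computation starting from $\{x_1 x_2,\, x_2^{k-1} y_1^{1-\delta} y_2^{k+\delta}\}$ covers $\delta \in \{-k, \ldots, -1, 1\}$, so together these kill every nonzero $\delta$.

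For the balanced classes, the plan is to apply $\{g_5, \cdot\}$ to $x_1^a x_2^b y_1^{a+1} y_2^{b+1}$ to obtain the recursion $(a+1)[a, b+1] + (b+1)[a+1, b] = 0$, forcing $[\alpha, \beta] \propto [\alpha + \beta, 0]$ with $[0, n] = (-1)^n [n, 0]$; then $\{g_3, x_1^a y_1^{a+m}\} = m(a+m)[a+m-1, 0]$ kills $[\alpha, 0]$ for $\alpha \geq m - 1$. Thus $\HP_0(\cO_V^G, \cO_V)$ is spanned by $\{[a, 0]\}_{a = 0}^{m-2}$, matching the lower bound; the proposed $x_1^a y_1^a + (-1)^a x_2^a y_2^a$ represents $[a, 0] + (-1)^a [0, a] = 2[a, 0]$, which is nonzero for $0 \leq a \leq m - 2$, forming the basis. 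The hard part will be finding the right test monomials for the $g_5$-brackets so that both resulting monomials land in the same unbalanced class; once these are in hand the rest of the argument is essentially bookkeeping.
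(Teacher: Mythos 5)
Your proposal is correct and follows essentially the same route as the paper: the corollary is read off from the basis $x_1^a y_1^a + (-1)^a x_2^a y_2^a$, $0 \leq a \leq m-2$, of Theorem \ref{t:gmm2}, which the paper likewise establishes by matching the lower bound $m-1$ from Lemma \ref{l:afls-fla} against an upper bound obtained from explicit brackets with the Poisson generators via Lemma \ref{l:pg}. The only difference is organizational: the paper gets the $G(m,m,2)$ relations by adjoining $\{x_1 x_2, \cO_V\} + \{y_1 y_2, \cO_V\}$ to the relations already computed for $G(m,1,2)$ in \S \ref{s:gm12}, whereas you run a self-contained computation with the classes $[k,\delta]$ using only four of the generators, which is legitimate since a subset of the brackets already forces the dimension down to the lower bound.
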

\begin{proof}[Proof of Theorem \ref{t:gmm2}]
  As in the proof of Theorem \ref{t:gm12}, it is enough to prove that
  the claimed elements form a basis of $\HP_0(\cO_V^G, \cO_V)$, since
  there are $m-1$ basis elements and this equals the number of
  elements $g \in G$ such that $g - \Id$ is invertible (in this case,
  they are the nontrivial diagonal elements of $G$).

  To do this, we compute explicitly the remaining brackets of
  \eqref{e:gm2pg} needed.  In this case, the final element of
  \eqref{e:gm2pg} is unnecessary, since it is a scalar multiple of the
  bracket $\{x_1 x_2, y_1^m + y_2^m\}$. So, $\HP_0(\cO_V^G, \cO_V)$ is
  the quotient of the span of \eqref{e:gm2-mons} and also the
  equivalent monomials according to \eqref{e:gm2-mons-equiv}, modulo
  \eqref{e:gm2-monrels1}--\eqref{e:gm2-monrels8} together with the
  span of $\{x_1 x_2, \cO_V\} + \{y_1 y_2, \cO_V\}$.  We now compute
  these spans.

Note that 
\begin{equation}\label{e:gmmp-t1}
\{x_1 x_2, x_1^a x_2^b y_1^c y_2^d\} = c x_1^a x_2^{b+1}
y_1^{c-1} y_2^d + d x_1^{a+1} x_2^b y_1^c y_2^{d-1}.
\end{equation}
In the case
$c=0$ or $d=0$ but not both, this yields the monomial $x_1^a x_2^{b+1}
y_1^{c-1}$ or $x_1^{a+1} x_2^b y_2^{d-1}$.  Applying this to the span $\{y_1 y_2, \cO_V\}$ and changing the $a,b,c$, and $d$, we obtain the monomials
\begin{equation} \label{e:gmmp-offdiag}
x_1^c y_1^a y_2^{b},
 \quad x_2^c y_1^{b} y_2^a, \quad \quad b \geq 1.
\end{equation}
This already includes all but the first type of monomial in \eqref{e:gm2-mons}.
For the remaining type, 
let us assume $a=c-1$ and $b+1=d$ in \eqref{e:gmmp-t1}. Then we obtain the
element
\begin{equation}
(a+1) x_1^a x_2^{b+1} y_1^a y_2^{b+1} + (b+1) x_1^{a+1} x_2^b y_1^{a+1} y_2^b.
\end{equation}
By symmetry, this is the end of the new elements of $\{\cO_V^G,
\cO_V\}$ added in the case $p=m$ to those
\eqref{e:gm2-monrels1}--\eqref{e:gm2-monrels8} from the previous
section.  Note that \eqref{e:gm2-monrels2} and \eqref{e:gm2-monrels7}
together with \eqref{e:gmmp-offdiag} yields
\begin{equation}\label{e:gmmp-diag}
x_1^a x_2^{b} y_1^a y_2^{b}, \quad \text{$a \geq m-1$ or $b \geq m-1$}.
\end{equation}
Now, putting \eqref{e:gmmp-offdiag}--\eqref{e:gmmp-diag} together, applied
to the monomials
\eqref{e:gm2-mons} modulo \eqref{e:gm2-mons-equiv}, we can recover all
of the elements \eqref{e:gm2-monrels1}--\eqref{e:gm2-monrels8}, and we
easily deduce the statement of the theorem.
\end{proof}
\subsection{The case $1 < p < m$}
\begin{theorem}\label{t:gmp2}
  If $G = G(m,p,n) < \GL_2 < \Sp_4$ and $1 < p < m$, then
  a basis of $\HP_0(\cO_V^G, \cO_V)$ is obtained by the images of the elements
\begin{gather} \label{e:t-gmp21}
x_1^a x_2^b y_1^a y_2^b, \quad a < r-1, b < m-1 \text{ or } a < m-1, b < r-1; \\ \label{e:t-gmp22}
x_1^{a} x_2^{r-1} y_1^{a} y_2^{r-1} + (-1)^{a-r+1} x_1^{r-1} x_2^a y_1^{r-1} y_2^a, \quad r-1 \leq a \leq m-r-1; \\ 
\label{e:t-gmp23}
x_1 x_2^{m-1} y_1 y_2^{m-1} + x_1^{m-1} x_2 y_1^{m-1} y_2, \quad p=2; \\ 
\label{e:t-gmp24}
x_1^{a+b} y_1^a y_2^b, \quad x_2^{a+b} y_1^b y_2^a, \quad \quad b > 0,  (\text{either } a+b < 2r-1 \text{ or } a < r-1), \text{ and:} \\   
\text{$\nexists k \in [b,a+b]$ s.t.~both $\lfloor\frac{k+1}{r} \rfloor + \lfloor\frac{a+2b-k}{r} \rfloor \geq p$} \text{ and } k \neq m/2-1; \notag \\ 
x_1^{a+b} y_1^a y_2^b + x_2^{a+b} y_1^b y_2^a, \quad b > 0, (\text{either $a+b < 2r-1$ or $a < r-1$}),  \\
\notag 
a+2b \geq m,  \text{ and } \lfloor \frac{a+b+1}{r} \rfloor = p/2; \\
x_1^{a+b} y_1^a y_2^b - x_2^{a+b} y_1^b y_2^a, \quad
\frac{a+b+1}{r} = \frac{p+1}{2}, \quad \frac{b}{r} > \frac{p-1}{2}.
\end{gather}
\end{theorem}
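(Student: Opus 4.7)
My plan is to build on the calculation from the proof of Theorem~\ref{t:gm12}. The first four Poisson generators in \eqref{e:gm2pg}, namely $x_1 y_1 + x_2 y_2$, $x_1 y_1 x_2 y_2$, $x_1^m + x_2^m$, and $y_1^m + y_2^m$, are invariant under $G(m,p,2)$ for every $p$, and in the case $p=1$ they already Poisson-generate $\cO_V^{G(m,1,2)}$. By Lemma~\ref{l:pg}, the quotient of $\cO_V$ by the spans of their brackets with $\cO_V$ is $\HP_0(\cO_V^{G(m,1,2)}, \cO_V)$, which has basis \eqref{gmp2-gens1}--\eqref{gmp2-gens3}. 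Hence $\HP_0(\cO_V^{G(m,p,2)}, \cO_V)$ is the further quotient obtained by imposing the relations from the brackets of the remaining Poisson generators $x_1^r x_2^r$, $y_1^r y_2^r$, and $x_1^{jr} y_2^{m-jr} + x_2^{jr} y_1^{m-jr}$ for $1 \leq j < p$ with all of $\cO_V$.

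For the diagonal monomials $E_{A,B} := x_1^A x_2^B y_1^A y_2^B$, the direct computation
\[
\{x_1^r x_2^r,\ x_1^{A+1-r} x_2^{B-r} y_1^{A+1} y_2^{B}\} = r(A+1)\, E_{A,B} + rB\, E_{A+1,B-1}
\]
yields the two-term relation $(A+1) E_{A,B} + B E_{A+1,B-1} \equiv 0$ whenever $A \geq r-1$ and $B \geq r$; the bracket with $y_1^r y_2^r$ produces the same relation on diagonal monomials. Iterating along an antidiagonal with both coordinates in $[r-1, m-2]$ and telescoping, I will deduce $E_{a,b} \equiv (-1)^{b-a} E_{b,a}$ on that region. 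This collapse explains the surviving diagonal basis: the unsymmetrized elements \eqref{e:t-gmp21} (those with $\min(a,b) < r-1$, so that no chain applies) and the symmetrized pairs \eqref{e:t-gmp22} (indexed so that the entire chain stays within $[r-1, m-2]^2$, giving the range $r-1 \leq a \leq m-r-1$). When the antidiagonal chain instead interacts with the $p=1$ symmetrized basis elements $E_{m-1, a} + E_{a, m-1}$, a parity check on $m-r$ either produces an additional vanishing or, exactly in the case $p=2$, yields the isolated survivor \eqref{e:t-gmp23}.

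The brackets $\{x_1^{jr} y_2^{m-jr} + x_2^{jr} y_1^{m-jr}, \cdot\}$ produce analogous relations between pairs of off-diagonal $p=1$ basis elements $x_1^{a+b} y_1^a y_2^b$ and $x_2^{a+b} y_1^b y_2^a$. Applying these for each $j \in \{1, \ldots, p-1\}$ and reducing modulo the equivalences \eqref{e:gm2-mons-equiv} from the $p=1$ quotient converts the question of whether a given pair is identified into the combinatorial criterion $\lfloor (k+1)/r \rfloor + \lfloor (a+2b-k)/r \rfloor \geq p$ appearing in \eqref{e:t-gmp24}, which records exactly when there is a choice of index $k$ and some $j$ such that the bracket shifts one monomial into the other via a chain lying within the $p=1$ basis region. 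The midpoint cases, where the inequality is tight and $j$ equals (or is adjacent to) $p/2$, give the symmetrized elements with $\lfloor (a+b+1)/r \rfloor = p/2$ and the antisymmetrized elements with $(a+b+1)/r = (p+1)/2$ in the last two lines of the theorem; the sign in each case is dictated by the ratio $jr/(m-jr)$ of coefficients in the bracket formula displayed above.

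The main obstacle is the combinatorial bookkeeping in the off-diagonal case: one must verify that the relations coming from varying $j$ neither under-identify (so that the listed elements actually span) nor over-identify (so that they remain linearly independent). This will rely on the minimality of the exponents $jr$ and $m-jr$ among generators in each bidegree of $\cO_V^{G(m,p,2)}$, which guarantees that distinct $j$ yield genuinely independent relations with no extra cancellation. Tracking the signs through the iterated two-term diagonal relations, together with the equivalences \eqref{e:gm2-mons-equiv}, will produce the precise sign patterns in \eqref{e:t-gmp22}--\eqref{e:t-gmp24}; the exceptional case $p=2$ and the appearance of the midpoint parity conditions in \eqref{e:t-gmp24} will each require brief separate treatment.
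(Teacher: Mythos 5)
Your overall framework is the one the paper uses: start from the $p=1$ quotient of Theorem \ref{t:gm12}, invoke Lemma \ref{l:pg}, and impose the additional relations coming from brackets of $\cO_V$ with $x_1^r x_2^r$, $y_1^r y_2^r$, and $x_1^{jr} y_2^{m-jr} + x_2^{jr} y_1^{m-jr}$. Your treatment of the off-diagonal monomials and your two-term antidiagonal relation $(A+1)E_{A,B} + B E_{A+1,B-1} \equiv 0$ (valid for $A \geq r-1$, $B \geq r$) both match the paper's \eqref{e:gmp2-rels1} and the derivation of \eqref{e:t-gmp22}.

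However, there is a genuine gap in your account of the diagonal basis elements. You assign the brackets with $x_1^{jr} y_2^{m-jr} + x_2^{jr} y_1^{m-jr}$ a role only on the off-diagonal monomials, and you propose to settle the fate of the elements $E_{a,m-1} + E_{m-1,a}$ by a ``parity check on $m-r$'' applied to the antidiagonal chains. This cannot work. The chain relations from $\{x_1^r x_2^r, -\}$ and $\{y_1^r y_2^r, -\}$ only connect diagonal monomials with \emph{both} indices $\geq r-1$, so for $r>2$ they never reach $E_{1,m-1}$ or $E_{a,m-1}$ with $a < r-1$; and the supplementary relation \eqref{e:gmp2-rels3} only covers $a \geq r$. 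Moreover, the parity of $m-r$ does not detect whether $p=2$ (for $p=2$ one has $m-r=m/2$, which can be either parity), so it cannot be the mechanism selecting the exceptional survivor \eqref{e:t-gmp23}. The essential missing input is that the brackets $\{x_1^{jr} y_2^{m-jr} + x_2^{jr} y_1^{m-jr}, x_1^a x_2^b y_1^c y_2^d\}$ in the case $c = a+jr$ (the paper's \eqref{e:gmp2-t1}, leading to \eqref{e:gmp2-t4} and \eqref{e:gmp2-t2}) produce relations coupling \emph{diagonal} monomials to off-diagonal ones of the form $x_2^{*} y_1^m y_2^{*}$. These give, for $a=d=0$, that each $E_{jr-1,m-jr}$ lies in $\{\cO_V^G,\cO_V\}$ (which is what truncates the antidiagonal chains at $a \leq m-r-1$ in \eqref{e:t-gmp22}), and, by comparing the $(a,d)=(0,1)$ and $(1,0)$ instances, that $x_1^a x_2^{m-1} y_1^a y_2^{m-1}$ lies in the span for $a\geq 2$, or for $a=1$ when $p>2$ — the degeneracy occurring exactly when $j = p-j$, which is forced only for $p=2$. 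This coefficient comparison, not a parity of $m-r$, is what produces \eqref{e:gmp2-rels4} and hence the $p=2$ exception \eqref{e:t-gmp23}. Without these diagonal-case relations your plan would leave the elements $E_{a,m-1}+E_{m-1,a}$ for $2 \leq a \leq r-1$ (and $a=1$, $p>2$) unaccounted for, and the claimed basis would not be verified.
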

We remark that the condition of \eqref{e:t-gmp24} in particular implies
$a+b < m-1$ (by taking $k=a+b \geq m-1$), 
so it is consistent with Theorem \ref{t:gm12}, noting that
$\HP_0(\cO_V^G, \cO_V)$ for $G=G(m,p,2)$ is a quotient of 
$\HP_0(\cO_V^H, \cO_V)$ for $H=G(m,1,2) > G$.

Also, note that the statement of the theorem actually holds when $p=m
> 2$, and reduces to Theorem \ref{t:gmm2}, but since the result is then
much simpler, we separated the two theorems.
\begin{corollary}\label{c:gmp2}
For $1 < p < m$, $\HP_0(\cO_V^G, \cO_V) \ncong \gr \HH_0(\cD_X^G, \cD_X)$.  Also, $\HP_0(\cO_V^G) \ncong \gr \HH_0(\cD_X^G)$ unless $p=2$ and $m \in \{4,6\}$, in which case one obtains
\begin{gather}\label{e:c-gmp2-m4}
h(\HP_0(\cO_V^G);t)=h(\gr \HH_0(\cD_X^G);t) = 1+t^2+3t^4+t^8, \quad G=G(4,2,2); \\
h(\HP_0(\cO_V^G);t)=h(\gr \HH_0(\cD_X^G);t) = 1+t^2+2t^4+3t^6+4t^8+t^{10}+t^{12}, \quad G=G(6,2,2). \label{e:c-gmp2-m6}
\end{gather}
In general, when $1 < p < m$,
\begin{multline}\label{e:c-gmp2}
h(\HP_0(\cO_V^G);t) = \sum_{j=0}^{2r-5} \lfloor \frac{j+2}{2} \rfloor t^{2j} + \sum_{j=2r-4}^{m-2} (r-1)t^{2j} + \sum_{j=m-1}^{m+r-4} (m+r-3-j) t^{2j}  \\
+ \sum_{j=0}^{\lfloor \frac{m-2r}{2}\rfloor} t^{4(r+j-1)}
+ \delta_{p,2} t^{2m} +  \delta_{2 \mid p} \sum_{i=0}^{r-2} t^{m+2i},
\end{multline}
where $\delta_{2 \mid p} = 1$ if $p$ is even and $\delta_{2 \mid p} = 0$ otherwise.
\end{corollary}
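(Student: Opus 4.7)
The plan is to deduce the corollary from Theorem \ref{t:gmp2} in two main steps: first extract the $G$-invariant part of the given basis of $\HP_0(\cO_V^G,\cO_V)$ in order to compute $\HP_0(\cO_V^G)$ together with its Hilbert series, and then compare the resulting dimensions with those of $\HH_0(\cD_X^G,\cD_X)$ and $\HH_0(\cD_X^G)$ read off from Lemma \ref{l:afls-fla}.

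\textbf{Restriction to invariants.} Every basis element listed in Theorem \ref{t:gmp2} is manifestly $K$-invariant (each monomial has $\deg_{x_i}-\deg_{y_i}$ divisible by $m$ and all exponents divisible in the appropriate way), so the residual $G$-action factors through the swap $\sigma\colon x_i \leftrightarrow x_{3-i},\ y_i \leftrightarrow y_{3-i}$ generated by $G/K\cong \ZZ/2$. The family \eqref{e:t-gmp21} is $\sigma$-paired: its invariants are the diagonals $x_1^a x_2^a y_1^a y_2^a$ with $a\le r-2$, together with the symmetrized sums indexed by unordered pairs $a<b$ subject to $a\le r-2,\ b\le m-2$. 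The element \eqref{e:t-gmp22} is $\sigma$-invariant iff $(-1)^{a-r+1}=1$, i.e.\ iff $a\equiv r-1\pmod{2}$; \eqref{e:t-gmp23} is automatically $\sigma$-invariant; in \eqref{e:t-gmp24} the two listed monomial families are swapped by $\sigma$, contributing symmetrized sums; the following family is $\sigma$-invariant as written; and the final $\sigma$-antisymmetric family contributes nothing.

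\textbf{Assembling the Hilbert series.} Collecting the degree contributions from the previous step, the symmetrized off-diagonal contributions of \eqref{e:t-gmp21} assemble into a triangular sum that produces the first three summands of \eqref{e:c-gmp2}, namely the ``growing'' piece $\sum_{j=0}^{2r-5}\lfloor(j+2)/2\rfloor t^{2j}$, the ``plateau'' $\sum_{j=2r-4}^{m-2}(r-1)t^{2j}$, and the ``decreasing'' tail $\sum_{j=m-1}^{m+r-4}(m+r-3-j)t^{2j}$. The diagonal contributions from \eqref{e:t-gmp21} together with the $\sigma$-invariant terms of \eqref{e:t-gmp22} interleave to give the chain $\sum_{j=0}^{\lfloor(m-2r)/2\rfloor} t^{4(r+j-1)}$. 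The element \eqref{e:t-gmp23}, which exists only when $p=2$, produces the single summand $\delta_{p,2}t^{2m}$, and the $\sigma$-invariant portion of the second family in \eqref{e:t-gmp24}, whose defining condition $\lfloor(a+b+1)/r\rfloor=p/2$ can only be met for $p$ even, yields $\delta_{2\mid p}\sum_{i=0}^{r-2}t^{m+2i}$.

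\textbf{Comparison with Hochschild homology.} By Lemma \ref{l:afls-fla}, $\dim\HH_0(\cD_X^G,\cD_X)$ equals the number of $g\in G(m,p,2)$ with $g-\Id$ invertible (diagonal elements with both diagonal entries $\ne 1$ together with all off-diagonal elements whose product of entries is not $-1$), and $\dim\HH_0(\cD_X^G)$ equals the number of such conjugacy classes. Comparing the total $\dim\HP_0(\cO_V^G,\cO_V)$ from Theorem \ref{t:gmp2} with $\dim\HH_0(\cD_X^G,\cD_X)$ gives strict inequality for every $1<p<m$, which proves $\HP_0(\cO_V^G,\cO_V)\ncong\gr\HH_0(\cD_X^G,\cD_X)$. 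Summing \eqref{e:c-gmp2} to obtain $\dim\HP_0(\cO_V^G)$ and subtracting the analogous count of conjugacy classes yields a nonnegative integer that vanishes precisely in the two exceptional cases $(m,p)\in\{(4,2),(6,2)\}$; for these, \eqref{e:c-gmp2} specializes to \eqref{e:c-gmp2-m4} and \eqref{e:c-gmp2-m6}, and the already-surjective map $\HP_0(\cO_V^G)\onto\gr\HH_0(\cD_X^G)$ must then be an isomorphism.

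The main obstacle is the combinatorial bookkeeping in the second step: the basis in Theorem \ref{t:gmp2} is cut out by several overlapping conditions coupling $a,b,r,p,m$ and a divisibility constraint tied to $p/2$, so one must check that the $\sigma$-invariant subsets partition cleanly into the five summands of \eqml{e:c-gmp2} without double-counting between the symmetrized off-diagonal pairs, the diagonal chain, and the extra contributions from \eqref{e:t-gmp22}, \eqref{e:t-gmp23}, and \eqref{e:t-gmp24}. Once this accounting is carried out, identifying the exceptional pairs $(m,p)$ reduces to comparing two explicit polynomials in $t$, one from \eqref{e:c-gmp2} and one from a straightforward enumeration of conjugacy classes in $G(m,p,2)$.
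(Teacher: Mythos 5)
Your overall strategy is the paper's: extract the $G$-invariant part of the basis of Theorem \ref{t:gmp2} to get \eqref{e:c-gmp2}, then compare dimensions with the counts supplied by Lemma \ref{l:afls-fla}. But the restriction-to-invariants step contains a concrete error. The opening claim that every basis element of Theorem \ref{t:gmp2} is $K$-invariant is false: the diagonal elements $x_1^ax_2^by_1^ay_2^b$ are $K$-invariant, but the monomials $x_1^{a+b}y_1^ay_2^b$ and $x_2^{a+b}y_1^by_2^a$ of \eqref{e:t-gmp24} and the subsequent families carry a nontrivial character of $K$ unless $m\mid 2b$ and $r\mid b$, which within the listed range forces $b=m/2$ and $p$ even. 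So the ``symmetrized sums'' you extract from \eqref{e:t-gmp24} contribute nothing to $\HP_0(\cO_V^G)$ outside that one case --- which is precisely why \eqref{e:c-gmp2} contains only the single off-diagonal family $\delta_{2\mid p}\sum_{i}t^{m+2i}$. Your accounting is internally inconsistent here: you declare these symmetrized pairs to be invariants and then assign them to no summand of \eqref{e:c-gmp2}. A second bookkeeping error: the chain $\sum_j t^{4(r+j-1)}$ comes from the $\sigma$-invariant elements of \eqref{e:t-gmp22} alone (degrees $2(a+r-1)$ with $a\equiv r-1\pmod 2$), while the diagonal elements of \eqref{e:t-gmp21} are already counted inside the first two summands; folding them into the chain as you propose would double-count and would not reproduce \eqref{e:c-gmp2}.

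The comparison step also has a gap. You propose to prove $\HP_0(\cO_V^G,\cO_V)\ncong\gr\HH_0(\cD_X^G,\cD_X)$ for all $1<p<m$ by comparing the \emph{total} dimension of the basis of Theorem \ref{t:gmp2} with $\dim\HH_0(\cD_X^G,\cD_X)=(2r-1)(m-1)$, but that total has no clean closed form and you never produce the required lower bound. The paper avoids this: outside $(m,p)\in\{(4,2),(6,2)\}$ the invariant-level strict inequality $\dim\HP_0(\cO_V^G)>\dim\HH_0(\cD_X^G)$ (established by the explicit formulas \eqref{e:c-gmp2-hp0invdim} and \eqref{e:c-gmp2-hh0invdim} and a short case analysis in $(p,r)$) already forces the full surjection $\HP_0(\cO_V^G,\cO_V)\onto\gr\HH_0(\cD_X^G,\cD_X)$ to be non-injective, and only in the two exceptional cases, where the invariant dimensions coincide, does one need a direct full-space comparison, done numerically as $12>9$ and $34>25$. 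As written, your argument leaves the first assertion of the corollary unproved exactly in the two cases where the invariant-level comparison cannot decide it.
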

It is also possible to use Theorem \ref{t:gmp2} to give
an explicit description of the graded $G$-structure of $\HP_0(\cO_V^G,
\cO_V)$ similarly to Corollaries \ref{c:gm12} and \ref{c:gmm2}, but we
omit this as it is complicated and less explicit. In computing the
Hilbert series of the $G$-invariants above, the relevant basis elements
above greatly simplify.
\begin{remark}\label{r:gmp2-topdeg}
  As a consequence of the theorem, we see that, for $1 < p < m$, the
  top degree of $\HP_0(\cO_V, \cO_V^G)$ is the same as the top degree
  of $\HP_0(\cO_V^G)$, which is $2(m+r-4)$ except in the cases $p = 2$
  and $m \in \{4,6\}$ (exactly the same cases wherein $\HP_0(\cO_V^G)
  \cong \gr \HH_0(\cO_V^G)$), in which case the top degree is $2m$. In
  contrast, Theorem \ref{t:gm12} shows that, in the case $p=1$, the
  top degree is $4m-4$, which is also the same as the top degree of
  $G$-invariants; Theorem \ref{t:gmm2} shows that, in the case $p=m$
  (i.e., the Coxeter groups of type $I_2(m)$), the top degree is
  $2m-4$, while the top degree of $G$-invariants is either $2m-4$ or
  $2m-6$, whichever is a multiple of $4$. In the case $m$ is odd,
  these produce some of the only examples of groups considered in this
  paper such that the top degree of $\HP_0(\cO_V^G, \cO_V)$ exceeds
  that of $\HP_0(\cO_V^G)$: the other examples are the groups $S_{n+1}
  < \GL_n < \Sp_{2n}$ (i.e., the type $A_n$ Weyl groups). This does
  not include groups mentioned for which we did not actually compute
  $\HP_0(\cO_V^G, \cO_V)$, such as complex reflection groups of rank
  $\geq 3$ and $G_{18}$ and $G_{19}$.

  Finally, note that the actual top degrees for $G(m,p,2)$ above
  differ from the bounds of Corollary \ref{crtopdegcor} (assuming $m >
  1$): there we have $2m + 4r - 8$, whereas the actual top degree as
  above is a constant plus $2m+2r$ (the constant depending on whether
  $p=1, p=m$, or $1 < p < m$, with the special cases $(m,p) \in
  \{(4,2), (6,2)\}$).  The only cases where the bound is sharp are
  $p=m$, $(m,p) = (4,2)$, and $(m,p)=(2,2)$.
\end{remark}
\subsubsection{Proof of Theorem \ref{t:gmp2}}
We need to compute the spans of brackets with the final three
elements of \eqref{e:gm2pg}, when summed with the spans already
computed from \S \ref{s:gm12}.

First, $\{x_1^r x_2^r, x_1^a x_2^b y_1^c y_2^d\} = cr x_1^{a+r-1}
x_2^{b+r} y_1^{c-1} y_2^d + dr x_1^{a+r} x_2^{b+r-1} y_1^c y_2^{d-1}$. Together with the similar expression for brackets with $y_1^r y_2^r$, and up to
\eqref{e:gm2-mons-equiv},
 this yields the span of
\begin{gather} \label{e:gmp2-rels1}
(a+1) x_1^{a} x_2^{b+1} y_1^a y_2^{b+1} + (b+1) x_1^{a+1} x_2^b y_1^{a+1} y_2^b, \quad a, b \geq r-1; \\ \label{e:gmp2-rels2}
x_1^{a+b} y_1^a y_2^b, x_2^{a+b} y_1^b y_2^a, \quad a+b \geq 2r-1, a \geq r-1, b > 0.
\end{gather}
Together with \eqref{e:gm2-monrels2}, since $m \geq 2r$, this also yields
\begin{equation} \label{e:gmp2-rels3}
x_1^a x_2^b y_1^a y_2^b, \quad a+b \geq r+m-1.
\end{equation}

It remains to consider the final element of \eqref{e:gm2pg} (note that $m-jr=(p-j)r$):
\begin{multline} \label{e:gmp2-t1}
\{x_1^{jr} y_2^{m-jr} + x_2^{jr} y_1^{m-jr}, x_1^a x_2^b y_1^c y_2^d\}
= r\Bigl[
\bigl(jc x_1^{a+jr-1} x_2^{b} y_1^{c-1} y_2^{d+m-jr} - (p-j)b x_1^{a+jr}
x_2^{b-1} y_1^c y_2^{d+m-jr-1}\bigr) \\
- \bigl((p-j)a x_1^{a-1} x_2^{b+jr} y_1^{c+m-jr-1} y_2^d -
jd x_1^{a} x_2^{b+jr-1} y_1^{c+m-jr} y_2^{d-1}\bigr)
\Bigr] 
\end{multline}
We will assume that $(a+jr-1)+b = c-1+(d+m-jr)$, since otherwise the
above is all in the span of $\{x_1 y_1 + x_2 y_2, \cO_V^G\}$ as noted
in \S \ref{s:gm12}.

In the case $a+jr=c$, so that the first two terms on the RHS have the
form $x_1^{a'} x_2^{b'} y_1^{a'} y_2^{b'}$, we can simplify the above
using \eqref{e:gmp2-rels1}.  We can restrict our attention to the case
that $a+d < r$, since otherwise all the terms on the RHS are already
in the span of \eqref{e:gmp2-rels2} and \eqref{e:gmp2-rels3}, using
also the relations \eqref{e:gm2-mons-equiv}.  Then,
up to the previous spans and rescaling we obtain
\begin{equation}\label{e:gmp2-t4}
p(a+jr) x_1^{a+jr-1} x_2^{d+m-jr} y_1^{a+jr-1} y_2^{d+m-jr}
- ((p-j)a-jd) x_2^{a+d+m-1} y_1^{m} y_2^{a+d-1}.
\end{equation}
In the case $a=d=0$, the second term vanishes and we obtain the monomial
$x_1^{jr-1} x_2^{m-jr} y_1^{jr-1} y_2^{m-jr}$ in the span. Otherwise,
substituting \eqref{e:gm2-monrels7}, this is equivalent to
\begin{equation}\label{e:gmp2-t2}
(a+d)p(a+jr) x_1^{a+jr-1} x_2^{d+m-jr} y_1^{a+jr-1} y_2^{d+m-jr}
+ m((p-j)a-jd) x_1^{a+d} x_2^{m-1} y_1^{a+d} y_2^{m-1}.
\end{equation}

If, instead of $a+jr=c$, we have $b+jr = d$,
i.e., the second two terms on the RHS of \eqref{e:gmp2-t1} have the
form $x_1^{a'} x_2^{b'} y_1^{a'} y_2^{b'}$ (rather than the first two
terms), then up to \eqref{e:gm2-monrels8} and swapping $j$
with $p-j$, we obtain the same relations.

Let analyze \eqref{e:gmp2-t4} and \eqref{e:gmp2-t2} further.
Using \eqref{e:gmp2-t2} together with \eqref{e:gmp2-rels1} (and the case
$a=d=0$ of \eqref{e:gmp2-t4}), we can
replace all monomials of the form $x_1^a x_2^b y_1^a y_2^b$ for $a,b
\geq r-1$ and $a+b \geq m-1$ by monomials of the form $x_1^{a+b-m+1}
x_2^{m-1} y_1^{a+b-m+1} y_2^{m-1}$ as above. It remains to see when
two such ways, for fixed $a$ and $b$, are irredundant, and hence
$x_1^{a+b-m+1} x_2^{m-1} y_1^{a+b-m+1} y_2^{m-1}$ is itself in the
span. We already saw that the latter is true when $a+b=m-1$, by \eqref{e:gm2-monrels2}.

In the case that $a=0$ and $d=1$ of \eqref{e:gmp2-t4}, then \eqref{e:gmp2-t2}
becomes, after dividing by $mj$,
\begin{equation}\label{e:gmp2-a0d1}
x_1^{jr-1} x_2^{m-jr+1} y_1^{jr-1} y_2^{m-jr+1} - x_1 x_2^{m-1} y_1 x_2^{m-1}.
\end{equation}
In the case that $a=1$ and $d=0$ of \eqref{e:gmp2-t4}, applying also
\eqref{e:gmp2-rels1}, we obtain
\[
-\frac{jr}{m-jr+1} p(1+jr)x_1^{jr-1} x_2^{m-jr+1} y_1^{jr-1} y_2^{m-jr+1} + m(p-j) x_1 x_2^{m-1} y_1 y_2^{m-1}.
\]
Together with \eqref{e:gmp2-a0d1}, this yields both monomials above, and in
particular $x_1 x_2^{m-1} y_1 x_2^{m-1}$, unless $jrp(1+jr) = m(p-j)(m-jr+1)$. 
Substituting $m=pr$ this equality becomes
$\frac{j}{p-j} = \frac{(p-j)r+1}{jr+1}$.  This holds if and only if $j=p-j$: if $j \neq p-j$, then one is strictly between both sides.   Note further that, unless $p=2$,
then we can always choose $j$ so that $j \neq p-j$, and therefore we obtain the monomial $x_1 x_2^{m-1} y_1 y_2^{m-1}$ in the span.

In the case that $a+d > 1$, then \eqref{e:gmp2-t2} can be applied to
at least three pairs $(a,d)$ with the same sum, and it is easy to see
that the second monomial (which does not change) must be in the span,
and hence all the monomials which appear are in the span. To summarize, \eqref{e:gmp2-t1} yields, in the case $c=a+jr$,
\begin{equation}\label{e:gmp2-rels4}
x_1^a x_2^{m-1} y_1^a y_2^{m-1}, a \geq 2 \text{ or } a=1, p > 2.
\end{equation}

In the remaining case of \eqref{e:gmp2-t1} where neither $a+jr=c$ nor
$b+jr=d$, provided $c, d \geq 1$, using \eqref{e:gm2-mons-equiv},
\eqref{e:gmp2-t1} becomes
\begin{equation} \label{e:gmp2-t3}
(jc-(p-j)b)x_1^{a+jr-1} x_2^{b} y_1^{c-1} y_2^{d+m-jr} - ((p-j)a-jd)x_1^{a} x_2^{b+jr-1} y_1^{c+m-jr} y_2^{d-1}.
\end{equation}
As before, we assume that the total degree in $x_1$ and $x_2$ equals the total
degree in $y_1$ and $y_2$, i.e., $a+b+jr = c+d+m-jr$. In particular, $a+b \equiv c+d \pmod r$.

If $c = 0$ and/or $d = 0$, then we instead get the same relation as
above, except that we must multiply the first term above by $\frac{x_1
  y_1}{x_2 y_2}$ and/or the second term by $\frac{x_2 y_2}{x_1 y_1}$,
respectively. (Note that if $b=c=0$ then the first term is zero, and
if $a=d=0$ then the second term is zero.)

The first term above vanishes if and only if $jc=(p-j)b$, and the second term
if and only if $jd=(p-j)a$.  One way the first equality can hold is if
$b=c=0$, in which case the second monomial appearing above is in the overall
span unless $jd=(p-j)a$, in which case we obtain no relations.  If $b+c = 1$,
then the first term does not vanish, and we obtain a nontrivial relation.
If $b+c > 1$ and
either $a,c > 0$ or $b, d > 0$, then we can replace $(a,b,c,d)$ by $(a
\pm 1, b \mp 1, c \pm 1, d \mp 1)$, and together with
\eqref{e:gm2-mons-equiv}, the new expression \eqref{e:gmp2-t3} is
irredundant unless $j=p-j$. 

The same arguments apply if we swap $b$ and $c$ with $a$ and $d$.  So, all
the monomials that can occur above are actually in the span, unless we
are in one of the cases $b+c=1=a+d$, one of $a,c$ and one of $b,d$ are
zero, or $j=p-j$ and $b+c, a+d > 0$. Even if we are in one of these
cases, by applying also \eqref{e:gm2-mons-equiv}, we can still obtain
the first monomial in the span if $b+c \geq r$, and the second
monomial in the span if $a+d \geq r$.  We can therefore discard the
case $b+c=1=a+d$, since this together with $b+c < r$ and $a+d < r$
already implies $j=p-j$.

Next, let us assume that $b+c < r$ and $a+d < r$, in addition to being
in one of the two cases (i) one of $a,c$ and one of $b,d$ are zero, or
(ii) $j=p-j$ and $b+c, a+d > 0$.  Then, applying again
\eqref{e:gm2-mons-equiv}, we obtain a single nontrivial relation
unless either $a=d=0$ and $jc=(p-j)b$ are both satisfied or $b=c=0$
and $jd=(p-j)a$ are both satisfied.  Then, we are in case (i), so $j =
p-j$, and either (1) both $a=d=0$ and $b=c < r$ are satisfied, or (2)
both $a=d < r$ and $b=c=0$ are satisfied.  So, in these final two
subcases (1) and (2) only, \eqref{e:gmp2-t3} yields no relations on
the monomials \eqref{e:gm2-mons} modulo \eqref{e:gm2-mons-equiv}, and
otherwise we obtain a single nontrivial relation.

Putting everything together, one may verify that
\eqref{e:gmp2-t3} adds to the overall span of $\{\cO_V^G, \cO_V\}$ exactly the
following:
\begin{gather} \label{e:gmp2-rels5beg}
x_1^{a+b} y_1^a y_2^b, \quad b > 0,\, \lfloor \frac{a+b+1}{r} \rfloor \neq p/2, \frac{a+b+1}{r} \neq \frac{p+1}{2}, \text{ and } \\ \notag
\exists k \in [b, a+b] \text{ s.t. } \lfloor \frac{k+1}{r} \rfloor
+ \lfloor \frac{a+2b-k}{r} \rfloor \geq p; \\
x_1^{a+b} y_1^a y_2^b - x_2^{a+b} y_1^b y_2^a, \quad b>0, \lfloor \frac{a+b+1}{r} \rfloor = p/2, a+2b \geq m; \\
x_1^{a+b} y_1^a y_2^b + x_2^{a+b} y_1^b y_2^a, \quad \frac{a+b+1}{r} = \frac{p+1}{2}, \quad \frac{b}{r} >  \frac{p-1}{2}. \label{e:gmp2-rels5end}
\end{gather}
Therefore, $\HP_0(\cO_V^G, \cO_V)$ is the quotient of the span of
monomials \eqref{e:gm2-mons} up to \eqref{e:gm2-mons-equiv} and the
relations \eqref{e:gmp2-rels1}--\eqref{e:gmp2-rels3},
\eqref{e:gmp2-rels4}, and
\eqref{e:gmp2-rels5beg}--\eqref{e:gmp2-rels5end}.  From this,
we easily obtain the basis claimed in the theorem. (A priori, we might also
need to include relations from \S \ref{s:gm12}, but it is easy to see they are
all spanned by the present relations, by
comparing the basis of the present theorem with
that of
Theorem
\ref{t:gm12}.  Alternatively, one can verify directly that the aforementioned
relations span also \eqref{e:gm2-monrels1}--\eqref{e:gm2-monrels8}.
This completes the proof of Theorem \ref{t:gmp2}.

\subsubsection{Proof of Corollary \ref{c:gmp2}}
First, to prove \eqref{e:c-gmp2}, we can use the basis of the theorem:
it is easy to see that the dimension of the space of $G$-invariants in
each degree is the number of terms of the form $x_1^a x_2^b y_1^a
y_2^b + x_1^b x_2^a y_1^b y_2^a$ and, in the case $p$ is even, also 
$x_1^{a+m/2} y_1^a y_2^{m/2} +
x_2^{a+m/2} y_1^{m/2} y_1^a$, which are in the span of the elements
appearing in the theorem.
From this \eqref{e:c-gmp2} easily follows.

Now, \eqref{e:c-gmp2} implies that the LHS and RHS of 
\eqref{e:c-gmp2-m4} are equal by substituting in the given values of
$m$ and $p$, and similarly for \eqref{e:c-gmp2-m6}.  To deduce from
this that $\HP_0(\cO_V^G) \cong \gr \HH_0(\cD_X^G)$ in the cases $p=2$
and $m \in \{4,6\}$, and hence the equality with the second term in
the these two equations, it suffices to show that $\dim \HP_0(\cO_V^G)
= \dim \HH_0(\cD_X^G)$.  By Lemma \ref{l:afls-fla}, $\dim
\HH_0(\cD_X^G, \cD_X)$ and $\dim \HH_0(\cD_X^G)$ equal the number of
elements $g \in G$ such that $g - \Id$ is invertible and the number of
conjugacy classes of such elements, respectively.  First, there are
$(m-r)r + (r-1)^2$ diagonal matrices in $G$ without $1$ on the
diagonal; of these there are $r-1$ or $2r-1$ scalar matrices,
depending on whether $p$ is odd or even, respectively.  The diagonal
matrices with distinct diagonal entries appear in conjugacy classes of
size two.  Next, the off-diagonal matrices $g$ such that $g-\Id$ is
invertible are those of determinant not equal to $-1$, i.e., equal to
a nontrivial $r$-th root of unity. There are $m(r-1)$ of these. Their
conjugacy classes are of size either $m$ (in the case $p$ is odd) or
$m/2$ (in the case $p$ is even).  Putting this together, we conclude
\begin{gather} \label{e:c-gmp2-hh0dim}
\dim\HH_0(\cD_X^G, \cD_X) = (2r-1)(m-1), \\ \label{e:c-gmp2-hh0invdim}
\dim \HH_0(\cD_X^G) = \begin{cases} \frac{1}{2}r(m+1) - 1, & \text{$p$ is odd}, \\ 
\frac{1}{2}r(m+4)  - 2, & \text{$p$ is even}. \end{cases}
\end{gather}
We easily deduce from this and \eqref{e:c-gmp2-m4} and
\eqref{e:c-gmp2-m6} the fact that $\dim \HH_0(\cD_X^G) = \dim
\HP_0(\cO_V^G)$ in these cases. Moreover, using
\eqref{e:c-gmp2-hh0dim} and an explicit calculation from the basis
given in the theorem, or using computer programs from Magma, we see
that $\dim \HP_0(\cO_V^G, \cO_V) > \dim \HH_0(\cD_X^G, \cD_X)$ in
these cases: for $(m,p) = (4,2)$, we obtain dimensions $12 > 9$, and in
the case $(m,p)=(6,2)$, we obtain dimensions $34 > 25$.

It remains to prove that, in all other cases (i.e., other than $p=2$
and $m \in \{4,6\}$) $1 < p < m$ implies that $\HP_0(\cO_V^G) \not
\cong \gr \HH_0(\cD_X^G)$, since this clearly implies $\HP_0(\cO_V^G,
\cO_V) \ncong \gr \HH_0(\cD_X^G, \cD_X)$.  For this, it suffices
to show that $\dim \HP_0(\cO_V^G) > \dim \HH_0(\cD_X^G)$.  From \eqref{e:c-gmp2} we can easily compute $\dim \HP_0(\cO_V^G)$ by plugging in $t=1$; or we
can compute it from the theorem itself and the
observations of the first paragraph of the proof. The
first line becomes the number of elements of the form $x_1^a x_2^b y_1^a y_2^b + x_1^b x_2^a y_1^b y_2^a$ with $a \leq b \leq m-2$ and $a \leq r-2$, which
is the area of an obvious trapezoid in the plane: $(r-1)(m-1) - \frac{1}{2}(r-1)(r-2)$. The evaluation of the second line of \eqref{e:c-gmp2} at $t=1$
is $\delta_{p,2} + \lfloor \frac{m-2r}{2} \rfloor + 1 + (r-1)\delta_{2 \mid p}$.
Put together, 
\begin{equation}\label{e:c-gmp2-hp0invdim}
\dim \HP_0(\cO_V^G) =  (r-1)(m-\frac{1}{2}r) + \lfloor \frac{m-2r}{2} \rfloor + (r-1)\delta_{2 \mid p} + 1 + \delta_{p,2}.
\end{equation}
Since the value of the formula in \eqref{e:c-gmp2-hh0invdim} for the
even case of $p$ exceeds that of the odd case, let us subtract the
even case formula from \eqref{e:c-gmp2-hp0invdim} and try to see when
the result is positive. We get:
\begin{equation}\label{e:c-gmp2-pf-t1}
(\frac{1}{2} r-1)(m-r-5) 
+ \lfloor \frac{(p-2)r}{2} \rfloor + (r-1)\delta_{2 \mid p} -2 + \delta_{p,2}.
\end{equation}
All of the terms above except for the first sum to a nonnegative
number
unless $p = 3$ and $r=2$.   The first term will be positive whenever $r > 2$ and $(p-1)r > 5$; the second condition is satisfied
for all pairs $(p,r)$ with $r > 2$ except when $p=2$ and $r \in \{3,4,5\}$.  It remains to check these last cases (along with $r=2$).

If $r=2$, then the above sum is positive unless either $p=2$ or $p=3$.
If $p=2$ and $r \in \{3,4,5\}$, then the above is clearly positive
unless $r=3$.  So this leaves only the cases $(p,r) \in \{(2,2),
(2,3), (3,2)\}$.  The first two cases are those in which the above is
zero and we actually get $\HP_0(\cO_V^G) \cong \gr \HH_0(\cD_X^G)$.
In the final case $p=3, r = 2$, $\dim \HP_0(\cO_V^G) = 7 > 6 = \dim
\HH_0(\cD_X^G)$ (recall that \eqref{e:c-gmp2-pf-t1} used the formula
\eqref{e:c-gmp2-hh0invdim} in the case $p$ is even). This completes the proof.

\appendix
\section{Examples where $\HP_0(\cO_V^G)$ is nontrivial in cubic
  degree} \label{s:nontriv-cubic} Let $G$ be a group and $V_1, V_2$,
and $V_3$ three quaternionic irreducible representations: then
$(\Sym^2 V_i)^G = 0$ for all $i \in \{1,2,3\}$. If, furthermore, $(V_i
\otimes V_j)^G = 0$ for all $i \neq j$ and $(V_1 \otimes V_2 \otimes
V_3)^G \neq 0$, then it would follow that the lowest degree invariant
element in $\cO_{V_1 \oplus V_2 \oplus V_3}^G$ is cubic.  Equipping $V
:= V_1 \oplus V_2 \oplus V_3$ with a $G$-invariant symplectic form,
$\HP_0(\cO_V^G)$ would have a nontrivial cubic component, isomorphic
to the cubic part of $\cO_{V}^G$ itself.  Our goal is to construct
such $G, V_1, V_2$, and $V_3$.

  To do so, we will employ the field $\FF_2$
  and the Arf invariant. Let $m \geq 1$ and let $E$ be a
  $\FF_2$-vector space of dimension $2m$. Let $Q_E$ denote the group
  of quadratic forms on $E$ with values in $\FF_2$.  Corresponding to
  each $q \in Q_E$ is a canonical central extension $\widetilde{E}_q$
  of $E$ by $\FF_2$, since $H^2(E, \FF_2) = Q_E$. If $q$ is
  nondegenerate, then it is well known \cite{Diclg,Arfuqf} that $(E,q)$
  is isomorphic to either $U_0^{m}$ or $U_0^{m-1} \oplus U_1$, where
  $U_0$ and $U_1$ are defined as $\FF_2^{2}$ with the quadratic forms
  $x_1 x_2$ and $x_1^2 + x_1 x_2 + x_2^2$, respectively.  In the
  former case, $q$ is said to have Arf invariant $0$, and in the
  latter case, Arf invariant $1$; the Arf invariant is the value that
  $q$ attains on the majority of vectors.

  It follows that, if $q$ is nondegenerate, then $\widetilde{E}_q$ has
  a (unique) irreducible representation $Y_q$ of dimension $2^m$ (note
  that any such irreducible representation must be unique and of
  maximal dimension, since $|\widetilde{E}_q| = 2^{2m+1}$ equals the
  sum of squares of dimensions of the irreducible representations).
  Namely, if $q = q_1 \oplus \cdots \oplus q_m$, then
  $\widetilde{E}_{q_1 \oplus \cdots \oplus q_m}$ is a central quotient
  of $\prod_i \widetilde{E}_{q_i}$, and $Y_q = Y_{q_1} \boxtimes \cdots
  \boxtimes Y_{q_m}$. This reduces one to the case $m=1$, where the
  central extensions corresponding to $U_0$ and $U_1$ are just the
  dihedral and quaternion groups of order eight, each equipped with a
  (unique) irreducible $2$-dimensional representation.  It also
  follows that $Y_q$ is equipped with a canonical
  $\widetilde{E}_q$-invariant bilinear form, which is symmetric or
  skew-symmetric, depending on whether the Arf invariant of $q$ is $0$
  or $1$, respectively (since this is true in the case $m=1$). That
  is, $Y_q$ is real or quaternionic, respectively.

Next, there is a canonical 
  group which puts together all the central extensions for
  varying $q$: Let $Q_E$ be the $\FF_2$-vector space of quadratic
  forms on $E$. Then $H^2(E,\FF_2) = Q_E$ and so there is a canonical
  element of $H^2(E, Q_E^*)$ yielding a central extension
\[
1 \to Q_E^* \to G \to E \to 1.
\]
Then, $G$ also acts on $Y_q$ with action factoring through $\widetilde
E_q$, which is the pushout of the above extension under the evaluation
map $q: Q_E^* \to \FF_2$.  It follows that $Y_q$ is an irreducible
representation of $G$ that is real or quaternionic, depending on
whether the Arf invariant of $q$ is $0$ or $1$, respectively.
Moreover, for distinct nondegenerate quadratic forms $q_1, q_2$,
$Y_{q_1} \ncong Y_{q_2}$.  Furthermore, one may check that, if
$q_1+q_2$ is nondegenerate, then $Y_{q_1} \otimes Y_{q_2} \cong Y_{q_1
  + q_2}^{2^m}$.

Now, suppose that we are given quadratic forms $q_1$ and $q_2$ of Arf
invariant $1$ such that $q_1 + q_2$ is nondegenerate and also has Arf
invariant $1$.  Then, setting $q_3 := q_1 + q_2$, we deduce that
$(\Sym^2 Y_{q_i})^G = 0$ and $(Y_{q_i} \otimes Y_{q_j})^G = 0$ for all $i
\neq j$, but since $q_1+q_2 = q_3$, $(Y_{q_1} \otimes Y_{q_2}
\otimes Y_{q_3})^G \neq 0$.  Thus, $G, Y_{q_1}, Y_{q_2}$, and
$Y_{q_1+q_2}$ provide an example of the desired form. In fact, in this
case, setting $V := Y_{q_1} \oplus Y_{q_2} \oplus Y_{q_3}$,
the cubic part of $\Sym(\cO_V^G)$
and hence $\HP_0(\cO_V^G)$ is isomorphic to $(Y_{q_1} \otimes Y_{q_2}
\otimes Y_{q_3})^G$, which is $2^m$-dimensional.

It is not hard to find such examples. Using Magma we found several
with $m=2$ (the minimum possible value), such as 
$q_1 = x_1x_2 + x_3^2+x_3x_4+x_4^2$ and
$q_2 = x_1^2+x_1x_4+x_2^2+x_2x_3+x_3x_4$.
 In this
case, setting $V := Y_{q_1} \oplus Y_{q_2} \oplus Y_{q_1+q_2}$, the space
$\HP_0(\cO_V^G)$ is nonzero in cubic degree (where it has dimension
four), and $\dim V = 12$.

\bibliographystyle{amsalpha}
\bibliography{master}

\end{document}